\def\hB{\hspace*{\fill}$\qed$}
\title{Transgressions and Chern characters in coarse homotopy theory}
\author{
Ulrich Bunke\thanks{Fakult{\"a}t f{\"u}r Mathematik,
Universit{\"a}t Regensburg,
93040 Regensburg,
ulrich.bunke@mathematik.uni-regensburg.de} 
}
\numberwithin{equation}{section}
\newtheorem{theorem}{Theorem}[section] 
\newtheorem{prop}[theorem]{Proposition}
\newtheorem{lem}[theorem]{Lemma}
\newtheorem{ddd}[theorem]{Definition}
\newtheorem{kor}[theorem]{Corollary}
\newtheorem{ass}[theorem]{Assumption}
\newtheorem{construction}[theorem]{Construction}
\newtheorem{prob}[theorem]{Problem}
\theoremstyle{remark}
\theoremstyle{definition}
\newtheorem{ex}[theorem]{Example}
\newtheorem{rem}[theorem]{Remark}
\newcommand{\nloc}{\mathrm{nloc}}
\newcommand{\Nat}{\mathrm{Nat}}
\newcommand{\BM}{\mathrm{BM}}
\newcommand{\LoF}{\mathrm{LF}}
\newcommand{\asmbl}{\mathrm{asmbl}}
\newcommand{\PH}{\mathrm{PH}}
\newcommand{\propp}{\mathrm{prop}}
\newcommand{\Mix}{\mathbf{Mix}}
\newcommand{\rmMix}{\mathrm{Mix}}
\newcommand{\HP}{\mathrm{HP}}
\newcommand{\CH}{\mathbf{CH}}
\newcommand{\rmch}{\mathrm{ch}}
\newcommand{\rmCH}{\mathrm{CH}}
\newcommand{\BT}{\mathbf{BornTop}}
\newcommand{\K}{\mathrm{K}}
\newcommand{\homol}{\mathrm{homol}}
\newcommand{\Ring}{\mathbf{Ring}}
\newcommand{\sepa}{\mathrm{sep}}
\newcommand{\fin}{\mathrm{fin}}
\newcommand{\EE}{\mathbf{E}}
\newcommand{\coarse}{\mathrm{coarse}}
\newcommand{\rmH}{\mathrm{H}}
\newcommand{\ctr}{\mathrm{ctr}}
\newcommand{\Acyc}{\mathbf{Acyc}}
\newcommand{\fg}{\mathrm{fg}}
\newcommand{\topp}{\mathrm{top}}
\newcommand{\ee}{\mathrm{e}}
\newcommand{\LCH}{\mathbf{LCH}}
\newcommand{\alg}{\mathrm{alg}}
\newcommand{\nCalg}{C^{*}\mathbf{Alg}^{\mathrm{nu}}}
\newcommand{\can}{\mathrm{can}}
\newcommand{\F}{\mathbb{F}}
\renewcommand{\CW}{\mathbf{CW}}
\newcommand{\All}{\mathbf{All}}
\newcommand{\UBC}{\mathbf{UBC}}
\newcommand{\Yo}{\mathrm{Yo}}
\newcommand{\Res}{\mathrm{Res}}
\newcommand{\Orb}{\mathbf{Orb}}
\newcommand{\TB}{\mathbf{TopBorn}}
\newcommand{\Hilb}{\mathbf{Hilb}}
\newcommand{\cR}{\mathcal{R}}
\newcommand{\bQ}{\mathbf{Q}}
\newcommand{\BC}{\mathbf{BC}}
\newcommand{\cN}{\mathcal{N}}
\newcommand{\Fin}{\mathbf{Fin}}
\newcommand{\Ob}{\mathrm{Ob}}
\newcommand{\Cofib}{\mathrm{Cofib}}
\newcommand{\Fib}{{\mathrm{Fib}}}
\newcommand{\Sing}{\mathrm{Sing}}
\newcommand{\incl}{\mathrm{incl}}
\newcommand{\bM}{\mathbf{M}}
\newcommand{\mot}{\mathrm{mot}}
\newcommand{\CAlg}{{\mathbf{CAlg}}}
\newcommand{\cI}{{\mathcal{I}}}
\newcommand{\cL}{{\mathcal{L}}}
\newcommand{\cW}{{\mathcal{W}}}
\newcommand{\PSh}{{\mathbf{PSh}}}
\newcommand{\Add}{{\mathtt{Add}}}
\newcommand{\bA}{{\mathbf{A}}}
\newcommand{\Alg}{{\mathbf{Alg}}}
\newcommand{\nAlg}{\mathbf{Alg}^{\mathrm{nu}}}
\newcommand{\cO}{{\mathcal{O}}}
\newcommand{\cY}{{\mathcal{Y}}}
\newcommand{\Var}{\mathrm{Var}}
\newcommand{\cD}{{\mathcal{D}}}
 \newcommand{\Cat}{{\mathbf{Cat}}}
\DeclareMathOperator{\proj}{proj}
\newcommand{\lf}{\mathrm{lf}}
\newcommand{\Spc}{\mathbf{Spc}}
\newcommand{\PCH}{\mathrm{PCH}}
\newcommand{\Ccat}{{C^{\ast}\mathbf{Cat}}}
\newcommand{\Calg}{{\mathbf{C}^{\ast}\mathbf{Alg}}}
\renewcommand{\Add}{\mathbf{Add}}
\renewcommand{\Pr}{\mathbf{Pr}}
\newcommand{\bP}{\mathbf{P}}
\newcommand{\st}{\mathrm{st}}
\newcommand{\comm}{\mathrm{comm}}
\newcommand{\op}{\mathrm{op}}
\newcommand{\kk}{\mathrm{kk}}
\newcommand{\KK}{\mathbf{KK}}
\newcommand{\std}{\mathrm{std}}
\newcommand{\cone}{\mathrm{cone}}
\newcommand{\nCcat}{C^{*}\mathbf{Cat}^{\mathrm{nu}}}
\renewcommand{\tr}{\mathrm{tr}}
\renewcommand{\id}{\mathrm{id}}
\newcommand{\CM}{\mathbf{CM} }
\newcommand{\disc}{\mathrm{disc}}
\newcommand{\an}{\mathrm{an}}
\begin{document}
 \setcounter{tocdepth}{2}

\maketitle

\begin{abstract}   
This paper investigates a variety of coarse homology theories and natural transformations between them. We in particular 
 study the commutativity of a square relating analytical and topological transgressions with algebraic 
and homotopy theoretic Chern characters. Here a transgression is a natural transformation from a coarse homology theory to a functor which factorizes over the Higson corona functor, and a Chern character is a transformation from a $K$-theory like coarse or Borel-Moore type homology theory to an ordinary version.
  \end{abstract}

 \tableofcontents

 \section{Introduction}

\subsection{Overview}  \label{kohperthethgertgetr}

The objects of equivariant  coarse homotopy theory for a group $G$ are  $G$-bornological coarse spaces which will be   analysed using equivariant coarse homology theories. The Higson corona functor $\partial_{h}$ sends $G$-bornological coarse spaces to compact Hausdorff $G$-spaces whose homotopy theory will by studied using equivariant Borel-Moore  homology theories. 
Let $E^{G}$ be an equivariant coarse homology theory and $F^{G}$ be an equivariant Borel-Moore homology theory with the same target category.
A    natural transformation
\begin{equation}\label{hwrtgretgrertrgrtger}T^{G}:E^{G}\to F^{G}\circ \partial_{h}
\end{equation}
of functors defined on $G$-bornological coarse spaces
will be called a transgression.
Currently we know two examples of transgressions. The first, the analytic transgression denoted by $T^{G,\an}$, relates the equivariant topological coarse $K$-homology
with the equivariant analytic $K$-homology. Its construction depends on the internal analytic details of the construction
of the $K$-theory functors.
The  second, the topological transgression denoted by $T^{G,\topp} $, connects the coarsification of
a Borel-Moore homology theory with the Borel-Moore homology itself. It is induced by a homotopy-theoretic construction using  the strong  excision property of the Borel-Moore homology theory.

These two transgressions appear in the horizontal parts of the following diagram

 {\small \begin{equation}\label{ferwferfwrefw}\hspace{-2.3cm}
\xymatrix{ K\cX_{G_{can,max}}^{G,\ctr}(X)\ar[rr]^{c^{G}}_{comparison} \ar[dd]_{alg. Chern}^{\rmch^{G,\alg}}&&K\cX_{G_{can,max}}^{G}(X) \ar[rr]^{T^{G,\an}}_{anal.\ transgression}&&\Sigma K^{G,\an}(\partial_{h}X)\ar[dddd]^{\beta}_{Borelific.}  \\   &\mbox{\begin{minipage}[c]{2cm} eq. alg.\\ coarse \\$K$-homol.\end{minipage}}\ar@{..}[ul]&\mbox{\begin{minipage}[c]{2cm} eq. top.\\ coarse \\$K$-homol.\end{minipage}}\ar@{..}[u]&\mbox{\begin{minipage}[c]{1.3cm} analytic \\$K$-homol.\end{minipage}}\ar@{..}[ur]& \\
  \PCH\cX^{G}_{G_{can,max}}(X)\ar[d]_{trace}^{\tau^{G}}& \mbox{\begin{minipage}[c]{2cm}eq. coarse \\ per.  cycl. \\homol.\end{minipage}}\ar@{..}[l]& & &   \\ \ar[d]^{\beta}_{Borelific.} \PH\cX^{G}_{G_{can,max}}(X,\C) & \mbox{\begin{minipage}[c]{2cm}eq. coarse\\ per. homol.\end{minipage}} \ar@{..}[l]& &  &  \\ 
\PH\cX^{hG}_{G_{can,max}}(X ,\C)  \ar[d]^{\pr_{X}} & \mbox{\begin{minipage}[c]{2cm}Borel-eq. \\coarse\\ per. homol.\end{minipage}} \ar@{..}[l] \ar@{..}[dl]&&  \mbox{\begin{minipage}[c]{2cm}Borel-eq. \\analytic\\ K-homol.\end{minipage}} \ar@{..}[r]& \Sigma K^{\an,hG}(\partial_{h} X)\ar[ddd]_{Borel-eq. h.\ Chern}^{\ch^{hG} }    \\   
\PH\cX^{hG}(X,\C)\ar[dd]^{P(\chi_{\BM})^{hG}}_{coarse\ char.}&  &&  &   \\ 
&\mbox{\begin{minipage}[c]{2cm}Borel-eq\\ coarsif. of \\  per. BM-homol.\end{minipage}}\ar@{..}[dl]&&\mbox{\begin{minipage}[c]{2cm} Borel-eq\\ per. \\BM-homol.\end{minipage}}\ar@{..}[dr]&\\ 
  (\prod_{k\in \Z} \Sigma^{2k}H\C_{\BM} \bP)^{hG} (X)\ar[rrrr]_{top.\  transgression}^{(T^{\topp})^{hG}}\ &&&&\Sigma  (\prod_{k\in \Z} \Sigma^{2k}H\C_{\BM})^{hG}(\partial_{h}X)  }\ .
\end{equation}
}of functors and natural transformations evaluated at a $G$-bornological coarse space $X$.   The functors on the left part and the middle are equivariant coarse homology theories.  The functors on the right are Borel-Moore homology  theories composed with the Higson corona functor.  
Its vertical compositions involve Chern-character maps from  versions of coarse or analytic $K$-homology  to the coarsification of a Borel-Moore homology or the Borel-Moore homology itself, respectively.
Thereby   the coarse algebraic Chern character is defined on
an algebraic coarse $K$-homology only, which is connected with the topological $K$-homology theory
by a comparison map $c^{G}$. 
In our examples the data determining a $G$-equivariant coarse or Borel-Moore homology actually determines a family
of corresponding equivariant homology theories for all subgroups of $G$. The non-equivariant homology theory
associated  to the trivial group in turn determines a Borel-equivariant version indicated by a superscript $hG$. In all our examples we have  Borelification maps, i.e., 
  natural transformations denoted by $\beta$  from the original $G$-equivariant homology theory to the associated
  Borel-equivariant version.

The existence of this diagram
 allows us to   discuss the compatibility of  the  Chern characters   with the
 transgressions.    There is no obvious reason that this diagram  commutes.
 It is one goal of 
  the present paper  to
provide commutativity   under certain additional  assumptions and in the sense of
 the extended diagram \eqref{gwerwefwerfrefw} below.
 The diagram \eqref{ferwferfwrefw} is our interpretation of the diagram  \cite[(1.2)]{Engel:2025aa}
 in the framework of equivariant coarse homotopy theory as developed in \cite{buen}, \cite{equicoarse}.

In \cref{gjkopwergerfwerf}   we start with providing a rough description of  the functors and transformations   in \eqref{ferwferfwrefw}. The complete details or references will be given in the subsections of \cref{orkhprtegrgerg}.
It is one of the
goals of this paper is to produce a complete  reference for  constructions of the functors and transformations
 in their natural generality. 

 Having explained the functors and transformations, in \cref{koipgwergwerfwerfrw} we 
  then extend the diagram  to the diagram \eqref{gwerwefwerfrefw} and discuss the steps towards   commutativity.
 Proving commutativity   means showing the existence of fillers.  
 We do not see technical relations between the algebraic and homotopy theoretic Chern characters
 $\rmch^{G,\alg}$ and $\rmch^{hG}$, and also no relations between the analytic and topological transgressions
 $T^{G,\an}$ and $(T^{\topp})^{hG}$ which could provide the desired fillers in general. 
 
We approach the problem as follows. We first consider the case of the trivial group  $G$ and evaluate the diagram at $\cO^{\infty}(*)$, the cone over a point, see \cref{koopehrrtgetrgeg}. In this case we can calculate the compositions and normalize $\rmch^{h}$ such that the  resulting diagram commutes. We then take advantage of the fact that  we have a diagram of natural transformations between spectrum-valued functors which  have homological properties. Using this we get a natural extension of the commutativity to cones over finite CW-complexes. By naturality this implies commutativity for the Borel-equivariant version on finite $G$-CW-complexes for finite groups.
Using the comparison of equivariant coarse homology theories with their Borel equivariant versions we conclude the commutativity of the diagram in the form \eqref{gwerwefwerfreffffefefedededew} for finite groups  and cones over finite $G$-CW-complexes.
In order to extend the commutativity further
we then  introduce
an additional geometric construction leading to the motivic  
transgression and extend the diagram further  to  \eqref{gwerwefwerfrefw1}. 

The motivic transgression is an attempt to associate to every equivariant coarse homology theory $E^{G}$ an associated Borel-Moore homology theory $F^{G}$   and a transgression transformation \eqref{hwrtgretgrertrgrtger}. As a first approximation we propose to take $F^{G}:=E^{G}\cO^{\infty}$ and the motivic transgression $T^{\mot}$ from \eqref{kophrgertgertrgertgergert}. But note that $F^{G}$ is not really a Borel-Moore homology, and $T^{\mot}$ is not a natural transformation of functors defined on  $G\BC$. One could ask for a better construction.

   Note that our argument for commutativity  is completely different from the arguments  in
   \cite[(1.2)]{Engel:2025aa} which are based on explicit calculations of the two
compositions in their diagram for special  cycles. In contrast, our proof tries to max out the homotopical properties of the functors and transformations.

In \cref{ergerwfwerfwrf} we discuss the pairing of the right corners of the diagram \eqref{ferwferfwrefw} with
cohomology classes. We further  derive the consequences of commutativity for the values of the pairings and state an analoge \eqref{hrtepogkeportgertgetrge} of \cite[(1.1)]{Engel:2025aa}.

   {\em Acknowledgement:  The author was supported by the SFB 1085 (Higher Invariants) funded by the Deutsche Forschungsgemeinschaft (DFG).  He  thanks M. Ludewig and A. Engel for motivating discussions on early stages of \cite{Engel:2025aa}. He further thanks Th. Nikolaus for  clarifying remarks on   Borel-Moore (co)homology theories. 
 Finally he thanks  B. Dünzinger  for his patient interest and clarifying remarks on various pieces of this paper. 
 }

%
%
%
%
  
  \subsection{The functors and transformations}\label{gjkopwergerfwerf}

 We start with the description of the functors and transformations in \eqref{ferwferfwrefw}.
  
 Let $G$ be a group. 
 By  $G\BC$ we denote the symmetric monoidal  category of $G$-bornological coarse spaces \cite{equicoarse} (\cref{kopehrtgegrtgrtge}). 
 The symbol $X$ in \eqref{ferwferfwrefw} denotes a $G$-bornological coarse space.

The functors in the left  and middle column in  \eqref{ferwferfwrefw} are instances of equivariant coarse homology theories in the sense introduced in \cite{buen},  \cite{equicoarse}.
These are functors $$E:G\BC\to \cC$$ to a cocomplete stable $\infty$-categories satisfying certain axioms 
named in  \cref{okprhertgrtge9}. If $Y$ is a $G$-bornological coarse space, then we can consider the twist $$E_{Y}:=E(-\otimes Y):G\BC\to \cC$$ of $E$. Twisting preserves coarse homology theories. In
 \eqref{ferwferfwrefw} we twist by the object $G_{can,max}$ given  the group $G$ with the canonical coarse structure and the maximal bornology. 

To any $C^{*}$-category with strict $G$-action $\bC$   which is effectively additive and admits countable $AV$-sums and to  any object $A$ in the stable $\infty$-category $\EE$ representing $E$-theory of $C^{*}$-algebras we can associate the equivariant  topological coarse $K$-homology theory (\cref{gu90erwfwef})
 $$K\cX^{G}_{\bC,A}:=\bE(\C,  \bV^{G}_{\bC}\otimes A):G\BC\to \Mod(KU)\ .$$
 This is a minor generalization of the construction of $K\cX^{G}_{\bC}$
from \cite{coarsek} by adding the variable $A$ in $\EE$. 
 As the formula indicates it is constructed  by applying
the topological $K$-theory functor   for $C^{*}$-categories with coefficients in $A$ to the functor which sends a $G$-bornological coarse space  $X$ to the Roe category $\bV_{\bC}^{G}(X)$   from \eqref{gweoihjgoiwerfwerfrwef} of locally finite equivariant $X$-controlled objects in $\bC$ \cite{coarsek}. The equivariant  topological coarse $K$-homology theory
 is a $\Mod(KU)$-valued and equivariant generalization   of the functor which associates to a proper  metric space the $K$-theory groups  of the associated Roe algebra first introduced in
\cite{MR1147350}.
The functor $K\cX^{G}_{G_{can,max}}$  in \eqref{ferwferfwrefw} is obtained from $K\cX^{G}_{\bC,A}$ by   twisting  with $G_{can,max}$ and specializing  to   $\bC=\Hilb_{c}(\C)$ and $A=\ee(\C)$.

%
%
%

We let
 $G\LCH^{+}$ denote the category of locally compact Hausdorff spaces with $G$-action and partially defined $G$-equivariant proper maps (\cref{kopwhwthrh}). It contains the subcategory of compact Hausdorff spaces $G\CH$ with $G$-action and everywhere defined maps. Associating to each bornological coarse space its Higson corona gives rise to 
 the Higson corona functor (\cref{kiogwergewrfwrefw})
$$\partial_{h} :G\BC \to G\CH\ .$$
The compact Hausdorff $G$-space $\partial_{h}X$ in   \eqref{ferwferfwrefw}  is thus the Higson corona of the $G$-bornological coarse space  $X$.  
 
 Associated to the data $\bC$ and $A$ as above we introduce
 the equivariant analytic
 $K$-homology theory (\cref{kophertgrgrgerg})
 $$K^{G,\an}
_{\bC,A}:=\EE^{G}(C_{0}(-),\bC^{(G)}_{\std}\otimes A):G\LCH^{+}\to \Mod(KU)$$
with coefficients in $(\bC,A)$. As the formula shows,
it is constructed  as the equivariant $E$-theory cohomology (with coefficients derived from $\bC$ and $A$) of the  $C^{*}$-algebra of continuous functions on the space that vanish at infinity. 
Here we use the  $\infty$-categorical version of  equivariant $E$-theory which was  constructed in \cite{budu}. The latter is   a homotopical version of the group-valued equivariant $E$-theory functor of \cite{Guentner_2000}. 
The construction of the equivariant analytic $K$-homology
$K^{G,\an}_{\bC,A}$ with coefficients in $(\bC,A)$ is a minor generalization of
the constructions  from \cite{KKG}, \cite{Bunke:2024aa}, \cite{bel-paschke}.
In contrast to  the $KK$-theory version from  \cite{KKG}, a nice consequence of the  present $E$-theory based construction is 
that $K^{G,\an}_{\bC,A}$ is an equivariant Borel-Moore homology theory (\cref{ogpwerferfewrfwef}).
Here we use that in contrast to $KK$ the  $E$-theory functor is unconditionally exact and preserves all filtered colimits.
%
%
  The functor $K^{G,\an}$ in  \eqref{ferwferfwrefw}  is obtained by specializing $K^{G,\an}
_{\bC,A}$ to   $\bC=\Hilb_{c}(\C)$ and $A=\ee(\C)$. 
%

%

 Using an analytic construction on the level of Roe categories inspired by \cite{quro} we define the analytic transgression transformation (\cref{rguweruigowerferfrfwr})
 $$T^{G,\an}:K\cX^{G}_{\bC,A,G_{can,max}}\to K^{G,\an}
_{\bC,A}\circ \partial_{h}:G\BC\to \Mod(KU)\ .$$
 The idea for its construction   was already envisaged in the construction of the coarse corona pairing in \cite{Bunke:2024aa}. Its technical details  are closely related with the construction of the Paschke morphism in  \cite{bel-paschke}.

%
%
%
%

At the moment we do not know a Chern character transformation from coarse topological $K$-homology
to a coarse periodic cyclic homology. In fact, the  existence of such a transformation would have strong 
implications explained in \cref{kophwegergwerwf}. But we will construct a 
coarse algebraic Chern character transformation whose domain is  the equivariant
coarse algebraic $K$-theory functor with coefficients in $\bC$  (\cref{kopgwegwerfrfwref})
$$K\cX^{G,\ctr}_{\bC}:G\BC\to \Sp\ .$$
This functor is  obtained by composing  the functor 
\begin{equation}\label{regerferwfrwefwfer}K^{\Cat_{\Z}}H(\cL^{1}\otimes^{\alg}_{\C} -):\Add_{\C}\to \Sp
\end{equation} with the uncompleted
Roe category functor $\bV^{G,\ctr}_{\bC}$ from   \eqref{gweoihjgoiwerfwerfrwef}, where $K^{\Cat_{\Z}}H$ is a version of Weibel's homotopy $K$-theory \cite{zbMATH04095731}  for  $\Z$-linear  categories
and $\cL^{1}$ is the algebra of trace class operators on a separable Hilbert space.
The functor  $K\cX_{G_{can,max}}^{G,\ctr}$  in \eqref{ferwferfwrefw} is the specialization of $K\cX^{G,\ctr}_{\bC}$ to  $\bC=\Hilb_{c}(\C)$, twisted by $G_{can,max}$. If $X$ is presented by a proper metric space $X$, then the tensor product $\cL^{1}\otimes_{\C} \bV^{G,\ctr}(X)$ is our (equivariant) version of the Roe algebra, often denoted by $\cB_{X}$, of controlled and locally trace class operators considered, e.g., in  
\cite{MR1147350}, \cite{Yu_1995}, \cite{Engel:2025aa}.

The equivariant
coarse algebraic $K$-theory functor
 comes with a 
  natural comparison transformation (\cref{kopgwegwerfrfwref1})
\begin{equation}\label{vweiohovwevsdf}c^{G}:K\cX^{G,\ctr}_{\bC}\to K\cX^{G}_{\bC}:G\BC\to \Sp
\end{equation}
to the topological equivariant coarse $K$-homology.
Here we  omit the index $A=\beins_{\EE}$ at
the target $K\cX^{G}_{\bC}$. 
The exactness and continuity properties of the homotopy $K$-theory functor $KH$ are important 
for verifying the coarse homology theory axioms for $K\cX^{G,\ctr}_{\bC}$.
On the other hand, the tensor product with the trace class operators  in \eqref{regerferwfrwefwfer} is chosen as a compromise. On the one hand hand we want  an  interesting trace map to periodic cyclic homology.  On the other hand
we want that the equivariant
coarse algebraic $K$-theory is an approximation of the topological equivariant coarse 
$K$-homology in the sense that the comparison map \eqref{vweiohovwevsdf} is an equivalence on sufficiently finite $G$-bornological coarse spaces. For our choices this follows from 
 results of  \cite{Corti_as_2008}, see \cref{gkopwreferfwerfrwef} for a precise statement.

  The target of the coarse algebraic Chern character  is the  coarse periodic cyclic homology theory (\cref{kopgewerfrefewf})
$$\PCH\cX^{G}_{\bC}:G\BC\to \Mod(H\C)\ .$$ 
 In order to construct this functor we start with  composing   the algebraic periodic cyclic homology functor $\PCH:\Add_{\C}\to \Mod(H\C)$ 
with the uncompleted
Roe category functor $\bV^{G,\ctr}_{\bC}$ from   \eqref{gweoihjgoiwerfwerfrwef}.
The  algebraic periodic cyclic homology functor is exact, but does not preserve filtered colimits. For this reason
we must restore  $u$-continuity  in a second step. Note that this second step was not needed in the cases of equivariant coarse cyclic or Hochschild homology constructed in a similar manner in \cite{Caputi_2020}.
The functor $\PCH\cX_{G_{can,max}}^{G}$ in \eqref{ferwferfwrefw}  is the specialization of $\PCH\cX^{G}_{\bC}$ to  
$\bC=\Hilb_{c}(\C)$, twisted by $G_{can,max}$.

The coarse algebraic Chern character transformation (\cref{iopgwregfrwefwerfrefw})
$$\rmch^{G,\alg}:K\cX^{G,\ctr}_{\bC}\to \PCH\cX^{G}_{\bC}:G\BC\to \Sp$$
is  induced by the algebraic Chern character \begin{equation}\label{ojvbiopgwerfwefrv}\rmch^{GJ}:K^{\Cat_{\Z}}H\to \PCH:\Add_{\C}\to \Mod(H\C)
\end{equation} 
and a trace map employing the trace of $\cL^{1}$. The superscript $GJ$ indicates that  the origin of $\rmch^{GJ}$
  is the Goodwillie-Jones trace
 from the algebraic $K$-theory of rings
to the negative cyclic homology or rings. The Goodwillie-Jones trace induces $\rmch^{GJ}$
by forcing homotopy invariance 
 and the  extension of these  functors  and transformations   from rings to $\C$-linear additive categories. 
The   coarse algebraic Chern character transformation is a variant of the construction 
of a Chern character from coarse algebraic  $K$-theory with coefficients in additive categories to coarse cyclic or Hochschild homology given in \cite{Caputi_2020}.

For a commutative ring $k$ the
equivariant ordinary coarse homology $$\rmH\cX^{G}(-,A):G\BC\to \Mod(Hk)$$   with  coefficient in a $k$-module $A$ (see \eqref{nfgbgfbddgfber})  has been  introduced in \cite[Def. 7.2]{equicoarse}.
It is the obvious equivariant and dual version of the coarse cohomology invented by Roe \cite{MR1147350}.
It is represented by an explicit chain complex  valued functor. Versions of this chain complex in special cases were also considered in \cite{Yu_1995}, \cite{wulff_axioms}.

By periodizing we mean the application of the operation $\prod_{k\in \Z}\Sigma^{2k}$.
Applying this to $\rmH\cX^{G}(-,A)$ and restoring $u$-continuity we obtain
the   periodic equivariant coarse homology
functor  with  coefficients  in $A$ (\cref{kopgherthertgertgertget}) $$\PH \cX^{G}(-,A):G\BC\to \Mod(Hk)\ .$$
The functor $ \PH\cX_{G_{can,max}}^{G}(-,\C)$ in \eqref{ferwferfwrefw}
   is the specialization of $\PH \cX^{G}(-,A)$ to  $k=A=\C$, twisted by $G_{can,max}$.

We now assume in addition that the $C^{*}$-category $\bC$ has a $G$-invariant trace. It induces   
 a natural trace transformation \eqref{wregwerfvdfs}
\begin{equation}\label{sdbfdvwervdfvsfdvd}\tau:\PCH\cX^{G}_{\bC}\to  \PH \cX^{G}(-,\C):G\BC\to \Mod(H\C)\ .
\end{equation}
The map $$\beta: \PH\cX_{G_{can,max}}^{G}(-,\C)\to  \PH\cX_{G_{can,max}} \cX^{hG}(-,\C)$$
 is the Borelification map 
 from 
\eqref{bsldkjvopsdfvsfdvsfdv} and the map $$\pr_{X}:\PH\cX^{hG}_{G_{can,max}} (-,\C)\to \PH \cX^{hG}(-,\C)$$ is  induced by the  projection $(-)\otimes G_{can,max} \to (-)$.  

If $E:G\LCH^{+}\to \cC$ is a functor to  a   cocomplete stable $\infty$-category $\cC$, then  
 $$E \bP:G\BC\to \cC$$  denotes the result of its coarsification  introduced in \cref{lhertghetrgrtgertgt}.
 The construction of coarsification has been introduced in \cite[Sec. 5.5 ]{roe_lectures_coarse_geometry}, and in the background we use its homotopical version \cite{ass}.
If $E$ is a weak equivariant  Borel-Moore homology theory, then its coarsification $E\bP$
 is by \cref{lpkherferferfergertgertgetrg} an equivariant coarse homology theory.

In the non-equivariant case, for any ring spectrum $R$ we introduce a functor \cref{okgpgkerpwofwerferwfwrefwf}
 which send a $R$-module spectrum $M$ to  a Borel-Moore homology theory
 $$M_{\BM}:\LCH^{+}\to \Mod(R)$$ with $M_{\BM}(*)\simeq M$. 
 The formation of Borel-equivariant  homology theories allows to construct equivariant Borel-Moore homology theories from non-equivariant ones. In particular
 we can form the  Borel-equivariant Borel-Moore homology $$(M_{\BM})^{hG}:G\LCH^{+}\to \Mod(R)\ ,$$ 
 see \cref{okheprthertgertgetg}.
 The functor $(\prod_{k\in \Z} \Sigma^{2k}H\C_{\BM})^{hG}\bP$ in the lower left corner of \eqref{ferwferfwrefw}
is an instance of the composition of the last three constructions for $R=H\C$.

Staying in the non-equivariant case,
for any $k$-module $A$ we have the 
 Borel-Moore character  \eqref{iogopwrefwerfewrferwf}  $$\chi_{\BM}:\rmH\cX(-,A)\to HA_{\BM}\bP:\BC\to \Mod(Hk)$$
 relating the ordinary coarse homology with coefficients in $A$ and the coarsification of the Borel-More homology
 $HA_{\BM}$. Specializing to $k=A=\C$,
its periodization (see \eqref{gwergwergwrfregw1}) and Borelification is the transformation 
\eqref{fqwedwqedwfrefrdq}
  $$P(\chi_{\BM})^{hG}:\PH\cX^{hG}(-,\C)\to (\prod_{k\in \Z} \Sigma^{2k}H\C_{\BM}\bP)^{hG} :G\BC\to \Mod(H\C)$$
  appearing in \eqref{ferwferfwrefw}.

 The trace transformation \eqref{sdbfdvwervdfvsfdvd} and the composition $$P(\chi_{\BM})\circ \tau:\PCH\cX_{\bC}\to  (\prod_{k\in \Z} \Sigma^{2k}H\C_{\BM})\bP:\BC\to \Mod(H\C)$$
 from coarse periodic cyclic homology to a version of ordinary coarse homology theory 
 are the general homotopy theoretic analogues of characters considered, e.g., in \cite[Sec. 2]{Yu_1995}, \cite[Sec. 4.3]{caputi-diss}, \cite[(4.1.3)]{Ludewig:2025aa}, \cite[(2.2)]{Engel:2025aa}.

 Using the strong excision axiom, for
 any weak equivariant Borel-Moore theory $E^{G}$, in \cref{ojohpertgtrgretgtgg} we describe the topological transgression, a natural transformation
$$T^{G,\topp}:E^{G} \bP\to  \Sigma E^{G}\circ \partial_{h}:G\BC\to  \cC\ .$$
With different technical details the fact that  strong excision allows to define such a transgression 
 has first been observed in \cite{wulff_axioms}. 
 In the diagram \eqref{ferwferfwrefw} we use the Borel-equivariant version $(T^{\topp})^{hG}$ from  \eqref{iogjoiergjowerfwerfw}
 of the topological transgression.

In the non-equivariant case and for $\bC=\Hilb_{c}(\C)$  in 
\cref{hertgertgergtrgrtg} we show (the at least to the author surprising fact) that the analytic $K$-homology with coefficients in $A$ has 
a completely homotopy theoretic description:
\begin{prop} \label{kopertgetrgetrr}We have an equivalence of spectrum-valued functors
\begin{equation}\label{gerfwereg354eewrgw}K^{\an}_{A}\simeq  K(A)_{\BM}:\LCH^{+}\to \Sp\ .
\end{equation}  \end{prop}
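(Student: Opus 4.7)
The strategy is to exhibit both sides as spectrum-valued Borel-Moore homology theories on $\LCH^{+}$ that take the value $K(A)$ on a point, and then to promote this pointwise identification to a natural equivalence by a rigidity argument.

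First I would check that both functors are genuinely Borel-Moore homology theories. For $K(A)_{\BM}$ this holds by construction (\cref{okgpgkerpwofwerferwfwrefwf}). For $K^{\an}_{A}$ this is the non-equivariant specialization of the cited fact (\cref{ogpwerferfewrfwef}) that $K^{G,\an}_{\bC,A}$ is an equivariant Borel-Moore homology theory, applied with trivial $G$ and $\bC=\Hilb_{c}(\C)$. The essential input is that, in contrast to $KK$, the $E$-theory functor on $C^{*}$-algebras is unconditionally exact and preserves filtered colimits, so the axioms of a Borel-Moore theory transfer through the formula $\EE(C_{0}(-),\Hilb_{c}(\C)_{\std}\otimes A)$.

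Second I would evaluate both sides on a point. By definition $K(A)_{\BM}(*)\simeq K(A)$. For the left-hand side
\[
K^{\an}_{A}(*)=\EE(C_{0}(*),\Hilb_{c}(\C)_{\std}\otimes A)=\EE(\C,\Hilb_{c}(\C)_{\std}\otimes A)\ .
\]
The standard stabilization $\C\to \Hilb_{c}(\C)_{\std}$ is an $E$-equivalence, so this reduces to $\EE(\C,\C\otimes A)\simeq \EE(\C,A)$; by the identification of topological $K$-theory of $C^{*}$-algebras with morphisms out of $\C$ in $\EE$ this agrees with $K(A)$.

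Third, to pass from a pointwise identification to a natural one, I would exploit a universal property of the construction $M\mapsto M_{\BM}$ from \cref{okgpgkerpwofwerferwfwrefwf}. The expectation is that $M_{\BM}$ is the initial spectrum-valued Borel-Moore homology theory on $\LCH^{+}$ with value $M$ on the point, realized as a left Kan extension of the inclusion $\{*\}\hookrightarrow \LCH^{+}$ along the evaluation-at-point functor on the category of Borel-Moore theories. Granting this, the pointwise equivalence of step two produces a canonical natural transformation $K(A)_{\BM}\to K^{\an}_{A}$. To verify it is a pointwise equivalence on all of $\LCH^{+}$ one proceeds by the standard cellular induction: both sides satisfy strong excision, homotopy invariance, and continuity under filtered limits, so an equivalence at $*$ spreads first to finite CW-complexes via Mayer-Vietoris, then to compact Hausdorff spaces by continuity, and finally to all of $\LCH^{+}$ by excision with respect to one-point compactifications.

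\textbf{Main obstacle.} The delicate step is the rigidity/universality in step three: one must make precise in which sense $M_{\BM}$ is characterized by its value on a point within the $\infty$-category of spectrum-valued Borel-Moore theories, and then check that $K^{\an}_{A}$ enjoys enough exactness (strong excision, continuity, homotopy invariance) to be reached by the resulting comparison map and to support the cellular induction. Everything else—the identification on a point and the Borel-Moore nature of each side—reduces to invoking results already established in the paper.
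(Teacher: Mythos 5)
There is a genuine gap at your step three, and it is exactly where the paper's proof does its real work. You posit that $M_{\BM}$ is the \emph{initial} spectrum-valued Borel-Moore homology theory with value $M$ at the point, "realized as a left Kan extension of the inclusion $\{*\}\hookrightarrow \LCH^{+}$". Neither claim is available: the paper defines $M_{\BM}:=\Sp(S^{\BM},M)$ by dualizing the universal Borel-Moore \emph{cohomology} theory (\cref{okgpgkerpwofwerferwfwrefwf}), and the classification by evaluation at a point (\cref{gojertpgwerfwerfwerf}, due to Clausen) is stated only for cohomology theories valued in a dualizable presentable stable category. A homology-side rigidity statement of the kind you need is explicitly \emph{not} proved in this paper; \cref{kohpertgrtgergerg} records it as a preview of ongoing separate work and notes that \cref{hertgertgergtrgrtg} \emph{would} follow from it. Moreover the left Kan extension description is wrong on its face: Kan extending from the point produces an assembly-type, homotopy-colimit construction which is not a Borel-Moore theory and does not compute $M_{\BM}$ beyond finite complexes (comparing such an assembly construction with the locally finite/Borel-Moore theory is precisely the nontrivial content of the paper's assembly discussion; cf.\ \cref{ijobpwrgrhre} for how badly this can fail in related settings). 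So your comparison map $K(A)_{\BM}\to K^{\an}_{A}$ is never actually constructed, and without it the subsequent cellular/prodescent induction has nothing to run on.

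The paper circumvents this by staying on the cohomological side and then dualizing through a UCT argument, neither step of which appears in your proposal. Concretely: Clausen's theorem applied to the Borel-Moore cohomology theory $\ee C_{0}$ gives $\ee C_{0}\simeq \beins_{\EE}^{\BM}$, hence $K(\ee C_{0})\simeq KU^{\BM}\simeq S^{\BM}\wedge KU$; the key analytic input is then that all commutative $C^{*}$-algebras lie in the UCT class of $\EE$ (via separability, nuclearity, the classical UCT in $\KK$, and essential surjectivity of $\KK\to\EE$), which yields
\[
K^{\an}_{A}\simeq \EE(\ee C_{0},A)\simeq \Mod(KU)(KU^{\BM},K(A))\simeq \Sp(S^{\BM},K(A))\simeq K(A)_{\BM}\ .
\]
Your steps one and two (both sides are Borel-Moore homology theories; both have value $K(A)$ at the point) are fine but do not suffice: without either the homological rigidity theorem or the UCT passage, the pointwise identification cannot be promoted to the asserted natural equivalence.
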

   The proof of this result essentially   uses the recent classification of Borel-Moore cohomology theories  by D. Clausen
 (see \cite[Thm. 3.6.13]{NKP}, \cref{gojertpgwerfwerfwerf}).
 The \cref{kopertgetrgetrr} allows to construct
the homotopical Chern character $\rmch^{h}$
 in  \eqref{ferwferfwrefw} as   the  natural transformation (\cref{9gwerfreferwfwever})
 \begin{equation}\label{bdfspokpvdfvsdfvsdvsfdv}\rmch^{h}:K^{\an}\stackrel{\eqref{gerfwereg354eewrgw}}{ \simeq}   K(\C)_{\BM}\to  \prod_{k\in \Z} \Sigma^{2k}H\C_{\BM}:\LCH^{+}\to \Sp
\end{equation} obtained by applying $(-)_{\BM}$ to
 the map of spectra
\begin{equation}\label{vdsfpovkspdfvfdvsf}K(\C)\simeq KU\to KU\wedge H\C\stackrel{!}{\simeq} \prod_{k\in \Z} \Sigma^{2k}H\C\ .
\end{equation}  It depends on the choice of the identification of $H\C$-modules marked by $!$.

\begin{rem}
The homotopical Chern character  \eqref{bdfspokpvdfvsdfvsdvsfdv} as a natural transformation  from (non-equivariant) analytic $K$-homology to periodic complex Borel-Moore homology theory defined on all of $\LCH^{+}$ is new, and its unconditional existence was surprinsing to the author.  
Classical constructions, e.g. in  \cite{Connes_1985}, \cite{Connes_1990}, \cite{Connes_1995},  associated a Chern character with values in variants of cyclic homology   to special cycles
in analytic $K$-homology  of $X$ which can be represented by suitably summable Fredholm modules.
The existence of good cycles usually requires some sort of smoothness of $X$.
  For smooth manifolds the cyclic homology   can further be mapped to de Rham
homology and finally to periodic complex Borel-Moore homology.  Our homotopical Chern character therefore allows to  formulate index theorems
stating that the homotopical Chern character of some class is equal to an analytically defined Chern character of the same class in cases where the latter is defined. In the present paper will not pursue this direction.
But this information  would be important if one wanted to deduce the results of \cite{Engel:2025aa} from the present paper in a technical sense.
\hB \end{rem}

If $E:BG\to \Fun(\BC,\cC)$ is a non-equivariant coarse homology theory
with $G$-action in the sense of \cref{khopertgertgetrg}, then  according to \cref{oiog0wergerfwerferwfr} we can form   
 the Borel-equivariant coarse homology theory
$$E^{hG}:=(\lim_{BG}E)^{u}:G\BC\to \cC\ ,$$
where for $X$ in $G\BC$ the group $G$ acts on $E(X)$ diagonally  via its action on $E$ and the action on $X$ by functoriality, and $(-)^{u}$ indicates the operation of forcing $u$-continuity.  A natural transformation $\phi:E\to F$ of functors as above gives rise to a transformation $\phi^{hG}:E^{hG}\to F^{hG}$. In all our examples the data determining the equivariant coarse homology theory $E^{G}$ also determines
a  non-equivariant coarse homology theory
with $G$-action $E$, and these two are related by a canonical Borelification map $\beta:E^{G}\to E^{hG}$.
The details will be given in \cref{okhphherthertgeg9}.

This finishes the rough description of the functors and transformations featuring \eqref{ferwferfwrefw}.

 \subsection{Commutativity}\label{koipgwergwerfwerfrw}

 In this section we explain how we  construct step by step commutative diagrams related with
 \eqref{ferwferfwrefw}.
 The main results are \cref{kohpethtrgertgergertgetgeh}, \cref{iowerjgowegjweriogrwegwe} and \cref{opherthrtgertg} stated below. The proofs given in the present section will use the definitions of the functors and transformations  from \cref{orkhprtegrgerg} and \cref{jogopwefewrfewfd} and depend on various technical results shown these sections.
 In order to get an overview in a first reading  one  could rely on the descriptions given in \cref{gjkopwergerfwerf} and
 accept various results cited from the later sections as granted.

 We first simplify the right part of  \eqref{ferwferfwrefw} by projecting from the Higson corona to a simpler space.
   We consider the functor 
  $$\iota^{\topp}_{u}:G\UBC\to G\LCH^{+}$$ (see   \eqref{gewrfwerfrwefwe}) defined, using Gelfand duality, such that
    $C_{0}(\iota^{\topp}_{u}(Y))\simeq C_{u,0}(Y)$, where the right-hand side is the $G$-$C^{*}$-algebra of bounded uniformly continuous functions on $Y$ which become arbitrary small outside of sufficiently large bounded subsets. 
The Higson corona of a $G$-bornological  is, if not empty,  a complicated compact Hausdorff $G$-space. In order to simplify the lower right corner in  \eqref{ferwferfwrefw} we consider 
  $Y$ in $G\UBC$ and  a map $\pi:\partial_{h}X\to \iota^{\topp}_{u}(Y)$ in $G\LCH^{+}$.
    Then we can extend the diagram \eqref{ferwferfwrefw} as follows:
 {\scriptsize \begin{equation}\label{gwerwefwerfrefw}\hspace{-1cm}
\xymatrix{ K\cX_{G_{can,max}}^{G,\ctr}(X)\ar[rr]^{c^{G}} \ar[dd]^{\rmch^{G,\alg}}&&K\cX_{G_{can,max}}^{G}(X) \ar[rr]^{T^{G,\an}}&&\Sigma K^{G,\an}(\partial_{h}X)\ar[dddd]^{\beta}\ar[dl]^{\pi}  \\  
 &&&\Sigma K^{G,\an}(\iota^{\topp}_{u}(Y))\ar[ddd]^{\beta}& \\
  \PCH\cX^{G}_{G_{can,max}}(X)\ar[d]^{\tau^{G}}&& & &   \\ \ar[d]^{\beta}\PH\cX^{G}_{G_{can,max}}(X,\C) && &  &  \\ 
\PH\cX^{hG}_{G_{can,max}}(X,\C)\ar[d]^{\pr_{X}}  &&& \Sigma K^{\an,hG}( \iota^{\topp}_{u}(Y))\ar[dd]^{\rmch^{hG}}&\ar[l]^{\pi} \Sigma K^{\an,hG}(\partial X)\ar[ddd]^{\rmch^{hG} }    \\ 
 \PH\cX^{hG}(X ,\C)\ar[dd]^{P(\chi_{\BM})^{hG}}
&  &&  &   \\ 
&&&\Sigma  (\prod_{k\in \Z} \Sigma^{2k}H\C_{\BM})^{hG}(\iota^{\topp}_{u}(Y)) &\\ 
  (\prod_{k\in \Z} \Sigma^{2k}H\C_{\BM} \bP)^{hG} (X)\ar[rrrr]^{(T^{\topp})^{hG}} &&&&\Sigma  (\prod_{k\in \Z} \Sigma^{2k}H\C_{\BM})^{hG}(\partial_{h}X)  \ar[ul]^{\pi}}\ ,
\end{equation}}where the two right squares commute by naturality of $\beta$ and $\rmch^{hG}$. It is even closer to 
  the diagram \cite[(1.2)]{Engel:2025aa}.

%
%
%
We let $\phi $ be the  clockwise composition and $\psi$ be the  counterclockwise composition
\begin{equation}\label{}\phi ,\psi  :K\cX_{G_{can,max}}^{G,\ctr}(X)\to \Sigma    (\prod_{k\in \Z} \Sigma^{2k}H\C_{\BM})^{hG}(\iota^{\topp}_{u}(Y))\ .
\end{equation} 
Note that $\psi$ and $\phi$ depend on $X$ and the map $\partial_{h}X\to \iota^{\topp}_{u}(Y)$, but this fact is   not reflected in the notation.
Our goal is to find  
 an equivalence  $$\phi\simeq \psi\ ,$$
 i.e., a filler of the big cell.


 We use the cone-at-$\infty$ functor described in \cref{koopehrrtgetrgeg}.
 We have a natural transformation
\begin{equation}\label{ewrferfrfvsdfvfd} \pi:\partial_{h}\cO^{\infty}\to \iota^{\topp}_{u}:G\UBC\to G\LCH^{+}\end{equation}
which will be described in detail after \eqref{gwreggjweiog} in  \cref{htekoperhrtgrt}.
So inserting $\cO^{\infty}(Z)$ in place of $X$ and $  Z  $ in place of $Y$ the whole diagram \eqref{gwerwefwerfrefw} becomes the  diagram \eqref{gwerwtttttefwerfrefw} of natural transformations between functors defined on $G\UBC$.

 {\scriptsize \begin{equation}\label{gwerwtttttefwerfrefw}\hspace{-2cm}
\xymatrix{ K\cX_{G_{can,max}}^{G,\ctr}(\cO^{\infty}(Z))\ar[rr]^{c^{G}} \ar[dd]^{\rmch^{G,\alg}}&&K\cX_{G_{can,max}}^{G}(\cO^{\infty}(Z)) \ar[rr]^{T^{G,\an}}&&\Sigma K^{G,\an}(\partial_{h}\cO^{\infty}(Z))\ar[dddd]^{\beta}\ar[dl]^{\pi}  \\  
 &&&\Sigma K^{G,\an}(\iota^{\topp}_{u}(Z))\ar[ddd]^{\beta}& \\
  \PCH\cX^{G}_{G_{can,max}}(\cO^{\infty}(Z))\ar[d]^{\tau^{G}}&& & &   \\ \ar[d]^{\beta}\PH\cX^{G}_{G_{can,max}}(\cO^{\infty}(Z),\C) && &  &  \\ 
\PH\cX^{hG}_{G_{can,max}}(\cO^{\infty}(Z),\C)\ar[d]^{\pr_{\cO^{\infty}(Z)}}  &&& \Sigma K^{\an,hG}( \iota^{\topp}_{u}(Z))\ar[dd]^{\rmch^{hG}}&\ar[l]^{\pi} \Sigma K^{\an,hG}(\partial \cO^{\infty}(Z))\ar[ddd]^{\rmch^{hG} }    \\ 
 \PH\cX^{hG}(\cO^{\infty}(Z) ,\C)\ar[dd]^{P(\chi_{\BM})^{hG}}
&  &&  &   \\ 
&&&\Sigma  (\prod_{k\in \Z} \Sigma^{2k}H\C_{\BM})^{hG}(\iota^{\topp}_{u}(Z)) &\\ 
  (\prod_{k\in \Z} \Sigma^{2k}H\C_{\BM} \bP)^{hG} (\cO^{\infty}(Z))\ar[rrrr]^{(T^{\topp})^{hG}} &&&&\Sigma  (\prod_{k\in \Z} \Sigma^{2k}H\C_{\BM})^{hG}(\partial_{h}\cO^{\infty}(Z))  \ar[ul]^{\pi}}\ ,
\end{equation}}
We now proceed with the following steps.
 \begin{enumerate}
 \item We consider the case of the trivial group. Then the diagram  \eqref{gwerwtttttefwerfrefw}  simplifies to \eqref{gwerwefwerfrefffw}
 \begin{enumerate}
 \item We evaluate at $Z=*$.
 In this case we can calculate the compositions and normalize the identification \eqref{vdsfpovkspdfvfdvsf} such that \eqref{gwerwefwerfrefffw} commutes. The detailed argument is given in   \cref{kophertgertgterg}.
 \item \label{okprregertgergg}We then take advantage of the facts that $K\cX^{\ctr}(\cO^{\infty}(-))$ and $  (\prod_{k\in \Z} \Sigma^{2k}H\C_{\BM} \bP)  (\iota^{\topp}_{u}(-))$ are homotopy invariant and excisive functors on $\UBC$ (see \cref{hgwiueghergwergwe} for the first), and that  \eqref{gwerwefwerfrefffw} is a diagram of natural transformations of spectrum valued functors. We   conclude  by
  \cref{koprthrtwerferfwerfwhergrtge} that
   \eqref{gwerwefwerfrefffw} naturally commutes for $Z$ a finite CW-complex, see \cref{kohpethtrgertgergertgetgeh}.  \end{enumerate} 
\item We can now generalize to the equivariant case.
\begin{enumerate}
\item \label{oigperwerferfwer} If $G$ is  finite and if  $Z$ in $G\UBC$ is homotopy invariant to a finite $G$-CW-complex, then we obtain
  the commutativity of the  Borel-equivariant version   \eqref{gwerwefwerfreffffefefedededew}  of  \eqref{gwerwefwerfrefffw} as follows. First note  that  the Borel-equivariant coarse homology theories in this diagram are naturally the versions $E^{\{e\},hG}$, in contrast to $E^{hG}$ derived from    $E$ associated to $E^{G}$ as in \cref{khopertgertgrtgertge}. But in view of  \cref{biojrgogbfgbdb} we can replace $E^{\{e\},hG}$ by $E^{hG}$.
 By homotopy invariance of the functors in \eqref{gwerwefwerfrefffw} we can assume that $Z$ is a finite $G$-$CW$-complex. Then it is a finite CW-complex with a cellular $G$-action. Using the natural commutativity
obtained in \cref{okprregertgergg} the diagram \eqref{gwerwefwerfrefffw} becomes a   commutative diagram  in $\Fun(BG,\Sp)$. We then apply $\lim_{BG}$ and restore $u$-continuity (see \cref{kophprthgertrtgegrtg})
where necessary in order to get the commutative diagram \eqref{gwerwefwerfreffffefefedededew}.

 \item  \label{okhperrtgetge}We now use the comparison of the equivariant and Borel equivariant coarse homology theories in order to conclude that the outer part diagram of the diagram \eqref{gwerwefwefffrfrefffw} commutes provided $G$ is finite and $Z$  in $G\UBC$ is homotopy equivalent to a finite $G$-CW-complex.
In order to see this  we use that all squares involving $\beta$ commute. The big inner cell is precisely  \eqref{gwerwefwerfreffffefefedededew}
 which commutes as seen in \ref{oigperwerferfwer}.
 The squares     \textcircled{1}, \textcircled{2},  \textcircled{3}   can be refined to  compositions of two squares, one involving 
 $\pr$ and the other involving $\beta$. The first type of square commutes by naturality of the respective transformation.
 For the commutativity of the cells involving the Borelification $\beta$ we refer to 
 \begin{itemize} \item \cref{khopertgertgretgtrdng} and \eqref{gwergerffssgrw4} for \textcircled{1},  
 \item  \cref{oiuehrgweroferferwfweferfwrdb} and \cref{okhopekrtgertgethehrtgertg} 
 for \textcircled{2}, \item and \cref{jihgiorhertgertgt55} and \cref{lkprthergretgtr} 
 for \textcircled{3}. 
 \end{itemize}  In each case we must combine a square of the kind \eqref{jiogjoiwegrwerfwrefrwefwfrfreew4}
 with a square of the type \eqref{ijiojgoejrofjwerfwerf234}.
The commutativity of   \textcircled{4} is shown in
 \cref{ijhgiowwerferfrewfrwefwf}.  
 
   \item We finally introduce an additional geometric data leading to the motivic transgression.
In this case we get a diagram \eqref{gwerwefwerfrefw1}. We assume that $G$ is finite and that $Y$ in $G\UBC$ is homotopy invariant to a finite $G$-CW-complex.
The maps
$p^{G,\an}$ and $p^{G,\topp}$ are the analytical and topological Paschke maps \eqref{verfwerffvsfdv} and \eqref{hkeorptkgpertgertg}.
The Paschke maps are versions of $T^{G,\an}$ or $T^{G,\topp}$ for cones, respectively. So it is not surprising that 
by  \cref{khopperttrgeg} and \cref{kophrthertgtrgetrge} 
the squares  and triangles involving  Paschke maps commute.
The big inner cell in  \eqref{gwerwefwerfrefw1}  is precisely the outer cell of   \eqref{gwerwefwefffrfrefffw} which is already seen to commute by 
\cref{okhperrtgetge}. We conclude \cref{iowerjgowegjweriogrwegwe}.
  \end{enumerate}\end{enumerate}
 
  {\scriptsize\begin{equation}\label{gwerwefwerfrefffw} \hspace{-1cm}
\xymatrix{ K\cX^{\ctr}(\cO^{\infty}(Z))\ar[rr]^{c} \ar[dd]^{\rmch^{\alg}}&&K\cX(\cO^{\infty}(Z)) \ar[rr]^{T^{\an}}&&\Sigma K^{\an}(\partial_{h}\cO^{\infty}(Z))\ar[ddddddd]^{\ch^{h}}\ar[dl]^{\pi}  \\  
 &&&\Sigma K^{\an}(\iota^{\topp}_{u}(Z))\ar[ddddd]^{\ch^{h}}& \\
  \PCH\cX(\cO^{\infty}(Z))\ar[d]^{\tau}&& & &   \\ \ar[dddd]^{P(\chi_{\BM})}\PH\cX (\cO^{\infty}(Z),\C) && &  &  \\ 
  &&& &    \\ 
 &  &&  &   \\ 
&&&\Sigma  (\prod_{k\in \Z} \Sigma^{2k}H\C_{\BM}) (\iota^{\topp}_{u}(Z)) &\\ 
  (\prod_{k\in \Z} \Sigma^{2k}H\C_{\BM} \bP)  (\cO^{\infty}(Z))\ar[rrrr]^{T^{\topp}} &&&&\Sigma  (\prod_{k\in \Z} \Sigma^{2k}H\C_{\BM})(\partial_{h} \cO^{\infty}(Z))  \ar[ul]^{\pi}}\ .
\end{equation}}

 {\scriptsize\begin{equation}\label{gwerwefwerfreffffefefedededew} \hspace{-2cm}
\xymatrix{ K\cX^{\ctr,hG}(\cO^{\infty}(Z))\ar[rr]^{c^{hG}} \ar[dd]^{\rmch^{\alg,hG}}&&K\cX^{hG}(\cO^{\infty}(Z)) \ar[rr]^{(T^{\an})^{hG}}&&\Sigma K^{\an,hG}(\partial_{h}\cO^{\infty}(Z))\ar[ddddddd]^{\ch^{hG}}\ar[dl]^{\pi}  \\  
 &&&\Sigma K^{\an,hG}(\iota^{\topp}_{u}(Z))\ar[ddddd]^{\ch^{hG}}& \\
  \PCH\cX^{hG}(\cO^{\infty}(Z))\ar[d]^{\tau^{hG}}&& & &   \\ \ar[dddd]^{P(\chi_{\BM})^{hG}}\PH\cX^{hG}(\cO^{\infty}(Z),\C) && &  &  \\ 
  &&& &    \\ 
 &  &&  &   \\ 
&&&\Sigma  (\prod_{k\in \Z} \Sigma^{2k}H\C_{\BM})^{hG} (\iota^{\topp}_{u}(Z)) &\\ 
  (\prod_{k\in \Z} \Sigma^{2k}H\C_{\BM} \bP)^{hG}  (\cO^{\infty}(Z))\ar[rrrr]^{(T^{\topp})^{hG}} &&&&\Sigma  (\prod_{k\in \Z} \Sigma^{2k}H\C_{\BM})(\partial_{h} \cO^{\infty}(Z))^{hG}  \ar[ul]^{\pi}}\ .
\end{equation}}

{\tiny\begin{equation}\label{gwerwefwefffrfrefffw} \hspace{-2.68cm}
\xymatrix{ K\cX_{G_{can,max}}^{\ctr,G}(\cO^{\infty}(Z))\ar[ddd]^{\rmch^{G,\alg}}\ar[rr]^{c^{G}}\ar[dr]^{\pr\circ \beta} &&K\cX_{G_{can,max}}^{G}(\cO^{\infty}(Z))\ar[d]^{\pr\circ \beta}\ar[rr]^{T^{\an}}&&\Sigma K^{G,\an}(\partial_{h}\cO^{\infty}(Z))\ar[d]^{\beta}\ar[ddl]^{\pi}\\
&\ar@{}[u]^{\textcircled{1}}K\cX^{\ctr,hG}(\cO^{\infty}(Z))\ar[r]^{c^{hG}} \ar[dd]^{\rmch^{\alg,hG}}&K\cX^{hG}(\cO^{\infty}(Z)) \ar[rr]^{(T^{\an})^{hG}}\ar@{}[ur]^{\textcircled{4}}&&\Sigma K^{\an,hG}(\partial_{h}\cO^{\infty}(Z))\ar[ddddddd]^{\ch^{hG}}\ar[ddl]^{\pi}  \\  
 &&&\Sigma K^{G,\an}(\iota^{\topp}_{u}(Z))\ar[d]^{ \beta} & \\
\ar@{}[uur]^{\textcircled{3}}  \PCH\cX^{G}_{G_{can,max}}(\cO^{\infty}(Z))\ar[d]^{\tau^{G}}\ar[r]^{\pr\circ \beta}& \PCH\cX^{hG}(\cO^{\infty}(Z))\ar[d]^{\tau^{hG}}& &\Sigma K^{\an,hG}(\iota^{\topp}_{u}(Z))\ar[dddd]^{\ch^{hG}} &   \\  \PH\cX^{G}_{G_{can,max}}(\cO^{\infty}(Z),\C) \ar[r]^{\pr\circ \beta}\ar@{}[ru]^{\textcircled{2}}&\PH\cX^{hG}(\cO^{\infty}(Z),\C) \ar[dddd]^{P(\chi_{\BM})^{hG}} & &  &  \\ 
  &&& &    \\ 
 &  &&  &   \\ 
&&&\Sigma  (\prod_{k\in \Z} \Sigma^{2k}H\C_{\BM})^{hG} (\iota^{\topp}_{u}(Z)) &\\ &
  (\prod_{k\in \Z} \Sigma^{2k}H\C_{\BM} \bP)^{hG}  (\cO^{\infty}(Z))\ar[rrr]^{(T^{\topp})^{hG}} &&&\Sigma  (\prod_{k\in \Z} \Sigma^{2k}H\C_{\BM})(\partial_{h} \cO^{\infty}(Z))^{hG}  \ar[ul]^{\pi}}\ .
\end{equation}}

{\tiny \begin{equation}\label{gwerwefwerfrefw1}\hspace{-3cm}
\xymatrix{K\cX_{G_{can,max}}^{G,\ctr}(X)\ar[rr]^{c^{G}}\ar[dr]^{T^{\mot}} \ar[dd]^{\rmch^{G,\alg}}&&K\cX_{G_{can,max}}^{G}(X)\ar[d]^{T^{\mot} } \ar[rr]^{T^{G,\an}}&&\Sigma K^{G,\an}(\partial_{h}X)\ar[d]^{ \beta}\ar[dl]^{t} \\ 
&K\cX^{G,\ctr}_{G_{can,max}}(\cO^{\infty}(Y))\ar[d]^{\rmch^{G,\alg}}\ar[r]^{c^{G}}&K\cX_{G_{can,max}}^{G}(\cO^{\infty}(Y))\ar[d]^{T^{G,\an}}\ar[r]^{p^{G,\an}}&\Sigma K^{G,\an}(\iota^{\topp}_{u}(Y))\ar[d]^{ \beta} &\Sigma K^{\an,hG}(\partial_{h}X)\ar[dddddd]^{\ch^{hG}}\ar[dl]^{t} \\
\PCH\cX_{G_{can,max}}^{G}(X)\ar[d]^{\tau^{G}}\ar[r]^{T^{\mot} }&\PCH\cX_{G_{can,max}}^{G}(\cO^{\infty}(Y)) \ar[d]^{\tau^{G}}&    \Sigma K^{G,\an}(\partial_{h} \cO^{\infty}(Y)) \ar[ur]^{\pi}& \Sigma K^{\an,hG}(\iota^{\topp}_{u}(Y))\ar[dddd]^{\ch^{hG} } & \\
\PH\cX_{G_{can,max}}^{G}(X)\ar[d]^{ \pr\circ  \beta} \ar[r]^{T^{\mot }}& \PH\cX_{G_{can,max}}^{G}(\cO^{\infty}(Y))\ar[d]^{ \pr\circ  \beta}  &       &  &  \\
\PH\cX^{hG}(X)\ar[ddd]^{P(\chi_{\BM})^{hG} } \ar[r]^{T^{\mot }}& \PH\cX^{hG}(\cO^{\infty}(Y))\ar[dd]^{P(\chi_{\BM})^{hG} }&& & \\
&  &(\prod_{k\in \Z}  \Sigma^{2k} H\C_{\BM})^{hG}(\partial_{h}\cO^{\infty}(Y)) \ar[dr]^{\pi}& &  \\ 
& (\prod_{k\in \Z}  \Sigma^{2k} H\C_{\BM}\bP)^{hG}(\cO^{\infty}(Y))\ar[ur]^{(T^{\topp})^{hG}}\ar[rr]^{p^{G,\topp}}&&\Sigma (\prod_{k\in \Z}  \Sigma^{2k} H\C_{\BM})^{hG}(\iota^{\topp}_{u}(Y)) &\\
  \ar[ur]^{T^{\mot} }(\prod_{k\in \Z}  \Sigma^{2k} H\C_{\BM}\bP)^{hG}(X)\ar[rrrr]^{(T^{\topp})^{hG}}\ &&&&\ar[ul]^{t}\Sigma (\prod_{k\in \Z}  \Sigma^{2k} H\C_{\BM})^{hG}(\partial_{h}X) }
\end{equation}}

%

 \begin{prop}[{\cref{kophertgertgterg}}]
There is a unique  choice of the marked equivalence in \eqref{vdsfpovkspdfvfdvsf}   such that  the diagram \eqref{gwerwefwerfrefffw} commutes for $Z=*$.   \end{prop}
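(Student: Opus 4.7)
The plan is to evaluate every vertex and edge of \eqref{gwerwefwerfrefffw} at $Z=\ast$, identify each constituent with a standard spectrum and a standard map of spectra, and then solve for the one free parameter (the marked equivalence) so that the two compositions agree. First I would compute the vertices. The cone functor applied to a point gives a bornological coarse space whose invariants, by a coarsification-plus-cone computation, reduce each left-hand vertex to the coefficient spectrum (up to the shift inherent in $\cO^{\infty}$): $K\cX^{\ctr}(\cO^{\infty}(\ast))\simeq K\cX(\cO^{\infty}(\ast))\simeq K(\C)\simeq KU$, $\PCH\cX(\cO^{\infty}(\ast))\simeq \HP(\C)$, and $\PH\cX(\cO^{\infty}(\ast),\C)\simeq \prod_{k\in\Z}\Sigma^{2k}H\C$, while the right-hand vertices simplify through the map $\pi:\partial_h\cO^{\infty}(\ast)\to \iota^{\topp}_u(\ast)=\ast$ to $\Sigma K^{\an}(\ast)\simeq \Sigma KU$ and $\Sigma\prod_{k\in\Z}\Sigma^{2k}H\C$.

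Next I would identify each arrow under these reductions. The comparison $c$ becomes the canonical equivalence $K(\C)\to KU$ (using \cref{gkopwreferfwerfrwef} at the sufficiently finite object $\ast$). The algebraic Chern character $\rmch^{\alg}$ becomes the Goodwillie–Jones character $\rmch^{GJ}:K(\C)\to \HP(\C)$ of \eqref{ojvbiopgwerfwefrv}, composed with the identity induced by the trace on $\cL^{1}$. The trace $\tau$ becomes the canonical equivalence $\HP(\C)\simeq \prod_{k\in\Z}\Sigma^{2k}H\C$. The Borel–Moore character $\chi_{\BM}$ at a point is an equivalence, and the topological transgression $T^{\topp}$ at $\cO^{\infty}(\ast)$ reduces to the suspension boundary dictated by strong excision for a one-point Higson corona. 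The analytic transgression $T^{\an}$ at $\cO^{\infty}(\ast)$ is a standard Paschke-type comparison $\Sigma KU\to \Sigma KU$ and, after tracking suspensions, is an equivalence. Finally, $\rmch^{h}$ is by \eqref{bdfspokpvdfvsdfvsdvsfdv} the composition $KU\to KU\wedge H\C \stackrel{!}{\simeq}\prod_{k\in\Z}\Sigma^{2k}H\C$ with the identification marked by $!$ in \eqref{vdsfpovkspdfvfdvsf}.

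After these identifications the diagram collapses to a single comparison in $\Mod(H\C)$ between two maps $KU\to \Sigma\prod_{k\in\Z}\Sigma^{2k}H\C$, one being (a suspension of) the Goodwillie–Jones character, the other being (a suspension of) $KU\to KU\wedge H\C$ post-composed with the unknown equivalence $!$. Since the Goodwillie–Jones character on $\pi_{*}K(\C)=\pi_{*}KU$ is the classical rational Chern character, which is a unit-valued class in $\pi_{*}(KU\wedge H\C)$, there exists an $H\C$-module equivalence $KU\wedge H\C\simeq \prod_{k\in\Z}\Sigma^{2k}H\C$ matching the two legs. Uniqueness follows because the space of $H\C$-module equivalences in question is a torsor for the graded units of $\pi_{*}(KU\wedge H\C)\cong \C[\beta^{\pm 1}]$, and any further automorphism preserving a map whose image generates the target as an $H\C\wedge KU$-module must be the identity; the Bott class being a unit ensures this rigidity.

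The main obstacle I expect is the precise reduction of the analytic transgression $T^{\an}$ at $\cO^{\infty}(\ast)$, since its construction in \cref{rguweruigowerferfrfwr} is analytic (via Roe categories and the Paschke construction of \cite{bel-paschke}) rather than formally homotopical; one must verify that after all identifications it is genuinely the identity on $\Sigma KU$ and not twisted by some normalization factor that would force a compensating shift of $!$. A secondary but less severe obstacle is bookkeeping the various $\Sigma$-shifts across the coarsification, the cone construction, and the boundary map from strong excision, so that the two legs land in exactly the same graded piece where the Bott-class comparison can be performed.
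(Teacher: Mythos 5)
Your overall strategy is the same as the paper's: evaluate at $Z=*$, note that a map into $\Sigma(\prod_{k\in\Z}\Sigma^{2k}H\C_{\BM})(\iota^{\topp}_{u}(*))\simeq\prod_{k\in\Z}\Sigma^{2k+1}H\C$ is determined by a family of scalar maps $\Z\to\C$, and use the freedom in the marked equivalence \eqref{vdsfpovkspdfvfdvsf} to match the two composites degreewise. However, the step you yourself flag as ``the main obstacle'' is precisely the mathematical content of the paper's proof, and you leave it unproven: one must know that the clockwise composite through $T^{\an}$ and $\pi$ is nonzero in each degree, equivalently that $\pi\circ T^{\an}$ is an equivalence at $\cO^{\infty}(*)$. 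The paper obtains this from \cref{khopperttrgeg}, which identifies $\pi\circ T^{\an}$ with the analytic Paschke morphism $p^{\an}$ precomposed with the motivic transgression $T^{\mot}$ (and $T^{\mot}$ is the identity when the input is already a cone), combined with the Paschke equivalence of \cite{bel-paschke}. Your ``after tracking suspensions, $T^{\an}$ is an equivalence'' is an assertion, not an argument. Note also that you ask for more than is needed: it is irrelevant whether $T^{\an}$ is ``genuinely the identity'' or twisted by a normalization factor, since any such scalar is absorbed by exactly the normalization of \eqref{vdsfpovkspdfvfdvsf} that you are solving for; only nonvanishing of each component matters.

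Two further inaccuracies in your endgame. First, your existence argument rests on identifying the counterclockwise composite with ``the classical rational Chern character'' and on its being unit-valued; the paper never needs (nor proves) such an identification of the chain $\rmch^{\alg}$, $\tau$, $P(\chi_{\BM})$, $T^{\topp}$ at the point — it treats $\psi$ as a black box and only adjusts the $\phi$ side, so if you insist on this route you owe a proof of that identification. Second, the set of $H\C$-module identifications $KU\wedge H\C\simeq\prod_{k\in\Z}\Sigma^{2k}H\C$ is a torsor under $\prod_{k\in\Z}\C^{\times}$, an independent unit in each even degree (since $H\C$-linear maps $\Sigma^{2j}H\C\to\Sigma^{2k}H\C$ vanish for $j\neq k$), not under the homogeneous graded units of $\C[\beta^{\pm 1}]$ as you assert. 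This larger group is what makes degreewise matching possible; with only a single global scalar you would additionally need the ratio of the two composites to be independent of $k$, which you do not address. With the Paschke input supplied and the torsor corrected, your argument collapses to the paper's.
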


%
%

By  \cref{koprthrtwerferfwerfwhergrtge}  we get

\begin{kor}\label{kohpethtrgertgergertgetgeh}
If $Z$ in $\UBC$ is  a finite $CW$-complex, then
 the diagram \eqref{gwerwefwerfrefffw}   commutes naturally in $Z$.
\end{kor}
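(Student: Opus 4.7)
The plan is to view the clockwise and counterclockwise compositions around the diagram \eqref{gwerwefwerfrefffw} as two natural transformations
\[
\phi,\psi \colon K\cX^{\ctr}(\cO^{\infty}(-)) \longrightarrow \Sigma\, (\textstyle\prod_{k\in\Z}\Sigma^{2k} H\C_{\BM})(\iota^{\topp}_{u}(-)) \colon \UBC \to \Sp
\]
and to invoke the principle that two natural transformations between homotopy invariant and excisive spectrum valued functors on $\UBC$ that agree at the point $Z=*$ automatically agree on the full subcategory spanned by finite CW-complexes. This is precisely the content of the cited \cref{koprthrtwerferfwerfwhergrtge}, so what I need to do is verify its hypotheses for $\phi$ and $\psi$.

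First I would identify the source and target functors. The source $K\cX^{\ctr}(\cO^{\infty}(-))$ is homotopy invariant and excisive on $\UBC$ by \cref{hgwiueghergwergwe}, since the cone-at-$\infty$ functor converts uniform homotopies and uniform decompositions into the coarse versions to which the coarse homology theory axioms for $K\cX^{\ctr}$ apply. The target $(\prod_{k\in\Z}\Sigma^{2k}H\C_{\BM})(\iota^{\topp}_{u}(-))$ is homotopy invariant and excisive because $\iota^{\topp}_{u}$ preserves the relevant pushouts/homotopies and $H\C_{\BM}$ is a Borel-Moore homology theory (hence satisfies strong excision), so the composite inherits these properties on finite CW-complexes. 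The remaining intermediate functors appearing along either leg of \eqref{gwerwefwerfrefffw} need not themselves be homotopy invariant and excisive; only the outer composition does, and this follows once the transformations in between are natural.

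The base case $Z=*$ is supplied by the preceding proposition, which fixes the marked equivalence in \eqref{vdsfpovkspdfvfdvsf} precisely to arrange the commutativity $\phi_{*}\simeq\psi_{*}$. Thus the two transformations $\phi$ and $\psi$ agree at a point, and both source and target are homotopy invariant and excisive spectrum valued functors on $\UBC$. Applying \cref{koprthrtwerferfwerfwhergrtge} then yields an equivalence $\phi\simeq\psi$ naturally in $Z$ for every finite CW-complex $Z$, which is exactly the claim.

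The step I expect to require the most care is checking that the leg functors really do admit natural transformations defined on all of $\UBC$ (so that we land in the right setting for the extension-from-point lemma) and that the homotopy invariance and excisiveness of source and target hold on the subcategory of $\UBC$ given by finite CW-complexes, rather than merely on some more restrictive class. This is a question of chasing definitions through the cone functor $\cO^{\infty}$ and the functor $\iota^{\topp}_{u}$, but it is essentially a matter of citing \cref{hgwiueghergwergwe} and the analogous statements for the Borel-Moore side; once these are in place the corollary follows from the base case and the cell-by-cell extension principle of \cref{koprthrtwerferfwerfwhergrtge}.
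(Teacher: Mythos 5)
Your top-level route is the same as the paper's: the corollary is obtained by combining the normalization at the point (\cref{kophertgertgterg}) with \cref{koprthrtwerferfwerfwhergrtge}, and indeed the paper derives the corollary by exactly this citation. However, you mischaracterize what \cref{koprthrtwerferfwerfwhergrtge} says. It is not a general principle of the form ``two natural transformations between homotopy invariant, excisive spectrum-valued functors on $\UBC$ that agree at $*$ agree on finite CW-complexes'' whose hypotheses you must verify; it is itself the statement $\phi_{|\CW^{\fin}}\simeq\psi_{|\CW^{\fin}}$ for the specific diagram \eqref{gwerwefwerfrefffw}. Used as a black box it makes the corollary immediate, and then your verifications of homotopy invariance and excision of source and target are redundant.

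If, on the other hand, you intend to justify the extension yourself by the ``cell-by-cell extension principle'' you describe, there is a genuine gap. Starting from the merely objectwise equivalence $\phi_{*}\simeq\psi_{*}$, a cell-attachment induction does not go through: to propagate agreement across excision squares one needs the identifications on the pieces to be compatible with the structure maps, i.e.\ one needs naturality of the identification, which is exactly what is to be proved (and what the corollary asserts, ``naturally in $Z$''). The paper flags this explicitly: the pointwise extension of \cref{okgprherhreth} only yields objectwise equivalences on the essential image and does not give an equivalence of natural transformations. The actual mechanism is different from what you sketch: both legs are natural transformations between \emph{normal local homology theories} in the sense of \cref{ogpwergerfwerfwref} (this uses strongness of $K\cX^{\ctr}$ and local finiteness of the Borel--Moore side, so more than homotopy invariance and excision — also $u$-continuity and vanishing on flasques), hence between colimit-preserving functors out of the universal category $\Sp\cN$; the agreement at $*$ upgrades to an equivalence after restriction along the colimit-preserving functor $\hat{*}:\Spc\to\Sp\cN$ by the Kan-extension argument of \cref{gwerjofewrfwer}; and the naturality on $\CW^{\fin}$ comes from \cref{koprthrthergrtge}, which lifts $\CW^{\fin}\to\UBC\xrightarrow{\Yo\cN}\Sp\cN$ through $\Spc$ (this is where the cell-by-cell/finite-colimit reasoning actually lives), combined with \cref{guwergergefwwefwr}. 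So either cite \cref{koprthrtwerferfwerfwhergrtge} and stop, or reproduce this universal-property argument; the induction you propose would not by itself deliver the naturality in $Z$.
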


\begin{kor}\label{iowerjgowegjweriogrwegwe}
If $G$ is finite, $X$ is in $G\BC$, and $Y$ in $G\UBC$   is homotopy invariant to a finite $G$-CW-complex, and 
  $\pi:\partial_{h}X\to \iota^{\topp}_{u}(Y)$ is such such that the motivic transgression   $T^{\mot}$ exists, then \eqref{gwerwefwerfrefw} commutes. 
\end{kor}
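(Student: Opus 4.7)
The plan is to deduce the commutativity of \eqref{gwerwefwerfrefw} from the commutativity of each interior cell of the refined diagram \eqref{gwerwefwerfrefw1}. The outer boundary of \eqref{gwerwefwerfrefw1} coincides with the diagram \eqref{gwerwefwerfrefw} to be filled, so once every interior cell is shown to commute the claim will follow by the standard pasting principle for diagrams in an $(\infty,1)$-category.

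First I would verify the squares on the left half of \eqref{gwerwefwerfrefw1}. These express the compatibility of the vertical transformations $\rmch^{G,\alg}$, $\tau^{G}$, $\pr \circ \beta$, and $P(\chi_{\BM})^{hG}$ with the motivic transgression $T^{\mot}:X \to \cO^{\infty}(Y)$ as introduced in \eqref{kophrgertgertrgertgergert}. Each of these cells commutes by the naturality of the respective vertical transformation applied to $T^{\mot}$; this is the defining feature of $T^{\mot}$ as a motivic construction and uses neither the finiteness of $G$ nor any hypothesis on $Y$.

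Next, the triangles and squares on the right of \eqref{gwerwefwerfrefw1} connect the analytic and topological transgressions $T^{G,\an}$ and $(T^{\topp})^{hG}$ with the respective Paschke maps $p^{G,\an}$, $p^{G,\topp}$ and with the boundary map $t$ induced by $\pi$. The commutativity of these cells is the content of \cref{khopperttrgeg} (analytic side) and \cref{kophrthertgtrgetrge} (topological side); the point is that on the cone $\cO^{\infty}(Y)$ the transgression factors, up to the comparison $t$ induced by $\pi$, through the Paschke morphism.

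Finally, the big central cell of \eqref{gwerwefwerfrefw1} is precisely the outer cell of \eqref{gwerwefwefffrfrefffw} evaluated at $\cO^{\infty}(Y)$. Under the hypotheses that $G$ is finite and $Y$ is homotopy invariant to a finite $G$-CW-complex, this outer cell commutes by the argument of step \ref{okhperrtgetge} in the outline above: the large inner subcell is the Borelified diagram \eqref{gwerwefwerfreffffefefedededew}, which commutes by applying $\lim_{BG}$ to the non-equivariant diagram provided by \cref{kohpethtrgertgergertgetgeh}, while the four auxiliary cells \textcircled{1}--\textcircled{4} commute by the Borelification compatibilities recorded in step \ref{okhperrtgetge}. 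The main obstacle is coherence: one must ensure that the normalization of the homotopical Chern character fixed in the baseline case $Z=*$ of \cref{kohpethtrgertgergertgetgeh} propagates consistently through the cone-at-infinity construction and the Borelification, so that all the independently produced fillers of the subcells can be pasted into a single filler of the big cell. Once this coherence is in place, assembling \eqref{gwerwefwerfrefw1} delivers the required equivalence $\phi \simeq \psi$.
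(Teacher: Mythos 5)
Your proposal is correct and follows essentially the same route as the paper: extend \eqref{gwerwefwerfrefw} to \eqref{gwerwefwerfrefw1}, fill the left cells by naturality of the vertical transformations applied to the motivic morphism $T^{\mot}$, fill the Paschke cells by \cref{khopperttrgeg} and \cref{kophrthertgtrgetrge}, and identify the big inner cell with the outer cell of \eqref{gwerwefwefffrfrefffw} (at $Z=Y$), which commutes under the finiteness hypotheses by step \ref{okhperrtgetge}. The "coherence" concern you raise is handled in the paper exactly as you suggest, by fixing the normalization once at $Z=*$ and propagating it via the natural commutativity of \cref{kohpethtrgertgergertgetgeh} before Borelifying.
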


The following is a consequence of \cref{opkherptokgpertgrtge} and \cref{iowerjgowegjweriogrwegwe}.
Note that if $Y$ is a compact Hausdorff space, then it is an object of $\UBC$ in the canonical way and
$\iota^{\topp}_{u}(Y)\cong Y$. 
\begin{kor}\label{opherthrtgertg}
Assume that $G$ is the trivial group.
If $X$ is a metrizable bornological coarse space and  $\bar X$ is  a  metrizable Higson-dominated compactification of $X$  by a non-empty finite CW-complex $Y$, then \eqref{gwerwefwerfrefw} commutes.   \end{kor}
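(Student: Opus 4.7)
The plan is to treat this corollary as a pure hypothesis-check for \cref{iowerjgowegjweriogrwegwe}: all the substantive work has been done in that corollary and in \cref{opkherptokgpertgrtge}, and what remains is to observe that a metrizable Higson-dominated compactification by a finite CW-complex provides exactly the auxiliary geometric input required by \cref{iowerjgowegjweriogrwegwe}.

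First I would produce the map $\pi:\partial_{h}X\to \iota^{\topp}_{u}(Y)$. By the definition of a Higson-dominated compactification, the embedding $X\hookrightarrow \bar X$ has the property that bounded continuous functions on $\bar X$ restrict to Higson functions on $X$; equivalently the restriction map $C(\bar X)\to C_{h}(X)$ is well-defined. Dualizing via Gelfand duality produces a continuous map from $\partial_{h}X$ to the corona $\bar X\setminus X=Y$. Since $Y$ is a compact Hausdorff space, $\iota^{\topp}_{u}(Y)\cong Y$ under the canonical uniform bornological coarse structure, and we obtain the required $\pi$.

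Second I would invoke \cref{opkherptokgpertgrtge} to produce the motivic transgression $T^{\mot}$ for this specific $(X,\pi)$. This is the step that uses metrizability of $X$ and of $\bar X$ in an essential way, since metrizability is what permits the geometric construction underlying the motivic transgression described in the introduction.

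Finally I would verify the remaining hypotheses of \cref{iowerjgowegjweriogrwegwe}. The group $G$ is trivial, hence finite; the condition that $Y$ be homotopy equivalent to a finite $G$-CW-complex reduces, in this trivial-$G$ setting, to $Y$ being a finite CW-complex, which is given by assumption. Thus all hypotheses of \cref{iowerjgowegjweriogrwegwe} are met for the pair $(X,\pi)$ with the motivic transgression supplied by \cref{opkherptokgpertgrtge}, and that corollary yields the commutativity of \eqref{gwerwefwerfrefw}. There is no genuine obstacle in the present proof: the only point requiring care is matching conventions, in particular the identification $\iota^{\topp}_{u}(Y)\cong Y$ for compact $Y$ and the compatibility of the map produced by Gelfand duality from the Higson-dominated compactification with the map $\pi$ expected by \cref{iowerjgowegjweriogrwegwe}, both of which hold by construction.
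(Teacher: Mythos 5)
Your proof is correct and follows exactly the paper's route: the corollary is obtained by combining \cref{opkherptokgpertgrtge} (which supplies the motivic transgression and the canonical map $\pi:\partial_{h}X\to Y$ from the metrizability and finite-CW hypotheses) with \cref{iowerjgowegjweriogrwegwe}, using the identification $\iota^{\topp}_{u}(Y)\cong Y$ for compact $Y$ and the observation that the trivial group is finite. Your Gelfand-duality construction of $\pi$ is precisely the paper's canonical map, so there is nothing further to add.
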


See also \cref{hojpertrtgertgerth9} and \cref{hojpertrtgertgerth91} for more general conditions implying the existence of the motivic transgression.

\begin{ex}
  We consider $X=\R^{n}$ with the standard bornological coarse structure induced by the metric.
  It has a metrizable Higson-dominated compactification by the sphere $S^{n-1}$. Since $S^{n-1}$ has the structure of a finite CW-complex,  by  \cref{opherthrtgertg}
   the diagram  \eqref{gwerwefwerfrefw} commutes  for $X=\R^{n}$ and $\partial_{h}\R^{n}\to S^{n-1}=:Y$.
  \hB
    \end{ex}

%
%

\begin{rem} The square \eqref{gwerwefwerfrefw1} commutes as a diagram of functors on the category of triples $(X,Y,\partial_{h}X\to \iota^{\topp}_{u}(Y))$, where $Y$ is   a finite CW-complex.  \hB
\end{rem}

\begin{rem}
In \cite[Thm. B]{Engel:2025aa} 
it is assumed that $X$ is presented by a complete Riemannian manifold 
and $\bar X$
 is a metrizable 
 Higson-dominated compactification with boundary $Y$ such that $(\bar X,\iota^{\topp}_{u}(Y))$ is a finite $CW$-pair. In this case
 \cref{opherthrtgertg} applies.   So by our result  we   drop the smoothness assumption on $X$. Furthermore we only need that $Y$ is a finite CW-complex and have no finiteness assumption on $\bar X$.
%
%
%
%
%
  \hB
 \end{rem}

\section{Functors and natural transformations}\label{orkhprtegrgerg}

\subsection{Bornological coarse spaces and the Higson corona functor}\label{kopehrtgegrtgrtge}
 
A bornological coarse space is a set equipped with a coarse structure and a compatible bornology. A morphism between bornological coarse spaces is a map of sets which is controlled (images of coarse entourages are coarse entourages) and proper (preimages of bounded subsets are bounded). We let $\BC$ denote 
 the
  symmetric monoidal category of bornological coarse spaces. For details we refer to  \cite[Sec. 2]{buen}. 
  
 We let $\CH$ denote the category of compact Hausdorff spaces and  continuous maps.  By Gelfand duality it is equivalent to the opposite of the category $\Calg_{\comm}$ of commutative unital $C^{*}$-algebras and unital homomorphisms.

In the following we explain the Higson corona construction which provides a functor
$$\partial_{h}:\BC\to \CH\ .$$  We recall the notation   from  \cite[Def. 3.1]{Bunke:2024aa}.
For a set $X$ we consider the $C^{*}$-algebra $\ell^{\infty}(X)$ of bounded functions
$f:X\to \C$ with the supremum norm.
For  $f$  in $\ell^{\infty}(X)$, an entourage $U$ of $X$, and  a subset $Y$ we define the $U$-variation of $f$ on $Y$ by
$$\Var_{U}(f,Y):=\sup_{(x,x')\in U\cap Y\times Y }|f(x)-f(x')|\ .$$
If $X$ is a bornological coarse space with coarse structure $\cC$ and bornology $\cB$, then
we consider the unital $C^{*}$-algebra
\begin{equation}\label{gerwfwerfwerfwf}\ell^{\infty}_{\cB}(X):=\{f\in \ell^{\infty}(X)\mid  \forall U\in \cC\colon \lim_{B\in \cB} \Var_{U}(f,X\setminus B)=0\}
\end{equation}  of bounded functions with the property that for every coarse entourage $U$ of $X$ the $U$-variation  becomes arbitrary small
outside of sufficiently large bounded subsets.
The $C^{*}$-algebra
\begin{equation}\label{gerwfwerfwerfwf1}\ell^{\infty}(\cB):=\{f\in \ell^{\infty}(X)\mid \lim_{B\in \cB} \sup_{x\in X\setminus B} |f(x)|=0\}\end{equation}  
of bounded functions which themselves  become arbitrary small 
outside of sufficiently large bounded subsets is an ideal in $\ell^{\infty}_{\cB}(X)$.
We then form   the commutative unital $C^{*}$-algebra
\begin{equation}\label{brtberbfgbdgetb}C(\partial_{h}X):=\ell^{\infty}_{\cB}(X)/\ell^{\infty}(\cB)\ .
\end{equation}  \begin{ddd}\label{kiogwergewrfwrefw}  The Higson corona $\partial_{h}X$ in $\CH$  is defined to be the compact Hausdorff space corresponding to
  $C(\partial_{h}X)$  by  Gelfand duality .\end{ddd}
 Note that a morphism $X\to X'$  in $\BC$ functorially induces via pull-back a morphism of exact sequence
   of the associated commutative $C^{*}$-algebras 
  $$
\xymatrix{
0\ar[r]&\ell^{\infty}(\cB_{X'})\ar[d]\ar[r]&\ell^{\infty}_{\cB_{X'}}(X')\ar[r]\ar[d]&C(\partial_{h}X')\ar[r]\ar[d]&0\\
0\ar[r]&\ell^{\infty}(\cB_{X})\ar[r]&\ell^{\infty}_{\cB_{X}}(X)\ar[r]&C(\partial_{h}X)\ar[r]&0}\ .
$$
We get a functorially induced map between the Higson coronas
 $$\partial_{h}f:\partial_{h}X\to \partial_{h}X'\ .$$
 This finishes the construction of the Higson corona functor.
 
 \begin{lem} \label{kopbrwgregrefwr}The Higson corona functor is coarsely invariant in the sense that if $f$ is a coarse equivalence, then
 $\partial_{h}f$ is a homeomorphism.\end{lem}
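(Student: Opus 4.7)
The plan is to reduce the assertion to the following lemma: if $f_{0},f_{1}\colon X\to X'$ are morphisms in $\BC$ which are close (i.e., there exists a coarse entourage $V'$ of $X'$ with $(f_{0}(x),f_{1}(x))\in V'$ for all $x\in X$), then the induced homomorphisms on the Higson corona algebras coincide, i.e., $\partial_{h}f_{0}=\partial_{h}f_{1}$. Granted this, a coarse equivalence $f$ admits a coarse inverse $g$ with $g\circ f$ close to $\id_{X}$ and $f\circ g$ close to $\id_{X'}$, and applying the Higson corona functor together with the lemma yields $\partial_{h}g\circ\partial_{h}f=\id_{\partial_{h}X}$ and $\partial_{h}f\circ\partial_{h}g=\id_{\partial_{h}X'}$, so $\partial_{h}f$ is a homeomorphism.

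For the lemma, I would proceed at the level of the algebras \eqref{gerwfwerfwerfwf} and \eqref{gerwfwerfwerfwf1}. Pick $\phi\in\ell^{\infty}_{\cB_{X'}}(X')$; it suffices to show that $\phi\circ f_{0}-\phi\circ f_{1}\in\ell^{\infty}(\cB_{X})$, since this exactly identifies the two induced maps on the quotients \eqref{brtberbfgbdgetb}. Given $\varepsilon>0$, the defining property \eqref{gerwfwerfwerfwf} applied to the entourage $V'$ provides a bounded subset $B'\subseteq X'$ with $\Var_{V'}(\phi,X'\setminus B')<\varepsilon$. Since $f_{0}$ and $f_{1}$ are proper morphisms in $\BC$, the set $B:=f_{0}^{-1}(B')\cup f_{1}^{-1}(B')$ is bounded in $X$. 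For every $x\in X\setminus B$ both $f_{0}(x)$ and $f_{1}(x)$ lie in $X'\setminus B'$, and $(f_{0}(x),f_{1}(x))\in V'$ by closeness, so
\[
|\phi(f_{0}(x))-\phi(f_{1}(x))|\ \leq\ \Var_{V'}(\phi,X'\setminus B')\ <\ \varepsilon.
\]
This shows $\phi\circ f_{0}-\phi\circ f_{1}\in\ell^{\infty}(\cB_{X})$, proving the lemma.

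The only subtle point—where I expect the main bookkeeping, though no real obstacle—is keeping track of which axioms of $\BC$-morphisms are used: properness is what makes $B$ bounded, while controlledness is not needed at this step (it is only used to guarantee that the pulled-back functions $\phi\circ f_{i}$ actually lie in $\ell^{\infty}_{\cB_{X}}(X)$, which was already invoked when constructing the functor $\partial_{h}$ itself). Once the lemma is in place, functoriality of $\partial_{h}$ and Gelfand duality turn the closeness identities $g\circ f\sim\id_{X}$ and $f\circ g\sim\id_{X'}$ into the identity morphisms on $\partial_{h}X$ and $\partial_{h}X'$, completing the proof.
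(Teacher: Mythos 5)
Your argument is correct and follows exactly the paper's route: the paper's proof consists precisely of the observation that close maps induce the same map on Higson coronas, from which the statement follows by applying $\partial_{h}$ to a coarse inverse. You additionally supply the variation estimate verifying this key fact, which the paper leaves implicit.
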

 \begin{proof}
 If $f,g:X\to X'$ are close maps between bornological coarse spaces, then $\partial_{h}f=\partial_{h}g$.
 \end{proof}

Let $G$ be a group.
 Recall from  \cite{equicoarse} that a $G$-bornological coarse space is a bornological coarse space with an action of $G$ by automorphisms such that the coarse structure $\cC$ contains a cofinal subset  $\cC^{G}$ of invariant entourages. The symmetric monoidal category $G\BC$ of  $G$-bornological coarse spaces   
 forms a  full subcategory
\begin{equation}\label{vweujiofwvwefewfvf}\hat \Res^{G}:G\BC\hookrightarrow \Fun(BG,\BC)
\end{equation}  of the symmetric monoidal  category of   bornological coarse spaces with an action of $G$ and equivariant morphisms.
  By functoriality,    the Higson corona functor  induces a functor
 $$\partial_{h}:\Fun(BG,\BC)\to G\CH\ ,$$
 where $G\CH$ is
 the category of   compact Hausdorff spaces with $G$-action and equivariant  continuous maps. The latter is equivalent to the opposite of 
 $G\Calg_{\comm}$ by Gelfand duality.

\begin{ex}
The Higson corona of a bounded bornological coarse space is empty. 
Furthermore, the Higson corona of a discrete bornological coarse space
with the minimal bornology is the boundary of the Stone-\v{C}ech compactification
of the set considered as a discrete topological space. \hB
\end{ex}

\subsection{Equivariant coarse homology theories}\label{okhphherthertgeg9}

In this section we recall the notion of an equivariant coarse homology theory. We further introduce the associated non-equivariant coarse homology theory and the corresponding Borel-equivariant coarse homology theory.  Using the additional structure of transfers we construct the Borelification map.

  Let  $$E^{G}:G\BC\to \cC$$ be some functor. 
\begin{rem}
 We will often use a simplified notation $E$ also for functors defined on $G\BC$.
 The superscript $G$ becomes relevant for a consistent notation if, as  later in this section, we want to talk about the non-equivariant theory
 $E$ associated to $E^{G}$, the theory $E^{\{e\}}$ and
 the corresponding Borel equivariant theory $E^{hG}$. The latter then denote functors derived in a natural way from $E^{G}$.
  \hB
 \end{rem}
Following \cite{equicoarse} we adopt the following definition.     
\begin{ddd}\label{okprhertgrtge9}
The  functor $E^{G} :G\BC\to \cC$  is an equivariant coarse homology theory if $\cC$ is   a cocomplete stable $\infty$-category  and $E^{G}$   is coarsely invariant, excisive, $u$-continuous and vanishes on flasques. \end{ddd}

 \begin{rem} An equivariant coarse homology  is called strong if it in addition annihilates weakly flasques.
 We refer to  \cite{equicoarse} for a detailed explanation of these conditions.
 The property of  strongness is relevant when we consider the composition $E^{G}\circ \cO^{\infty}:G\UBC\to \cC$
since it ensures that this composition is not only homotopy invariant, excisive and $u$-continuous, but also vanishes on  flasques and is therefore a local homology theory in the sense of \cite{ass}, see \cref{hgwiueghergwergwe}. \hB
 \end{rem}

 Note that the conditions  for an equivariant coarse homology theory only involve colimits.

 \begin{kor}\label{oigpwerferfwfrfrfr} If $\phi:\cC\to \cC'$ is a colimit-preserving functor between cocomplete stable $\infty$-categories and $E^{G}:G\BC\to \cC$ is an equivariant  coarse homology theory, then the composition $\phi\circ E^{G}:G\BC\to \cC'$ is again an equivariant   coarse homology theory.
 \end{kor}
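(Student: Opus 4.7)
The plan is to check each of the four axioms of \cref{okprhertgrtge9} for the composition $\phi\circ E^{G}$ by transferring them from $E^{G}$ along $\phi$, using only that $\phi$ preserves colimits between cocomplete stable $\infty$-categories. Since colimits in a stable $\infty$-category include finite colimits, which coincide with finite limits, a colimit-preserving functor between such categories is automatically exact; in particular $\phi$ preserves equivalences, the zero object, pushout squares, and filtered colimits. These are precisely the operations that enter the axioms.

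For coarse invariance and vanishing on flasques, the argument is essentially automatic. If $f$ is a coarse equivalence in $G\BC$, then $E^{G}(f)$ is an equivalence in $\cC$, and any functor takes equivalences to equivalences, so $\phi(E^{G}(f))$ is an equivalence in $\cC'$. If $X$ is (weakly) flasque, then $E^{G}(X)\simeq 0$ in $\cC$, and since $\phi$ preserves the initial object (being a colimit), $\phi(E^{G}(X))\simeq \phi(0)\simeq 0$ in $\cC'$.

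For excision, recall that the axiom asserts that certain squares associated to complementary pairs or coarse decompositions in $G\BC$ are sent by $E^{G}$ to pushout (equivalently cofibre) squares in $\cC$. Since $\phi$ preserves all colimits, in particular pushouts, the image square $\phi\circ E^{G}$ applied to the same input is again a pushout in $\cC'$. For $u$-continuity, the axiom states that the canonical map $\colim_{U\in \cC^{G}} E^{G}(X_{U})\to E^{G}(X)$ is an equivalence, where the colimit runs over the filtered poset of invariant entourages of $X$. Applying $\phi$ and using that $\phi$ preserves this (filtered) colimit, together with preservation of equivalences, we obtain that the analogous map for $\phi\circ E^{G}$ is an equivalence.

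There is no genuine obstacle here; the only point that needs a brief verbal mention is that colimit-preservation in the stable setting suffices for all four axioms, because each axiom is expressible purely in terms of colimits (or, in the flasque and coarse-invariance cases, in terms of equivalences and the zero object, which are also preserved). Hence $\phi\circ E^{G}:G\BC\to \cC'$ satisfies all conditions of \cref{okprhertgrtge9} and is an equivariant coarse homology theory.
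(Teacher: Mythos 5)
Your argument is correct and is exactly the one the paper intends: the corollary is stated as an immediate consequence of the preceding remark that the defining conditions (coarse invariance, excision, $u$-continuity, vanishing on flasques) only involve colimits, equivalences and the zero object, all of which a colimit-preserving functor between cocomplete stable $\infty$-categories preserves. You have simply written out the details the paper leaves implicit.
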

 
 If $E^{G}$ is strong, then so is  $\phi\circ E^{G}$.

 \begin{ex}\label{hokrptogkertpgerthe9}
 Recall from \cite[Sec. 4]{equicoarse} that there is a universal equivariant coarse homology theory
\begin{equation}\label{gweroihgwuierjfoiewrfiojwerofwerfw} \Yo^{G}:G\BC\to G\Sp\cX\ .
\end{equation}By design, for every cocomplete stable $\infty$-category $\cC$ pull-back along $\Yo^{G}$ induces an equivalence
 $$\Yo^{G,*}:\Fun^{\colim}(G\Sp\cX,\cC)\to \Fun^{\cX}(G\BC,\cC)\ ,$$
 where the superscript $\cX$ indicates the full subcategory of $\Fun(G\BC,\cC)$ of  equivariant coarse homology theories.
  If $E^{G}:G\BC\to  \cC$ is an equivariant  coarse homology theory, then we will use the same symbol for the corresponding colimit-preserving functor $E^{G}:G\Sp\cX\to \cC$. \hB
\end{ex}

\begin{rem} \label{owopgeerfwerfrewfw}Assume that $\cC$ is a cocomplete stable $\infty$-category.
The full subcategory $\Fun^{\cX}(G\BC,\cC)$ of $ \Fun(G\BC,\cC)$ of equivariant coarse homology theories is closed under colimits. If $\cC$ is presentable, then $ \Fun(G\BC,\cC)$ is also presentable and we 
 have an adjunction
$$\incl:\Fun^{\cX}(G\BC,\cC)\leftrightarrows \Fun(G\BC,\cC):R\ .$$
The functor $R$ thus provides a universal way to approximate any functor $E:G\BC\to \cC$ by an equivariant  coarse homology theory
$RE:G\BC\to \cC$. The counit of the adjunction is a natural transformation $RE\to E$ with the universal property that for any coarse homology $F:G\BC\to \cC$ with a natural transformation $F\to E$ we have a unique factorization
\begin{equation}\label{zwetrwertwret}
\xymatrix{&RE\ar[d]\\F\ar@{..>}[ur]\ar[r]&E}\ .
\end{equation}
In general it is complicated to control $RE$. But in the case that $E$ is already coarsely invariant, excisive, and vanishes on flasques, then $R$ just forces $u$-continuity, and in this case we have an explicit formula, see \cref{kophprthgertrtgegrtg}.
\hB
\end{rem}

\begin{ex}\label{khoeprthretgertge}
We consider the Higson corona functor $\partial_{h}:G\BC\to G\CH$ from \cref{kiogwergewrfwrefw}. Let $E:G\CH\to \cC$ be any functor to a presentable stable $\infty$-category $\cC$. Then by \cref{owopgeerfwerfrewfw} we can form an equivariant  coarse homology theory
$$R(E\partial_{h}):G\BC\to \cC\ .$$ 
It comes with a natural transformation
$R(E\partial_{h})\to E\partial_{h}$. In this generality it is difficult to understand the functor $R(E\partial_{h})$.

The analytic transgression considered in \cref{kophertggertgrtger} has a factorization \eqref{kophertggertgrtgehertgrtr}
which can be used to show that $R(K_{\bC,A,G_{can,max}}^{G,\an}\partial_{h})$ is non-trivial by showing that
$T^{G,\an}$ is non-trivial. 

Similarly we can use the factorization \eqref{gerfwerfweg542zze} of the topological transgression introduced in \cref{kogprwfregergerfewrf} in order to show that
$R(E\partial_{h})$ is non-trivial for a weak  equivariant Borel-Moore homology theory $E:G\LCH^{+}\to \cC$. 

It seems to be an interesting problem in coarse homotopy theory to understand the coarse homology theories of the form $R(E\partial_{h})$ in greater detail.
\hB
\end{ex}

 \begin{ex}\label{oihperthertgertgetrg}
 If $Y$ is a $G$-bornological coarse space and $E^{G}:G\BC\to \cC$ is an equivariant coarse homology theory, then using the symmetric monoidal structure of $G\BC$ we can define a new equivariant coarse homology theory
$$E^{G}_{Y}:=E^{G}(-\otimes Y):G\BC\to \cC\ , $$
called the twist of $E^{G}$ by $Y$. We refer to \cite{equicoarse} for justifications.  \hB
 \end{ex}

\begin{construction}  \label{kophprthgertrtgegrtg}{\em
Sometimes we want to post-compose coarse homology theories with exact functors which may not preserve filtered colimits. Then coarse invariance, excision and vanishing on  flasques or weakly flasques  is preserved, but not $u$-continuity.
In this case we can apply 
 the construction $F\mapsto F^{u}$ of forcing $u$-continuity  \cite[Sec. 6.1]{buen} in order to 
 obtain again a coarse homology theory. If $\cC$ is presentable, then this construction is special case of the application of the functor $R$ from \cref{owopgeerfwerfrewfw}.
  
    We form    the category $G\BC^{\cC}$ of pairs $(X,U)$ of $X$ in $G\BC$ and $U$ in $\cC_{X}^{G}$.
A map
$f:(X,U)\to (X',U')$ is a morphism $f:X\to X'$ in $G\BC$ such that $f(U)\subseteq U'$.
We consider 
  a functor $F:G\BC\to \cC$ to a cocomplete $\infty$-category.
We can then consider the diagram of  functors
$$\xymatrix{G\BC^{\cC}\ar[d]^{(X,U)\mapsto X}\ar[rr]^{(X,U)\mapsto X_{U}}&&G\BC\ar[r]^{F}&\cC \\ G\BC\ar@{..>}[urrr]_{F^{u}}&&&}\ ,$$
where $X_{U}$ is the $G$-bornological coarse space defined by equipping the underlying 
$G$-bornological space of $X$   with the coarse structure generated by $U$, and the functor
$F^{u}$ is defined by left Kan-extension.
We say that $F^{u}$ is obtained by forcing $u$-continuity.
By the point-wise formula we have
\begin{equation}\label{bsdfvdfvsfvsfdvs}F^{u}(X)\simeq \colim_{U\in \cC_{X}^{G}} F(X_{U})\ .
\end{equation}
The functor $F^{u}$ inherits  the following properties of $F$: coarse invariance, excision, vanishing on flasques.
In addition,  $F^{u}$ is $u$-continuous by design. 
The natural transformation
$$G\BC^{\cC}\ni (X,U)\mapsto (X_{U}\to X)\in \Fun(\Delta^{1},G\BC)$$
induces a natural transformation
 \begin{equation}\label{gwezghuowerfwerf} F^{u}\to F:G\BC\to \cC \end{equation}
which has the following universal property:  If $E$ is $u$-continuous, then the transformation
$$\Nat(E,F^{u})\to \Nat(E,F)$$ induced by \eqref{gwezghuowerfwerf}
is an equivalence.
 \hB}
\end{construction}

We will apply this construction as follows.

Assume that $\cC$ is a stable, complete and cocomplete $\infty$-category, and that
$E^{G}:G\BC\to \cC$ is an equivariant coarse homology theory.
\begin{ddd} \label{iugwierogjwe8r9uf98werfwerfwerwerf}The $2$-periodization of  $E^{G}$
is defined by 
$$P(E^{G}):=(\prod_{k\in \Z} \Sigma^{2k} E^{G})^{u}:G\BC\to \cC\ .$$
\end{ddd}
The functor $\prod_{k\in \Z} \Sigma^{2k} E^{G}$
is coarsely invariant, excisive and vanishes on flasques, but it is no longer $u$-continuous in general since the infinite product does not commute with filtered colimits.  For this reason we force $u$-continuity in the second step.
%
%

Let $G$ be a group and \begin{equation}\label{gwer0ir0dfvdvvdfvwerfwerf}E:BG \times \BC\to  \cC
\end{equation}  be a functor to a complete $\infty$-category.
Then we can form
$$\lim_{BG}E:G\BC\xrightarrow{\hat \Res^{G},\eqref{vweujiofwvwefewfvf}} \Fun(BG,\BC)\xrightarrow{E}\Fun(BG\times BG,\cC) \xrightarrow{\diag_{BG}^{*}}  \Fun(BG,\cC)\xrightarrow{\lim_{BG}}\cC\ .$$

\begin{ddd}\label{khopertgertgetrg} If the adjoint $BG\to \Fun(\BC,\cC)$ of   $E$  in \eqref{gwer0ir0dfvdvvdfvwerfwerf} takes values   in coarse homology theories,
then we say that $E$ is a coarse homology theory with $G$-action. \end{ddd} In this case
the functor $   \lim_{BG}E$ is  coarsely invariant, excisive and vanishes on flasques, but it is no longer $u$-continuous in general since the  limit over $BG$ does not commute with filtered colimits   unless $BG$ is  finite.  In order to retain an equivariant  coarse homology theory   we force $u$-continuity in the second step.

\begin{ddd}\label{oiog0wergerfwerferwfr}
We define the Borel-equivariant coarse homology theory associated to a coarse homology theory $E$ with $G$-action by
$$E^{hG}:=(\lim_{BG}E)^{u}:G\BC\to \cC\ .$$
\end{ddd}
If $$\phi:E\to F:BG\times G\BC\to \cC$$ is a natural transformation  between coarse homology theories with $G$-actions, then
we get a functorially induced natural transformation 
\begin{equation}\label{iojiojoivj dffv d}\phi^{hG}:E^{hG}\to F^{hG}:G\BC\to \cC
\end{equation}
between their associated  Borel-equivariant versions.

Recall that a morphism between $G$-bornological coarse spaces is a  weak coarse equivalence  if it becomes a coarse equivalence after forgetting the $G$-action. The following is an immediate consequence of the definition.

\begin{kor}
The Borel-equivariant coarse homology theory $E^{hG}$ associated to $E$  
sends weak coarse equivalences to equivalences.
\end{kor}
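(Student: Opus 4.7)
The plan is to first establish the claim for the intermediate functor $F := \lim_{BG} E$, and then extend it to $E^{hG} = F^u$ via a cofinality argument.

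For $F$, let $f\colon X \to X'$ be a weak coarse equivalence in $G\BC$. Then $\hat{\Res}^G(f)$ is a morphism in $\Fun(BG,\BC)$ whose underlying morphism in $\BC$ is a coarse equivalence. Since $E$ takes values in coarse homology theories on $\BC$ by \cref{khopertgertgetrg}, each slice $E(g,-)\colon \BC \to \cC$ is coarsely invariant. Post-composition with $E$ therefore yields a pointwise equivalence in $\Fun(BG\times BG,\cC)$, and both $\diag_{BG}^{*}$ and $\lim_{BG}$ preserve equivalences. Hence $F(f)$ is an equivalence in $\cC$.

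For $F^u = E^{hG}$, I would use the pointwise colimit formula \eqref{bsdfvdfvsfvsfdvs}, so that $F^u(f)$ is induced by transition maps $f\colon X_U \to X'_{f_{*}U}$. These are not weak coarse equivalences for arbitrary $U$, because a non-equivariant coarse inverse $g\colon X' \to X$ of $f$ need not remain controlled between the restricted coarse structures. To fix this I would fix $V\in \cC_X$ witnessing $gf \sim \id_X$ and $V'\in \cC_{X'}$ witnessing $fg\sim \id_{X'}$, together with $G$-invariant extensions $V^G\in \cC_X^G$ and $V'^G\in \cC_{X'}^G$ (which exist by the cofinality of $G$-invariant entourages), and a $G$-invariant entourage $W^G\in \cC_X^G$ containing $g_{*}V'^G$. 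For any $U\in \cC_X^G$ with $U\supseteq V^G\cup W^G$ and the associated $U':=f_{*}U\cup V'^G\in \cC_{X'}^G$, one verifies that $g$ restricts to a controlled map $X'_{U'}\to X_U$ and exhibits $f\colon X_U\to X'_{U'}$ as a weak coarse equivalence in $G\BC$. Pairs $(U,U')$ of this form provide a cofinal subdiagram for both colimits computing $F^u(X)$ and $F^u(X')$, and by the first step all transitions in this subdiagram are sent to equivalences by $F$. Hence $F^u(f)$ is an equivalence.

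The main obstacle is the cofinality verification in the second step: one must show that for every $U_0'\in \cC_{X'}^G$ there exists $U\in \cC_X^G$ of the above form with $U_0'$ contained in the coarse structure generated by $U' = f_{*}U\cup V'^G$. This uses the estimate $U_0' \subseteq V'\circ f_{*}U\circ V'\subseteq \langle f_{*}U\cup V'^G\rangle$ valid for sufficiently large $U$, which comes from $g$ being coarsely inverse to $f$. Once this is in place, the conclusion follows formally from the first step and the preservation of equivalences under filtered colimits.
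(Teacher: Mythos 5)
Your argument is correct, and it is in fact more detailed than what the paper offers: the paper states this corollary with no proof at all ("an immediate consequence of the definition"), the implicit reasoning being exactly your first step — a weak coarse equivalence becomes a coarse equivalence after forgetting the $G$-action, each slice of $E$ (\cref{khopertgertgetrg}) is coarsely invariant, so $E(\hat\Res^G f)$ is a pointwise equivalence in $\Fun(BG,\cC)$ and $\lim_{BG}$ preserves it. Your second step addresses the only genuinely non-obvious point, namely that forcing $u$-continuity (\cref{kophprthgertrtgegrtg}, formula \eqref{bsdfvdfvsfvsfdvs}) does not destroy this invariance, which the paper passes over in silence; your reduction to showing that $f\colon X_U\to X'_{U'}$ is itself a weak coarse equivalence for a cofinal family of invariant entourages, and then invoking the first step, is exactly the right way to fill that in, and it correctly avoids ever applying $\lim_{BG}E$ to the non-equivariant inverse $g$.

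One small repair is needed in the cofinality verification: your estimate produces $U_0'\subseteq V'\circ f_*U\circ V'$, which lands in the coarse structure generated by $U'=f_*U\cup V'^{G}$ but not in $U'$ itself, whereas cofinality of the assignment $U\mapsto U'$ into the poset $\cC^{G}_{X'}$ requires literal containment. This is cosmetic: assuming (as one may) that all entourages in play contain the diagonal and that $V'^{G}$ is symmetric, redefine $U'(U):=V'^{G}\circ f_*U\circ V'^{G}$; this is still invariant, still contains $f_*U$ and $V'^{G}$, the same verification shows $f\colon X_U\to X'_{U'(U)}$ is a weak coarse equivalence with inverse $g$, and now $U_0'\subseteq U'(U)$ whenever $U\supseteq g_*U_0'\cup V^{G}\cup W^{G}$. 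With that adjustment the map $E^{hG}(f)$ is a filtered colimit, over a cofinal subposet, of pointwise equivalences compared along a cofinal reindexing, hence an equivalence.
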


\begin{ex}\label{ojhperthertgrtge}The projection map
\begin{equation}\label{frefwrefwrefwesdfv}X\otimes G_{max,max}\to X
\end{equation}
is a weak coarse equivalence, but not a coarse equivalence unless the $G$-action on the underlying set of $X$ is free.

Let $$G\BC\to G\BC_{h}$$ be the quotient which identifies  close maps. By \cite{heissdiss} it represents the Dwyer-Kan localization of $G\BC$ at the coarse equivalences.
 The object $G_{max,max}$ with the projection $G_{max,max}\to *$ and the diagonal
$G_{max,max}\to G_{max,max}\otimes G_{max,max}$ is an idempotent commutative coalgebra in $G\BC_{h}$.
 This coalgebra induces a right Bousfield  localization of $G\BC_{h}$ which inverts the weak coarse equivalences.
The map \eqref{frefwrefwrefwesdfv} is the component of the  counit of this localization at $X$.

In Diagram \eqref{ferwferfwrefw} we twist some equivariant coarse homology theories with $G_{can,max}$. If $G$ is finite, then $G_{can,max}\cong G_{max,max}$. In this case twisting by $G_{can,max}$ is the universal way to
make the equivariant coarse homology theory weakly coarsely  invariant.
 \hB\end{ex}
 
 Let us now start from an 
  equivariant coarse homology theory
 $$E^{G}:G\BC\to \cC\ .$$   
 Then, using \cref{oihperthertgertgetrg}, we define a  coarse homology theory with $G$-action by
 $$E:BG\times \BC\to \cC\ , \quad X\mapsto  E(X):=E^{G}(\Res_{G}(X)\otimes G_{min,min})\ ,$$
 where the $G$-action comes from the right-action of $G$ on $G_{min,min}$ and $\Res_{G}$
 equips $X$ with the trivial $G$-action.

 \begin{ddd}\label{khopertgertgrtgertge}
 We call $E$ the non-equivariant coarse homology theory associated to $E^{G}$.
 \end{ddd}
 To $E$ we can further associate the Borel-equivariant coarse homology theory $E^{hG}$ according to \cref{oiog0wergerfwerferwfr}.
 In order to relate $E^{G}$ with $E^{hG}$
 we now assume that $E^{G}$ extends to   an equivariant coarse homology theory with transfers $E^{G}_{\tr}$  in the sense of \cite{trans}:\begin{equation}\label{gowiejhofwerfwerfwerfwew}\xymatrix{G\BC\ar[dr]\ar[rr]^{E^{G}}&&\cC\\&G\BC_{\tr}\ar@{..>}[ur]_{E^{G}_{\tr}}&}
\end{equation} Here $G\BC_{\tr}$ is the $(2,1)$-category of spans
 $$\xymatrix{&W\ar[dr]^{f}\ar@{-->}[dl]_{g}&\\X&&Y}\ ,$$
 where $f:W\to X $ is a morpism in $G\BC$ which is in addition bornological, and $g$ is a controlled and bornological map which is in addition a bounded coarse covering, see \cite{trans} for details. We use a dashed arrow in order to make clear that $g$ is in general not a morphism in $G\BC$ since it may not be proper.

 \begin{ex}
 If $S$ is a $G$-set, then the projection
 $$\xymatrix{X\otimes S_{min,min}\ar@{-->}[r]^{\pr_{X}}& X}$$ is a typical example of a bounded coarse covering. 
 In fact, locally on coarse components  any bounded coarse covering over $X$
 is of this form.

 In contrast, if  $G$ is non-trivial, then the morphism \eqref{frefwrefwrefwesdfv} is not a bounded coarse covering.
\hB 
 \end{ex}

 \begin{construction} \label{} {\em Under the assumption that $E^{G}$ has an extension $E^{G}_{\tr}$ as in \eqref{gowiejhofwerfwerfwerfwew}  we 
  construct a   natural transformation
\begin{equation}\label{irjgoiwerferfweferf} \beta:E^{G}\to E^{hG} \ .
\end{equation} 
Recall that $$E^{hG}\simeq (\lim_{BG}\circ   E\circ \hat \Res^{G})^{u}:G\BC\to \cC\ .$$
In view of the universal property of $(-)^{u}\to \id$ (see \cref{kophprthgertrtgegrtg})  and the fact that $E^{G}$ is $u$-continuous we must construct a map
$$ E^{G}\to E^{hG}:=\lim_{BG}\circ E\circ \hat \Res^{G}:G\BC\to \cC\ .$$ 
We will actually construct its adjoint
$$\underline{E^{G}}\to E\circ \hat \Res^{G}:G\BC\to \Fun(BG,\cC)\ .$$
 Its value at $X$ in $G\BC$ is  a map
\begin{equation}\label{iogjoewrfwerfwerfwerfwef} \underline{E^{G}(X)}\to E^{G}(\hat \Res^{G}(X)\otimes G_{min,min})\ .
\end{equation}
 In $\Fun(BG,G\BC)$ we have a canonical isomorphism
 \begin{equation}\label{gwerhfiuhwerifewrfwer}X\otimes G_{min,min}\stackrel{\cong }{\to}\hat \Res^{G}(X)\otimes G_{min,min}\ , \quad (x,g)\mapsto (g^{-1}x,g)\ ,
\end{equation}  where $G$ acts on  $G_{min,min}$ by right multiplication.
 The transfer is a map
 \begin{equation}\label{bsfdwergwregwergwevodfsdbdf}\tr:X\to X\otimes G_{min,min}
\end{equation} in $G\BC_{\tr}$ represented by the span $$\xymatrix{&X\otimes G_{min,min}\ar[dr]^{\id}\ar@{-->}[dl]_{\pr_{X}}&\\X&&X\otimes G_{min,min}}\ ,$$

 We claim that it refines to an equivariant map \begin{equation}\label{gweoiroifjoewfwerfwerferwferfrffw}(\tr,\rho):\underline{X}\to X\otimes G_{min,min}
\end{equation}
 in $\Fun(BG,G\BC_{\tr})$.
 For every $g$ in $G$ we have a two-isomorphism $\rho_{g}: \tr\to g\circ \tr$ given by 
 $$\xymatrix{&& X\otimes G_{min,min}\ar[dddd]^{\id\otimes -\cdot g^{-1}} \\&  \ar@{-->}[dl]_{\pr_{X}}X\otimes G_{min,min} \ar[ur]^{\id}& \\X& &\\&\ar@{-->}[ul]^{\pr_{X}}X\otimes G_{min,min}\ar[dr]_{\id}\ar[uu]_{\id\otimes -\cdot g^{-1}}& \\&&X\otimes G_{min,min}}\ .$$
    The family $ \rho=(\rho_{g})_{g\in G}$ satisfies a cocycle condition.
 Applying the extension $E_{\tr}^{G}$ of $E^{G}$ to the  map $$\underline{X} \xrightarrow{(\tr,\rho)} X\otimes G_{min,min} \stackrel{\eqref{gwerhfiuhwerifewrfwer}}{\simeq} \hat \Res^{G}(X)\otimes G_{min,min} $$ in $\Fun(BG,G\BC_{\tr})$ we get the desired map \eqref{iogjoewrfwerfwerfwerfwef}  in $\Fun(BG,\cC)$.
 This construction is natural in $X$.
 
 This finishes the construction of the  natural transformation \eqref{irjgoiwerferfweferf}.
\hB }  \end{construction}

  \begin{ddd}\label{jgoijowerfewrfewfwfwef}
  If $E^{G}$ is an equivariant coarse homology theory with a given extension $E^{G}_{\tr}$, then we call
  the map
  $$\beta:E^{G}\to E^{hG}$$ in  \eqref{irjgoiwerferfweferf}
  the Borelification map.
  \end{ddd}
  
  Note that  the Borelification map depends on the choice of the extension $E^{G}_{\tr}$.

 Let $$\phi^{G}:E^{G}\to F^{G}:G\BC\to \cC$$ be a natural transformation  between equivariant coarse homology theories.
 Then we get an induced natural transformation \begin{equation}\label{bjibjsdofvsdfvsdfv}\phi:E\to F:BG\times G\BC\to \cC
\end{equation} of associated coarse homology theories with $G$-action.
 We then let  \begin{equation}\label{boijiodfbfgbdfgbd}\phi^{hG}:E^{hG}\to F^{hG}:G\BC\to \cC
\end{equation} denote the induced transformation of Borel-equivariant coarse homology theories. The following is an immediate consequence of the constructions.
 
 \begin{kor}\label{ogjwioergjeworijuoiewferfwwrf}
 If $\phi^{G}$ extends to a  natural transformation $$\phi^{G}_{\tr}:E^{G}_{\tr}\to F^{G}_{\tr}:G\BC_{\tr}\to \cC$$  between equivariant coarse homology theories with transfers, then  the following square commutes:
   \begin{equation}\label{jiogjoiwegrwerfwrefrwefwfrfreew4}\xymatrix{E^{G}\ar[r]^{\beta_{E}}\ar[d]^{\phi} &E^{hG} \ar[d]^{\phi^{hG}} \\ F^{G} \ar[r]^{\beta_{F}} &F^{hG} } \ .
\end{equation} 
 \end{kor}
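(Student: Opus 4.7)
The plan is to unwind the construction of $\beta$ from \cref{jgoijowerfewrfewfwfwef} and observe that at every stage the assertion amounts to the naturality of $\phi^{G}_{\tr}$. First I would recall that $\beta_{E}$ was built as follows: using the universal property of $(-)^{u}\to \id$ from \cref{kophprthgertrtgegrtg} together with $u$-continuity of $E^{G}$, one reduces to producing a map of $\Fun(BG,\cC)$-valued functors $\underline{E^{G}}\to E\circ \hat\Res^{G}$ on $G\BC$; by adjunction this is the same datum as a map \eqref{iogjoewrfwerfwerfwerfwef}, which is obtained by applying the extension $E^{G}_{\tr}$ to the equivariantly refined transfer $(\tr,\rho):\underline{X}\to X\otimes G_{min,min}$ in $\Fun(BG,G\BC_{\tr})$, post-composed with \eqref{gwerhfiuhwerifewrfwer}. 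The analogous formula yields $\beta_{F}$.

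Now consider the square \eqref{jiogjoiwegrwerfwrefrwefwfrfreew4}. Since $\phi^{G}$ is $u$-continuous in both variables (both $E^{G}$ and $F^{G}$ are $u$-continuous and $\phi^{G}$ is a transformation between them), and since $(-)^{u}$ is functorial in natural transformations, it suffices by the universal property of $(-)^{u}$ to check commutativity of the corresponding square of natural transformations
\[
\xymatrix{\underline{E^{G}}\ar[r]\ar[d]^{\underline{\phi^{G}}}& E\circ \hat\Res^{G}\ar[d]^{\phi\circ \hat\Res^{G}}\\ \underline{F^{G}}\ar[r]& F\circ \hat\Res^{G}}
\]
in $\Fun(G\BC,\Fun(BG,\cC))$, where the horizontal maps are the ones produced in the construction above. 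Evaluating at $X$, the two compositions are obtained by applying $E^{G}_{\tr}$ resp.\ $F^{G}_{\tr}$ to the single morphism $\underline{X}\xrightarrow{(\tr,\rho)}X\otimes G_{min,min}\simeq \hat\Res^{G}(X)\otimes G_{min,min}$ in $\Fun(BG,G\BC_{\tr})$, and then applying $\phi^{G}$ (resp.\ $\phi^{G}_{\tr}$) in the other order.

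The commutativity is therefore precisely the statement that the natural transformation $\phi^{G}_{\tr}:E^{G}_{\tr}\to F^{G}_{\tr}$ of functors on $G\BC_{\tr}$ is natural with respect to this particular morphism, which is automatic. The hypothesis that $\phi^{G}$ extends to $\phi^{G}_{\tr}$ is essential here, because the transfer $\tr$ from \eqref{bsfdwergwregwergwevodfsdbdf} is a morphism in $G\BC_{\tr}$ and not in $G\BC$, so naturality of $\phi^{G}$ alone would not suffice. The main (and essentially only) point requiring care is verifying that the construction of the map \eqref{iogjoewrfwerfwerfwerfwef} is itself natural in the transformation $E^{G}_{\tr}\mapsto F^{G}_{\tr}$; this is immediate from the fact that each step — taking $(\tr,\rho)$, applying the extension, composing with the canonical isomorphism \eqref{gwerhfiuhwerifewrfwer}, taking the adjoint, and finally applying $(-)^{u}$ — is manifestly functorial.
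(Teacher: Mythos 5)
Your argument is correct and is exactly the argument the paper intends: the paper offers no proof beyond ``immediate consequence of the constructions,'' and your unwinding — reduce via the universal property of $(-)^{u}$ and the adjunction to a square in $\Fun(BG,\cC)$, which is the naturality square of $\phi^{G}_{\tr}$ at the morphism $(\tr,\rho)$ composed with \eqref{gwerhfiuhwerifewrfwer} — makes that precise. You also correctly identify why the extension $\phi^{G}_{\tr}$ is needed (the transfer lives only in $G\BC_{\tr}$), matching the paper's remark that the filler depends on this choice.
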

Note that the filler of this square may depend on the choice of $\phi^{G}_{\tr}$.

 \begin{rem}\label{biojrgogbfgbdb}
 In our examples  we have  for every group $G$ a family $(E^{H})_{H\subseteq G}$ of  coarse homology theories $E^{H} :H\BC\to \cC$ for all subgroups of $G$.
 We than have two versions of a non-equivariant coarse homology theory, namely $E$ as defined in \cref{khopertgertgrtgertge} and the member  $E^{\{e\}}$ of the family. In our  examples the coarse homology theories $E^{H}$    furthermore depend on the choice of some parameters on which $G$ acts. In particular, we get a $G$-action on $E^{\{e\}}$ from the action on the parameters. 

 In all  of our examples we will show that there is a canonical equivalence $$E^{\{e\}}\simeq E:BG\times \BC\to \cC\ .$$
The natural transformations in our examples  also  come in families
$(\phi^{H}:E^{H}\to F^{H})_{H\subseteq G}$ 
of natural transformations. The transformation $\phi^{G}$ induces $\phi:E\to F$ according to \eqref{bjibjsdofvsdfvsdfv}.
We will furthermore show that
the squares \begin{equation}\label{ijiojgoejrofjwerfwerf234}\xymatrix{E\ar[r]^{\phi}\ar[d]^{\simeq} &F \ar[d]^{\simeq} \\ E^{\{e\}}\ar[r]^{\phi^{\{e\}}} &F^{\{e\}} } 
\end{equation}
  commute.
  These facts allow to simplify the notation an not to distinguish between $E$ and $E^{\{e\}}$ anymore.
   \hB
  \end{rem}

   \subsection{Uniform bornological coarse spaces, topological bornological spaces,  and locally compact Hausdorff spaces}\label{kopwhwthrh}
 
  A  partially defined equivariant proper map $X\to Y$ between locally compact Hausdorff $G$-spaces is a pair $(U\subseteq X,U\to Y)$
  of an invariant open subset $U$ of $X$ and an equivariant 
   proper continuous map $ U\to Y$.
   The category $G\LCH^{+}$ of locally compact Hausdorff $G$-spaces and partially defined equivariant proper maps  
is by Gelfand duality equivalent to the opposite  of the category $G\nCalg_{\comm}$ of possibly non-unital commutative $G$-$C^{*}$-algebras. It contains the wide subcategory   $G\LCH^{\propp} $  of everywhere defined equivariant proper maps.
 Under Gelfand duality the latter corresponds to the wide subcategory of $G\nCalg_{\comm}$ of essential homomorphisms.

A $G$-uniform bornological coarse space is a $G$-bornological coarse space with an additional $G$-uniform structure which is compatible with coarse structure. A morphism
 between $G$-uniform  bornological coarse spaces is a morphism of $G$-bornological coarse spaces which is in addition uniformly continuous.  We have a symmetric monoidal category 
 $G\UBC$  of $G$-uniform bornological coarse spaces, see  \cite{buen}, \cite{ass}, \cite{equicoarse} for details.

 A $G$-bornological topological space is a $G$-topological space with an additional compatible $G$-invariant bornology. A morphism between $G$-bornological topological spaces is an equivariant continuous map which is in addition proper in the bornological sense.
We have the symmetric monoidal category 
  $G\BT$ of
  topological bornological coarse spaces. We refer to   \cite{buen} for details.

  These categories are related by functors 
  \begin{equation}\label{gewrfwerfrwefwe}\xymatrix{G\BC\ar[rd]^{\iota^{\topp}_{\ell^{\infty}}}&\\G\UBC\ar[u]^{\iota} \ar[d]_{\iota'}\ar[r]^{\iota_{u}^{\topp}}&G\LCH^{+}\\G\BT\ar[ur]^{\iota^{\topp}}} .
\end{equation}  
The functor
 $\iota$ forgets the uniform structure, and $\iota'$ forgets the coarse structure and takes the underlying topological space presented by the uniform space.
We will often omit the symbols $\iota$ and $\iota'$.
The other functors 
are  characterized via Gelfand duality as follows:
\begin{enumerate}
\item $C_{0}(\iota^{\topp}_{\ell^{\infty}}(X))=\ell^{\infty}(\cB_{X})$
\item $C_{0}(\iota^{\topp}_{u}(X))= C_{u}(X)\cap \ell^{\infty}(\cB_{X})=:C_{0,u}(X)$
\item $C_{0}(\iota^{\topp}(X))= C(X)\cap \ell^{\infty}(\cB_{X})=:C_{0}(X)$\ .
\end{enumerate} 
Here $C(Z)$ denotes the $C^{*}$-algebra of bounded continuous functions on the topological space $Z$ with the supremum norm. If $Z$ is a uniform space, then 
  $C_{u}(Z)$ denotes  the subalgebra of $C(Z)$  of uniformly continuous functions.
For   $Z$ in $G\LCH^{+}$ the $C^{*}$-algebra
$C_{0}(Z) $ denotes the subalgebra of the $C^{*}$-algebra $C(Z_{+})$ of  functions on the one-point compatification $Z_{+}$ which satisfy $f(+)=0$.
%
 
The triangles in  \eqref{gewrfwerfrwefwe} do not commute but are filled with natural transformations
$$ \iota^{\topp}_{\ell^{\infty}}\circ \iota \to \iota^{\topp}_{u} \ , \qquad \iota^{\topp} \circ \iota' \to \iota^{\topp}_{u}  $$
induced by the obvious inclusions of   $C^{*}$-algebras indicated by the morphisms in the left column in \eqref{fqewfqwefqwd} below.

For $X$ in $G\UBC$ we have canonical morphisms of exact sequences
\begin{equation}\label{fqewfqwefqwd} 
\xymatrix{0\ar[r]&C_{0,u}(X) \ar[d]\ar[r]&C_{u}(X)\cap  \ell^{\infty}_{\cB}(X)\ar[r]\ar[d]& \frac{ C_{u}(X)\cap  \ell^{\infty}_{\cB}(X)}{C_{0,u}(X) }\ar[r]\ar[d]^{!!} &0\\
0\ar[r]&C_{0}(X)\ar[d]\ar[r]&C(X)\cap  \ell^{\infty}_{\cB}(X)\ar[r]\ar[d]& \frac{ C(X)\cap  \ell^{\infty}_{\cB}(X)}{C_{0}(X)}\ar[r]\ar[d]^{!}&0\\
0\ar[r]&\ell^{\infty}(\cB)\ar[r]& \ell^{\infty}_{\cB}(X)\ar[r]&C(\partial_{h}X)\ar[r]&0}
 \end{equation} It has been shown in \cite[Rem 3.5]{Bunke:2024aa} that the   vertical map marked by $!$ is an isomorphism
provided $X$ in $G\UBC$  has a  paracompact underlying topological space.

\begin{kor}\label{pkhoprtorptgiporetgertgetrg}
If $X$ in $G\UBC$  has a  paracompact underlying topological space, then
we have a compactification $\overline{\iota^{\topp}X}$  of $\iota^{\topp}X$ by the coarse Higson corona
$\partial_{h}  X$.
\end{kor}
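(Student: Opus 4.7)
The plan is to read the compactification directly off the middle row of diagram \eqref{fqewfqwefqwd} via Gelfand duality, using the isomorphism $!$ (which holds under the paracompactness hypothesis) to identify the boundary with $\partial_h X$.

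First I would set $A := C(X)\cap \ell^{\infty}_{\cB}(X)$, a unital commutative $G$-$C^{*}$-algebra sitting between $C_0(X)$ and $\ell^{\infty}_{\cB}(X)$. By equivariant Gelfand duality, $A$ corresponds to a compact Hausdorff $G$-space which I would define to be $\overline{\iota^{\topp}X}$. The inclusion $C_0(X)\hookrightarrow A$ exhibits $C_0(X)$ as a closed (and essential) ideal of $A$: essentiality is immediate because $C_0(X)$ separates the points of $X$, so any $a\in A$ annihilating all of $C_0(X)$ must vanish pointwise on $X$ and hence is zero. Under Gelfand duality, a closed essential ideal corresponds to an open dense subspace, so $\iota^{\topp}X$ (the Gelfand spectrum of $C_0(X)$) embeds equivariantly as an open dense subset of $\overline{\iota^{\topp}X}$.

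Next I would identify the closed complement $\overline{\iota^{\topp}X}\setminus \iota^{\topp}X$, which by Gelfand duality is the spectrum of the quotient $A/C_0(X)$. Here the middle row of \eqref{fqewfqwefqwd} gives exactly this quotient, and the vertical map $!$ from the middle row to the bottom row provides the comparison with $C(\partial_h X)=\ell^{\infty}_{\cB}(X)/\ell^{\infty}(\cB)$. By the cited result \cite[Rem.~3.5]{Bunke:2024aa}, the paracompactness of the underlying topological space of $X$ forces $!$ to be an isomorphism of $C^{*}$-algebras. Applying Gelfand duality once more, this isomorphism translates into a $G$-equivariant homeomorphism between the boundary $\overline{\iota^{\topp}X}\setminus \iota^{\topp}X$ and $\partial_h X$.

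The only step that needs a bit of care is verifying that the two inclusions $C_0(X)\subseteq A\subseteq \ell^{\infty}_{\cB}(X)$ are indeed inclusions of $G$-invariant subalgebras of the appropriate type (unital in the middle, non-unital ideal on the left), so that the Gelfand duals give the desired diagram of $G$-spaces. This is routine from the definitions recalled in \cref{kopehrtgegrtgrtge} and from items (1)--(3) following \eqref{gewrfwerfrwefwe}. No additional hypothesis beyond paracompactness is needed, because this hypothesis is only used to invoke the isomorphism~$!$. I do not foresee any genuine obstacle; the result is essentially a reformulation of diagram~\eqref{fqewfqwefqwd} together with the isomorphism $!$ under Gelfand duality.
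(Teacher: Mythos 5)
Your proposal is correct and matches the paper's reasoning: the paper simply defines $\overline{\iota^{\topp}X}$ as the Gelfand dual of $C(X)\cap\ell^{\infty}_{\cB}(X)$ and reads the boundary off the middle row of \eqref{fqewfqwefqwd}, with paracompactness entering only through the isomorphism marked $!$ from \cite[Rem. 3.5]{Bunke:2024aa}. Your added check that $C_{0}(X)$ is an essential ideal (giving open dense image of $\iota^{\topp}X$) is a sensible elaboration of what the paper leaves implicit, but it is the same argument.
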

Here by definition $ \overline{\iota^{\topp}X}$ is the Gelfand dual of $C(X)\cap  \ell^{\infty}_{\cB}(X)$.
The  vertical map in \eqref{fqewfqwefqwd} marked by $!!$ is an  isomorphism only under  rather restrictive conditions, see   \cite[Prop. 3.22]{Bunke:2024aa}.

 We have a fully faithful functor
 \begin{equation}\label{rfwerferf24}\beta:G\LCH^{\propp}\to G\BT
\end{equation}
 which equips a locally compact Hausdorff space with the bornology of relatively compact subsets.
%
%
%

\subsection{Equivariant  topological  coarse $K$-homology}

In this section we recall the construction of the equivariant topological coarse $K$-homology functor. 
We further compare two versions of its Borelification.

Let $G$ be a group. We let 
 $\nCcat$ denote the category of possibly non-unital $C^{*}$-categories  and consider  the category $$G\nCcat:=\Fun(BG,\nCcat)$$ of  possibly non-unital $C^{*}$-categories with strict $G$-action and equivariant functors. We refer to \cite{crosscat} for details.

 We fix $\bC$ in $G\nCcat$ and assume that the underlying $C^{*}$-category is  
effectively additive and admits countable AV-sums \cite{cank}.  
Following   \cite{coarsek}  we can then  consider the functor
$$\bV^{G}_{\bC}:G\BC\to \Ccat$$
which associates to $X$ in $G\BC$ the $C^{*}$-category of equivariant locally finite $X$-controlled objects in $\bC$. 
\begin{rem}\label{ttijhoerthertertg} 
At various places in the present paper  we need some details of the 
 description of the functor $\bV^{G}_{\bC}$. Let $\bM\bC$ denote the multiplier category of $\bC$ (we refer to \cite{cank} for details)  and consider
 $X$  in $G\BC$. The objects of $\bV^{G}_{\bC}(X)$ are triples
$(C,\rho,\nu)$, where $C$ is an object of $\bC$, $\rho=(\rho_{g})_{g\in G}$ is a cocycle of unitaries 
$\rho_{g}:C\to gC$ in $\bM\bC$, and $\nu$ is an equivariant  finitely additive   multiplier projection-valued measure on $X$.
Thereby we require  that  for every $x$  in $X$ the projection  $\nu(\{x\})$   belongs to $\bC$ and has an image, that $\id_{C}=\sum_{x\in X}\nu(\{x\})$ in the strict topology on $\bM\bC$,
and that for every bounded subset $B$ of $X$ the set
$\{x\in B\mid\nu(\{x\})\not=0\}$ is finite.

A controlled morphism $A:(C,\rho,\nu)\to (C',\rho',\nu')$ is an equivariant  morphism $A:C\to C'$ in $\bM\bC$ 
with the property that there exists a coarse entourage $U$ of $X$ such that
$\nu(\{x'\})A\nu(\{x\})=0$ for all $(x,x')$ in $(X\times X)\setminus U$. 
The composition and the involution of controlled morphisms are inherited from $\bM\bC$.

 The objects described above and controlled morphisms form the  unital $\C$-linear $*$-category
$\bV^{G,\ctr}_{\bC}(X)$ which happens to be additive. The  norm on the morphism spaces of $\bM\bC$ induces a 
 norm on the morphism spaces of $\bV^{G,\ctr}_{\bC}(X)$ and we define the unital $C^{*}$-category 
  $ \bV^{G}_{\bC}(X)$ as the completion.

If $f:X\to X'$ is a morphism in $G\BC$, then we define functors
$f_{*}:\bV^{G,\ctr}_{\bC}(X)\to \bV^{G,\ctr}_{\bC}(X')$ and
$f_{*}:\bV^{G}_{\bC}(X)\to \bV^{G}_{\bC}(X')$
 on objects and morphisms by
 $$f_{*}(C,\rho,\nu):=(C,\rho,f_{*}\nu)\ , \quad f_{*}A:=A\ .$$  
For justifications and more details we refer to \cite{coarsek}.
We thus have functors
\begin{equation}\label{gweoihjgoiwerfwerfrwef}\bV^{G,\ctr}_{\bC}:G\BC\to \Add_{\C}\ , \qquad \bV^{G}_{\bC}:G\BC\to \Ccat\ .
\end{equation}
where in the first case we forget the $*$-structure and consider the values of  $\bV^{G,\ctr}_{\bC}$ as a $\C$-linear  additive categories.
\hB 
\end{rem}

We next recall the $E$ and $K$-theory functors for $C^{*}$-categories. By  \cite{joachimcat} we have an adjunction
$$A^{f}:\nCcat\leftrightarrows \nCalg:\incl$$
where $\incl$ views a $C^{*}$-algebra as a $C^{*}$-category with a single object.

 By  \cite{budu} we have a universal
homotopy-invariant, $K$-stable, exact, and filtered colimit-preserving functor
\begin{equation}\label{bswgfergrwgrerwe}\ee:\nCalg\to \EE
\end{equation}
to a stable $\infty$-category which turns out to be presentable.
 If we equip $\nCalg$ with the symmetric monoidal structure given by the maximal tensor, then
$\EE$ has a unique presentably symmetric monoidal structure such that $\ee$ admits a unique symmetric monoidal refinement.
The functor  in \eqref{bswgfergrwgrerwe} is an $\infty$-categorical version of the classical $E$-theory functor
of \cite{MR1068250}, \cite{zbMATH04182148}.

We have an adjunction \begin{equation}\label{gwergwerferfrew} b: \Mod(KU) \leftrightarrows \EE:K \end{equation}
where $K:=\map_{\EE}(\beins_{\EE},-)
$ is the $K$-theory functor, and
the essential image   of the fully faithful functor $b$ is called the bootstrap class.

Let $$H:\nCcat\to \cC$$ be some functor.
Following \cite{cank} we adopt the following definition.
\begin{ddd} The functor $H $
is called a  finitary homological functor 
 if $\cC$ is a cocomplete stable $\infty$-category and $H$ 
 sends unitary equivalences to equivalences, 
exact sequences to fibre sequences, and preserves filtered colimits.\end{ddd}

\begin{lem}\label{kopherthetrg}
The composition  
 \begin{equation}\label{fewfqwfdqwedqwdwdweq} \ee^{\nCcat}:\nCcat\xrightarrow{A^{f}} \nCalg\xrightarrow{\ee}\EE
\end{equation}  is a finitary homological  functor.
  \end{lem}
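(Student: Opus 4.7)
The plan is to verify the three properties defining a finitary homological functor for the composition $\ee \circ A^{f}$: preservation of filtered colimits, inverting unitary equivalences, and converting exact sequences into fibre sequences.

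Preservation of filtered colimits is immediate. By \cite{joachimcat} the functor $A^{f}$ is a left adjoint, hence preserves all colimits; combined with the filtered colimit preservation of $\ee$ recorded after \eqref{bswgfergrwgrerwe}, the composition preserves filtered colimits.

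For unitary equivalences, the strategy is to factor a unitary equivalence $F \colon \bC \to \bD$ in $\nCcat$ into a unitary isomorphism (which $A^{f}$ carries to an isomorphism of $C^{*}$-algebras, and thus a fortiori to an $\ee$-equivalence) and an inclusion that adds only objects unitarily isomorphic to ones already present. The second type of factor is handled via $K$-stability of $\ee$: on the $C^{*}$-algebra side such an enlargement corresponds, up to Morita equivalence, to an upper-left-corner inclusion, so it becomes an equivalence after applying $\ee$. A detailed form of this argument can be extracted from \cite{cank}. For exact sequences, I would show that $A^{f}$ sends $0 \to \bI \to \bC \to \bC/\bI \to 0$ to a short exact sequence of $C^{*}$-algebras, using the description of $\bC/\bI$ as being obtained from $\bC$ by collapsing the morphisms lying in $\bI$; the exactness property of $\ee$ then yields a fibre sequence in $\EE$.

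The main technical obstacle is the unitary-equivalence step, since $A^{f}$ alone is not Morita invariant. One genuinely has to invoke the $K$-stability of $\ee$ (rather than merely its homotopy invariance) to collapse the difference between $A^{f}(\bD)$ and $A^{f}$ applied to a unitarily equivalent but strictly larger $C^{*}$-category.
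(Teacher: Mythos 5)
Your first step (filtered colimits: $A^{f}$ is a left adjoint, $\ee$ preserves filtered colimits) matches the paper and is fine. The genuine gap is in your treatment of exact sequences. You assert that $A^{f}$ sends an exact sequence $0\to\bI\to\bC\to\bC/\bI\to 0$ of $C^{*}$-categories to a short exact sequence of $C^{*}$-algebras, but this is exactly the point that fails for the \emph{free} functor: in $A^{f}(\bC)$ the product of non-composable morphisms is not zero, so the image of $A^{f}(\bI)$ in $A^{f}(\bC)$ is only a subalgebra, not an ideal. What one gets from the universal property is at best $A^{f}(\bC/\bI)\cong A^{f}(\bC)/\langle\bI\rangle$, where $\langle\bI\rangle$ is the closed ideal generated by the morphisms of $\bI$, and the missing (and non-trivial) content is that $\ee(A^{f}(\bI))\to\ee(\langle\bI\rangle)$ is an equivalence. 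The paper circumvents precisely this difficulty: it passes to the other assembly functor $A:\nCcat_{i}\to\nCalg$ (the completed matrix algebra $\bigoplus_{A,B}\Hom_{\bC}(A,B)$, in which non-composable products \emph{are} zero), which preserves exact sequences by \cite[Prop. 8.9.2]{crosscat}, and then uses that the comparison $A^{f}(\bC)\to A(\bC)$ becomes an equivalence after applying $\kk$, hence after $\ee$, by \cite[Prop. 6.9]{KKG}. Without some substitute for this comparison your exactness step does not go through.

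The same issue undercuts your unitary-equivalence step. The Morita/upper-left-corner picture you invoke to absorb an enlargement by unitarily isomorphic objects is valid for the matrix-type algebra $A(-)$, not for the free algebra $A^{f}(-)$: for $A^{f}$ of an enlarged category there is no evident corner embedding, so $K$-stability of $\ee$ cannot be applied directly, and you would again be forced to first compare $A^{f}$ with $A$. The paper avoids constructing any such argument by factoring $\ee\simeq c\circ\kk$ and quoting \cite[Thm. 1.32.2]{KKG}, which says that $\kk\circ A^{f}$ already inverts unitary equivalences. So while your overall outline (check the three axioms separately) is the right shape, the two substantive verifications both rest on properties of $A^{f}$ that are only available for $A$, and the bridge $A^{f}\to A$ (an equivalence in $\kk$/$\ee$) is the key ingredient your proposal is missing.
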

  \begin{proof}
  In order to cite results from \cite{KKG} we use the
  factorization
  $$\xymatrix{&\nCalg\ar[dl]_{\kk}\ar[dr]^{\ee}&\\ \KK\ar[rr]^{c}&&\EE}$$
  where $\kk:\nCalg\to \KK$ is the $\infty$-categorical version of Kasparov's $KK$ theory \cite{kasparovinvent}
  constructed in \cite{KKG} and $c$ is the canonical comparsion map induced by the universal property of $\kk$.
  We  therefore have an equivalence $\ee^{\nCcat}\simeq c\circ \kk^{\nCcat}$,
  where $\kk^{\nCcat}:=\kk\circ A^{f}$.
 By \cite[Thm. 1.32.2]{KKG} the functor $\kk^{\nCcat}$ sends unitary equivalences  to equivalences.
 Consequently $\ee^{\nCcat}$ does so.
 
 We have a functor $A:\nCcat_{i}\to \nCalg$ (see \cite[Sec. 6]{crosscat}), where
 $\nCcat_{i}$ is the wide subcategory of $\nCcat$ of functors which are injective on objects. We further have a natural transformation $$A^{f}_{|\nCcat_{i}}\to A:\nCcat_{i}\to \nCalg\ .$$ By
  \cite[Prop. 6.9]{KKG} the morphism $\kk(A^{f}(\bC))\to \kk(A(\bC))$ is an equivalence for every $C^{*}$-category $\bC$.  This implies that $\ee(A^{f}(\bC))\to \ee(A(\bC))$ is an equivalence, too.
  
   Since $A$ preserves exact sequences by  \cite[Prop. 8.9.2]{crosscat} and $\ee$ is exact by design we can conclude that
 $\ee\circ A$ sends exact sequences to fibre sequences. 
 Hence so does $\ee^{\nCcat}$.
 
 Finally, $\ee^{\nCcat}$ preserves filtered colimits, since $A^{f}$, being    a left-adjoint, preserves all colimits and
 $\ee$ preserves filtered colimits by design.
   \end{proof}

Recall the \cref{okprhertgrtge9} of an equivariant coarse homology theory.
  The following is a consequence of  \cref{kopherthetrg} and \cite[Thm. 7.3]{coarsek}.
\begin{kor} \label{i9gopwegergffrefwre}The composition
\begin{equation}\label{ohkeroptgrtegertgerg}\ee\cX^{G}_{\bC}:G\BC\stackrel{\bV^{G}_{\bC}}{\to} \nCcat \stackrel{\ee^{\nCcat}}{\to} \EE
\end{equation}
is  an $\EE$-valued equivariant coarse homology theory.\end{kor}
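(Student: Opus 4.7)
The plan is to combine \cref{kopherthetrg} with \cite[Thm.~7.3]{coarsek}. The latter result asserts that post-composing the Roe category functor $\bV^{G}_{\bC}$ with any finitary homological functor out of $\nCcat$ produces an equivariant coarse homology theory with values in whatever cocomplete stable $\infty$-category the target is. Since \cref{kopherthetrg} exhibits $\ee^{\nCcat}$ as exactly such a functor, the conclusion follows by one line of bookkeeping, provided one first notes that $\EE$ is cocomplete stable; this is immediate from its presentability, recorded just after \eqref{bswgfergrwgrerwe}.

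If I wanted to unpack the four axioms of \cref{okprhertgrtge9} individually rather than invoking \cite{coarsek} as a black box, the correspondence would run as follows. Coarse invariance of $\ee\cX^{G}_{\bC}$ reflects the standard fact that close morphisms in $G\BC$ induce unitarily equivalent functors of Roe categories, which $\ee^{\nCcat}$ inverts by the first bullet of \cref{kopherthetrg}. Excision corresponds to the property that a complementary pair in $G\BC$ produces an exact sequence of Roe $C^{*}$-categories in $\nCcat$, which $\ee^{\nCcat}$ turns into a fibre sequence by the second bullet. $u$-continuity combines the filtered colimit presentation of $\bV^{G}_{\bC}$ established in \cite{coarsek} with preservation of filtered colimits by $\ee^{\nCcat}$ from the third bullet. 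Vanishing on flasques uses the Eilenberg swindle carried by $\bV^{G}_{\bC}(X)$ for a flasque $X$, together with the $K$-stability of $\ee$ built into its universal characterization in \eqref{bswgfergrwgrerwe}.

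I do not anticipate any genuine obstacle: all of the homotopical input — exactness, $K$-stability, preservation of filtered colimits, and invariance under unitary equivalences — is built into the universal property of $\ee$, extends to $\ee^{\nCcat}$ via the left-adjoint $A^{f}$, and is precisely what the hypotheses of \cite[Thm.~7.3]{coarsek} demand. The proof is therefore essentially a citation, and the entire content of the argument has already been assembled in \cref{kopherthetrg}.
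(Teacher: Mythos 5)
Your proposal is correct and coincides with the paper's argument: the corollary is obtained precisely by combining \cref{kopherthetrg} with \cite[Thm.~7.3]{coarsek}, the only implicit check being that $\EE$ is a cocomplete stable $\infty$-category, which follows from its presentability. The extra axiom-by-axiom unpacking you sketch is unnecessary for the proof but consistent with how those inputs enter the cited theorem.
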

By   \cite[Thm. 7.4]{coarsek} it is in addition  strong.

We write $\EE(-,-):=\map_{\EE}(-,-)$ and
omit the symbols $\ee$ or $\ee^{\nCcat}$ if we insert $C^{*}$-algebras or $C^{*}$-categories.

Let $A$ be in $\EE$.
\begin{ddd}
The functor  \begin{equation}\label{gerwferfrwefw}K_{A}:=\EE(\C, -\otimes A):\EE\to \Mod(KU)
\end{equation} is called the $K$-theory with coefficients in $A$.
\end{ddd}

Since $\ee(\C)$ is a compact object in $\EE$ the functor $K_{A}$
 preserves colimits.
 The following definition is therefore justified by \cref{oigpwerferfwfrfrfr}.
   
\begin{ddd}\label{gu90erwfwef} We define
  the equivariant topological coarse $K$-homology with coefficients in $(\bC,A)$ by
$$
K\cX^{G}_{\bC,A}:G\BC\stackrel{\ee\cX^{G}_{\bC}}{\to} \EE \stackrel{K_{A}}{\to} \Mod(KU)\ .$$
\end{ddd}

\begin{rem}
If $A=\beins_{\EE}$, then we omit the subscript and write
$K\cX^{G}_{\bC}:=K\cX^{G}_{\bC,\beins_{\EE}}$ for the equivariant topological coarse $K$-homology with coefficients in $\bC$ from
 \cite{coarsek}. 
 If $\bC=\Hilb_{c}(\C)$ with the trivial action, then we further omit the subscript $\bC$ and call
 $$K\cX^{G}:=K\cX^{G}_{\Hilb_{c}(\C)}:G\BC\to \Mod(KU)$$ the equivariant topological coarse $K$-homology.
 Finally, if $G$ is the trivial group, then we omit the symbol $G$ from the notation and call
 \begin{equation}\label{hrtgertggbgdfbdgf}K\cX:=K\cX^{\{e\}}:\BC\to \Mod(KU)
\end{equation}  the topological coarse $K$-homology.

 For a comparison of $K\cX$ (or $K\cX^{G}$) with classical constructions as $K$-theory of (equivariant) Roe
 algebras associated to proper metric spaces (with $G$-action) we refer to \cite[Sec. 8.7]{buen}, \cite[Sec. 7]{Bunke:2017aa}.  \hB\end{rem}
 
 \begin{rem}\label{gkojpwregwerwef}
 The additional parameter $A$ in $\EE$ can be used to introduce coefficients.
 For example we could consider the object 
 $\beins_{\EE}\otimes H\Q:=b(KU\wedge H\Q)$
 with $b$ as in \eqref{gwergwerferfrew}. 
 The map $KU\to KU\wedge H\Q$ in $\Mod(KU)$ induces a morphism $\beins_{\EE}\to \beins_{\EE}\otimes H\Q$ in $\EE$ and 
 a rationalization map
 $$K\cX^{G}_{\bC}\to K\cX^{G}_{\bC,\Q}\ .$$
 But note that 
as long as $A$ belongs to the bootstrap class (the essential image of $b$), or, more generally, to the Künneth class in $\EE$, we have an equivalence
$$K\cX^{G}_{\bC,A}\simeq K\cX^{G}_{\bC}\otimes_{KU} K(A)$$
so that the additional parameter does not   really give new information.  
This is in contrast to the case of analytic $K$-homology considered in \cref{gwjiopwergergw}, see \cref{tkopwerggregwr}.

At the moment we do not have an interesting application for the case where $A$ does not belong to the  Künneth class. \hB
 \end{rem}

 We now develop the facts envisaged in \cref{biojrgogbfgbdb}.
  For the moment (in contrast to the convention adopted in \eqref{hrtgertggbgdfbdgf}) let $\ee\cX_{\bC}$ and  $K\cX_{\bC,A}$ denote the non-equivariant coarse homology theories associated to $\ee\cX^{G}_{ \bC}$ and $K\cX^{G}_{ \bC,A}$ according \cref{khopertgertgrtgertge}.  Further we let $\ee\cX^{\{e\}}_{\bC}$ and 
 $K\cX^{\{e\}}_{\bC,A}$ denote the specializations of \eqref{ohkeroptgrtegertgerg} and  \cref{gu90erwfwef} to the case of the trivial group. These functors carry $G$-actions induced by the $G$-action on $\bC$.
 
  In the following we show that these two versions of non-equivariant coarse homology theories are equivalent.
 
  Let $\Ccat_{2,1}$ denote the $2$-category of unital $C^{*}$-categories, functors and unitary equivalences. The functor  $\ell:\Ccat\to \Ccat_{2,1}$ presents the Dwyer-Kan localization of $\Ccat$ at the unitary equivalences \cite{startcats}.
 If $F:\Ccat\to \cC$ is a functor which sends unitary equivalences to equivalences, then
  it has a canonical factorization \begin{equation}\label{gwerfwerfrfdsvdf}\xymatrix{\Ccat\ar[dr]_{\ell}\ar[rr]^{F}&&\cC\\&\Ccat_{2,1}\ar@{..>}[ur]_{F_{2,1}}&}\ . 
\end{equation}

 \begin{prop}  \label{kohperthtregrtgertgbtertgrtgrertgertge}
 If $\bC$ admits all $AV$-sums, then we have canonical equivalences 
 \begin{equation}\label{vsfdvfdvfdvsdfcwervdsfvfdvfdvsdfvs}\ee\cX_{\bC}\simeq \ee\cX_{\bC}^{\{e\}}:BG\times \BC\to \EE 
\end{equation} and  \begin{equation}\label{vsfdvfdvfdvsdfcwervdsfvfdvfdvsdfvse}K\cX_{\bC,A}\simeq K\cX^{\{e\}}_{\bC,A}:BG\times \BC\to \Mod(KU)\ .\end{equation}
 \end{prop}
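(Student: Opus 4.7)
The strategy is to construct, for each $X$ in $\BC$, a natural unitary equivalence of $C^{*}$-categories
\[
\Phi_{X}\colon \bV^{\{e\}}_{\bC}(X)\to \bV^{G}_{\bC}(\Res_{G}(X)\otimes G_{min,min})
\]
which is natural in $X$ and intertwines the two $G$-actions in question: on the left the action on $\bV^{\{e\}}_{\bC}(X)$ induced by the $G$-action on $\bC$, and on the right the action induced by the right $G$-action on $G_{min,min}$. The proposition then follows by applying the finitary homological functor $\ee^{\nCcat}$ of \cref{kopherthetrg} (which sends unitary equivalences to equivalences), yielding \eqref{vsfdvfdvfdvsdfcwervdsfvfdvfdvsdfvs}, and then composing with $K_{A}$ from \eqref{gerwferfrwefw} to obtain \eqref{vsfdvfdvfdvsdfcwervdsfvfdvfdvsdfvse}.

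To construct $\Phi_{X}$ on objects, I take $(C,\nu)$ in $\bV^{\{e\}}_{\bC}(X)$ (an object of the non-equivariant Roe category with $G$-action on $\bC$) and form the $AV$-sum $\tilde C := \bigoplus_{g\in G} g\cdot C$, which exists by the assumption on $\bC$. Let $u_{g}\colon g\cdot C\to \tilde C$ be the structural isometries of the sum. Define a cocycle $\rho=(\rho_{h})_{h\in G}$ on $\tilde C$ in $\bM\bC$ by letting $\rho_{h}$ act as the permutation $g\cdot C\to (hg)\cdot C$ of summands; and define an equivariant multiplier projection-valued measure $\tilde\nu$ on $\Res_{G}(X)\otimes G_{min,min}$ by
\[
\tilde\nu(\{(x,g)\}) := u_{g}\, (g\cdot\nu(\{x\}))\, u_{g}^{*}\ .
\]
One checks locally finiteness (because $\nu$ is locally finite and the bornology of $G_{min,min}$ consists of finite sets) and equivariance with respect to the right action $(x,g)\cdot h = (x,gh^{-1})$. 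On morphisms, a controlled morphism $A\colon(C,\nu)\to (C',\nu')$ extends to an equivariant controlled morphism $\tilde A\colon \tilde C\to \tilde C'$ with $\tilde A|_{g\cdot C} = g\cdot A$.

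I then verify that $\Phi_{X}$ is a unitary equivalence. Full faithfulness is immediate from the formula, since any equivariant controlled morphism between induced objects is determined by its restriction to the $g=e$ summand. For essential surjectivity, I take $(D,\sigma,\mu)$ in $\bV^{G}_{\bC}(\Res_{G}(X)\otimes G_{min,min})$, set $P:= \sum_{x\in X}\mu(\{(x,e)\})$ (a multiplier projection on $D$), use the existence of images in $\bC$ to pick $C := \mathrm{Im}(P)$ together with the restricted measure $\nu(\{x\}) := \mu(\{(x,e)\})|_{C}$; the cocycle $\sigma$ then supplies unitary isomorphisms between $\sigma_{g}C$ and the $g$-component of $D$ realising a unitary isomorphism $\Phi_{X}(C,\nu)\cong(D,\sigma,\mu)$. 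Naturality in $X\in\BC$ is direct from the definition of $\Phi_{X}$ on measures via pushforward, and the required intertwining with the two $G$-actions is built into the shift of summands: acting by $k\in G$ on $(C,\nu)$ through $\bC$ produces $(k\cdot C,\nu)$, whose induced object has measure supported at $(x,g)$ translated to $(x,gk^{-1})$, agreeing with the action on $\tilde\nu$ induced by right multiplication by $k$ on $G_{min,min}$.

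The main obstacle is organising the above as a genuine natural transformation of functors $BG\times \BC\to \nCcat$, rather than merely a family of equivalences: the cocycle $\rho$ and the chosen structural maps $u_{g}$ depend on choices of $AV$-sums, and the $G$-equivariance diagrams must commute strictly (or at least coherently after applying $\ee^{\nCcat}$). Passing through the $2$-categorical localisation $\ell\colon\Ccat\to\Ccat_{2,1}$ of \eqref{gwerfwerfrfdsvdf} resolves this: $\Phi_{X}$ becomes a well-defined natural equivalence of $BG$-diagrams in $\Ccat_{2,1}$, and since $\ee^{\nCcat}$ factors through $\ell$ by \cref{kopherthetrg}, we obtain the natural equivalence of $\EE$-valued functors $BG\times \BC\to \EE$ as asserted. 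The second equivalence \eqref{vsfdvfdvfdvsdfcwervdsfvfdvfdvsdfvse} follows by post-composing with the colimit-preserving functor $K_{A}$, using \cref{oigpwerferfwfrfrfr} to preserve naturality.
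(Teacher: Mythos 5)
Your proposal is correct and follows essentially the same route as the paper: both reduce to a natural unitary equivalence between $\ell\bV_{\bC}$ and $\ell\bV^{\{e\}}_{\bC}$ in $\Ccat_{2,1}$ intertwining the two $G$-actions, and then apply $\ee^{\nCcat}$ (which factors through $\ell$) and $K_{A}$. The only difference is that you take the induction functor $\bigoplus_{g\in G}gC$ as the primary map while the paper takes the compression to the $e$-summand (with induction as its inverse); your deferral of the choice-coherence issue to $\Ccat_{2,1}$ is exactly what the paper makes explicit via the $2$-isomorphisms $\sigma_{f}$ and $\kappa_{g}$.
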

 \begin{proof}
 Let $F$ be one of $\ee^{\nCcat}_{|\Ccat}$ or $K_{A}\circ \ee^{\nCcat}_{|\Ccat}$.
 These functors send unitary equivalences to equivalences and therefore have 
 factorizations $ F_{2,1}$ as in \eqref{gwerfwerfrfdsvdf}, and the functors in questions can be written in the form
 $$F\circ \bV_{\bC}\simeq  F_{2,1}\circ \ell \bV_{\bC} \ , \quad F\circ \bV_{ \bC}^{\{e\}}\simeq      F_{2,1}\circ \ell \bV^{\{e\}}_{ \bC} \ .  $$
 Here $\bV_{ \bC}^{\{e\}}$ is the functor from \eqref{gweoihjgoiwerfwerfrwef} for the trivial group and the $G$-action induced from the action on $\bC$, and in analogy to \cref{khopertgertgrtgertge}
 \begin{equation}\label{}
  \bV_{\bC}:=\bV_{\bC}^{G}(X\otimes G_{min,min})
\end{equation}
with the $G$-action induced by the right action of $G$ on $G_{min,min}$.
  
 The asserted equivalences come from an  equivalence
 \begin{equation}\label{gwioeuorgfewrgwergre}\ell\bV_{\bC}\stackrel{\simeq}{\to} \ell\bV^{\{e\}}_{\bC}:BG\times \BC\to  \Ccat_{2,1}
\end{equation} which we now describe.
 For $X$ in $\BC$ we first  define a functor
\begin{equation}\label{fpoewrkfpowerferwfew}\phi_{X}:\bV_{\bC}(X)\to \bV^{\{e\}}_{ \bC}(X)\ .
\end{equation}  
 For every object $(C,\rho,\nu)$ of $\bV_{\bC}(X)=\bV_{\bC}^{G}(X\otimes G_{min,min})$ we choose an image $C_{e}\to C$
 of  the projection $\nu(X\times \{e\})$. It exists by the assumption that $\bC$ is effectively additive.
  We write $\nu_{e}$ for the restriction of $\nu$ to $C_{e}$ and $X\times \{e\}$. 
We then let $\phi_{X}$ in  \eqref{fpoewrkfpowerferwfew} send $(C,\rho,\mu)$  to  $(C_{e},\nu_{e})$ in $\bV^{\{e\}}_{\bC}(X)$.  
Furthermore, for
$A:(C,\rho,\nu)\to (C',\rho',\nu')$ we let
$A_{e}:C_{e}\to  C\xrightarrow{A} C'\to  C_{e}'$ be the restriction of $A$, where
the last map in this composition is the adjoint of the isometry $C_{e}'\to C'$.
 This is the image of $A$ under the functor $\phi_{X}$ in \eqref{fpoewrkfpowerferwfew}.
This finishes the description  of  $\phi_{X}$ from \eqref{fpoewrkfpowerferwfew}. In order to show that  it is  an equivalence
one can construct an inverse which  sends an object $(C,\nu)$ in $\bV^{\{e\}}_{\bC}(X)$ to an object $(\bigoplus_{g\in G}gC,\rho,\tilde \nu)$ in $\bV_{\bC}(X)=\bV_{\bC}^{G}(X\otimes G_{min,min})$, where
$\rho=(\rho_{g})_{g\in G}$ with $\rho_{g}:\bigoplus_{h\in G}hC\to g\bigoplus_{h\in G}hC$
sends the summand with index $h$ canonically to the summand with index $g^{-1}h$ of $g \bigoplus_{h\in G}hC$.
The measure $\tilde \nu$ is given by $\tilde \nu(\{(x,g)\}):=g\nu(\{x\})$ on the summand with index $g$.
It furthermore sends $A:(C,\nu)\to (C',\nu')$ to $\bigoplus_{g\in G} gA:\bigoplus_{g\in G}gC\to \bigoplus_{g\in G}gC'$.
Note that this construction requires the existence of AV-sums for index sets   of the cardinality of $G$.

Since taking the images of the projections  $\nu(X\times \{e\})$ involves a choice we can only extend the construction above to produce a natural transformation of functors \begin{equation}\label{gpoewrwkpferfwerfwregwerwer42}\bV_{\bC}\to \bV^{\{e\}}_{\bC}:\BC\to \Ccat_{2,1}\ .
\end{equation}
To this end, if $f:X\to X'$ is a morphism, $(C,\rho,\nu)$ is an object of $\bV_{\bC}(X)$
and $C_{e}\to C$ and $C_{e}'\to C$ are the images chosen for
$\nu(X\times \{e\}$ and $(f_{*}\nu)(X\times \{e\})$, then
we have a canonical unitary isomorphism $\sigma_{f,(C,\rho,\nu)}: C_{e}\to C_{e}'$.
The family  of unitaries $\sigma_{f}:=(\sigma_{f,(C,\rho,\nu)})_{(C,\rho,\nu)\in \bV^{G}_{\bC}(X)}$ is a natural  unitary isomorphism  
$$\sigma_{f}:f_{*}\circ \phi_{X}\stackrel{ \cong}{\to}  \phi_{X'}\circ f_{*}\ .$$
The pair $((\phi_{X})_{X},(\sigma_{f})_{f})$
is a natural transformation  of functors \eqref{gpoewrwkpferfwerfwregwerwer42}.

We finally identify the $G$-actions. 
The composition $\phi_{X}\circ g$ would send
$(C,\rho,\nu)$ to $(C_{g},\nu_{g})$, where
$C_{g}$ is the choice of an image of $\nu(X\times \{g\})$ and $\nu_{g}$ is the corresponding restriction. 
The composition $g\circ \phi_{X}$ sends
$(C,\rho,\nu)$  to $(gC_{e},g\nu_{e})$.
The maps
$$\kappa_{g,(C,\rho,\nu)}:C_{g}\to C\stackrel{\rho_{g}}{\to} gC\to gC_{e}$$ provide an isomorphism
$$\kappa_{g}:=(\kappa_{g,(C,\rho,\nu)})_{(C,\rho,\nu)\in \bV^{G}_{\bC}}(X):\phi_{X}\circ g\to g\circ \phi_{X}\ .$$
The family $\kappa:=(\kappa_{g})_{g\in G}$ satisfies a cycle condition and the pair 
$(\phi_{X},\kappa):\bV_{\bC}(X)\to \bV^{\{e\}}_{ \bC}(X)$
is a refinement of \eqref{fpoewrkfpowerferwfew} to a $G$-equivariant functor.
 For a morphism $(g,f)$ in $BG\times \BC$ the following square
$$\xymatrix{C_{g}\ar[rr]^{\sigma_{f,(C,\rho,\nu))}}\ar[d]^{\kappa_{g,(C,\rho,\nu)}} &&C'_{g} \ar[d]^{\kappa_{g,(C,\rho,f_{*}\nu)}} \\ gC_{e}\ar[rr]^{g\sigma_{f,(C,\rho,g_{*}\nu))}} &&gC_{e}' } $$
commutes which expresses the compatibility of $\sigma$ and $\kappa$. Alltogether this describes the transformation \eqref{gwioeuorgfewrgwergre}.
\end{proof}

  By \cite[Thm. 10.4]{coarsek}, if $\bC$ admits all $AV$-sums, then  the functor $\bV^{G}_{\bC}$ from \eqref{gweoihjgoiwerfwerfrwef} has an extension \begin{equation}\label{hjfhiquhqwuiehfew24}
  \xymatrix{G\BC\ar[r]^{\bV_{\bC}^{G}}\ar[dr]&\Ccat\ar[r]^{\ell}&\Ccat_{2,1}\\& G\BC_{\tr}\ar@{..>}[ur]_{\bV_{\bC,\tr}^{G}}&}   \end{equation}to the $(2,1)$-category of $G$-bornological coarse spaces with transfers.
 Postcomposing with the functors $(e^{\nCcat}_{|\Ccat})_{2,1}$
or
$K_{A}\circ  (e^{\nCcat}_{|\Ccat})_{2,1}$ we get extensions
 $$\xymatrix{G\BC\ar[rr]^{\ee\cX_{\bC}^{G}}\ar[dr]& &\EE\\& G\BC_{\tr}\ar@{..>}[ur]_{\ee\cX^{G}_{\bC,\tr}}&}\ , \quad \xymatrix{G\BC\ar[rr]^{K\cX_{\bC,A}^{G}}\ar[dr]& &\Mod(KU)\\& G\BC_{\tr}\ar@{..>}[ur]_{K\cX^{G}_{\bC,A,\tr}}&}\ .$$ 
 This allows to construct
 the Borelification maps
 \begin{equation}\label{erfwerfrfwreefr2334frewfr}\beta:\ee\cX^{G}_{\bC}\to \ee\cX^{hG}_{\bC} \ , \quad \beta:K\cX^{G}_{\bC,A}\to K\cX^{hG}_{\bC,A}  
\end{equation} according to 
 \cref{jgoijowerfewrfewfwfwef}.
 
 In the following we construct versions
 \begin{equation}\label{vsdfvsdfvr3vsfvdfvsdfv}  \beta':\ee\cX^{G}_{\bC}\to \ee\cX^{\{e\},hG}_{ \bC} \ , \quad \beta':K\cX^{G}_{\bC,A}\to K\cX^{\{e\},hG}_{ \bC,A} \end{equation}  of the Borelification maps and argue that they are equivalent to the maps in \eqref{erfwerfrfwreefr2334frewfr} via the equivalences from \cref{kohperthtregrtgertgbtertgrtgrertgertge}. This is relevant since we will show later that the analytic transgression is compatible with the Borelification using the version $\beta'$, see \cref{ijhgiowwerferfrewfrwefwf}.

 The maps $\beta'$ are induced by a natural transformation \begin{equation}\label{fqewjhbhjf788f714fjhrqf}
\beta'':\ell \bV_{\bC}^{G}\to  \lim_{BG}\ell \bV^{\{e\}}_{ \bC}:G\BC\to \Ccat_{2,1}\ .
 \end{equation}
   We then apply $F_{1,2}$ and post-compose with the canonical map  $F_{1,2}\lim_{BG}\to \lim_{BG}F_{1,2}$, where  $F$  is  one of $\ee^{\nCcat}_{|\Ccat}$ or $K_{A}\circ \ee^{\nCcat}_{|\Ccat}$, and lift against a map \eqref{gwezghuowerfwerf}.

In order to construct \eqref{fqewjhbhjf788f714fjhrqf} note
that objects of
$\lim_{BG}\ell \bV^{\{e\}}_{\bC}(X) $ are triples
$(C,\rho,\nu)$ where $\rho=(\rho_{g})_{g\in G}$ is a cocyle of unitary isomorphism  $\rho_{g}:(C,\nu)\to (gC,g_{*}\nu)$ in
$\bV^{\{e\}}_{\bC}(X)$, and the morphisms $A:(C,\rho,\nu)\to (C,\rho,\nu)$  of
$\lim_{BG}\ell \bV^{\{e\}}_{\bC}(X)$ are
 morphisms $A:(C,\nu)\to (C',\nu)$ in $\bV^{\{e\}}_{\bC}(X)$ such that $gA\rho_{g}=\rho'_{g}A
$ for all $g$ in $G$. This is almost the description of the $C^{*}$-category $\bV_{\bC}^{G}(X)$ given in \cref{ttijhoerthertertg}. The only difference is that the morphisms
$\rho_{g}$ for objects in $\bV_{\bC}^{G}(X)$ are controlled by $\diag(X)$, while
here they are just approximable by controlled ones.
 We therefore have a natural  fully faithful embedding inducing \eqref{fqewjhbhjf788f714fjhrqf}.
 Note that the adjoint of $\beta''$ is described explicitly below in  \eqref{vasfcaasdcasdadscq}.

  \begin{rem} Note that in \cite{startcats} we introduced marked $C^{*}$-categories in order to write
$\bV^{G}_{\bC}(X)$ as a limit of $\bV_{\bC}^{\{e\}}(X)$. To this end we mark the $\diag(X)$-controlled unitaries in the latter.
\hB \end{rem}

 \begin{lem} We assume that $\bC$ admits all $AV$-sums. Then the following triangle of natural transformations of functors $G\BC\to \Fun(BG,\Ccat_{2,1})$ commutes:
 \begin{equation}\label{}
 \xymatrix{&\underline{\ell\bV^{G}_{\bC}}\ar[dl]_{(\tr,\rho), \eqref{gweoiroifjoewfwerfwerferwferfrffw}}\ar[dr]^{ \eqref{vasfcaasdcasdadscq}} &\\    \ell \bV_{\bC}\ar[rr]_{\eqref{gwioeuorgfewrgwergre}}&&\ell \bV_{\bC}^{\{e\}} }
\end{equation}
 \end{lem}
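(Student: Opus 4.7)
The plan is to unfold both compositions on the level of objects and morphisms, to identify them as functors $\underline{\ell\bV^{G}_{\bC}(X)} \to \ell\bV^{\{e\}}_{\bC}(X)$ in $\Fun(BG,\Ccat_{2,1})$, and to exhibit a canonical unitary natural isomorphism between them. Since both paths factor through the Dwyer-Kan localization $\ell$, it suffices to work in $\Ccat$ and to produce a unitary natural isomorphism that then descends to a $2$-cell in $\Ccat_{2,1}$.

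First I would unpack the right diagonal, which is the adjoint of the fully faithful embedding $\beta''$ from \eqref{fqewjhbhjf788f714fjhrqf}. By construction it sends an object $(C,\rho,\nu)$ of $\bV^{G}_{\bC}(X)$ to the pair $(C,\nu)$ in $\bV^{\{e\}}_{\bC}(X)$, with $G$-equivariance witnessed componentwise by the unitaries $\rho_{g}\colon(C,\nu)\to(gC,g_{*}\nu)$; these lie in $\bV^{\{e\}}_{\bC}(X)$ because they are $\mathrm{diag}(X)$-controlled. A morphism $A\colon(C,\rho,\nu)\to(C',\rho',\nu')$ is sent to itself.

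Next I would compute the left-hand composition. Unwinding the construction of the transfer extension \eqref{hjfhiquhqwuiehfew24} from \cite{trans}, the functor $\bV^{G}_{\bC,\tr}(\tr,\rho)\colon\bV^{G}_{\bC}(X)\to\bV^{G}_{\bC}(X\otimes G_{min,min})$ attached to the bounded coarse covering $\pr_{X}\colon X\otimes G_{min,min}\to X$ should send $(C,\rho,\nu)$ to $\bigl(\bigoplus_{g\in G}gC,\rho',\tilde\nu\bigr)$, where $\rho'_{h}$ permutes summands through $\rho_{h}$ and $\tilde\nu(\{(x,g)\})=g\nu(\{x\})$ on the $g$-summand. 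The $|G|$-fold repetition accounts for the fiber of $\pr_{X}$ over $x$, so that $\sum_{(x,g)}\tilde\nu(\{(x,g)\})=\id_{\bigoplus_{g}gC}$, and local finiteness is automatic because $G_{min,min}$ carries the minimal bornology. Applying $\phi_{X}$ from \eqref{fpoewrkfpowerferwfew} picks the image of the projection $\tilde\nu(X\times\{e\})$, which is canonically the $e$-summand $C$ together with the measure $\nu$. On morphisms, the transfer sends $A$ to $\bigoplus_{g}gA$, which restricts to $A$ on the $e$-summand. Hence both paths send $(C,\rho,\nu)$ to $(C,\nu)$ and $A$ to $A$.

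Since any two chosen images of $\tilde\nu(X\times\{e\})$ differ by a unique unitary, one obtains a canonical unitary natural isomorphism filling the triangle. To conclude I would verify that this filler is a $2$-cell in $\Fun(BG,\Ccat_{2,1})$ and is natural in $X$: the $G$-equivariance follows from the comparison isomorphisms $\kappa_{h}$ constructed in the proof of \cref{kohperthtregrtgertgbtertgrtgrertgertge}, which identify the $G$-action on the left path (arising from the right $G$-action on $G_{min,min}$) with the $G$-action on the right path (arising from the action on $\bC$) and match precisely the cocycle $\rho$ used in $\beta''$; naturality in $X$ is handled analogously by the unitaries $\sigma_{f}$ of the same proof. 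The main obstacle is the explicit identification of the transfer functor in step two: in \cite{trans} it is defined via the general formalism of spans of bounded coarse coverings, and carefully verifying that this abstract definition unwinds to the fiberwise direct-sum formula above is the technical heart of the argument; once this identification is in hand, the rest is the same projection-and-cocycle bookkeeping as in the proof of \cref{kohperthtregrtgertgbtertgrtgrertgertge}.
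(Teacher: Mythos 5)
Your proposal is correct and takes essentially the same route as the paper, whose entire proof consists of the remark that the triangle is verified by a direct $2$-categorical calculation unfolding all definitions; your unfolding of the transfer functor, of $\phi_{X}$, and of the adjoint of $\beta''$, together with the cocycle bookkeeping via $\kappa$ and $\sigma$, is precisely that calculation carried out in more detail than the paper records. (Only a small point: the transfer extension of $\bV^{G}_{\bC}$ is supplied by \cite{coarsek}, with \cite{trans} providing only the category $G\BC_{\tr}$, so the "technical heart" you flag is the unwinding of the formula from \cite{coarsek}, which indeed yields the fiberwise $AV$-sum description you state.)
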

 \begin{proof}
 The left-down map in the triangle is induced by applying $\ell\bV_{\bC}^{G}$ to the map  \eqref{gweoiroifjoewfwerfwerferwferfrffw}. The right-down map is the adjoint of \eqref{fqewjhbhjf788f714fjhrqf}.
  This is a direct $2$-categorical calculation unfolding all definitions. 
  \end{proof}

We apply the functors   $\ee^{\nCcat}_{|\Ccat}$ or $K_{A}\circ \ee^{\nCcat}_{|\Ccat}$, go over to adjoints,
and form the lifts under \eqref{gwezghuowerfwerf} using the $u$-continuity of the upper corners in order to get:

\begin{kor}\label{iguweorigwergwergw9}We assume that $\bC$ admits all $AV$-sums.
Then the following triangles commute:
 \begin{equation}\label{gtgetrgg345gertgrge}
 \xymatrix{& \ee\cX^{G}_{\bC} \ar[dl]_{\beta}\ar[dr]^{\beta'}&\\    \ee\cX^{hG}_{\bC}\ar[rr]^{\eqref{vsfdvfdvfdvsdfcwervdsfvfdvfdvsdfvs}^{hG}} && \ee\cX^{\{e\},hG}_{\bC} }\ , \quad \xymatrix{&  K\cX^{G}_{\bC,A}\ar[dl]_{\beta}  \ar[dr]^{\beta'}&\\  K\cX^{hG}_{\bC,A}\ar[rr]^{\eqref{vsfdvfdvfdvsdfcwervdsfvfdvfdvsdfvse}^{hG}}&&K\cX_{\bC,A}^{\{e\},hG}  }\ .
\end{equation}
\end{kor}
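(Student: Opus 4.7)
The plan is to deduce the corollary directly from the preceding lemma by applying the functoriality machinery set up in the statement of the corollary itself. Namely, the preceding lemma produces a commuting triangle of natural transformations
\[
\underline{\ell\bV^{G}_{\bC}}\;\to\;\ell \bV_{\bC}\;\to\;\ell\bV_{\bC}^{\{e\}}
\]
in $\Fun(G\BC,\Fun(BG,\Ccat_{2,1}))$ whose slanted edge is the adjoint of \eqref{fqewjhbhjf788f714fjhrqf} and whose other slanted edge is induced by the transfer refinement \eqref{gweoiroifjoewfwerfwerferwferfrffw}. The corollary is then obtained by post-composing with the factorizations $F_{2,1}$ of $F\in\{\ee^{\nCcat}_{|\Ccat},\,K_A\circ\ee^{\nCcat}_{|\Ccat}\}$ through $\Ccat_{2,1}$ granted by \eqref{gwerfwerfrfdsvdf}, and then passing to adjoints along the $(\hat{\Res}^{G},\lim_{BG})$-adjunction using the canonical comparison $F_{2,1}\circ\lim_{BG}\to \lim_{BG}\circ F_{2,1}$.

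More concretely, first I apply $F_{2,1}$ levelwise to the commuting triangle from the lemma, obtaining a commuting triangle in $\Fun(G\BC,\Fun(BG,\cC))$. Its upper corner becomes $\underline{E^G_{\bC}}$ with $E^G_{\bC}\in\{\ee\cX^{G}_{\bC},K\cX^{G}_{\bC,A}\}$, the left slant is precisely the map whose adjoint, after applying $E^G_{\bC,\tr}$ to the transfer \eqref{bsfdwergwregwergwevodfsdbdf} via the $(2,1)$-categorical extension \eqref{hjfhiquhqwuiehfew24}, defines the Borelification map $\beta$ of \cref{jgoijowerfewrfewfwfwef}, and the right slant is by construction the adjoint of $\beta''$ composed with $F_{2,1}$, which after post-composition with the canonical map $F_{2,1}\circ\lim_{BG}\to\lim_{BG}\circ F_{2,1}$ and passage to the adjoint provides $\beta'$. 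Passing to adjoints preserves commutativity, hence we obtain a commuting triangle
\[
E^{G}_{\bC}\;\to\;\lim_{BG}\bigl(E_{\bC}\circ\hat\Res^{G}\bigr)\;\to\;\lim_{BG}\bigl(E^{\{e\}}_{\bC}\circ\hat\Res^{G}\bigr)
\]
of functors $G\BC\to\cC$, whose lower edge is the image under $\lim_{BG}$ of the equivalence \eqref{vsfdvfdvfdvsdfcwervdsfvfdvfdvsdfvs} (respectively \eqref{vsfdvfdvfdvsdfcwervdsfvfdvfdvsdfvse}).

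To conclude the triangles \eqref{gtgetrgg345gertgrge}, I then replace the two lower corners by their $u$-continuous approximations using \eqref{bsdfvdfvsfvsfdvs} and the counit $(-)^u\to \id$. Since $E^{G}_{\bC}$ is itself $u$-continuous by \cref{i9gopwegergffrefwre}, the universal property of the counit \eqref{gwezghuowerfwerf} established in \cref{kophprthgertrtgegrtg} yields unique lifts of the two composites to $E^{hG}_{\bC}$ and $E^{\{e\},hG}_{\bC}$; by the same universal property the compatibility between the lift $\beta$ and the lift $\beta'$ composed with $\eqref{vsfdvfdvfdvsdfcwervdsfvfdvfdvsdfvse}^{hG}$ is forced by the already established commutativity upstairs, giving the desired triangle.

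The genuinely non-formal input is the lemma preceding the corollary, which requires the explicit $2$-categorical identification of the transfer-induced map with the adjoint of the fully faithful embedding $\beta''$; once that has been checked the argument above is purely bookkeeping around the adjunction $(\hat{\Res}^{G},\lim_{BG})$ and the $u$-continuity construction $(-)^u$. I therefore expect no further obstacle beyond keeping track of where each natural transformation lives, and the main benefit of the strategy is that it allows us to replace the Borelification $\beta$ by the more analytically convenient variant $\beta'$ whenever we need to compare with constructions (such as the analytic transgression in \cref{ijhgiowwerferfrewfrwefwf}) that are naturally built out of the non-equivariant functor $\bV^{\{e\}}_{\bC}$ with its $G$-action.
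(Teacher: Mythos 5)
Your argument matches the paper's own proof: the non-formal input is exactly the preceding lemma's commuting triangle over $\Fun(BG,\Ccat_{2,1})$, and the corollary is then obtained, as in the paper, by applying the factorized functors $F_{2,1}$ for $F\in\{\ee^{\nCcat}_{|\Ccat},K_{A}\circ\ee^{\nCcat}_{|\Ccat}\}$, passing to adjoints over $BG$, and lifting along the counit \eqref{gwezghuowerfwerf} using that the upper corner is $u$-continuous. Apart from the slightly loose naming of the adjunction (it is the constant-diagram/$\lim_{BG}$ adjunction rather than one involving $\hat\Res^{G}$), this is the same bookkeeping the paper performs, so the proposal is correct.
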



%
%
%

 \subsection{Equivariant analytic $K$-homology}\label{gwjiopwergergw}

 In this section we recall the  construction of the equivariant analytic $K$-homology. 
 We furthermore introduce its  Borel-equivariant version and the Borelification map.

%

Generalizing \eqref{bswgfergrwgrerwe} to the equivariant case
we let \begin{equation}\label{hertgrtgtergbgd}\ee^{G}:G\nCalg\to \EE^{G}
\end{equation} be the  universal homotopy invariant, $K_{G}$-stable, exact and filtered colimit  preserving    functor to a cocomplete stable $\infty$-category constructed in \cite{budu}, called the equivariant $E$-theory functor   
Again, $\EE^{G}$ is presentable, and if we equip $G\nCalg$ with the symmetric monoidal structure given by the maximal tensor, then
$\EE^{G}$ has a unique presentably symmetric monoidal structure such that $\ee^{G}$ admits a unique symmetric monoidal refinement. 
The functor in \eqref{hertgrtgtergbgd} is an $\infty$-categorical refinement of the functor introduced in \cite{Guentner_2000}.

We consider the equivariant generalization   \begin{equation}\label{erfwrefrfwrfwrfreefrefwerferfw}\ee^{G\nCcat}:G\nCcat\xrightarrow{A^{f}} G\nAlg \xrightarrow{\ee^{G}}\EE^{G}
\end{equation}
of \eqref{fewfqwfdqwedqwdwdweq} which again sends unitary equivalences to equivalences, is exact and  preserves filtered colimits by the equivariant versions of the  arguments given in the proof of \cref{kopherthetrg}. 
As these properties are not needed in the present paper we will not discuss the details further.

We let $\bC^{(G)}_{\std}$ in $G\nCcat$ denote the $G$-$C^{*}$-category constructed from $\bC$ in 
 \cite[Def. 2.15.(2)]{bel-paschke}.
 Its objects are pairs $(C,\rho)$ which extend to an object $(C,\rho,\nu)$  in $\bV_{\bC}^{G}(X)$  for some 
 $X$ in $G\BC$ with a free $G$-action. The morphisms
 $A:(C,\rho)\to (C',\rho')$ are morphisms $A:C\to C'$
 in $\bC$, and the involution and composition is inherited from $\bC$.
 Finally, the $G$-action fixes the objects, and $g$ in $G$ acts on $A$ by
 $A\mapsto g\cdot A:= \rho_{g}^{\prime,-1} \circ gA \circ \rho_{g}$.
 
 \begin{rem} \label{kophertgretgertgertg}In the classical approach to  Kasparov's $KK$-theory \cite{kasparovinvent}, for
  a $G$-$C^{*}$-algebra $A$ one considers the standard $A$-Hilbert module $L^{2}(G,A)$ with $G$-action
 and its $G$-$C^{*}$-algebra  $K( L^{2}(G,A))$ of compact operators.
 It fits into an exact sequence
 \begin{equation}\label{gerwferfwerfwre}0\to K( L^{2}(G,A))\to B( L^{2}(G,A)) \to Q( L^{2}(G,A))\to 0\ ,
\end{equation}  where $B( L^{2}(G,A))\cong M K( L^{2}(G,A))$ is the
 algebra of bounded adjointable
 operators on the standard $A$-Hilbert module and also the multiplier algebra of $K(L^{2}(G,A))$,  and $Q( L^{2}(G,A))$ is an equivariant and Hilbert $A$-module version of the classical Calkin algebra.

 The category $\bC^{(G)}_{\std}$ can be considered as the analogue the $G$-$C^{*}$-algebra $K( L^{2}(G,A))$
 in the realm of $G$-$C^{*}$-categories. In analogy  to \eqref{gerwferfwerfwre} it fits into an exact sequence
 \begin{equation}\label{hrtge3g43gtrgegr}0\to \bC^{(G)}_{\std}\to \bM\bC^{(G)}_{\std} \to \bQ^{(G)}_{\std} \to 0
\end{equation}
 of $G$-$C^{*}$-categories, see \cite[Def. 2.15.]{bel-paschke}, where $\bM\bC^{(G)}_{\std}$ is the multiplier  category of $\bC^{(G)}_{\std}$.
 \hB
 \end{rem}

 We write $\EE^{G}(-,-):=\map_{\EE^{G}}(-,-)$ and omit the symbols $\ee^{G}$ or $\ee^{G\nCcat}$
 if we insert $G$-$C^{*}$-algebras or categories. Implicitly we we also use the functor $\Res_{G}:\EE\to \EE^{G}$ in order to consider objects of $\EE$ as objects of $\EE^{G}$.

 In the following we consider the functor  $$C_{0}:G\LCH^{+}\stackrel{\simeq}{\to} (G\nCalg_{\comm})^{\op}\to (G\nCalg)^{\op}\ .$$
 Recall that $A$ belongs to $\EE$.
\begin{ddd}\label{kophertgrgrgerg}
The equivariant   analytic $K$-homology with coefficients in
$(\bC,A)$ is defined  by
$$K^{G,\an}_{\bC,A} :=\EE^{G}(C_{0}(-),\bC^{(G)}_{\std}\otimes A ):G\LCH^{+}\to \Mod(KU)\ .$$
\end{ddd}
 
In \cref{ogpwerferfewrfwef} we will show that the  equivariant   analytic $K$-homology with coefficients in
$(\bC,A)$  is an equivariant Borel-Moore homology theory.

\begin{rem}\label{tkopwerggregwr}
Since the functor $\ee^{G}$ is symmetric monoidal we have a natural transformation
$$K^{G,\an}_{\bC}\otimes_{KU}K(A)\to  K^{G,\an}_{\bC,A}\ ,$$
but this transformation is not an equivalence in general.
The functor $ K^{G,\an}_{\bC,A}$ sends disjoint  unions (also infinite ones) to products.
In contrast, if $K(A)$ is not dualizable in $\Mod(KU)$, then the functor $K^{G,\an}_{\bC}\otimes_{KU}K(A)$
does not have this property. So in the case of analytic $K$-homology we have, e.g.,  two  non-equivalent versions of a rationalization,
$$K^{G,\an}_{\bC}\otimes H\Q \to  K^{G,\an}_{\bC,\Q}\ .$$
We prefer to work with the second one since it is again an equivariant Borel-Moore homology   homology theory, while the first is not.
The same applies if we replace $\Q$ by $\C$.
\hB
\end{rem}
 
Recall the notation from \cref{gkojpwregwerwef}.
\begin{ddd}\label{okpgbvgrbgbdfbdfgb}
The natural transformation of functors 
\begin{equation}\label{gwerojopwerferfwerfwerfw}\rmch^{K}:K^{G,\an}_{\bC} \to K^{G,\an}_{\bC,\C}:G\LCH^{+}\to \Sp
\end{equation} induced by
$\beins_{\EE}\to \beins_{\EE}\otimes H\C$
is called the equivariant $K$-theoretic  Chern character.\end{ddd}
The $K$-theoretic Chern character will be further used in \cref{9gwerfreferwfwever} and \cref{9gwerfreferwfwever1}. 

Note that $G$ acts on the functor $K^{\an}_{\bC,A}:\LCH^{+}\to \Mod(KU)$ via its action on $\bC$.
Specializing \cref{okheprthertgertgetg} we adopt the following definition:
\begin{ddd}\label{okheprthertgegegwerfwerfwrtgetg}
We define the Borel-equivariant  analytic $K$-homology as the composition 
$$\hspace{-0.3cm}K_{\bC,A}^{\an,hG}:G\LCH^{+}\xrightarrow{K_{\bC,A}^{\an}}\Fun(BG\times BG, \Mod(KU))\xrightarrow{\diag_{BG}^{*}}   \Fun(BG, \Mod(KU))\xrightarrow{\lim_{BG}} \Mod(KU)\ .$$
\end{ddd}
It is again an equivariant Borel-Moore homology theory.

In the following  we construct  the Borelification map
\begin{equation}\label{gwerpoj0erjgeroigerwg}\beta:K^{G,\an}_{\bC,A}\to K^{\an,hG}_{\bC,A}\ .
\end{equation}\begin{construction}{\em 
Recall from the text before \eqref{gwerfwerfrfdsvdf} that $\ell:\Ccat\to \Ccat_{2,1}$ represents the Dwyer-Kan localization at the unitary equivalences. We further use the notation from  \cite[Def. 2.8]{bel-paschke}.
We first  construct a morphism 
\begin{equation}\label{fwqedwedqewdq} (\phi,\sigma): \ell \bC^{(G)}_{\std}\to  \ell \bC_{\std}
\ . \end{equation}
in $\Fun(BG,\Ccat_{2,1})$.
The functor $\phi: \bC^{(G)}_{\std}\to  \bC_{\std}$ sends
the object $(C,\rho)$ in $ \bC^{(G)}_{\std}$ to $C$ in $\bC_{\std}$, and the morphism
$A:(C,\rho)\to (C',\rho')$  in $ \bC^{(G)}_{\std}$  to $A:C\to C'$  in $\bC_{\std}$.
The cocycle $\sigma=(\sigma_{g})_{g\in G}$ consists of natural transformations
$\sigma_{g}: \phi \circ g\to g\circ \phi$.  We define the components by
$\sigma_{g,(C,\rho)}:=\rho_{g}$. Indeed,
$\phi(g(C,\rho))= C$ and $g(\phi(C,\rho))=gC$ and
$\sigma_{g,(C,\rho)}:=\rho_{g}:C\to gC$ is a unitary as  required.
In order to see naturality, consider $A:(C,\rho)\to (C',\rho')$ in $  \bC^{(G)}_{\std}$.
Then $\phi(g(A))=\rho^{\prime,-1}_{g} gA\rho_{g}$ and $g(\phi(A))=gA$. The  resulting  relation
$\sigma_{g,(C',\rho')}   \phi(g(A)) = g(\phi(A)) \sigma_{g,(C,\rho)} $
expresses the naturality of $\sigma_{g}$. The cocycle condition for the $\rho$'s induces the cocycle
condition for $\sigma$. This finishes the construction of the map \eqref{fwqedwedqewdq}.

We have a canonical functor
$$\hat \Res^{G} :\EE^{G}\to \Fun(BG,\EE)\ .$$
It is determined by the property that it sends $\ee^{G}(A)$  for $A$ in $G\nCalg$ to
$\ee( A )$ in $\Fun(BG,\EE)$  for $A$ considered just as a $C^{*}$-algebra, and with the $G$-action induced by the $G$-action on $A$ and functoriality of $\ee$.
We write $\underline{\EE}$ for the  morphism $KU$-module in $\Fun(BG,\EE)$. For $A$ and $B$ in $G\nCalg$ or $\EE^{G}$ we have
$\underline{\EE}(A,B)\simeq \lim_{BG}\EE(A,B)$, where on the right-hand side
$G$ acts by conjugation  via its actions on $A$ and $B$, and we omitted to write the application of  $\hat \Res^{G}$ to $A$ and $B$.

Note that $\ee^{G\nCcat}_{|G\Ccat}$ sends unitary equivalences to equivalences and therefore has a natural factorization 
$ (\ee^{G\nCcat}_{|G\Ccat})_{2,1}$ as in \eqref{gwerfwerfrfdsvdf}.
We can therefore safely omit the symbol $\ell$ when we insert unital  $C^{*}$-categories into $\EE(-,-)$, or unital $G$-$C^{*}$-categories into   $\EE^{G}(-,-)$.
Furthermore, as above,  it is understood implicitly that when we insert a  unital $G$-$C^{*}$-category or a $G$-$C^{*}$-algebra into $\underline{\EE}$, then we apply $\hat \Res^{G}$ beforehand.
We now define the component of the Borelification map  \eqref{gwerpoj0erjgeroigerwg} at $X$ in $G\BC$ as the composition
\begin{eqnarray*}
K^{G,\an}_{\bC,A}(X)&\stackrel{def}{\simeq}  &
\EE^{G}(C_{0}(X),A\otimes \bC^{(G)}_{\std})\\&\xrightarrow{\hat \Res^{G}}&
\underline{\EE}(  C_{0}(X) ,A\otimes \bC^{(G)}_{\std} )\\&\xrightarrow{\eqref{fwqedwedqewdq}} &
\underline{\EE}( C_{0}(X) ,A\otimes   \bC_{\std})\\
&\stackrel{def}{\simeq}&K^{\an,hG}_{\bC,A}(X)\ .
\end{eqnarray*}
It is easy to see that this actually describes a natural transformation \eqref{gwerpoj0erjgeroigerwg} of functors 
from $G\LCH^{+}$ to $\Mod(KU)$.

 }
\hB
\end{construction}


\subsection{The analytic transgression}\label{kophertggertgrtger}

In this section we construct the analytic transgression, a natural transformation from the equivariant topological
coarse $K$-homology of a $G$-bornological coarse space  to the equivariant analytic  $K$-homology of its Higson corona. We further discuss its compatibility with the Borelification maps.

 We let $G_{can,max}$ in $G\BC$ be the group $G$ equipped with the canonical coarse structure and the maximal bornology.  Following   \cref{oihperthertgertgetrg}
we  consider the twist 
$$K\cX_{\bC,A,G_{can,max}}^{G}:G\BC\to \Mod(KU)$$
of the equivariant coarse topological $K$-homology from \cref{gu90erwfwef}.

We now construct the
  analytic transgression as a natural transformation
\begin{equation}\label{wegwerfrefwerfvfs}T^{G,\an}:K\cX_{\bC,A,G_{can,max}}^{G}\to \Sigma K_{\bC,A}^{G,\an}\circ \partial_{h}:G\BC\to \Mod(KU)\ .
\end{equation}
We closely  follow the construction from  \cite[Sec. 6]{bel-paschke}. We will use the details of the construction
of the  functor $\bV_{\bC}^{G}$ from \cite{coarsek} and recalled in \cref{ttijhoerthertertg}.
 We construct a multiplication morphism
\begin{equation}\label{werfwerfwerfwrbgwg}\mu_{X}:C(\partial_{h}X)\otimes \bV_{\bC}^{G}(X\otimes G_{can,max})\to \bQ_{\std}^{(G)}\ ,
\end{equation} where  $ \bQ_{\std}^{(G)}$ in $G\nCcat$ is as in \cite[Def. 2.15.(2)]{bel-paschke}, see also \cref{kophertgretgertgertg}. The twist by $G_{can,max}$ is introduced  to make the $G$-action on 
 $G$-bornological coarse spaces  free.
This freeness is used to see that the multiplication $\mu$ takes values in $\bQ^{(G)}_{\std}$.

\begin{construction}\label{ugwerigwerfrefw}{\em Here are the details of the construction of $\mu_{X}$ in  \eqref{werfwerfwerfwrbgwg}. Let $Y$ be in $G\BC$. If $f$ is in $\ell^{\infty}(Y)$, then for any object $(C,\rho,\nu)$ of $\bV^{G}_{\bC}(Y  )$ we can consider the element \begin{equation}\label{giouweiorjgoiwejfioeferwfw43}\nu(f):=\sum_{y\in Y} f(y)\nu(\{y\})  
\end{equation}(strict convergence) in $\End_{\bM\bC}(C )$.
Let $A:(C,\rho,\nu)\to (C',\rho',\nu')$ be a morphism in $\bV^{G}_{\bC}(Y  )$. Note that $A$ is in particular a morphism from $C$ to $C'$ in the multiplier category $\bM\bC$ of $\bC$.
But if  $f$ is in $\ell^{\infty}(\cB_{Y})$, then   $\nu'(f)A\in \bC$ by local finiteness of the objects.
Furthermore, if $f$ is in $\ell^{\infty}_{\cB_{Y}}(Y)$, then also 
$\nu'(f)A-A\nu(f)\in \bC$ by  \cite[Cor. 5.5]{bel-paschke}.  
We therefore get a multiplication map (see \cite[Def. 2.8]{bel-paschke} for notation)
$$C(\partial_{h}Y)\otimes \bV^{G}_{\bC}(Y  )\to \bM\bC^{(G)}/\bC^{(G)}\ , \quad (C,\rho,\nu)\mapsto (C,\rho)\ , \quad [f]\otimes A\mapsto [\nu'(f)A]\ .$$
 The domain of this multiplication map  is a maximal tensor product of $C^{*}$-categories. 
Since 
$C(\partial_{h}Y)$, considered as a $C^{*}$-category, has a single object the objects of the domain are in canonical bijection with the objects of $\bV_{\bC}^{G}(Y)$.

We have a projection $\pr:X\otimes G_{can,max}\to X$ in $G\BC$ which induces a morphism
$\pr^{*}:C(\partial_{h}X)\to C(\partial_{h}(X\otimes G_{can,max}))$.
The multiplication morphism $\mu_{X}$ in \eqref{werfwerfwerfwrbgwg} sends the object $(C,\rho,\nu)$ to the object $(C,\rho)$ in $\bQ^{(G)}_{\std}$. Furthermore it sends $[f]\otimes A:(C,\rho,\nu)\to (C',\rho',\nu')$ to the morphism $[\nu(\pr^{*}f)A]:(C,\rho)\to (C',\rho')$ in $\bQ^{(G)}_{\std}$.
As argued above this describes a well-defined functor of $C^{*}$-categories.
%
%
%
}\hB
\end{construction}

We further have a diagonal morphism
\begin{equation}\label{vwiejviowevdsfvsdfv}\hspace{-0.8cm}\delta_{\partial_{h}X}:=C(\partial_{h}X)\otimes -:\EE(\C,  \bV_{\bC}^{G}(X\otimes G_{can,max})\otimes A)\to \EE^{G}(C(\partial_{h}X ),C(\partial_{h}X)\otimes 
\bV_{\bC}^{G}(X\otimes G_{can,max})\otimes A)\ .
\end{equation} \begin{ddd} \label{rguweruigowerferfrfwr}We   define
the analytic transgression for $X$ as the composition
\begin{align*}
T^{G,\an}_{X}:K\cX^{G}_{\bC,A,G_{can,max}}(X)&\stackrel{def}{=}\EE(\C,\bV_{\bC}^{G}(X\otimes G_{can,max})\otimes A)\\&\stackrel{\delta_{\partial_{h}X}}{\to }
\EE^{G}(C(\partial_{h}X),C(\partial_{h}X)\otimes 
\bV_{\bC}^{G}(X\otimes G_{can,max})\otimes A)\\&\stackrel{\mu_{X}}{\to} \EE(C(\partial_{h}X),\bQ^{(G)}_{\std}\otimes A)\stackrel{\simeq,\partial}{\to} \Sigma
 \EE(C(\partial_{h}X),\bC^{(G)}_{\std}\otimes A)\stackrel{def}{=}\Sigma K^{G,\an
}_{\bC,A}(\partial_{h}X)\ .\end{align*}
\end{ddd}
Here $\partial$ is the boundary map associated to the sequence \eqref{hrtge3g43gtrgegr}. It
 is an equivalence since $ \bM\bC^{(G)}_{\std}$ is flasque \cite[Lem. 2.21]{bel-paschke}.
\begin{theorem}
The family $(T^{G,\an}_{X})_{X\in G\BC}$ are the components of 
a natural transformation $T^{G,\an}$ as in \eqref{wegwerfrefwerfvfs}
\end{theorem}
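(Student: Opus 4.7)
The plan is to factor the naturality check through the naturality of each of the three constituents of $T^{G,\an}_X$, namely the diagonal $\delta_{\partial_h X}$, the multiplication $\mu_X$, and the boundary $\partial$ associated to the sequence \eqref{hrtge3g43gtrgegr}. Given a morphism $g:X\to X'$ in $G\BC$, we obtain induced morphisms $\bV^{G}_{\bC}(g\otimes \id_{G_{can,max}}):\bV_{\bC}^{G}(X\otimes G_{can,max})\to \bV_{\bC}^{G}(X'\otimes G_{can,max})$ and $C(\partial_{h}g):C(\partial_{h}X')\to C(\partial_{h}X)$ (the pull-back along the Higson corona map, contravariantly induced through Gelfand duality from $\partial_{h}g$). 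The goal is to show that the square with vertical maps $T^{G,\an}_X$ and $T^{G,\an}_{X'}$ and horizontal maps given by $g_*$ on source and by $(\partial_h g)_*$ on target commutes in $\Mod(KU)$.

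First, the boundary $\partial$ is manifestly natural: the sequence \eqref{hrtge3g43gtrgegr} and its objects $\bC^{(G)}_{\std}, \bM\bC^{(G)}_{\std}, \bQ^{(G)}_{\std}$ do not depend on $X$, so applying the bifunctor $\EE^{G}(C(\partial_{h}(-)),-\otimes A)$ to $\eqref{hrtge3g43gtrgegr}$ produces a natural fibre sequence, whose boundary is natural in $X$. Second, the diagonal $\delta_{\partial_{h}X}$ in \eqref{vwiejviowevdsfvsdfv} is the external product with $C(\partial_{h}X)$; its naturality in $X$ is an instance of the bifunctoriality of the symmetric monoidal structure on $\EE^{G}$ together with functoriality of $\partial_{h}$ and $\bV_{\bC}^{G}$.

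The main step, and the one requiring actual content, is the naturality of the multiplication family $\mu=(\mu_X)_{X\in G\BC}$ from \eqref{werfwerfwerfwrbgwg}, viewed as a natural transformation of functors $G\BC\to G\nCcat$ sending $X$ to $C(\partial_{h}X)\otimes \bV^{G}_{\bC}(X\otimes G_{can,max})$ and to $\bQ^{(G)}_{\std}$ respectively. Concretely, I will verify that the square
\[
\xymatrix{C(\partial_{h}X')\otimes \bV^{G}_{\bC}(X\otimes G_{can,max}) \ar[r]^-{C(\partial_{h}g)\otimes \id} \ar[d]_{\id\otimes g_*}& C(\partial_{h}X)\otimes \bV^{G}_{\bC}(X\otimes G_{can,max})\ar[d]^{\mu_X}\\
C(\partial_{h}X')\otimes \bV^{G}_{\bC}(X'\otimes G_{can,max})\ar[r]^-{\mu_{X'}} & \bQ^{(G)}_{\std}}
\]
commutes. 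On objects both compositions send $(C,\rho,\nu)$ to $(C,\rho)\in \bQ^{(G)}_{\std}$. On morphisms, using the description in \cref{ugwerigwerfrefw}, both compositions applied to $[f]\otimes A:(C,\rho,\nu)\to(C',\rho',\nu')$ produce, up to the ideal $\bC^{(G)}_{\std}$, the morphism $\nu'(g^{*}f)A$ in one case and $(g_{*}\nu')(f)A$ in the other. These agree because push-forward of multiplier projection-valued measures is adjoint to pull-back of bounded functions, which is the defining identity $\sum_{x\in X}(g^{*}f)(x)\nu'(\{x\}) = \sum_{y\in X'}f(y)(g_{*}\nu')(\{y\})$ (the sums converging strictly in $\bM\bC$).

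The main obstacle is ensuring that the equality above really takes place after passing to $\bQ^{(G)}_{\std}$, i.e., modulo locally compact perturbations; since $g$ is proper and controlled, the difference between the finite-sum representatives on the two sides is supported on bounded subsets of $X$ or $X'$, hence belongs to $\bC^{(G)}_{\std}$ by local finiteness, so $\mu_{X'}\circ(\id\otimes g_{*})=\mu_X\circ(C(\partial_{h}g)\otimes\id)$ as morphisms in $G\nCcat$. Applying $\ee^{G\nCcat}$ and the bifunctor $\EE^G(-,-\otimes A)$ then assembles these three naturality statements into the commutativity of the outer square for $T^{G,\an}$, completing the proof.
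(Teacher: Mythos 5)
There is a genuine gap, and it is exactly the point the paper flags by calling the argument ``non-trivial'' and deferring it to the detailed construction in \cite[Sec.\ 7]{bel-paschke}. Your levelwise computations are fine: for a morphism $g:X\to X'$ the identity $\nu(g^{*}f)=(g_{*}\nu)(f)$ holds strictly (not just modulo $\bC^{(G)}_{\std}$), and this does show that for each individual $g$ the outer naturality square commutes on homotopy classes. But producing a natural transformation as in \eqref{wegwerfrefwerfvfs}, i.e.\ a morphism in the $\infty$-category $\Fun(G\BC,\Mod(KU))$, requires coherent fillers for all composable strings of morphisms of $G\BC$, not a square-by-square check. Your ``assembly'' step, in which the three naturality statements for $\delta$, $\mu$ and $\partial$ are composed, is precisely where this coherence has to be constructed, and the proposal offers no mechanism for it.

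Moreover, the proposed decomposition does not literally typecheck. The source of $\mu_{X}$ in \eqref{werfwerfwerfwrbgwg}, $X\mapsto C(\partial_{h}X)\otimes \bV^{G}_{\bC}(X\otimes G_{can,max})$, is not a functor on $G\BC$: $C(\partial_{h}X)$ is contravariant in $X$ while $\bV^{G}_{\bC}(X\otimes G_{can,max})$ is covariant, so your commuting square is a dinaturality-type statement rather than naturality of functors, and the intermediate object $\EE^{G}(C(\partial_{h}X),C(\partial_{h}X)\otimes \bV^{G}_{\bC}(X\otimes G_{can,max})\otimes A)$ in \eqref{vwiejviowevdsfvsdfv} has the same mixed variance, so ``naturality of $\delta_{\partial_{h}X}$'' is not a statement about a transformation between functors of $X$. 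The content of the actual proof (carried out for the Paschke morphism in \cite[Sec.\ 7]{bel-paschke}, and invoked verbatim here) is to reorganize the diagonal-then-multiplication step into a strictly functorial construction (so that the corona algebra only ever sits in the contravariant slot of the mapping object, with the coherence supplied by strict $1$- or $2$-categorical data before localizing), and only then to apply $\ee^{G\nCcat}$ and the mapping spectra. Your dinaturality check is an ingredient of that argument, but on its own it does not yield the theorem.
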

\begin{proof}
The argument is non-trivial, but  completely analogous to the one for the Paschke transformation given in  \cite[Sec. 7]{bel-paschke}.
\end{proof}

\begin{rem}
Using the composition in $\EE^{G}$ and the definitions $K_{G}(\partial_{h}X):=\EE^{G}(\C,C(\partial_{h}X))$ and
$K_{A}^{G}(\bC^{(G)}_{\std}):=\EE^{G}(\C,\bC^{(G)}_{\std}\otimes A)$
we can define a pairing
$$-\cap^{\cX}-:K_{G}(\partial_{h}X)\otimes  K\cX^{G}_{\bC,A,G_{can,max}}(X)\to \Sigma K^{G}_{A}(\bC^{(G)}_{\std})$$
which generalizes the coarse corona pairing from \cite[Def. 3.35]{Bunke:2024aa} to the equivariant case and coefficients  (see also \cref{ergerwfwerfwrf} for an application). \hB
\end{rem}


We now discuss the compatibility of the analytic transgression with the Borelification map.
 For $X$ in $G\BC$ let   $\pr:X\otimes G_{can,max}\to X$ denote the projection. 
\begin{prop}\label{ijhgiowwerferfrewfrwefwf}
We assume that $\bC$ admits all AV-sums.
The following diagram commutes  
\begin{equation}\label{fqwefqwedqwedqwedqwed}
\xymatrix{&\ar[dl]_{\pr\circ \eqref{erfwerfrfwreefr2334frewfr}}^(0.6){\pr\circ \beta}K\cX^{G}_{\bC,A,G_{can,max}}\ar[r]^{T^{G,\an}}\ar[d]_{\pr\circ  \eqref{vsdfvsdfvr3vsfvdfvsdfv}}^{\pr\circ \beta'}&K^{G,\an}_{\bC,A}\circ \partial_{h}  \ar[d]_{\eqref{gwerpoj0erjgeroigerwg}}^{\beta} \\K\cX^{hG}_{\bC,A }\ar[r]^{\simeq}_{\eqref{gtgetrgg345gertgrge}}& K\cX^{\{e\},hG}_{\bC,A }\ar[r]^{(T^{\an})^{hG}}&K^{\an,hG}_{\bC,A}\circ \partial_{h} }\ .
\end{equation}
\end{prop}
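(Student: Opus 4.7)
The plan is to split the diagram into the left triangle and the right square and argue each commutes separately. The left triangle is precisely the content of \cref{iguweorigwergwergw9}, applied to $K\cX^G_{\bC,A,G_{can,max}}$ and post-composed with the projection $\pr$ induced by $G_{can,max}\to *$, so it commutes. The nontrivial content is the right square, which expresses compatibility of the analytic transgression with Borelification.

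To treat the right square, I would unfold both vertical maps. By construction, the Borelification $\beta$ on the analytic side is obtained by applying $\hat\Res^G$ and post-composing with the morphism $(\phi,\sigma):\bC^{(G)}_\std\to \bC_\std$ in $\Fun(BG,\Ccat_{2,1})$ from \eqref{fwqedwedqewdq}. After the equivalence \eqref{gtgetrgg345gertgrge}, the map $\pr\circ\beta'$ is obtained, evaluated at $X$, by post-composing with the transformation $\beta'':\ell\bV^G_\bC\to \lim_{BG}\ell\bV^{\{e\}}_\bC$ from \eqref{fqewjhbhjf788f714fjhrqf} and the projection $\pr$. Both $T^{G,\an}$ and the non-equivariant $T^{\an}$ underlying $(T^{\an})^{hG}$ are built from the same three ingredients: a diagonal map, the multiplication $\mu$ of \eqref{werfwerfwerfwrbgwg}, and the boundary of the exact sequence \eqref{hrtge3g43gtrgegr} (resp.\ its non-equivariant version $0\to \bC_\std\to\bM\bC_\std\to\bQ_\std\to 0$). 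The right square therefore reduces to exhibiting a commutative square in $\Fun(BG,\Ccat_{2,1})$ comparing the two multiplications,
\[
\xymatrix@C=1em{
C(\partial_h X)\otimes \ell\bV^G_\bC(X\otimes G_{can,max})\ar[r]^-{\mu_X}\ar[d]_{\id\otimes\beta''} & \ell\bQ^{(G)}_\std\ar[d]^{(\phi,\sigma)}\\
C(\partial_h X)\otimes \lim_{BG}\ell\bV^{\{e\}}_\bC(X\otimes G_{can,max})\ar[r] & \lim_{BG}\ell\bQ_\std
}
\]
(the bottom arrow being $\lim_{BG}\mu^{\{e\}}$), together with the corresponding square for $\bM\bC$ in place of $\bQ$. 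By naturality of $\ee^{G\nCcat}$ and of the boundary of a morphism of exact sequences, this yields commutativity of the right square.

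The verification of the multiplication square above is dictated by the formulas: on an object $(C,\rho,\nu)$ of $\bV^G_\bC(X\otimes G_{can,max})$, the Borelification $\beta''$ produces via the $e$-restriction a family of objects of $\bV^{\{e\}}_\bC(X\otimes G_{can,max})$ related by the cocycle $\rho$; the operator $\nu(f)$ from \eqref{giouweiorjgoiwejfioeferwfw43} matches, after $\phi$, the multiplications on these translates, and the cocycle $\sigma_g=\rho_g$ installed on $\bC_\std$ agrees with the one that $\beta''$ extracts from the structure unitaries of $(C,\rho)$. I expect the main obstacle to be the $2$-categorical bookkeeping: establishing the match of cocycles as an honest commutative diagram in $\Fun(BG,\Ccat_{2,1})$ rather than only pointwise, using the hypothesis that $\bC$ admits all $AV$-sums so that the decomposition entering $\beta''$ is available. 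This is the same cocycle-matching argument already performed in the proof of \cref{kohperthtregrtgertgbtertgrtgrertgertge}; once it is in place, naturality of $\ee^{G\nCcat}$ and of the boundary assembles everything into the desired commutativity.
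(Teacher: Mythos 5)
Your proposal follows essentially the same route as the paper: the left triangle is disposed of by \cref{iguweorigwergwergw9}, and the right square is reduced, via naturality of $\ee^{G\nCcat}$ and of the boundary of \eqref{hrtge3g43gtrgegr}, to a multiplication-compatibility square in $\Fun(BG,\Ccat_{2,1})$ relating $\mu_X$, $(\phi,\sigma)$ from \eqref{fwqedwedqewdq} and (the adjoint of) $\beta''$, which is exactly the key square \eqref{gwerfewrfrewfwe}-lower-right checked in the paper. One small correction: the verification is even easier than you anticipate, since $\beta''$ (equivalently its adjoint $(\psi,\kappa)$) simply records $(C,\rho,\nu)\mapsto(C,\nu)$ with cocycle $\rho$ and involves no $e$-restriction, image choices or $AV$-sums — those enter only through \eqref{gwioeuorgfewrgwergre} and \cref{iguweorigwergwergw9} used for the left triangle.
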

 \begin{proof}
   We start with constructing a natural transformation 
 \begin{equation}\label{vasfcaasdcasdadscq} (\psi,\kappa):\underline{\ell \bV^{G}_{\bC }} \to \ell \bV^{\{e\}}_{\bC}:G\BC\to \Cat_{2,1}\end{equation}
representing the adjoint of $\beta''$ in   
  \eqref{fqewjhbhjf788f714fjhrqf}.
   For $X$ in $G\BC$ the functor $\psi_{X}$ sends $(C,\rho,\nu)$ in $\bV^{G}_{\bC}(X)$ to
 $(C,\nu)$ in $\bV_{\bC}(X)$, and $A:(C,\rho,\nu)\to (C',\rho',\nu')$ in $  \bV^{G}_{\bC}(X)$ to
 $A:(C,\nu)\to (C',\nu')$ in $\bV^{\{e\}}_{\bC}(X)$. The  cocycle 
 $\kappa_{X}=(\kappa_{X,g})_{g\in G}$ of natural transformations
 $\kappa_{X,g}:\psi_{X}\to g\circ \psi_{X}$ has the components
 $\kappa_{X,g,(C,\rho,\nu)}:= \rho_{g}:(C,\nu) \to g(C,\nu)=(gC,g_{*}g\nu)$.
 The equation  $  g_{*} g\nu \circ \rho_{g}= \rho_{g} \circ \nu$ expresses the fact that  
 $\kappa_{X,g,(C,\rho,\nu)}$ is $\diag(X)$-controlled.
 The equality $\rho_{g}\circ A=gA\circ \rho_{g}$ expresses naturality of $\kappa_{X,g}$.
 Finally, the family
 $(\psi_{X},\kappa_{X})_{X\in G\BC}$ is the desired natural transformation \eqref{vasfcaasdcasdadscq}.
 
 The natural transformation \eqref{fwqedwedqewdq} induces a natural transformation
 \begin{equation}\label{fwiojowqdqwedeqweeee} \ell \bQ^{(G)}_{\std}\to \ell \bQ_{\std}\ .
  \end{equation}
  in $\Fun(BG,\Ccat_{2,1})$.
  For $X$ in $G\BC$ we consider the diagram 
 \begin{equation}\label{gwerfewrfrewfwe}\hspace{-2cm}
\xymatrix{\EE(\C,\bV^{G}_{\bC}(X\otimes G_{can,max})\otimes A)\ar[r]^-{\delta^{G}}\ar[d]^{\hat \Res^{G}\circ \Res_{G}}&\EE^{G}(C(\partial_{h}X),\bV^{G}_{\bC}(X\otimes G_{can,max})\otimes C(\partial_{h}X)\otimes A) \ar[r]^-{\mu^{G}}\ar[d]^{\hat \Res^{G}}& \EE^{G}(C(\partial_{h}X),\bQ^{(G)}_{\std}\otimes A)\ar[d]^{\hat \Res^{G}}\\ \ar[d]^{\pr\circ \eqref{vasfcaasdcasdadscq}}
\underline{\EE}(\underline{\C}, \underline{\bV^{G}_{\bC}(X\otimes G_{can,max})}\otimes A) \ar[r]^-{\widehat{\delta}}& \underline{\EE}( C(\partial_{h}X) ,\underline{\bV^{G}_{\bC}(X\otimes G_{can,max})}\otimes  C(\partial_{h}X) \otimes A)\ar[r]^-{\hat \mu} \ar[d]^{\pr\circ \eqref{vasfcaasdcasdadscq}}& \underline{\EE}(  C(\partial_{h}X)  , \bQ^{(G)}_{\std} \otimes A)\ar[d]^{ \eqref{fwiojowqdqwedeqweeee}}\\ 
\underline{\EE}(\underline{\C},  \bV^{\{e\}}_{\bC}(X )\otimes A)\ar[r]^-{ \delta}  & \underline{\EE}( C(\partial_{h}X) , \bV^{\{e\}}_{\bC}(X )\otimes  C(\partial_{h}X)\otimes A)\ar[r]^-{\mu}&\underline{\EE}( C(\partial_{h}X) , \bQ_{\std}\otimes A)}\ ,
\end{equation}
in $ \Mod(KU)$.
The map $\hat \mu$ is induced by the same functor \eqref{werfwerfwerfwrbgwg}  as $\mu^{G}$.
The upper and lower horizontal compositions reflect the definitions of the transgression maps $ T^{G,\an}$ and $(T^{\{e\},\an})^{hG}$.
The upper two and the lower left squares commute obviously.
One checks that the lower right square commutes since
\begin{equation}\label{}  \xymatrix{\underline{\ell \bV^{G}_{\bC}(X\otimes G_{ can,max})}\otimes \ C(\partial_{h}X) \ar[r]^-{\hat \mu}\ar[d]^{\pr\circ \eqref{vasfcaasdcasdadscq}} &  \ell \bQ^{(G)}_{\std}  \ar[d]^{\eqref{fwiojowqdqwedeqweeee}} \\ \ell \bV^{\{e\}}_{\bC}(X)\otimes  C(\partial_{h}X) \ar[r]^-{\mu} & \ell \bQ_{\std} }  \end{equation} 
commutes in $\Fun(BG,\Ccat_{2,1})$. We  get a commutative square \begin{equation}\label{}\xymatrix{ &\ar[dl]_{\pr\circ \beta'}K\cX^{G}_{\bC,A,G_{can,max}}\ar[r]^{T^{G,\an}}\ar[d] & K_{\bC,A}^{G,\an}\circ \partial_{h}\ar[d]_{\eqref{gwerpoj0erjgeroigerwg}}^{\beta} \\ K\cX^{\{e\},hG}_{\bC,A}\ar[r]^-{\eqref{gwezghuowerfwerf} }& \lim_{BG} K\cX^{\{e\}}_{\bC,A}\ar[r]^{!} & K_{\bC,A}^{\an,hG}\circ \partial_{h} } 
\end{equation}
of natural transformations between functors from $G\BC$ to $\Mod(KU)$, where the marked arrow is induced by 
the lower line of \eqref{gwerfewrfrewfwe}. The precise argument for naturality is again as in  \cite[Sec. 7]{bel-paschke}.
We get the right square in \eqref{fqwefqwedqwedqwedqwed}.
The left triangle follows from 
  \cref{iguweorigwergwergw9}. 
\end{proof}

%
%

\begin{rem}\label{jhtrhoeriotgertgetrhehhfj}
By specializing \eqref{zwetrwertwret} we obtain  a factorization
\begin{equation}\label{kophertggertgrtgehertgrtr}
\xymatrix{&R(K^{G,\an}_{\bC,A}\partial_{h})\ar[d]\\\ar@{..>}[ur]K\cX^{G}_{\bC,A,G_{can,max}}\ar[r]^{T^{G,\an}}&K^{G,\an}_{\bC,A}\partial_{h}} \ .\end{equation}
The dotted arrow can be considered as the best approximation of   the analytic  transgression  by  a natural transformation between
equivariant coarse homology theories. In examples one can show that $T^{G,\an}$ is non-trivial which implies that 
$R(K^{G,\an}_{\bC,A}\partial_{h})$ is not the zero functor. \hB
\end{rem}

\subsection{Equivariant algebraic coarse   $K$-homology}

 In this section we introduce the equivariant algebraic coarse $K$-homology and the comparison map to equivariant topological coarse $K$-homology. We discuss conditions ensuring that this comparison map is an equivalence.
 We furthermore introduce the Borel equivariant  algebraic coarse $K$-homology and the Borelification map using the general procedure from \cref{okhphherthertgeg9}. We argue that the comparison map is compatible with the Borelification.

For a commutative ring $k$ we let $\Cat_{k}$  denote the category of $\Mod(k)$-enriched categories.
We let $$K^{\Cat_{\Z}}H:\Cat_{\Z}\to \Sp$$ denote the
algebraic homotopy $K$-theory functor for $\Z$-linear categories described, e.g., in \cite{L_ck_2016},  \cite[Sec. 13.6]{Bunke:2025aa}.
Following \cite[Sec. 13.8]{Bunke:2025aa} its twist by the non-unital algebra of trace class operators is defined by
\begin{equation}\label{gwergfwrwfg5gf}K^{\Cat_{\Z}}H_{\cL^{1}}:=\Fib(K^{\Cat_{\Z}}H(\cL^{1,+}\otimes_{\C} -)\to K^{\Cat_{\Z}}H(  -)):\Cat_{\C}\to \Sp
\end{equation} 
where we omitted to write the forgetful functor $\Cat_{\C}\to \Cat_{\Z}$, and  the map  in \eqref{gwergfwrwfg5gf} is induced by 
the  canonical projection
$\cL^{1,+}\to \C$ from the unitalization $\cL^{1,+}$ of 
$\cL^{1}$.

By \cite[Def. 13.23]{Bunke:2025aa} we have   a natural comparison transformation
\begin{equation}\label{vfojoivdsfvfdvswre}c^{\cL^{1}}:K^{\Cat_{\Z}}H_{\cL^{1}}\to K^{\nCcat} :\Ccat\to \Sp\ ,
\end{equation}
where
$$K^{\nCcat}:\nCcat\xrightarrow{\ee^{\nCcat},\eqref{erfwrefrfwrfwrfreefrefwerferfw}} \EE\xrightarrow{K} \Mod(KU)$$
is the usual $K$-theory functor for $C^{*}$-categories. Here in the domain of \eqref{vfojoivdsfvfdvswre} we omitted the   functor which 
interprets a unital $C^{*}$-category as a $\C$-linear category, and in the target we omitted to write the restriction of the $K$-theory functor to unital $C^{*}$-categories. 
 By   \cite[Cor. 13.25]{Bunke:2025aa} the component
 $c^{\cL^{1}}_{\Hilb^{\fin}(\C)}$ of \eqref{vfojoivdsfvfdvswre} is an equivalence.
 Using that any finite-dimensional $C^{*}$-algebra  is Morita equivalent to some finite coproduct of copies of $\Hilb^{\fin}(\C)$ we conclude, using Morita invariance and additivity of the domain and target, that the 
 components 
 $c^{\cL^{1}}_{\Hilb^{\fg,\proj}(A)}$ and $c^{\cL^{1}}_{A}$  for finite-dimensional  $C^{*}$-algebras $A$
are  equivalences. Here we consider $A$ as a one-object $C^{*}$-category.

For any commutative ring $k$ let 
 $\Add_{k}\subseteq \Cat_{k}$ be the full subcategory  $k$-linear additive categories.
The following is taken from   \cite[Def. 6.2]{Bunke:2025aa}.
\begin{ddd}\label{kophtrherthetregrtg}
A functor $F:\Add_{k}\to \cC$ is homological if $\cC$ is a cocomplete stable $\infty$-category
and $F$ preserves equivalences, filtered colimits, sends Karoubi filtrations to fibre sequences,
and annihilates flasques.
\end{ddd}

By  \cite[Cor. 13.22]{Bunke:2025aa} the restriction of $K^{\Cat_{\Z}}H$ to $\Add_{\Z}$ is homological. 
This property is inherited by the twisted version
$K^{\Cat_{\Z}}H_{\cL^{1}}$ on $\Add_{\C}$.
Recall the functor $\bV^{G,\ctr}_{\bC}$ from  \eqref{gweoihjgoiwerfwerfrwef}.
\begin{ddd}\label{kopgwegwerfrfwref}
We define  the equivariant algebraic coarse $K$-homology functor $$K\cX^{G,\ctr}_{\bC }:G\BC\xrightarrow{\bV^{G,\ctr}_{\bC}} \Add_{\C}\xrightarrow{K^{\Cat_{\Z}}H_{\cL^{1}}}\Sp\ .$$
\end{ddd}

 Since $K^{\Cat_{\Z}}H_{\cL^{1}}$ is homological we conclude by   \cite[Thm. 7.2]{Bunke:2025aa} that the functor $K^{G,\ctr}_{\bC }$ is indeed an    equivariant coarse homology theory.
 \begin{ddd}\label{kopgwegwerfrfwref1}
 We define the comparison transformation
\begin{equation}\label{gtjwneiofgerfewrfwerfref}c^{G}:K\cX^{G,\ctr}_{\bC }\to K\cX^{G}_{\bC}:G\BC\to  \Sp
\end{equation}
as the composition
\begin{equation}\label{}K\cX^{G,\ctr}_{\bC }=K^{\Cat_{\Z}}H_{\cL^{1}}\circ \bV^{G,\ctr}_{\bC}\xrightarrow{!}
 K^{\Cat_{\Z}}H_{\cL^{1}}\circ \bV^{G}_{\bC} \xrightarrow{c^{\cL^{1}},\eqref{vfojoivdsfvfdvswre}}K^{\nCcat} \circ \bV^{G}_{\bC} =K\cX^{G}_{\bC} .
 \end{equation}  \end{ddd}
  
  Here the marked arrow is induced by the inclusion
  $ \bV^{G,\ctr}_{\bC }\to  \bV^{G}_{\bC}$.

 We now consider conditions ensuring that the comparison map is an equivalence. 
  Recall that we omit the subscript $\bC$    in the case that   $\bC=\Hilb_{c}(\C)$ with the trivial action.
 Let $H$ be a subgroup of $G$.  
\begin{lem}\label{hkrtopgkoprtogkrtgretgert}
 The component  \begin{equation}\label{gwergerrewferfwerf}c^{G}_{(G/H)_{min,min}}:K\cX^{G,\ctr}((G/H)_{min,min})\stackrel{\simeq}{\to} K\cX^{G}((G/H)_{min,min}) \ .
 \end{equation}
 of the comparison transformation at $(G/H)_{min,min}$ in $G\BC$ is an equivalence.
 \end{lem}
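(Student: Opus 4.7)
My plan is to reduce this statement to the known fact, recorded in the paragraph after \eqref{vfojoivdsfvfdvswre}, that the comparison $c^{\cL^{1}}_{A}$ is an equivalence on every finite-dimensional $C^{*}$-algebra $A$ (hence, by Morita invariance and additivity of both target functors, on any additive or direct-sum completion of such). The reduction proceeds by exploiting the rigidity of the minimal coarse structure and the minimal bornology on $(G/H)_{\min,\min}$ to identify, up to unitary/Morita equivalence, both Roe categories with the category of finite-dimensional unitary $H$-representations, which is Morita equivalent to the finite-dimensional $C^{*}$-algebra $\C[H]$ (at least when $H$ is finite).

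More concretely, I would first unfold the definitions: by \cref{kopgwegwerfrfwref1}, the map in question is obtained by evaluating the natural transformation $c^{\cL^{1}}$ of \eqref{vfojoivdsfvfdvswre} on the inclusion $\bV^{G,\ctr}_{\Hilb_{c}(\C)}((G/H)_{\min,\min}) \to \bV^{G}_{\Hilb_{c}(\C)}((G/H)_{\min,\min})$. I would then analyze these Roe categories using the explicit description in \cref{ttijhoerthertertg}. Because the coarse structure is minimal, every controlled morphism is supported on $\mathrm{diag}(G/H)$ and hence decomposes as a block sum indexed by the points of $G/H$; since the bornology is minimal, local finiteness forces each projection $\nu(\{x\})$ to have finite-dimensional image in $\Hilb_{c}(\C)$. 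Picking a base point $eH$, the stabilizer $H$ acts on the finite-dimensional subspace $\nu(\{eH\})C$, and the $G$-equivariance condition together with the block-diagonal nature of controlled morphisms shows that an object is determined by a finite-dimensional unitary $H$-representation and morphisms by $H$-equivariant linear maps. In particular, controlled morphisms are automatically of finite rank and hence bounded, so no norm completion is added in passing from $\bV^{G,\ctr}$ to $\bV^{G}$; both sides are unitarily equivalent to the $C^{*}$-category $\Hilb^{\fg,\proj}(\C[H])$ of finite-dimensional unitary $H$-representations.

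Finally, since both $K^{\Cat_{\Z}}H_{\cL^{1}}$ and $K^{\nCcat}$ are invariant under unitary/Morita equivalence and are additive (homological, in the sense of \cref{kophtrherthetregrtg}, for the algebraic version; a consequence of $K$-stability and exactness for the topological one), the claim reduces to showing that $c^{\cL^{1}}_{\C[H]}$ is an equivalence. For $H$ finite, $\C[H]$ is a finite-dimensional $C^{*}$-algebra, so the paragraph after \eqref{vfojoivdsfvfdvswre} applies directly.

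The main obstacle is the step identifying $\bV^{G}_{\Hilb_{c}(\C)}((G/H)_{\min,\min})$ with $\Hilb^{\fg,\proj}(\C[H])$ as a unitary equivalence of $C^{*}$-categories carrying equivariance data through correctly (the functor back goes via the coinduction $V \mapsto \bigoplus_{gH \in G/H} gV$, which requires $\bC$ to admit $AV$-sums of cardinality $|G/H|$). A secondary subtlety arises if $H$ is infinite: then $\C[H]$ is no longer finite-dimensional and the Cortiñas--Thom argument cited is not immediately applicable; in that case one should exhaust $\Hilb^{\fg,\proj}(\C[H])$ by its full subcategories on representations that factor through finite quotients (or combine $u$-continuity with the finite case), which should reduce the statement to the setting already handled.
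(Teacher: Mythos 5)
Your first step — unwinding the minimal coarse and bornological structures to identify both $\bV^{G,\ctr}_{\Hilb_{c}(\C)}((G/H)_{\min,\min})$ and $\bV^{G}_{\Hilb_{c}(\C)}((G/H)_{\min,\min})$ with the category of finite-dimensional unitary $H$-representations — is essentially the paper's first step (it cites the analogous statement from the reference for \cref{hkrtopgkoprtogkrtgretgert}). The gap is in the reduction you perform afterwards. You pass to $\C[H]$ and invoke the fact that $c^{\cL^{1}}$ of \eqref{vfojoivdsfvfdvswre} is an equivalence on finite-dimensional $C^{*}$-algebras; this only works when $H$ is finite, whereas the lemma is asserted, and is needed in \cref{gkopwreferfwerfrwef} and the definition of the subcategory \eqref{fqwewefedqedew}, for \emph{arbitrary} subgroups $H$ of $G$ (it is the companion \cref{lpfrferferertgrtgrgerg} that is restricted to finite $H$). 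Your suggested repairs for infinite $H$ do not go through: (i) not every finite-dimensional unitary representation of an infinite group factors through a finite quotient — already for $H=\Z$ the character $n\mapsto e^{2\pi i\theta n}$ with $\theta$ irrational is a counterexample, so the proposed exhaustion misses most of the category; (ii) the identification of finite-dimensional unitary representations with $\Hilb^{\fg,\proj}(\C[H])$ fails for infinite $H$, since a finite-dimensional $\C[H]$-module is in general not projective (e.g.\ the trivial module over $\C[\Z]$); (iii) appealing to $u$-continuity buys nothing here, because $(G/H)_{\min,\min}$ already carries the minimal coarse structure, so there is no filtered family of coarse structures left to exploit.

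The paper's route avoids the group algebra altogether: since finite-dimensional \emph{unitary} representations of any group are semisimple, Schur's lemma decomposes the representation category into isotypic components, identifying it (as a $\C$-linear additive category, resp.\ as a $C^{*}$-category) with the additive completion of the coproduct $\coprod_{\hat H^{u}_{\fin}}\Hilb^{\fin}(\C)$ indexed by the irreducible finite-dimensional unitary representations of $H$. Both $K^{\Cat_{\Z}}H_{\cL^{1}}$ and $K^{\nCcat}$ are Morita invariant and preserve coproducts, and the component of \eqref{vfojoivdsfvfdvswre} at $\Hilb^{\fin}(\C)$ is an equivalence, so the comparison map is an equivalence summand-wise and hence on the whole Roe category, with no finiteness assumption on $H$. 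This is the missing idea you need to substitute for the $\C[H]$ step; for finite $H$ your argument is fine and coincides in spirit with the paper's treatment of the finite-dimensional algebra case, but as written it does not prove the stated lemma.
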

\begin{proof}
 As in \cite[Lem. 8.20]{equicoarse}      observe that  both, 
 $$\bV^{G,\ctr}((G/H)_{min,min})\ , \qquad  \bV^{G}((G/H)_{min,min})$$ are  equivalent to the  categories of finite-dimensional unitary representations of $H$, considered as $\C$-linear additive category or as $C^{*}$-category.
 They  are  equivalent to the additive completions of 
 $\coprod_{\hat H^{u}_{\fin}} \Hilb^{\fin}(\C)$ in $\Cat_{\C}$ or $\Ccat$, respectively, 
where $  \hat H^{u}_{\fin}$ denotes the set of isomorphism classes of finite-dimensional unitary representations of $G$.
Since the functors  $K^{\Ccat_{\Z}}H_{\cL^{1}}$ and
$K^{\nCcat}$
are Morita invariant and preserve coproducts, and \eqref{vfojoivdsfvfdvswre} is an equivalence on
$\Hilb^{\fin}(\C)$,  we conclude that
 \eqref{gwergerrewferfwerf}  is an equivalence.
 \end{proof}
 
 \begin{lem}\label{lpfrferferertgrtgrgerg}
If $H$ is a finite subgroup of $G$, then the component
 \begin{equation}\label{}
 K\cX^{G,\ctr}((G/H)_{min,max}\otimes G_{can,min})\to K\cX^{G}((G/H)_{min,max}\otimes G_{can,min})\end{equation}
 of the comparison transformation \eqref{gtjwneiofgerfewrfwerfref} at $(G/H)_{min,max}\otimes G_{can,min}$  in $G\BC$ is an equivalence.
 \end{lem}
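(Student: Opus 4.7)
The plan is to reduce this computation to the previous lemma (Lemma \ref{hkrtopgkoprtogkrtgretgert}) by identifying the controlled Roe categories at $(G/H)_{min,max} \otimes G_{can,min}$ with something built out of $H$-equivariant data, exploiting the finiteness of $H$.

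First I would unpack the underlying structure. The underlying $G$-set is $G/H \times G$, with the diagonal $G$-action. The bornology is generated by finite subsets in the $G$-factor (since $G_{can,min}$ has finite bounded sets and $(G/H)_{min,max}$ has everything bounded, the product bornology takes the product), and the coarse structure is diagonal in the $G/H$-factor. Equivariance plus local finiteness lets us pin objects of $\bV^{G,\ctr}_{\bC}((G/H)_{min,max} \otimes G_{can,min})$ down by their data over the fiber $\{eH\} \times G$, on which the finite stabilizer $H$ acts, and where the coarse/bornological structure pulled back is that of $G_{can,min}$ viewed as an $H$-bornological coarse space. The same identification works for $\bV^{G}_{\bC}$. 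Concretely I expect a natural equivalence (induction-style)
\[
\bV^{G,(\ctr)}_{\bC}\bigl((G/H)_{min,max}\otimes G_{can,min}\bigr)\;\simeq\; \bV^{H,(\ctr)}_{\bC}\bigl(\Res^{G}_{H}(G_{can,min})\bigr)\ ,
\]
compatible with the inclusion used to build $c^{G}$.

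Next I would argue that for the finite group $H$, the space $\Res^{G}_{H}(G_{can,min})$ is a coproduct (over $H$-orbits, i.e.\ over $H\backslash G$) of copies of $H_{can,min}$, each bounded coarse equivalent to $*$ with free $H$-action (since $H$ is finite and the canonical coarse structure on $H$ is maximal, $H_{can,min}$ is bounded, hence $\bV^{H,(\ctr)}_{\bC}(H_{can,min})$ computes finite $H$-representation theory exactly as in the proof of Lemma \ref{hkrtopgkoprtogkrtgretgert}). The $u$-continuity of both $K\cX^{G,\ctr}_{\bC}$ and $K\cX^{G}_{\bC}$, together with their excisiveness, then reduces the computation to the discrete sum over $H\backslash G$. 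At each term the comparison map becomes $c^{\cL^{1}}$ evaluated on the additive completion of a coproduct of copies of $\Hilb^{\fin}(\C)$, which is an equivalence by the Morita invariance and additivity arguments already used in the proof of Lemma \ref{hkrtopgkoprtogkrtgretgert}.

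The main obstacle is justifying the induction-style identification in the first step, in particular checking that the measure/equivariance bookkeeping and the local finiteness condition really match up so that the two Roe categories on the two sides are equivalent, and that this equivalence is compatible with the inclusion $\bV^{G,\ctr}_{\bC}\hookrightarrow \bV^{G}_{\bC}$ (and with $c^{\cL^{1}}$). Once this is in place, the verification that $c^G$ is an equivalence on each summand is essentially the same as in Lemma \ref{hkrtopgkoprtogkrtgretgert} and requires only the fact that $c^{\cL^{1}}$ is an equivalence on $\Hilb^{\fin}(\C)$ together with Morita invariance and additivity of $K^{\Cat_{\Z}}H_{\cL^{1}}$ and $K^{\nCcat}$.
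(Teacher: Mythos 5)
Your reduction breaks at the very first step, and not for reasons of bookkeeping. The restriction functor from $\bV^{G,(\ctr)}_{\bC}\bigl((G/H)_{min,max}\otimes G_{can,min}\bigr)$ to $\bV^{H,(\ctr)}_{\bC}\bigl(\Res^{G}_{H}(G_{can,min})\bigr)$ given by passing to the fiber $\{eH\}\times G$ is not an equivalence, because local finiteness in the source is taken with respect to the bornology of the product, whose bounded sets are the subsets of $(G/H)\times F$ with $F\subseteq G$ finite and hence meet \emph{every} fiber. For a $G$-invariant object, finiteness of the support inside the bounded set $(G/H)\times\{x_{0}\}$ forces (for finite $H$) the restriction of the object to the fiber $\{eH\}\times G$ to have \emph{finite} support: if $(eH,y)$ lies in the support, then so does $(x_{0}y^{-1}H,x_{0})$, and distinct right cosets $Hy$ give distinct such points. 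This is far stronger than local finiteness over $G_{can,min}$ with its minimal bornology, so the fiber-restriction functor is not essentially surjective. A $K$-theoretic sanity check with $H=\{e\}$, $G=\Z$ confirms this: by the identifications the paper cites, $K\cX^{\Z}(\Z_{min,max}\otimes \Z_{can,min})\simeq K(\C)\simeq KU$, whereas $K\cX(\Z_{can,min})\simeq \Sigma KU$, so no induction-style equivalence of Roe categories can hold. Your second step is also false: the canonical coarse structure on $G$ is generated by the entourages $\{(g,gf)\mid f\in F\}$ for \emph{all} finite subsets $F$ of $G$, and these connect distinct $H$-orbits; hence $\Res^{G}_{H}(G_{can,min})$ is coarsely connected and unbounded for infinite $G$, and is not a coproduct of copies of $H_{can,min}$ indexed by $H\backslash G$.

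The paper argues differently and more directly: by \cite[Prop.~8.24]{equicoarse} and \cite[Prop.~9.12]{coarsek}, the categories $\bV^{G,\ctr}\bigl((G/H)_{min,max}\otimes G_{can,min}\bigr)$ and $\bV^{G}\bigl((G/H)_{min,max}\otimes G_{can,min}\bigr)$ are Morita equivalent to the group ring $\C[H]$ and to the reduced group $C^{*}$-algebra $C^{*}(H)$, respectively; this is exactly what the combination of the maximal bornology on $G/H$ with the $G_{can,min}$ factor produces, in contrast to $(G/H)_{min,min}$ in \cref{hkrtopgkoprtogkrtgretgert}, which yields categories of finite-dimensional unitary representations. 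The finiteness of $H$ then enters only through the fact that $\C[H]\to C^{*}(H)$ is an isomorphism of finite-dimensional $C^{*}$-algebras, and one concludes using Morita invariance of $K^{\Cat_{\Z}}H_{\cL^{1}}$ and $K^{\nCcat}$ together with the fact that $c^{\cL^{1}}$ is an equivalence on finite-dimensional $C^{*}$-algebras. If you want to keep your strategy, the step you would actually have to prove is precisely this Morita identification of the two Roe categories with $\C[H]$ and $C^{*}(H)$, which is the real content of the lemma and is not obtained by your induction and coproduct decomposition.
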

 \begin{proof}
 For any subgroup $H$ of $G$
 the categories $\bV^{G,\ctr}((G/H)_{min,max}\otimes G_{can,min})$ and 
 $\bV^{G}((G/H)_{min,max}\otimes G_{can,min})$ are Morita equivalent to the group ring   $\C[H]$ 
 and the reduced group $C^{*}$-algebra $C^{*}(H)$, respectively, 
 see \cite[Prop. 8.24]{equicoarse}, \cite[Prop. 9.12]{coarsek}. If $H$ is finite, then $\C[H]\to C^{*}(H)$
 is an isomorphism of finite-dimensional $C^{*}$-algebras. 
 We now use that the functors $K^{\Ccat_{\Z}}H_{\cL^{1}}$ and
$K^{\nCcat}$ are Morita invariant and that \eqref{vfojoivdsfvfdvswre} is an equivalence on finite-dimensional $C^{*}$-algebras.
 \end{proof}

 Recall the universal equivariant coarse homology theory described in  \cref{hokrptogkertpgerthe9}.
 Let \begin{equation}\label{fqwewefedqedew}G\Sp\cX\langle \Fin_{min,max}\otimes G_{can,min}\cup \All_{min,min}  \rangle\subseteq G\Sp\cX
\end{equation} be the  localizing subcategory   
 generated by the objects $\Yo^{G}((G/H)_{min,min})$ for all subgroups $H$ of $G$ and $\Yo^{G}(  (G/H)_{min,max}\otimes G_{can,min})$ forall finite subgroups $H$ of $G$.
 Recall from \cref{hokrptogkertpgerthe9} that we denote coarse homology theories and the corresponding colimit preserving functors on motivic spectra by the same symbol.
\cref{hkrtopgkoprtogkrtgretgert} and \cref{lpfrferferertgrtgrgerg} imply:
 \begin{kor}\label{gkopwreferfwerfrwef} On the   localizing subcategory   \eqref{fqwewefedqedew} of $G\Sp\cX$  the comparison map $c^{G}:K\cX^{G,\ctr}\to K\cX^{G}$ is an equivalence.  
 \end{kor}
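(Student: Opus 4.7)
The plan is to reduce the claim to the two preceding lemmas via the closure properties of localizing subcategories.

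First I would observe that both $K\cX^{G,\ctr}$ and $K\cX^{G}$ are equivariant coarse homology theories: the former by \cref{kopgwegwerfrfwref} and the homological properties of $K^{\Cat_{\Z}}H_{\cL^{1}}$ noted after \cref{kophtrherthetregrtg}, the latter by \cref{i9gopwegergffrefwre}. By the universal property of $\Yo^{G}$ recalled in \cref{hokrptogkertpgerthe9}, both functors extend uniquely (keeping the same names) to colimit-preserving functors
\begin{equation*}
K\cX^{G,\ctr},\,K\cX^{G}:G\Sp\cX\to \Sp\ ,
\end{equation*}
and the comparison transformation $c^{G}$ correspondingly extends to a natural transformation between these extensions.

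Next I would consider the full subcategory
\begin{equation*}
\cS:=\{X\in G\Sp\cX\mid c^{G}_{X}\text{ is an equivalence}\}\ .
\end{equation*}
Since both $K\cX^{G,\ctr}$ and $K\cX^{G}$ are colimit-preserving and land in the stable category $\Sp$, the subcategory $\cS$ is closed under all small colimits and under shifts; being stable and colimit-closed, it is a localizing subcategory of $G\Sp\cX$.

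Finally, \cref{hkrtopgkoprtogkrtgretgert} shows that $\Yo^{G}((G/H)_{min,min})\in \cS$ for every subgroup $H\subseteq G$, and \cref{lpfrferferertgrtgrgerg} shows that $\Yo^{G}((G/H)_{min,max}\otimes G_{can,min})\in \cS$ for every finite subgroup $H\subseteq G$. Hence $\cS$ contains all generators of the localizing subcategory \eqref{fqwewefedqedew}, and therefore contains the entire localizing subcategory itself. This proves the corollary. There is no serious obstacle here; the only point that requires a bit of care is to invoke the universal property of $\Yo^{G}$ correctly so that the closure argument takes place in $G\Sp\cX$ rather than only on the level of $G\BC$, where extensions, fibers, and arbitrary colimits are not available.
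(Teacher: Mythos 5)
Your argument is correct and is essentially the paper's own: the paper proves the two lemmas on the generators and then invokes the fact (recalled in \cref{hokrptogkertpgerthe9}) that the coarse homology theories and the transformation $c^{G}$ pass to colimit-preserving functors on $G\Sp\cX$, so the equivalence locus is localizing and contains \eqref{fqwewefedqedew}. Your explicit spelling-out of the subcategory $\cS$ and its closure properties is exactly the step the paper leaves implicit.
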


 \begin{ex}\label{plkthertgtggetrg}
 If $Y$ is a $G$-compact  proper  Hausdorff $G$-space which is  a homotopy retract of a finite $G$-$CW$-complex
 with finite stabilizers,
 then  $$\Yo^{G}(\cO^{\infty}(Y_{\cB_{\max}})\otimes G_{can,min}) \in \Sp\cX\langle \Fin_{min,max}\otimes G_{can,min} \rangle\ .$$
 Here we equip $Y$ with the  canonical $G$-uniform  and coarse structures (see \cite[Lem. 4.10]{bel-paschke}), and $(-)_{\cB_{\max}}$ stands for replacing the bornology by    the maximal bornology.
 Note that spaces of the form $\cO^{\infty}(Y_{\cB_{\max}})\otimes G_{can,min}$ for $G$-simplicial complexes $Y$ with finite stabilizers  occur in the domain of the forget-control map, see 
  \cite[Sec. 8]{Bunke_20202}.

Assume now that $Z$  in $G\UBC$
a  homotopy retract of a $G$-finite $G$-simplicial complex with the 
 structures induced by the sperical path metric. 
 Then $$\Yo^{G}(\cO^{\infty}(Z))\in \Sp\cX\langle \All_{min,min}  \rangle\ .$$
  
  In both cases, using homotopy invariance, we  reduce to the case of a $G$-$CW$-complex or $G$-simplicial complex. We then argue by induction over the cells using excision
 and homotopy invariance. The start of the inductions are given by  
  \cref{hkrtopgkoprtogkrtgretgert} or \cref{lpfrferferertgrtgrgerg}, respectively.
  \hB
   \end{ex}

   We define the associated non-equivariant
     algebraic coarse $K$-homology $$K\cX^{\ctr}_{\bC}:BG\times \BC\to \Sp$$ and  the corresponding 
    Borel equivariant algebraic coarse $K$-homology 
    $$K\cX^{\ctr,hG}_{\bC}:G\BC\to \Sp$$ by specializing the definitions from 
    \cref{okhphherthertgeg9} to $K\cX^{G,\ctr}_{\bC}$. 
    
   Let $ \Add_{\C}\to \Add_{\C,2,1}$ be the $(2,1)$-category of $\C$-linear additive categories, functors and equivalences.
  The functor  \begin{equation}\label{fwedqwdeqdqewdqwd} \ell:\Add_{\C}\to \Add_{\C,2,1} \end{equation}  represents the Dwyer-Kan localization at the equivalences \cite{Bunke:ab}.
    If we assume that $\bC$ admits all AV-sums, then by restriction to wide subcategories   in \eqref{hjfhiquhqwuiehfew24},
    then by the  arguments of \cite[Sec. 10]{coarsek}   the functor $\ell \bV_{\bC}^{G,\ctr}$ has an extension
  with transfers
 \begin{equation}\label{foiwjeiorfwerfwefewerfw}\xymatrix{G\BC\ar[r]^{\ell } \ar[dr]&\Add_{\C}\ar[r]^{ \bV_{\bC}^{G,\ctr}}&\Add_{\C,2,1}\\&G\BC_{\tr}\ar@{..>}[ur]_{ \bV_{\bC,\tr}^{G,\ctr}}}\ .
\end{equation} 
Applying $K^{\Cat_{\Z}}H_{\cL^{1}}$  (which factorizes over \eqref{fwedqwdeqdqewdqwd} since it sends equivalences to equivalences) we get an extension   
   $$\xymatrix{G\BC\ar[rr]^{K\cX_{\bC}^{G,\ctr} } \ar[dr]& &\Sp\\&G\BC_{\tr}\ar@{..>}[ur]_{ K\cX_{\bC,\tr}^{G,\ctr}}}\ .$$
   The latter allows to define the Borelification map
   \begin{equation}\label{sdacasdcrgqrgrewge} \beta: K\cX^{G,\ctr}_{\bC} \to K\cX^{\ctr,hG}_{\bC}:G\BC\to \Sp
    \end{equation}
  by   \cref{jgoijowerfewrfewfwfwef}
    and
    the Borelification of the comparison map 
      \begin{equation}\label{fwefefwerfwrefrwfre} c^{hG}: K\cX^{\ctr,hG}_{\bC} \to K\cX^{hG}_{\bC}:G\BC\to \Sp\ .
    \end{equation}

It immediately follows from the construction of the transfers  in \cite[Sec. 10]{coarsek}
that the natural transformation
$$\bV^{G,\ctr}_{\bC}\to \bV^{G}_{\bC}:G\BC\to \Add_{\C}$$ given by the inclusion extends to a natural transformation
$$\bV^{G,\ctr}_{\bC,\tr}\to \bV^{G}_{\bC,\tr}:G\BC_{\tr}\to \Add_{\C,2,1}\ .$$
This in turn gives an extension of the comparison map \eqref{gtjwneiofgerfewrfwerfref} to a map
$$c^{G}_{\tr}:K\cX^{G,\ctr}_{\bC,\tr}\to K\cX^{G }_{\bC,\tr}:G\BC_{\tr} \to \Sp\ .$$
From \cref{ogjwioergjeworijuoiewferfwwrf} we conclude:
\begin{kor}\label{khopertgertgretgtrdng} We assume that $\bC$ admits all AV-sums.
The square of  natural transformations between equivariant coarse homology theories
\begin{equation}\label{}\xymatrix{K\cX^{G,\ctr}_{\bC}\ar[d]_{\eqref{gtjwneiofgerfewrfwerfref}}^{c^{G}}\ar[r]^{\beta}_{\eqref{sdacasdcrgqrgrewge}}& K^{\ctr,hG}_{\bC}\ar[d]_{\eqref{fwefefwerfwrefrwfre}}^{c^{hG}}\\ K\cX^{G}_{\bC}\ar[r]^{\beta}_{\eqref{erfwerfrfwreefr2334frewfr}}&K\cX^{hG}_{\bC}} \end{equation}
commutes
\end{kor}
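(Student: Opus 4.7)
The plan is to apply \cref{ogjwioergjeworijuoiewferfwwrf} to the natural transformation $c^{G}\colon K\cX^{G,\ctr}_{\bC}\to K\cX^{G}_{\bC}$. That corollary yields precisely the commutativity of the square, provided $c^{G}$ extends to a natural transformation $c^{G}_{\tr}\colon K\cX^{G,\ctr}_{\bC,\tr}\to K\cX^{G}_{\bC,\tr}$ between the chosen transfer refinements of source and target. Both transfer refinements already exist under the standing AV-sum hypothesis on $\bC$: for the topological version this is \eqref{hjfhiquhqwuiehfew24}, and for the algebraic version this is $K\cX^{G,\ctr}_{\bC,\tr}$ constructed from \eqref{foiwjeiorfwerfwefewerfw} just above the statement. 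Moreover these transfer refinements are the same ones used in the constructions of the Borelification maps $\beta$ in \eqref{sdacasdcrgqrgrewge} and \eqref{erfwerfrfwreefr2334frewfr}, so the corollary applies directly to them.

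The only thing that remains to be produced is the transfer-extension $c^{G}_{\tr}$ itself. First I would lift the pointwise inclusion $\bV^{G,\ctr}_{\bC}(X)\hookrightarrow \bV^{G}_{\bC}(X)$ to a natural transformation
\[
\bV^{G,\ctr}_{\bC,\tr}\longrightarrow \bV^{G}_{\bC,\tr}\colon G\BC_{\tr}\longrightarrow \Add_{\C,2,1}
\]
(composing the target with the completion functor $\Add_{\C,2,1}\to \Ccat_{2,1}$, which is harmless for the argument). This is essentially automatic from the construction of \eqref{hjfhiquhqwuiehfew24}: on the wide subcategory of morphisms coming from $G\BC$ the inclusion is obviously natural by Definition \ref{kopgwegwerfrfwref1}, and the transfer functor along a bounded coarse covering is given on objects by pushforward of the multiplier projection-valued measure and on morphisms by the identity (cf.\ \cref{ttijhoerthertertg}), so it preserves the controlled subcategory of morphisms. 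Postcomposing this natural transformation with $K^{\Cat_{\Z}}H_{\cL^{1}}$ and then with $c^{\cL^{1}}$ from \eqref{vfojoivdsfvfdvswre} produces $c^{G}_{\tr}$, and its restriction to $G\BC$ recovers $c^{G}$ by \cref{kopgwegwerfrfwref1}.

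The main, and essentially only, point requiring genuine verification is the $2$-categorical naturality of the inclusion with respect to the transfer spans; in other words, that for a span $X\xleftarrow{g} W\xrightarrow{f} Y$ representing a morphism in $G\BC_{\tr}$ the pullback $g^{*}$ and pushforward $f_{*}$ restrict compatibly to the controlled subcategories, and that the $2$-cells witnessing composition in $\bV^{G}_{\bC,\tr}$ are already present in $\bV^{G,\ctr}_{\bC,\tr}$. Both facts are formal from the definitions of $\bV^{G,\ctr}_{\bC}$ and $\bV^{G}_{\bC}$ recalled in \cref{ttijhoerthertertg}, since the relevant coherence isomorphisms are built from permutations of summands and from the multiplier-measure pushforward, all of which lie in the controlled calculus. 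Once this bookkeeping is done, \cref{ogjwioergjeworijuoiewferfwwrf} applied to $\phi^{G}=c^{G}$ with extension $\phi^{G}_{\tr}=c^{G}_{\tr}$ delivers the square, finishing the proof.
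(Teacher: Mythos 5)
Your proposal follows the paper's own route: extend the pointwise inclusion $\bV^{G,\ctr}_{\bC}\to\bV^{G}_{\bC}$ to a transformation $\bV^{G,\ctr}_{\bC,\tr}\to\bV^{G}_{\bC,\tr}$ of functors with transfers, apply $K^{\Cat_{\Z}}H_{\cL^{1}}$ and $c^{\cL^{1}}$ to obtain $c^{G}_{\tr}$ extending $c^{G}$, and then invoke \cref{ogjwioergjeworijuoiewferfwwrf}. This is exactly the argument given in the paper (which also treats the compatibility of the inclusion with the transfer spans as immediate from the construction), so the proposal is correct and essentially identical.
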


  Note that as in the case of  equivariant  coarse  topological $K$-homology
  we have a non-equivariant  version $K\cX^{\{e\},\ctr}_{\bC}$ and a Borelification map
  $\beta':K\cX^{G,\ctr}_{\bC}\to K\cX^{\{e\},\ctr,hG}_{\bC}$ defined similarly as the map with the same symbol in  \eqref{vsdfvsdfvr3vsfvdfvsdfv}
  Analoguousy to \cref{kohperthtregrtgertgbtertgrtgrertgertge} and \cref{iguweorigwergwergw9} one can show, as announced in \cref{biojrgogbfgbdb}:
    \begin{prop}\label{kopbgbdgbdfgbgd} We assume that $\bC$ admits all AV-sums.
  There is a natural equivalence \begin{equation}\label{gjiweorjofwewerfwerfw}K\cX^{\ctr}_{\bC}\simeq K\cX^{\{e\},\ctr}_{\bC}:BG\times G\BC\to \Sp\end{equation}  
under which the Borelification maps $\beta,\beta'$ and the comparison maps $c^{hG},c^{\{e\},hG}$ become equivalent, i.e., 
 \begin{equation}\label{ghweruihfiuwehrfuiehurfiwerfewrfwerfwerf}\xymatrix{&K\cX^{G,\ctr}_{\bC}\ar[dr]^{\beta'}\ar[dl]_{\beta}&\\ K\cX_{\bC}^{\ctr,hG}\ar[rr]_{\eqref{gjiweorjofwewerfwerfw}^{hG}}^{\simeq}&&K\cX_{\bC}^{\{e\},\ctr,hG}}
\end{equation}  
  and \begin{equation}\label{gwergerffssgrw4}\xymatrix{ K\cX_{\bC}^{\ctr,hG}\ar[rr]_{\eqref{fwefefwerfwrefrwfre}}^{c^{hG}}\ar[d]_{\eqref{gjiweorjofwewerfwerfw}^{hG}} && \ar[d]^{\eqref{vsfdvfdvfdvsdfcwervdsfvfdvfdvsdfvse}^{hG}}K\cX_{\bC}^{hG} \\ K\cX_{\bC}^{\{e\},\ctr,hG}\ar[rr]^{c^{\{e\},hG}} && K\cX_{\bC}^{\{e\},hG}} 
\end{equation} commute.
    \end{prop}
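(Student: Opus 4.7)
I closely follow the pattern of the proof of \cref{kohperthtregrtgertgbtertgrtgrertgertge} together with \cref{iguweorigwergwergw9}, transposing the argument from the topological ($C^{*}$-categorical) setting to the algebraic (additive categorical) setting. The functor $K^{\Cat_{\Z}}H_{\cL^{1}}$ sends $\C$-linear equivalences to equivalences and therefore factors through the localization $\ell:\Add_{\C}\to \Add_{\C,2,1}$ from \eqref{fwedqwdeqdqewdqwd}. Consequently, to establish \eqref{gjiweorjofwewerfwerfw} it suffices to construct a natural equivalence
\begin{equation*}
\ell\bV^{\ctr}_{\bC}\stackrel{\simeq}{\to} \ell\bV^{\{e\},\ctr}_{\bC}:BG\times \BC\to \Add_{\C,2,1}
\end{equation*}
given at $X$ in $\BC$ by the same formulas as in the proof of \cref{kohperthtregrtgertgbtertgrtgrertgertge}: the functor $\phi_{X}^{\ctr}$ sends $(C,\rho,\nu)$ to $(C_{e},\nu_{e})$, where $C_{e}\to C$ is a chosen image of the projection $\nu(X\times \{e\})$ (existing because $\bC$ admits AV-sums), and sends a controlled morphism $A:(C,\rho,\nu)\to (C',\rho',\nu')$ to $A_{e}:C_{e}\hookrightarrow C\xrightarrow{A} C'\twoheadrightarrow C'_{e}$. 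The inverse is given by the same sum construction $(C,\nu)\mapsto (\bigoplus_{g\in G} gC,\rho,\tilde\nu)$, and the coherence data $\sigma_{f}$ (functoriality in $X$) and $\kappa_{g}$ (equivariance in $G$) are defined literally as before.

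Second, for \eqref{ghweruihfiuwehrfuiehurfiwerfewrfwerfwerf} I follow the pattern of \eqref{gtgetrgg345gertgrge}. Restricting the transfer extension \eqref{foiwjeiorfwerfwefewerfw} and proceeding as in \eqref{fqewjhbhjf788f714fjhrqf}, I construct a natural transformation
\begin{equation*}
\beta''{}^{\ctr}:\ell\bV^{G,\ctr}_{\bC}\to \lim_{BG}\ell\bV^{\{e\},\ctr}_{\bC}:G\BC\to \Add_{\C,2,1}
\end{equation*}
realizing an equivariant object as the descent datum $((C,\nu),(\rho_{g})_{g\in G})$. The triangle
\begin{equation*}
\xymatrix{&\underline{\ell\bV^{G,\ctr}_{\bC}}\ar[dl]_{(\tr,\rho)}\ar[dr] &\\\ell\bV^{\ctr}_{\bC}\ar[rr]&&\ell\bV^{\{e\},\ctr}_{\bC}}
\end{equation*}
in $\Fun(BG\times G\BC,\Add_{\C,2,1})$ commutes by exactly the same $2$-categorical unfolding used in \cref{iguweorigwergwergw9}. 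Applying $K^{\Cat_{\Z}}H_{\cL^{1}}$, passing to adjoints and lifting against \eqref{gwezghuowerfwerf} using the $u$-continuity of the corner functors yields the commutative triangle \eqref{ghweruihfiuwehrfuiehurfiwerfewrfwerfwerf}.

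Third, for \eqref{gwergerffssgrw4} the key point is that the formulas defining $\phi_{X}^{\ctr}$ (and its inverse, and the coherence data) are identical to those defining the functor $\phi_{X}$ of \cref{kohperthtregrtgertgbtertgrtgrertgertge}; they are intertwined by the inclusion $\bV^{G,\ctr}_{\bC}\hookrightarrow \bV^{G}_{\bC}$. Hence the natural equivalences \eqref{gjiweorjofwewerfwerfw} and \eqref{vsfdvfdvfdvsdfcwervdsfvfdvfdvsdfvse} are compatible under the inclusions, and applying the comparison transformation $c^{\cL^{1}}$ from \eqref{vfojoivdsfvfdvswre} (which is natural in $\Add_{\C}$-linear functors) and then $\lim_{BG}(-)^{u}$ produces \eqref{gwergerffssgrw4}.

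The main technical obstacle is verifying that the assignments defining $\phi_{X}^{\ctr}$, its inverse, and the coherence data actually take controlled morphisms to controlled morphisms and not merely to morphisms in the norm completion. This reduces to the observation that the structural maps involved (the inclusion $C_{e}\hookrightarrow C$, the adjoint projection $C'\twoheadrightarrow C'_{e}$, the reindexing isomorphisms $\rho_{g}$ on $\bigoplus_{h}hC$, and the diagonal measure $\tilde\nu$) are all assembled from $\nu$- and $\nu'$-components supported on diagonal entourages of the underlying coarse spaces, hence are $\diag$-controlled. Once this is checked, the transition from the $C^{*}$-categorical setting of \cref{kohperthtregrtgertgbtertgrtgrertgertge} and \cref{iguweorigwergwergw9} to the present algebraic setting is formally verbatim.
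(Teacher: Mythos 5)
Your proposal is correct and follows essentially the same route the paper intends: the paper proves this proposition precisely by asserting that the constructions of \cref{kohperthtregrtgertgbtertgrtgrertgertge} and \cref{iguweorigwergwergw9} restrict from the completed Roe categories to the uncompleted ones (cf.\ \eqref{gwioeuorgfewrgwrrrrergre}), and your verification that $\phi_{X}$, its inverse, and the coherence data $\sigma_{f},\kappa_{g}$ are $\diag$-controlled, followed by applying $K^{\Cat_{\Z}}H_{\cL^{1}}$ through the localization \eqref{fwedqwdeqdqewdqwd} and using naturality of $c^{\cL^{1}}$ together with the transfer extension \eqref{foiwjeiorfwerfwefewerfw}, is exactly that argument.
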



%
%
%
%
%
%

\subsection{Equivariant  coarse    periodic cyclic homology}

In this section we introduce the equivariant coarse periodic cyclic homology theory and the corresponding  Borelification.  The equivariant coarse periodic cyclic homology theory will be defined by composing the periodic cyclic homology functor for $\C$-linear additive categories with the functor 
$\bV^{G,\ctr}_{\bC}$ from \eqref{gweoihjgoiwerfwerfrwef} and forcing $u$-continuity. 
In principle we could map $\C$-linear additive categories, using bounded chain complexes represening the stable hull \cite{zbMATH07160436}, to $\C$-linear
stable $\infty$-categories and then apply the general periodic cyclic homology functor defined by taking
$S^{1}$-Tate-fixed points of the Hochschild homology functor for $\C$-linear stable $\infty$-categories. 
But in order to construct trace maps from the
equivariant coarse periodic cyclic homology theory to periodic coarse homology later
we rely on the concrete classical chain complex model for periodic cyclic homology of $\C$-linear additive categories which we now recall.

Recall from \cite{Kassel_1987} that a mixed complex over a commutative ring $k$ is a triple $(C_{*},d,B)$, where   
$(C_{*},d)$ is a chain complex over $k$ and $B:C_{*}\to C_{*+1}$ is an additional degree-$1$ map  such that $dB+Bd=0$ and $B^{2}=0$.
A morphism between mixed complexes is a map of chain complexes  over $k$ which  intertwines the additional degree-$1$ maps. We get a category $\Mix_{k}$ of mixed complexes over $k$ and morphisms.

A morphism   of mixed complexes is called a weak equivalence   if it induces a 
  quasi-isomorphism of underlying chain complexes. We let \begin{equation}\label{dfvdsfvwerv} \ell:\Mix_{k}\to \Mix_{k,\infty}
\end{equation} 
denote the Dwyer-Kan localization at the weak equivalences.
The $\infty$-category $\Mix_{k,\infty}$ turns out to be cocomplete and stable \cite[Prop. 3.1.10]{caputi-diss}.

We have a functor
\begin{equation}\label{bgfbdfgbdgfbdf} \rmMix_{k}:  \Cat_{k}\to \Mix_{k,\infty}
\end{equation}  which sends a $k$-linear category $\bA$ to the  mixed complex
associated to the cyclic nerve of $\bA$ considered as an object of $\Mix_{k,\infty}$.

\begin{rem} \label{kwgopergweferfwef}For $\bA$ in $\Cat_{k}$
we have \begin{equation}\label{ijfwoervwewer}\rmMix_{k}(\bA)_{n}\cong   \bigoplus_{(A_{0},\dots,A_{n})\in \Ob(\bA)^{n+1}}  \Hom_{\bA}(A_{0},A_{n})\otimes_{k} \Hom_{\bA}(A_{1},A_{0})\otimes_{k}\dots  \otimes_{k}  \Hom_{\bA}(A_{n},A_{n-1})\ .
\end{equation} 
We let $d_{i}:\rmMix_{k}(\bA)_{n}\to \rmMix_{k}(\bA)_{n-1}$ be the $k$-linear map  induced by the composition
at $A_{i}$. Thereby $d_{n}(f_{0}\otimes \dots \otimes f_{n})=(-1)^{n}f_{n}f_{0}\otimes f_{1}\otimes \dots \otimes f_{n-1}$.
Then $$d:=\sum_{i=0}^{n}(-1)^{i} d_{i}:\rmMix_{k}(\bA)_{n}\to \rmMix_{k}(\bA)_{n-1} $$ is the differential of the mixed complex in degree $n$. The map $$B:\rmMix_{k}(\bA)_{n}\to \rmMix_{k}(\bA)_{n+1}$$
is given by
$(-1)^{n+1} (1-t)sN$, where $t:\Mix_{k}(\bA)_{n+1}\to \Mix_{k}(\bA)_{n+1}$   is  the cyclic permutation sending 
$f_{0}\otimes \dots \otimes f_{n+1}$ to $(-1)^{n+1}f_{n+1}\otimes f_{0}\otimes \dots\otimes f_{n}$,
 $N:=\sum_{i=0}^{n+1} t^{i}: \Mix_{k}(\bA)_{n+1}\to \Mix_{k}(\bA)_{n+1}$,
and the map $s :\Mix_{k}(\bA)_{n}\to \Mix_{k}(\bA)_{n+1}$ sends $f_{0}\otimes \dots \otimes f_{n}$ to
$\id_{A_{n}}\otimes  f_{0}\otimes \dots \otimes f_{n}$.
\hB
\end{rem}

While $\Mix_{k}$ is defined on all of $\Cat_{k}$ its restriction to $\Add_{k}$ is has good  properties.
Recall \cref{kophtrherthetregrtg}.
\begin{prop}\label{hoperthetrgerger}
If $k$ is a field, then
the  functor $\rmMix_{k}:  \Add_{k}\to \Mix_{k,\infty}$ is homological.
\end{prop}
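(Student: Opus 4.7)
The plan is to verify the four defining properties of \cref{kophtrherthetregrtg} one by one for the restriction of $\rmMix_k$ to $\Add_k$: preservation of equivalences, preservation of filtered colimits, vanishing on flasques, and conversion of Karoubi filtrations into fibre sequences. The first three are relatively direct from the explicit model described in \cref{kwgopergweferfwef}; the last is the real content.

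\textbf{Preservation of equivalences and filtered colimits.} An equivalence $F:\bA\to \bA'$ in $\Add_k$ induces a map of the cyclic nerves which, on the underlying chain complexes, is a quasi-isomorphism (one can replace $F$ by an inclusion of a skeleton and verify the chain map is a $k$-linear isomorphism after summing over the cofinal subset of tuples coming from the skeleton). Since $F$ intertwines the Connes operator $B$ defined in \cref{kwgopergweferfwef}, this promotes to a weak equivalence in $\Mix_k$, so $\rmMix_k$ factors through the localization \eqref{dfvdsfvwerv}. For filtered colimits, the formula \eqref{ijfwoervwewer} exhibits $\rmMix_k(\bA)_n$ as a filtered colimit indexed by tuples of objects of a tensor product of Hom-groups; over a field, tensor products commute with filtered colimits of $k$-modules, and filtered colimits of mixed complexes are computed degreewise. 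Since filtered colimits of weak equivalences are weak equivalences, $\rmMix_k$ preserves filtered colimits.

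\textbf{Vanishing on flasques.} If $\bA$ is flasque then there exists an endofunctor $S:\bA\to \bA$ together with a natural isomorphism $S\oplus\id_{\bA}\cong S$. The additivity theorem for the cyclic nerve of additive categories (the classical statement going back to McCarthy, and in our $\infty$-categorical setting proved in \cite{Caputi_2020} and \cite[Sec.~3]{caputi-diss}) asserts that $\rmMix_k$ turns a direct sum of endofunctors into a sum of induced maps in $\Mix_{k,\infty}$. Applying this to $S\oplus \id_\bA\cong S$ yields $\rmMix_k(S)+\id \simeq \rmMix_k(S)$ in the stable $\infty$-category $\Mix_{k,\infty}$, hence $\id\simeq 0$ on $\rmMix_k(\bA)$.

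\textbf{Karoubi filtrations — the main step.} Given a Karoubi filtration $\bU\to \bA\to \bA/\bU$, stability of $\Mix_{k,\infty}$ reduces the claim to showing that the canonical map from the cofibre of $\rmMix_k(\bU)\to \rmMix_k(\bA)$ to $\rmMix_k(\bA/\bU)$ is an equivalence. The plan is to analyse its kernel degreewise: a generator $f_0\otimes\dots\otimes f_n$ maps to zero in $\rmMix_k(\bA/\bU)$ precisely when some intermediate object $A_i$ lies in $\bU$ (using that morphisms in $\bA/\bU$ are represented by classes of morphisms in $\bA$ modulo those factoring through $\bU$, and that $k$ being a field makes tensor products exact). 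Filtering the kernel by the number of such $\bU$-factorizations and using the Karoubi splittings to split off $\bU$-summands, each associated graded piece is identified with a mixed subcomplex on which one can run a suspended Eilenberg-swindle/contraction argument analogous to the one used in the flasque case. This excision argument for cyclic-type theories along Karoubi filtrations is the cyclic analogue of the Schlichting / Keller localization sequence in algebraic $K$-theory and is the substantive part; I would follow the implementation in \cite[Sec.~4]{caputi-diss} and \cite{Caputi_2020}, where the combinatorics of the cyclic nerve together with the $B$-operator from \cref{kwgopergweferfwef} are shown to be compatible with the filtration. That combinatorial verification is where the hypothesis that $k$ is a field enters essentially, and it is the step where I expect the main technical work to lie.
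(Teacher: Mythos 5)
Your treatment of the first three axioms of \cref{kophtrherthetregrtg} is essentially fine (filtered colimits from the explicit formula; invariance under equivalences is standard, though your "sum over the skeleton" phrasing really only gives a quasi-isomorphism via a homotopy, not an isomorphism of complexes; vanishing on flasques from additivity plus the swindle is legitimate, but note that additivity for the cyclic nerve is itself a theorem of the same calibre as what is being proved). The genuine gap is in the Karoubi-filtration step, which you yourself identify as the substantive part but then neither carry out nor correctly set up. Concretely: (i) your description of the kernel is wrong — a generator $f_{0}\otimes\dots\otimes f_{n}$ of \eqref{ijfwoervwewer} dies in $\rmMix_{k}(\bA/\bI)$ exactly when some $f_{i}$ becomes zero in $\Hom_{\bA/\bI}$, i.e.\ when $f_{i}$ is approximated by morphisms factoring through $\bI$; there is no condition on the objects $A_{i}$, since $\bA/\bI$ has the same objects as $\bA$. (ii) What must actually be shown is that the degreewise quotient $\rmMix_{k}(\bA)/\rmMix_{k}(\bI)$ — a sum of \emph{full} tensor products of $\bA$-morphism spaces over tuples not entirely in $\bI$ — maps quasi-isomorphically to $\rmMix_{k}(\bA/\bI)$, a sum over \emph{all} tuples of tensor products of quotient morphism spaces; your proposed filtration "by the number of $\bI$-factorizations" is not well defined on these quotients, and no contraction of the associated graded pieces is supplied. (iii) The references you defer to do not contain the combinatorial excision argument you envisage: the argument in \cite{caputi-diss} is the one the paper follows, namely an identification of $\rmMix_{k}$ on $\Add_{k}$ with Keller's mixed complex of the exact category $(\bA,\text{split exact})$, followed by an appeal to Keller's localization theorem. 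So the main step of your proof is left as a black box that the cited sources do not fill in the form you need.

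For comparison, the paper's route is: since $k$ is a field one needs no flat resolution and the acyclic-complex correction term $\rmMix_{k}(\Acyc^{b}(\bA))$ vanishes, so the naive mixed complex of \cref{kwgopergweferfwef} agrees with Keller's invariant, and $\rmMix_{k}(\bA)\to\rmMix_{k}(\Ch^{b}(\bA))$ is a weak equivalence; a Karoubi filtration $\bI\subseteq\bA$ induces a Karoubi sequence $\Ch^{b}(\bI)_{\infty}\to\Ch^{b}(\bA)_{\infty}\to\Ch^{b}(\bA/\bI)_{\infty}$ by \cite[Ex.~1.8, Prop.~2.6]{Schlichting_2004}, and Keller's exactness theorem \cite[Thm.~1.5(c)]{Keller_1999} then yields the fibre sequence (equivalences are handled the same way). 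Note also that the field hypothesis enters there — in bypassing flat resolutions and killing the acyclic term — rather than in the tensor-product combinatorics where you place it. If you want a proof independent of Keller's theorem you would have to genuinely establish the excision statement in (ii), which is a nontrivial theorem, not a routine filtration argument.
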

\begin{proof}
This assertion is surely folklore. We sketch the argument  from \cite{caputi-diss}.

The cyclic nerve construction, forming the associated mixed complex, and the localization $\ell$  in \eqref{dfvdsfvwerv}
all preserve filtered colimits. Therefore $\rmMix_{k}$ does so.

By  \cite[Lem. 3.4.4]{caputi-diss} the functor $\rmMix_{k}$ is equivalent to the restriction 
 Keller's   mixed complex functor \cite{Keller_1999}  for exact $k$-linear categories to $k$-linear additive categories equipped with the split exact structure. In more detail, 
 recall that Keller's functor associates to an exact category $\bA$ the  object
 \begin{equation}\label{bsdoijviosdfvsdfvsfdvsf}\Cofib(\rmMix_{k}(\Acyc^{b}(\bA'))\to \rmMix_{k}(\Ch^{b}(\bA')))
\end{equation}
  where   $\bA'\to \bA$ is a resolution by an exact flat $dg$-category,
 $\Ch^{b}(\bA')$ is the category of bounded chain chain complexes over $\bA'$, and
 $ \Acyc^{b}(\bA')$ is the full subcategory of $\Ch^{b}(\bA') $ of acylic chain complexes.
 In this formula  $\rmMix_{k}$ is   the obvious extension of the mixed complex functor from $k$-linear to $dg$-categories over $k$.
  
 First of all, since we assume that $k$  is a field, it is not necessary
 to  go over to flat resolutions. Since we consider additive categories as exact categories with the split-exact structure
 the mixed complex of the $dg$-category of bounded acyclic chain complexes over the additive category is acyclic 
 \cite[Lem. 3.4.3]{caputi-diss}. We therefore do not need to consider the cofibre construction above. 
 Finally the map $\bA\to \Ch^{b}(\bA)$ sending the objects of $\bA$ to chain complexes concentrated in degree $0$ induces a weak equivalence  $\rmMix_{k}(\bA)\to \rmMix_{k}(\Ch^{b}(\bA))$ by \cite[Lem. 3.4.3(i)]{caputi-diss}.
 
 We therefore know that the functor $\rmMix_{k}$ from \eqref{bgfbdfgbdgfbdf} has the
homological properties of Keller's functor stated in  \cite[Thm. 1.5.]{Keller_1999}.

An equivalence of additive categories $\bA\to \bA'$  induces a weak equivalence  of
 $\infty$-categories of bounded chain complexes  $\Ch^{b}(\bA)_{\infty}\to \Ch^{b}(\bA')_{\infty}$ (see \cite[Lem. 2.15]{Bunke:2017aa})
 and therefore, by the exactness result \cite[Thm. 1.5.(c)]{Keller_1999}, an equivalence
 $\rmMix_{k}(\bA)\to \rmMix_{k}(\bA')$.

 By \cite[Ex. 1.8, Prop. 2.6]{Schlichting_2004} a Karoubi filtration  $\bI\subseteq \bA$ induces a Karoubi sequence 
 $$    \Ch^{b}(\bI)_{\infty}\to \Ch^{b}(\bA)_{\infty} \to \Ch^{b}(\bA/\bI)_{\infty}$$
 which again by  
  \cite[Thm. 1.5.(c)]{Keller_1999} induces a fibre sequence
  $$\rmMix_{k}(\bI)\to \rmMix_{k}(\bA)\to \rmMix_{k}(\bA/\bI)\ .$$
%
%
%
The exactness property just shown implies that $\rmMix_{k}$ preserves sums and therefore  flasques. Since the target
category is stable, flasques therein are zero objects and we can conclude that $\rmMix_{k}$ annihilates flasques.
\end{proof}

 We have a bifunctor
 $$\Alg_{k}\times \Cat_{k}\to \Cat_{k}\ , \qquad (R,\bA)\mapsto R\otimes_{k}\bA\ .$$
 The category $R\otimes_{k}\bA$ has the same objects as $\bA$, and
 the morphism spaces are given by \begin{equation}\label{iujgiowerfwerfwerf}\Hom_{ R\otimes_{k}\bA}(A,A'):=R\otimes_{k}\Hom_{\bA}(A,A')\ .
\end{equation}
 The composition involves the product of $R$. Applied to a non-unital algebra this construction would yield a non-unital category. In order to avoid this problem we adopt the following construction.
 
Let $R$ be a possibly non-unital $k$-algebra and $R^{+}\to k$ be the canonical morphism from the unitalization $R^{+}$. Then we define the $R$-twist of the mixed complex functor by
\begin{equation}\label{bsdbsdfd}\rmMix_{R}:=\Fib(\rmMix_{k}(R^{+}\otimes_{k}-)\to \rmMix_{k}(-)):\Add_{k}\to \Mix_{k,\infty}\ . 
\end{equation} 

The following is a consequence of \cref{hoperthetrgerger}.

\begin{kor}\label{hoperthetrgerger1}  If $k$ is field, then 
the functor $\rmMix_{R}:\Add_{k}\to \Mix_{k,\infty}$ is homological.
\end{kor}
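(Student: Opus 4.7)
The plan is to reduce this statement to \cref{hoperthetrgerger} by combining two closure properties: preservation of homological functors under precomposition with the endofunctor $R^{+}\otimes_{k}-$, and closure of the class of homological functors under fibres of natural transformations.

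First I would verify that the endofunctor $R^{+}\otimes_{k}-:\Add_{k}\to \Add_{k}$ defined via \eqref{iujgiowerfwerfwerf} preserves each ingredient of \cref{kophtrherthetregrtg}. Equivalences and filtered colimits in $\Add_{k}$ are detected on hom spaces, where $R^{+}\otimes_{k}-$ acts as an exact, colimit-preserving endofunctor of $\Mod(k)$. A flasque endofunctor $S$ on $\bA$ with $S\simeq S\oplus \id$ induces the flasque endofunctor $R^{+}\otimes_{k}S$ on $R^{+}\otimes_{k}\bA$. A Karoubi filtration is characterized by direct-sum decompositions and lifting properties of morphisms; both are preserved under extending scalars on hom spaces since objects are unchanged and idempotents in $\bA$ tensor with $1\in R^{+}$ to give the corresponding idempotents in $R^{+}\otimes_{k}\bA$. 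Composing with the homological functor $\rmMix_{k}$ from \cref{hoperthetrgerger} therefore shows that both $\rmMix_{k}(R^{+}\otimes_{k}-)$ and (trivially) $\rmMix_{k}(-)$ are homological functors $\Add_{k}\to \Mix_{k,\infty}$.

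Next I would check that the class of homological functors $\Add_{k}\to \Mix_{k,\infty}$ is closed under fibres of natural transformations. Preservation of equivalences is automatic. Since $\Mix_{k,\infty}$ is stable, finite limits commute with filtered colimits, so the fibre functor inherits filtered-colimit preservation. For a Karoubi filtration $\bI\to \bA\to \bA/\bI$ the two functors send it to fibre sequences, producing a natural map between two fibre sequences; in a stable $\infty$-category the column-wise fibres of such a map again form a fibre sequence. Vanishing on flasques is immediate from $\Fib(0\to 0)\simeq 0$. Applying this closure to the natural transformation $\rmMix_{k}(R^{+}\otimes_{k}-)\to \rmMix_{k}(-)$ induced by the counit $R^{+}\to k$ yields that $\rmMix_{R}$ is homological.

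The only mildly delicate point is the preservation of Karoubi filtrations by $R^{+}\otimes_{k}-$, but this is essentially formal because the filtration condition refers only to the additive structure and decompositions into direct summands, which are manifestly preserved by change of scalars on hom groups. The rest is a routine consequence of stability of $\Mix_{k,\infty}$.
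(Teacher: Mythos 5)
Your proof is correct and is exactly the argument the paper intends: it treats the statement as an immediate consequence of \cref{hoperthetrgerger} via the fibre description \eqref{bsdbsdfd}, and you simply make explicit the two implicit ingredients, namely that $R^{+}\otimes_{k}-$ preserves equivalences, filtered colimits, Karoubi filtrations and flasques, and that fibres of natural transformations between homological functors into the stable $\infty$-category $\Mix_{k,\infty}$ are again homological. No gaps; the only point you rightly flag as delicate (preservation of Karoubi filtrations, using that a morphism in $R^{+}\otimes_{k}\bA$ is a finite sum and the $\bI$-decompositions are filtered) is handled adequately.
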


Assume that $\tr:R\to k$ is a $k$-linear trace.

\begin{lem}\label{kopgerwefwergrtg}
The trace induces a natural transformation of functors
\begin{equation}\label{hrtgfgbdgfdb}\Tr:\rmMix_{R}\to \rmMix_{k}:\Cat_{k}\to \Mix_{k,\infty}\ .
\end{equation}
\end{lem}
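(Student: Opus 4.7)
The plan is to define a morphism of mixed complexes $\rmMix_k(R^+ \otimes_k \bA) \to \rmMix_k(\bA)$ via the classical generalized trace construction, and then to precompose with the canonical map out of the fibre in \eqref{bsdbsdfd}.

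First I would extend the trace $\tr \colon R \to k$ to a $k$-linear map $\tilde{\tr} \colon R^+ \to k$ by setting $\tilde{\tr}(r,\lambda) := \tr(r)$. A direct computation using $\tr(rs) = \tr(sr)$ shows that $\tilde{\tr}$ is cyclic on $R^+$, i.e., $\tilde{\tr}(xy) = \tilde{\tr}(yx)$ for all $x,y$ in $R^+$; explicitly one computes $\tilde{\tr}((r,\lambda)(s,\mu)) = \tr(rs) + \lambda \tr(s) + \mu \tr(r)$, which is manifestly symmetric in its two arguments.

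Next, for $\bA$ in $\Cat_k$, I would define a $k$-linear map $T(\bA)_n \colon \rmMix_k(R^+ \otimes_k \bA)_n \to \rmMix_k(\bA)_n$ in each degree $n$ by the formula
\begin{equation*}
((r_0,\lambda_0) \otimes f_0) \otimes \cdots \otimes ((r_n,\lambda_n) \otimes f_n) \;\longmapsto\; \tilde{\tr}\bigl((r_0,\lambda_0)\cdots (r_n,\lambda_n)\bigr)\, f_0 \otimes \cdots \otimes f_n\ ,
\end{equation*}
where the product is taken in $R^+$. Compatibility with the face maps $d_i$ for $i < n$ is immediate from associativity of the multiplication in $R^+ \otimes_k \bA$. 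Compatibility with $d_n$, which in $\rmMix_k(\bA)$ contracts $f_n f_0$ via the signed cyclic permutation, reduces to the identity $\tilde{\tr}(x_n x_0 x_1 \cdots x_{n-1}) = \tilde{\tr}(x_0 x_1 \cdots x_n)$ with $x_i := (r_i,\lambda_i)$, i.e., to the cyclicity of $\tilde{\tr}$ established above. Compatibility with $B$ follows by the analogous direct computation from its description in \cref{kwgopergweferfwef} in terms of the signed cyclic action $t$, the norm $N$ and the degeneracy $s$, using that the unit of $R^+$ is $(0,1)$ and again the cyclicity of $\tilde{\tr}$. Naturality of $T(\bA)$ in $\bA$ is clear, since $k$-linear functors $\bA \to \bA'$ only act on the $f_i$-factors and commute with the tensor factor $R^+$.

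Descending to the localization $\Mix_{k,\infty}$ via \eqref{dfvdsfvwerv} then yields a natural transformation $T \colon \rmMix_k(R^+ \otimes_k -) \to \rmMix_k(-)$ of functors $\Cat_k \to \Mix_{k,\infty}$, and I would define $\Tr$ as the composition
\begin{equation*}
\rmMix_R(\bA) \;\longrightarrow\; \rmMix_k(R^+ \otimes_k \bA) \;\xrightarrow{\ T(\bA)\ }\; \rmMix_k(\bA)\ ,
\end{equation*}
where the first arrow is the canonical map out of the fibre from \eqref{bsdbsdfd}. Naturality of $\Tr$ is inherited from that of $T$ and of the fibre sequence defining $\rmMix_R$. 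The only real computation is the chain-map check for $T$; there is no genuine obstacle here, as this is the classical cyclic calculation in the spirit of Loday, and the passage from rings to $k$-linear categories via \eqref{ijfwoervwewer} is formal.
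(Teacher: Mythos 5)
Your proposal is correct and follows essentially the same route as the paper: extend $\tr$ to $\tr^{+}$ on $R^{+}$, define the chain-level map $(r_{0}\otimes f_{0})\otimes\dots\otimes(r_{n}\otimes f_{n})\mapsto \tr^{+}(r_{0}\cdots r_{n})\,f_{0}\otimes\dots\otimes f_{n}$, check compatibility with $d$ and $B$ via cyclicity, and then induce the map on the fibre $\rmMix_{R}$. The only cosmetic difference is that the paper realizes the fibre as the kernel of the chain-level surjection $\rmMix_{R^{+}}(\bA)\to\rmMix_{k}(\bA)$ and restricts there, whereas you compose with the canonical fibre map after localizing — these give the same transformation.
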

\begin{proof} The underlying $k$-module of $R^{+}$ is $R\oplus k$ whose elements will be denoted by $(r,\lambda)$
with $r$ in $R$ and $\lambda$ in $k$.
We can extend the trace to a trace $\tr^{+}:R^{+}\to k$ by setting
$\tr^{+}(r,\lambda):=\tr(r)$.  The transformation \eqref{hrtgfgbdgfdb}  is induced by a chain-level construction $\Tr^{+}$.  As explained in \cref{kwgopergweferfwef} and using \eqref{iujgiowerfwerfwerf} we have  
$$\rmMix_{R^{+}}(\bA)_{n}\cong \bigoplus_{(A_{0},\dots,A_{n})\in \Ob(\bA)^{n+1}} (R^{+}\otimes_{k} \Hom_{\bA}(A_{0},A_{n}))\otimes_{k}\dots \otimes_{k}(R^{+}\otimes_{k} \Hom_{\bA}(A_{n},A_{n-1}))\ .$$
The transformation $\Tr^{+}$ sends
$$(r_{0}\otimes f_{0})\otimes \dots \otimes (r_{n}\otimes f_{n})$$ to
$$\tr^{+}(r_{0}\dots r_{n})   f_{0}\otimes_{k} \dots \otimes_{k} f_{n}$$ in \eqref{ijfwoervwewer}.
One checks that this map is compatible with the differentials  $d$ and $B$. The canonical homomorphism $R^{+}\to k$ induces the second map in the
   exact sequence of  mixed complexes  $$0\to  F\to \rmMix_{R^{+}}(\bA) \to \rmMix_{k}(\bA)_{n} \to 0 $$
 (this time considered in $\Mix_{k}$), where according to \eqref{bsdbsdfd} the kernel $F$ of the second map represents $\rmMix_{R}(\bA)$.
 The restriction of $\Tr^{+}$ to $F$ is a map $\Tr:\rmMix_{R}(\bA)\to \rmMix_{k}(\bA)$ in $\Mix_{k,\infty}$ 
 which represents the desired natural transformation.
 \end{proof}

The following is a version of \cite[Thn. 3.3.2]{caputi-diss} and a formal consequence of \cite[Thm. 7.2]{Bunke:2025aa}
and \cref{hoperthetrgerger1}.
Let $R$ be a $\C$-algebra, e.g., the algebra $\cL^{1}$ of trace class operators.
\begin{theorem}
The composition
$$\rmMix\cX^{G}_{\bC,R}:G\BC\xrightarrow{\bV_{\bC}^{G,\ctr}}\Add_{\C}\xrightarrow{\rmMix_{R}} \Mix_{\C,\infty}$$
is an equivariant   coarse homology theory.
\end{theorem}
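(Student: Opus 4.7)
The plan is to invoke the general principle, recorded as \cite[Thm. 7.2]{Bunke:2025aa}, that whenever $F:\Add_{\C}\to\cC$ is a homological functor in the sense of \cref{kophtrherthetregrtg}, the composition $F\circ \bV^{G,\ctr}_{\bC}:G\BC\to\cC$ automatically satisfies the four axioms of an equivariant coarse homology theory in \cref{okprhertgrtge9}. Since \cref{hoperthetrgerger1} tells us that $\rmMix_{R}:\Add_{\C}\to \Mix_{\C,\infty}$ is homological (note that $\C$ is a field, which is all that is needed), and $\Mix_{\C,\infty}$ is stable and cocomplete, this reduces the proof to citing that theorem.

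For the benefit of the reader I would spell out how each axiom follows, since these are the verifications that lie behind the black box. First, coarse invariance: if $f_{0},f_{1}:X\to X'$ are close morphisms in $G\BC$, then the induced functors $f_{0,*},f_{1,*}:\bV^{G,\ctr}_{\bC}(X)\to \bV^{G,\ctr}_{\bC}(X')$ are naturally isomorphic via the identity on underlying objects (the closeness bound furnishes the controlled witness), and homological functors preserve equivalences. Second, vanishing on flasques: a flasqueness datum on $X$ given by a self-map $f$ with $f\sqcup \id_{X}\cong f$ induces an Eilenberg swindle on $\bV^{G,\ctr}_{\bC}(X)$ realising it as a flasque $\C$-linear additive category, which is annihilated by any homological functor. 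Third, excision: a coarsely excisive decomposition $X=Y\cup Z$ gives rise to a Karoubi filtration of $\bV^{G,\ctr}_{\bC}(X)$ whose associated Verdier quotient recovers the mapping cone term $\bV^{G,\ctr}_{\bC}(Y\cap Z)\to\bV^{G,\ctr}_{\bC}(Y)\oplus\bV^{G,\ctr}_{\bC}(Z)$, and homological functors turn Karoubi filtrations into fibre sequences. Fourth, $u$-continuity follows because $\bV^{G,\ctr}_{\bC}(X)\simeq \colim_{U\in\cC^{G}_{X}}\bV^{G,\ctr}_{\bC}(X_{U})$ as a filtered colimit in $\Add_{\C}$, and $\rmMix_{R}$ preserves filtered colimits.

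The only mildly delicate point is that $\rmMix_{R}$ is defined via the fibre \eqref{bsdbsdfd} involving the unitalization $R^{+}$, so strictly speaking the properties of homological functors must be transported from $\rmMix_{\C}$ to $\rmMix_{R}$. This however is automatic: the class of homological functors is closed under taking fibres of natural transformations since $\Mix_{\C,\infty}$ is stable, and \cref{hoperthetrgerger1} has already packaged this step. Consequently the verification reduces entirely to the cited general theorem, so the proof amounts to little more than assembling these references. I expect no genuine obstacle here; the one point to handle carefully in the write-up is the compatibility of $\bV^{G,\ctr}_{\bC}$ with Karoubi filtrations arising from excisive decompositions, but this is exactly the content of the general result already used in \cref{kopgwegwerfrfwref} to treat the algebraic coarse $K$-theory functor, so the same argument transports verbatim.
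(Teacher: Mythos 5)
Your proposal is correct and follows exactly the paper's route: the paper proves the theorem precisely by combining \cref{hoperthetrgerger1} (that $\rmMix_{R}$ is homological, $\C$ being a field) with the general result \cite[Thm. 7.2]{Bunke:2025aa} that composing a homological functor with $\bV^{G,\ctr}_{\bC}$ yields an equivariant coarse homology theory. The axiom-by-axiom sketch you add merely unpacks what lies behind that cited theorem and does not change the argument.
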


If $R=\C$, then we omit this symbol from the notation.

\begin{rem}
One can check that $\rmMix\cX^{G}_{\bC,R}$ is in addition strong and continuous. As these properties are not needed
in the present paper we will not give the argument. \hB
\end{rem}

If $R$ has a trace $\tr:R\to \C$, then  by \cref{kopgerwefwergrtg} we get a natural transformation of coarse homology theories
\begin{equation}\label{vsdfvbvbsdbdfb}\Tr:\rmMix\cX^{G}_{\bC,R}\to\rmMix\cX^{G}_{\bC}:G\BC\to \Mix_{\C,\infty}\ .
\end{equation}

We have a functor
\begin{equation}\label{bsdfvsdfvsfsfvwr}\PCH':\Mix_{k}\to \Ch_{k}
\end{equation}
which sends a mixed complex to the associated periodic cyclic homology complex.
The functor is finitely additive and preserves exact sequences. 

\begin{rem} \label{koprethertgetgegtrg}If $(C_{*},d,B)$ is a mixed complex over $k$, then the periodic cyclic homology complex
is given by $$\PCH'(C_{*},d,B)_{n}:=\prod_{k\in \nat} C_{n+2k}$$ with the differential
$\hat d+\hat B$, where $\hat d$ is the factorwise application of $d$ and
$\hat B$ is given by $B:C_{*+2k}\to C_{*+2k+1}$, where the target is considered as a factor with index $k+1$
(so that $\hat B$ has degree $-1$).   \hB \end{rem}

The functor  in \eqref{bsdfvsdfvsfsfvwr}
preserves weak equivalences and therefore descends to a functor
$$\PCH:\Mix_{k,\infty}\to   \Mod(Hk)\ .$$
This functor $ \PCH$  does not preserve infinite filtered colimits  because of 
  the infinite products involved in its chain-level construction.

    We  define the functor
 $$\widetilde{\PCH\cX^{G}_{\bC,R}}:G\BC\xrightarrow{\rmMix\cX^{G}_{\bC,R} }  
 \Mix_{\C,\infty}\xrightarrow{\PCH }\Mod(H\C)\ .$$
Since $ \PCH$ is exact this functor inherits the following  properties from $\rmMix\cX^{G}_{\bC,R}$:
   coarse invariance,
 excision, and
  vanishing on flasques.
 But since $\PCH$ does not preserve filtered colimits this composition is no longer $u$-continuous and therefore not a coarse homology theory.
 But we can force $u$-continuity by  \cref{kophprthgertrtgegrtg}.
 \begin{ddd}\label{kopgewerfrefewf}
We define the equivariant coarse periodic cyclic homology theory 
with coefficients in $(\bC,R)$ $$\PCH\cX^{G}_{\bC,R}:= (\widetilde{\PCH\cX^{G}_{\bC,R}})^{u}:G\BC\to \Mod(H\C)$$ 
as the functor obtained  by forcing $u$-continuity on $\widetilde{\PCH\cX^{G}_{\bC,R}}$.
\end{ddd}

If $R=\C$, then we omit this symbol from the notation.

\begin{rem}
One can show in addition that $\PCH\cX^{G}_{\bC,R}$ is strong. \hB
\end{rem}

 In the presence of a trace $\tr:R\to \C$ 
 the transformation \eqref{vsdfvbvbsdbdfb} induces a transformation of equivariant coarse homology theories
 \begin{equation}\label{gerwfewrfefwerf}\Tr:\PCH\cX^{G}_{\bC,R}\to \PCH\cX^{G}_{\bC}:G\BC\to \Mod(H\C) \ .
\end{equation}

 Recall that
 $$\PCH\cX^{G}_{\bC,R}:=(\PCH\circ \rmMix_{R}\circ \bV_{\bC}^{G,\ctr})^{u}:G\BC\to \Mod(H\C)\ .$$
If we assume that $\bC$ admits all AV-sums, then the extension $\bV^{G}_{\bC,\tr}$ from \eqref{foiwjeiorfwerfwefewerfw}  provides an extension  \begin{equation}\label{iozgweuirfwerfwerfwref}\xymatrix{G\BC\ar[rr]^{\PCH\cX^{G}_{\bC,R} } \ar[dr]& &\Mod(H\C)\\&G\BC_{\tr}\ar@{..>}[ur]_{\PCH\cX^{G}_{\bC,R,\tr}}}\ .
\end{equation}
We can therefore define the Borel equivariant version
\begin{equation}\label{gwregwer}
 \PCH\cX^{hG}_{\bC,R}:G\BC \to \Mod(H\C) 
\end{equation}
of the equivariant coarse periodic cyclic homology and
the Borelification map
 \begin{equation}\label{werfwerfwerf} 
\beta:\PCH\cX^{G}_{\bC,R}\to \PCH\cX^{hG}_{\bC,R}:G\BC \to \Mod(H\C) 
\end{equation}
 by specializing the constructions from 
    \cref{okhphherthertgeg9}.

  Note that as in the case of  equivariant coarse algebraic $K$-homology
  we have a non-equivariant  version $\PCH\cX^{\{e\}}_{\bC,R}$ and a Borelification map
  $\beta':\PCH\cX^{G}_{\bC,R}\to \PCH\cX^{\{e\},hG}_{\bC,R}$ defined similarly as the map with the same symbol in  \eqref{vsdfvsdfvr3vsfvdfvsdfv}
  Analoguousy to \cref{kohperthtregrtgertgbtertgrtgrertgertge} and \cref{iguweorigwergwergw9} one can show, as announced in \cref{biojrgogbfgbdb}:
    \begin{prop}\label{kopbgbdgberferferdfgbgd} We assume that $\bC$ admits all AV-sums.
  There is a natural equivalence \begin{equation}\label{gjiweorjofwewerfwerfw111}\PCH\cX_{\bC,R}\simeq \PCH\cX^{\{e\}}_{\bC,R}:BG\times G\BC\to \Mod(H\C)\end{equation}  
under which the Borelification maps $\beta$ and $\beta'$ become equivalent, i.e.,
 \begin{equation}\label{gweorjfiojiwerfwerfwerfw}\xymatrix{&\PCH\cX^{G}_{\bC,R}\ar[dr]^{\beta'}\ar[dl]_{\beta}&\\ \PCH\cX_{\bC,R}^{hG}\ar[rr]_{\eqref{gjiweorjofwewerfwerfw111}^{hG}}^{\simeq}&&\PCH\cX_{\bC,R}^{\{e\},hG}}\ .
\end{equation}  
commutes.
    \end{prop}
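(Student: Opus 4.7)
The proof follows the pattern of the proofs of \cref{kohperthtregrtgertgbtertgrtgrertgertge} and \cref{iguweorigwergwergw9}, with one additional step to handle the forcing of $u$-continuity. First, I restrict the natural transformation \eqref{gpoewrwkpferfwerfwregwerwer42} to the wide subcategories of controlled morphisms in order to construct a natural equivalence
\[ \phi: \ell \bV^{G,\ctr}_{\bC} \to \ell \bV^{\{e\},\ctr}_{\bC}: BG \times \BC \to \Add_{\C, 2, 1} \ , \]
where $\ell: \Add_{\C} \to \Add_{\C, 2, 1}$ is the Dwyer--Kan localization at the equivalences, and where on the left $\bV^{G,\ctr}_{\bC}(X) := \bV^{G,\ctr}_{\bC}(X \otimes G_{min,min})$ with $G$-action coming from the right multiplication of $G$ on $G_{min,min}$. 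The forward functor $\phi_{X}$ sends $(C, \rho, \nu)$ to $(C_{e}, \nu_{e})$, where $C_{e} \to C$ is an image of the projection $\nu(X \times \{e\})$, and preserves controlled morphisms by restriction. Its inverse, involving $\bigoplus_{g \in G} gC$, likewise preserves control. The construction uses effective additivity of $\bC$ (for the images) and the existence of AV-sums indexed by $G$ (for the inverse), and the naturality analysis of \cref{kohperthtregrtgertgbtertgrtgrertgertge} carries over verbatim.

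Next, I observe that $\rmMix_{R}: \Add_{\C} \to \Mix_{\C, \infty}$ sends equivalences of $\C$-linear additive categories to equivalences in $\Mix_{\C,\infty}$: this is a consequence of the homological properties recorded in \cref{hoperthetrgerger1} (an equivalence of additive categories induces a quasi-isomorphism on cyclic nerves, cf.\ \cite[Thm.~1.5]{Keller_1999}). Since $\PCH: \Mix_{\C, \infty} \to \Mod(H\C)$ preserves equivalences, the composition $\PCH \circ \rmMix_{R}$ factors through $\ell$ as a functor $(\PCH \circ \rmMix_{R})_{2,1}: \Add_{\C, 2, 1} \to \Mod(H\C)$. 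Applying this factored functor to the equivalence $\phi$ produces an equivalence
\[ \PCH \circ \rmMix_{R} \circ \bV^{G,\ctr}_{\bC} \simeq \PCH \circ \rmMix_{R} \circ \bV^{\{e\},\ctr}_{\bC} \]
in $\Fun(BG \times \BC, \Mod(H\C))$. Applying the functorial construction of forcing $u$-continuity from \cref{kophprthgertrtgegrtg} pointwise on $BG$ descends this to the desired equivalence \eqref{gjiweorjofwewerfwerfw111}.

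For the commutativity of the triangle \eqref{gweorjfiojiwerfwerfwerfw}, the argument proceeds exactly as in the proof of \cref{iguweorigwergwergw9}. It suffices to verify commutativity at the level of $\Add_{\C, 2, 1}$ of the triangle
\[ \xymatrix{& \underline{\ell \bV^{G,\ctr}_{\bC}} \ar[dl]_-{(\tr,\rho)} \ar[dr]^-{\beta''} & \\ \ell \bV^{\ctr}_{\bC} \ar[rr]^-{\phi} & & \ell \bV^{\{e\},\ctr}_{\bC} } \]
where $(\tr, \rho)$ is the transfer map \eqref{gweoiroifjoewfwerfwerferwferfrffw} restricted to the additive controlled setting and $\beta''$ is the additive analog of \eqref{fqewjhbhjf788f714fjhrqf}. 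This is a direct $2$-categorical unwinding of definitions, identical to the one performed in the preceding sections. Applying $(\PCH \circ \rmMix_{R})_{2,1}$, passing to adjoints, and lifting through the universal map \eqref{gwezghuowerfwerf} then yields the claimed commutative triangle.

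The main obstacle is the careful bookkeeping of the forcing of $u$-continuity when passing from the commutative diagrams at the level of $\Fun(BG \times \BC, \Mod(H\C))$ down to the equivariant coarse homology theories. Since $\PCH$ fails to preserve filtered colimits, the intermediate functor $\widetilde{\PCH\cX^{G}_{\bC,R}}$ is not itself a coarse homology theory, so both the construction of the equivalence and of the filler of the triangle must be performed as lifts along \eqref{gwezghuowerfwerf} using its universal property with respect to $u$-continuous targets, precisely as in the proof of \cref{ijhgiowwerferfrewfrwefwf}. Once this bookkeeping is in place, every remaining verification is routine.
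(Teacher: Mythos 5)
Your proposal is correct and follows exactly the route the paper intends: the paper gives no separate argument for this proposition but simply invokes the analogy with \cref{kohperthtregrtgertgbtertgrtgrertgertge} and \cref{iguweorigwergwergw9}, and your writeup (restricting the equivalence to the uncompleted Roe categories, factoring $\PCH\circ\rmMix_{R}$ through $\Add_{\C,2,1}$, forcing $u$-continuity, and checking the transfer triangle before lifting along \eqref{gwezghuowerfwerf}) is precisely that analogy carried out, including the only genuinely new point, the bookkeeping caused by $\PCH$ not preserving filtered colimits. No gaps.
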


%
%

\subsection{The algebraic coarse Chern character}

In this section we introduce the algebraic coarse Chern character
as a natural transformation from  equivariant algebraic coarse $K$-homology to equivariant  coarse periodic cyclic homology. We further show that it is compatible with Borelification.

Let $k$ be a field of characteristic $0$. The periodic cyclic homology functor $\PCH_{k}$ has a canonical extension to
$\Cat_{\Z}$ and we 
 have a natural transformation
\begin{equation}\label{werfwerferwfwrefref}\rmch^{GJ}:K^{\Cat_{\Z}}H \to  \PCH_{k}: \Add_{k}\to \Mod(Hk)\ .
\end{equation}

\begin{rem}
The superscript $GJ$ stands for Goodwillie-Jones and indicates that this transformation
is derived from the Goodwillie-Jones trace. The Chern character map \eqref{werfwerferwfwrefref} is well-known and widely used in the literature, but the detailed construction is difficult to locate in the literature.
For completeness we provide a sketch.

Note that the domain and target of \eqref{werfwerferwfwrefref} are defined on $\Cat_{k}$, and that the 
Chern character can be extended to this bigger domain. But for simplicity, in the following we restrict to additive categories since we defined $\rmMix_{k}$ using the naive mixed complex functor in contrast to \eqref{bsdoijviosdfvsdfvsfdvsf}.

%
%
Let $$\rmCH^{-}:\Mix_{k,\infty}\to \Mod(Hk)$$ be the negative cyclic homology complex functor.
We have a natural transformation  of functors \begin{equation}\label{bdfsbvfdvsfdv}\rmCH^{-}\to \PCH:\Mix_{k,\infty}\to \Mod(Hk)
\end{equation} 
induced by the inclusion of the negative cyclic homology complex into the periodic one.

By Morita invariance, the classical non-connective  algebraic $K$-theory functor for rings over $k$ is equivalent to the composition
 $$K^{\Ring_{k}}:\Ring_{k}\xrightarrow{\incl} \Cat_{k} \xrightarrow{ K^{\Cat_{\Z}}}  \Sp\ .$$
 Similarly we define
 the functor
 $$\rmMix_{k}^{\Ring}:\Ring_{k}\xrightarrow{\incl}   \Cat_{k}\xrightarrow{\rmMix_{k}} \Mix_{k,\infty}\ .$$
 Furthermore we define   the negative cyclic homology for rings by 
\begin{equation}\label{bdfgbdfgbdfgbdgfb}\rmCH_{k}^{\Ring,-}:\Ring_{k} \xrightarrow{\rmMix^{\Ring}_{k}} \Mix_{k,\infty}\xrightarrow{\rmCH^{-}}\Mod(Hk)
\end{equation} 
and 
   the periodic cyclic homology for rings by 
$$\PCH_{k}^{\Ring}:\Ring_{k} \xrightarrow{\rmMix^{\Ring}_{k}} \Mix_{k,\infty}\xrightarrow{\PCH}\Mod(Hk)\ .$$
 The transformation \eqref{bdfsbvfdvsfdv} induces a transformation
$$\rmCH_{k}^{\Ring,-}\to \PCH_{k}^{\Ring}:\Ring_{k}\to \Mod(Hk)\ .$$
Since $\Mix_{k}$ is the naive mixed complex of the cyclic nerve of the $k$-linear category
the functors $\rmMix_{k}^{\Ring}$, $\rmCH_{k}^{\Ring,-}$, $\PCH^{\Ring}_{k}$ are represented by the explicit chain complex-valued constructions described, e.g., in \cite{zbMATH01093754}.

We start from the classical Goodwillie-Jones trace 
$$c^{GJ}:K^{\Ring_{k}}\to \rmCH_{k}^{\Ring,-}\ .$$
We let
$$h:K^{\Ring_{k}}\to K^{\Ring_{k}}H:\Ring_{k}\to \Sp$$ be the universal homotopy invariant functor
under $K^{\Ring}$ given by  Weibel's homotopy $K$-theory \cite{zbMATH04095731}. 
We then consider the square of natural transformations   $$\xymatrix{K^{\Ring_{k}}\ar[r]^{h}\ar[d]^{c^{GJ}} & K^{\Ring_{k}}H\ar@{..>}[d]^{\rmch^{GJ,\Ring}} \\ \rmCH_{k}^{\Ring,-}\ar[r] &  \PCH_{k}^{\Ring}} 
$$ of functors from $\Ring_{k}$ to $\Mod(Hk)$. Since $ \PCH_{k}^{\Ring}$ is homotopy invariant by  \cite{Goodwillie_1985}  \cite{Goodwillie_1986}  
we get the dotted arrow $\rmch^{GJ,\Ring}$ from the universal property of $h$.

In order to get $\rmch^{GJ}$ in \eqref{werfwerferwfwrefref} we must now extend $\rmch^{GJ,\Ring}$ from rings over $k$ to $k$-linear additive categories.
For more details about the following constructions see \cite[Sec. 13.4]{Bunke:2025aa}.
Let  $\Cat_{k,i}$ denote the wide subcategory of $\Cat_{k}$ of functors which are injective on objects. We have a functor $$A^{\alg}:\Cat_{k,i}\to \Ring_{k}$$
which sends a $k$-linear category $\bA$ to the ring $ A^{\alg}(\bA)$ whose underlying
  $k$-module  is $$\bigoplus_{A,B\in \Ob(\bA)} \Hom_{\bA}(A,B)\ ,$$ and whose composition is the obvious matrix
multiplication.  The functor $A^{\alg}$ can be restricted to $\Add_{k}$.
 Using that    $K^{\Ring_{k}}H$ and $\rmMix_{k}^{\Ring} $  are matrix stable and preserve filtered colimits one can check that 
 the compositions
 $$K^{\Ring_{k}}H\circ A^{\alg}:\Add_{k,i}\to  \Sp$$
 and $$\rmMix_{k}^{\Ring}\circ A^{\alg}:\Add_{k,i}\to \Mix_{k,\infty}$$
 preserve equivalences, see \cite[Lem. 13.20]{Bunke:2025aa} for an argument. Hence also  $\PCH_{k}^{\Ring}\circ A^{\alg}:\Add_{k}\to \Mod(Hk)$
 preserves equivalences. Note that the argument of  \cite[Lem. 13.20]{Bunke:2025aa} 
 does not directly apply to periodic cyclic homology since this functor does not preserve filtered colimits.
 We get factorizations
 $$\xymatrix{\Add_{k,i}\ar[r]^{K^{\Ring_{k}}H\circ A^{\alg}}\ar[d]&\Sp\\ \Add_{k,i}[eq^{-1}]\ar@{..>}[ur]_{\hat K^{\Add_{k}}H}&}\ ,
 \xymatrix{\Add_{k,i}\ar[r]^{\rmMix_{k}^{\Ring}\circ A^{\alg}}\ar[d]&\Mix_{k,\infty}\\ \Add_{k,i}[eq^{-1}]\ar@{..>}[ur]_{\hat \rmMix_{k } }&}  \ , \xymatrix{\Add_{k,i}\ar[r]^{\PCH_{k}^{\Ring}\circ A^{\alg}}\ar[d]&\Mod(Hk)\\ \Add_{k,i}[eq^{-1}]\ar@{..>}[ur]_{\widehat \PCH_{k}}&}\ ,$$
 where the vertical arrows are the Dwyer-Kan localizations at the equivalences.
 The transformation $\rmch^{GJ,\Ring}$ induces a transformation of functors 
 \begin{equation}\label{bdfgbdfbdfgbdfgb}\hat \rmch^{GK}:\hat K^{\Add_{k}}H\to \widehat{\PCH_{k}}:\Add_{k,i}\to \Sp\ .
\end{equation}
 We finally use that the inclusion 
 $\Add_{k,i}\to \Add_{k}$ induces an equivalence
 $\Add_{k,i}[eq^{-1}]\stackrel{\simeq}{\to} \Add_{k}[eq^{-1}]$.
 We have squares
 $$\xymatrix{\Add_{k}\ar[r]^{K^{\Cat_{\Z}}H}\ar[d] &\Sp   \\ \Add_{\Z}[eq^{-1}]&\ar[l]^{\simeq} \Add_{k,i}[eq^{-1}]\ar[u]^{\hat K^{\Add_{k}}H} }\ , \quad   
 \xymatrix{\Add_{k}\ar[r]^{\rmMix_{k} }\ar[d] &\Sp   \\ \Add_{k}[eq^{-1}]&\ar[l]^{\simeq} \Add_{k,i}[eq^{-1}]\ar[u]^{\hat \rmMix_{k} } }$$ and
 $$\xymatrix{\Add_{k}\ar[r]^{\PCH_{k} }\ar[d] &\Mod(Hk)   \\ \Add_{k}[eq^{-1}]&\ar[l]^{\simeq} \Add_{k,i}[eq^{-1}]\ar[u]^{\widehat{\PCH_{k}}} } \ .$$
 For the commutativity of the left square we refer to  \cite[Lem. 13.21]{Bunke:2025aa}.
 The argument only depends the fact that the functors preserve filtered colimits and that
 $K^{\Cat_{\Z}}H_{|\Add_{k}}$ is Morita invariant.  
 The same holds true for $\rmMix_{k} $ (see \cite[Thm. 1.5.(a)]{Keller_1999} for Morita invariance)
 which implies commutativity of the second square.  This directly implies the commutativity of the 
 third.

   The natural transformation \eqref{bdfgbdfbdfgbdfgb} eventually
 induces the desired natural transformation \eqref{werfwerferwfwrefref}. 
 \hB
 \end{rem}
 
 \begin{rem}
 For explicit calculations one often needs a formula
 for the composition
 $$ K^{\Cat_{\Z}}\to  K^{\Cat_{\Z}}H \xrightarrow{\rmch^{GJ}}  \PCH_{k} :\Add_{k}\to \Sp\ .$$   To this end one observes that    
 this transformation extends to a transformation of functors defined on $\mathbf{dgCat}_{k}$.
 The induced transformation
 $$\pi_{0}K^{\Cat_{\Z}}\to \pi_{0}  \PCH_{k}:\mathbf{dgCat}_{k}\to \Ab$$
 is characterized by universal properties in \cite{Tabuada_2011}.
 This characterization leads to explicit formulas
 as  in \cite[4.3.4]{Ludewig:2025aa}.
 \hB
  \end{rem}

In view of \cref{kopgwegwerfrfwref} and \cref{kopgewerfrefewf} the transformation
\eqref{werfwerferwfwrefref} induces a natural transformation of equivariant coarse homology theories \begin{equation}\label{oihbuirthbidgbdfg}\rmch^{\alg}_{\cL^{1}}:K\cX^{G,\ctr}_{\bC }\to \PCH\cX^{G}_{\bC,\cL^{1}}:G\BC\to \Sp\ .
\end{equation}

We can post-compose with the transformation
\eqref{gerwfewrfefwerf} induced by the trace $\tr:\cL^{1}\to \C$.
\begin{ddd}\label{iopgwregfrwefwerfrefw}
We define the algebraic Chern character transformation
as the composition
$$\rmch^{G,\alg}:K\cX^{G,\ctr}_{\bC }\xrightarrow{\rmch^{\alg}_{\cL^{1}},\eqref{oihbuirthbidgbdfg}} \PCH\cX^{G}_{\bC,\cL^{1}}\xrightarrow{\Tr,\eqref{gerwfewrfefwerf}}
 \PCH\cX^{G}_{\bC}:G\BC\to \Sp\ .$$
\end{ddd}

\begin{rem}\label{kophwegergwerwf}
We have constructed the bold part of
\begin{equation}\label{okhpowhwegrhthtr}\xymatrix{K\cX_{\bC}^{G,\ctr}\ar[rr]^{c}\ar[d]^{\rmch^{G,\alg}}&&K\cX_{\bC}^{G}\ar@{.>}[dll]^{\rmch^{G,\topp}}\\\PCH\cX^{G}_{\bC}}
\end{equation}
and one could ask whether there exists  a factorization indicated by the dotted arrow.
Recall the localizing subcategory  \eqref{fqwewefedqedew} 
 of $G\Sp\cX$
and \cref{gkopwreferfwerfrwef}.
\begin{kor}We have a natural transformation
   $$\rmch^{G,\topp}:K\cX^{G}_{|\eqref{fqwewefedqedew} }\to \PCH\cX^{G}_{|\eqref{fqwewefedqedew} }\ .$$ 
\end{kor}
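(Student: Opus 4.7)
The plan is to observe that the existence of the dotted arrow is essentially a formal consequence of \cref{gkopwreferfwerfrwef} plus the universal property of $\Yo^{G}$. All three functors $K\cX^{G,\ctr}$, $K\cX^{G}$, $\PCH\cX^{G}$ (with $\bC=\Hilb_{c}(\C)$) are equivariant coarse homology theories, so by \cref{hokrptogkertpgerthe9} they correspond to colimit-preserving functors on the stable $\infty$-category $G\Sp\cX$ of equivariant motivic coarse spectra. Under this identification, the comparison map $c^{G}$ and the algebraic coarse Chern character $\rmch^{G,\alg}$ become natural transformations of colimit-preserving functors.

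Next I would restrict everything to the localizing subcategory $\cL:=G\Sp\cX\langle \Fin_{min,max}\otimes G_{can,min}\cup \All_{min,min}\rangle$ of \eqref{fqwewefedqedew}. Since $\cL$ is by definition closed under colimits and the identified natural transformations preserve colimits, the restriction $c^{G}_{|\cL}:K\cX^{G,\ctr}_{|\cL}\to K\cX^{G}_{|\cL}$ has the property that the class of objects on which it is an equivalence is a localizing subcategory of $\cL$. By \cref{gkopwreferfwerfrwef} this class contains the generators of $\cL$, and therefore equals all of $\cL$. Hence $c^{G}_{|\cL}$ is an equivalence of colimit-preserving functors $\cL\to \Sp$, and in particular it is invertible as a natural transformation.

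I would then simply define
\[
\rmch^{G,\topp}\;:=\;\rmch^{G,\alg}_{|\cL}\circ (c^{G}_{|\cL})^{-1}:K\cX^{G}_{|\cL}\to \PCH\cX^{G}_{|\cL}\ ,
\]
pulled back along $\Yo^{G}$ to a natural transformation of the original coarse-homology-valued functors restricted to the preimage of $\cL$. By construction, the diagram \eqref{okhpowhwegrhthtr} restricted to $\cL$ commutes, which is the asserted factorization of $\rmch^{G,\alg}_{|\cL}$ through $c^{G}_{|\cL}$.

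The only subtle point, and the one I would double-check, is that $c^{G}_{|\cL}$ being an equivalence really does follow from the componentwise statement at the generators. This is standard once one notes that the full subcategory of $\cL$ on which a transformation between colimit-preserving functors with stable target is an equivalence is closed under colimits and shifts, hence is itself localizing; so containing the generators suffices. Everything else is formal.
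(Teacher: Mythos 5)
Your proposal is correct and is essentially the paper's (implicit) argument: by \cref{gkopwreferfwerfrwef} the comparison map $c^{G}$ is an equivalence on the localizing subcategory \eqref{fqwewefedqedew}, so one defines $\rmch^{G,\topp}:=\rmch^{G,\alg}\circ (c^{G})^{-1}$ there, using that all three functors are equivariant coarse homology theories and hence colimit-preserving on $G\Sp\cX$ via \cref{hokrptogkertpgerthe9}. Note only that \cref{gkopwreferfwerfrwef} already asserts the equivalence on all of \eqref{fqwewefedqedew} (its proof being exactly your localizing-subcategory argument applied to the generator statements \cref{hkrtopgkoprtogkrtgretgert} and \cref{lpfrferferertgrtgrgerg}), so your re-derivation from the generators is redundant but harmless.
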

At the moment we do not have an argument that $\rmch^{G,\topp}$ does not exist in general on all of $G\BC$. But even partial existence beyond \eqref{fqwewefedqedew} would have strong consequences.

  Recall \cref{gkopwreferfwerfrwef} and  \cref{plkthertgtggetrg}.
 The complex version of the Novikov conjecture asserts that  the assembly map 
 $$\asmbl^{K^{G}}:\colim_{G_{\Fin}\Orb} K^{G}\wedge H\C\to K^{G}(*)\wedge H\C$$
 is  injective on homotopy groups, where 
 $$K^{G}:=K\cX^{G}((-)_{min,max}\otimes G_{can,min}):G\Orb\to \Mod(KU)\ .$$
 We define the functors $$K^{G,\ctr}:=K\cX^{G,\ctr}((-)_{min,max}\otimes G_{can,min}):G\Orb\to \Sp$$
 and $$\PCH^{G}:=\PCH\cX^{G}((-)_{min,max}\otimes G_{can,min}):G\Orb\to \Mod(H\C)\ .$$
  We now assume that the extension $\rmch^{G,\topp}$ exist  on the  image of the functor \begin{equation}\label{rewferfwefwerfwrf}
 (-)_{min,max}\otimes G_{can,min}:G\Orb\to G\BC\ . \end{equation} 
  Then we have  the  commutative diagram in $\Sp$
 $$\xymatrix{ \ar@/^-3cm/[dd]_{\rmch^{G,\topp}}^{\simeq}\colim_{G_{\Fin}\Orb} K^{G}\wedge H\C\ar[rrr]^{\asmbl^{K^{G}}\wedge H\C}  &&& K^{G}(*)\wedge H\C\ar[dd]^{\rmch^{G,\topp}}  \\\colim_{G_{\Fin}\Orb} K^{G,\ctr}\wedge H\C\ar[d]^{\rmch^{G,\alg}}\ar[u]^{\simeq}_{c^{G}\wedge H\C}&&\\
\colim_{G_{\Fin}\Orb} \PCH^{G}\ar[rrr]^{\asmbl^{\PCH^{G}}}&&&\PCH^{G}(*)
  }\ .$$ By  \cref{gkopwreferfwerfrwef}
the comparison map $c^{G}$  is an equivalence.  Since the targets of $\rmch^{G,\alg}$ and $\rmch^{G,\topp}$ are 
  $H\C$-modules      we have   canonically induced $H\C$-module maps also denoted by $\rmch^{G,\alg}$ and $\rmch^{G,\topp}$ as indicated.
 For a finite group $H$ the algebra  $C^{*}(H)$ is a finite sum of matrix algebras. Since
 $$K^{\nCalg}(\C)\wedge H\C\stackrel{\simeq}{\leftarrow}  K^{\Cat_{\Z}}H(\cL^{1})\stackrel{\rmch^{GJ}}{\to} \PCH_{\C}(\cL^{1})\stackrel{\Tr}{\to} \PCH(\C)$$  induces an equivalence (domain and target are both equivalent to
 $\prod_{k\in \Z}\Sigma^{2k} H\C$ and the map induces some equivalence)
 we conclude by additivity  of the functors that also the left vertical composition is an equivalence as indicated.
 
 By  \cite[Thm. 1.7 \& 1.8]{L_CK_2006}, if 
  $E_{\Fin}G$ has a $G$-compact $CW$-model,   then $\asmbl^{\PCH^{G}}$ is split injective. 
  We could conclude that
  $\asmbl^{K^{G}}\wedge H\C$ is split injective.

\begin{kor}
Under the assumption   that $E_{\Fin}G$ has a $G$-compact $CW$-model the existence of the extension  $\rmch^{G,\topp}
$ in \eqref{okhpowhwegrhthtr} on the image of the functor \eqref{rewferfwefwerfwrf} implies the complex version of the Novikov conjecture for $G$.
\end{kor}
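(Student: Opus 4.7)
The plan is to carry out the diagram chase that the preceding discussion already sets up, checking each ingredient carefully. I will verify that the hypothesized extension $\rmch^{G,\topp}$ on the image of \eqref{rewferfwefwerfwrf} produces a well-defined square of natural transformations of functors $G_\Fin\Orb \to \Mod(H\C)$, and that the induced diagram after $(-)\wedge H\C$-linearization is the one displayed in \cref{kophwegergwerwf}. The commutativity of the outer rectangle is immediate from naturality of the assembly maps. The triangle relating $\rmch^{G,\alg}$ and $\rmch^{G,\topp}$ commutes by construction of $\rmch^{G,\topp}$ as a factorization \eqref{okhpowhwegrhthtr} through $c^{G}$, restricted to the full subcategory of $G\Sp\cX$ on which \cref{gkopwreferfwerfrwef} guarantees that $c^{G}$ is an equivalence.

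First I would check that the left vertical composition $c^{G}\wedge H\C$ followed by $\rmch^{G,\alg}$ is indeed an equivalence after $H\C$-linearization on each orbit $G/H$ with $H$ finite. For such $H$, \cref{lpfrferferertgrtgrgerg} identifies the objects $K^{G,(\ctr)}((G/H)_{\min,\max}\otimes G_{\can,\min})$ with the $K$-theory (topological or algebraic) of $C^\ast(H)$, which is a finite product of matrix algebras over $\C$. The composition of $K$-theories of a matrix algebra with the Goodwillie–Jones chern character and the trace induced by $\tr:\cL^1\to \C$ yields the standard $KU \to \prod_{k\in\Z}\Sigma^{2k} H\C$; wedging with $H\C$ turns this into an equivalence of $H\C$-modules. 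Additivity of $K\cX^{G,(\ctr)}$ and $\PCH\cX^G_\bC$ under finite products then propagates the equivalence to $G/H$, and taking the colimit over $G_\Fin\Orb$ preserves the equivalence.

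Next I would cite \cite[Thm. 1.7 \& 1.8]{L_CK_2006} to obtain split injectivity of $\asmbl^{\PCH^G}$ under the hypothesis that $E_\Fin G$ has a $G$-compact $CW$-model: the assumption gives the required finiteness on the source of the assembly map and the target $\PCH^G(*)$ is an $H\C$-module, which is exactly the setting in which Lück's rational injectivity result applies. Combined with the left vertical equivalence and the commutativity of the diagram, an elementary diagram chase yields that $\asmbl^{K^G}\wedge H\C$ is itself split injective. This is precisely the complex version of the Novikov conjecture for $G$.

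The main obstacle in this plan is not the diagram chase itself, which is formal once the ingredients are in place, but rather the bookkeeping needed to guarantee that $\rmch^{G,\topp}$, assumed to exist on the image of \eqref{rewferfwefwerfwrf}, is indeed compatible on the nose with the naturality constraints coming from morphisms in $G_\Fin\Orb$, so that it descends to the colimit and fits into the left triangle of the displayed diagram. Once this is verified, the conclusion follows; a posteriori the proof is really just the diagram displayed in \cref{kophwegergwerwf} together with the equivalence of the left column and Lück's splitting result.
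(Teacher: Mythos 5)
Your proposal is correct and follows essentially the same route as the paper: the commutative square of assembly maps over $G_{\Fin}\Orb$, the orbit-wise identification via $C^{*}(H)$ being a finite sum of matrix algebras so that the left vertical composition becomes an equivalence after $\wedge H\C$, Lück's split injectivity of $\asmbl^{\PCH^{G}}$ under the $G$-compact $E_{\Fin}G$ hypothesis, and the concluding diagram chase. The "main obstacle" you flag is not one: the hypothesis is precisely that $\rmch^{G,\topp}$ exists as a natural transformation on the image of \eqref{rewferfwefwerfwrf}, so its compatibility with morphisms in $G_{\Fin}\Orb$ and the passage to the colimit are assumed, not to be verified.
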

\hB
\end{rem}

The algebraic Chern character is induced from a natural transformation
of functors
\begin{equation}\label{hrthertgertgertgetrg}K^{\Cat_{\Z}}H_{\cL^{1}}\stackrel{ \rmch^{GL}_{\cL^{1}}}{\to} \PCH_{\cL^{1}}\stackrel{\Tr}{\to}  \PCH
\end{equation}applied to $\bV^{G,\ctr}_{\bC}$.   These functors are defined on $\Add_{\C}$ and send equivalences to equivalences. If $\bC$ admits all AV-sums, then we have the extension 
\eqref{hjfhiquhqwuiehfew24} which in turn  induces an extension
$$\rmch^{G,\alg}_{\tr}:K\cX^{G,\ctr}_{\bC,\tr}\to \PCH\cX^{G}_{\bC,\tr}:G\BC_{\tr}\to \Sp$$
of $\rmch^{G,\alg}$ from \cref{iopgwregfrwefwerfrefw}.
By \cref{ogjwioergjeworijuoiewferfwwrf} we conclude:

\begin{kor}\label{jihgiorhertgertgt55} We assume that $\bC$ admits all AV-sums. The following square commutes: 
\begin{equation}\label{briojbiofdjgdo}\xymatrix{ K\cX^{G,\ctr}\ar[rr]^{\rmch^{G,\alg}}\ar[d]^{\beta} && \PCH\cX^{G}_{\bC} \ar[d]^{\beta} \\ K\cX^{\ctr,hG}_{\bC} \ar[rr]^{\rmch^{\alg,hG} } &&  \PCH\cX^{hG}_{\bC} } 
\end{equation} \end{kor}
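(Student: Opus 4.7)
The plan is to apply \cref{ogjwioergjeworijuoiewferfwwrf} to the natural transformation $\rmch^{G,\alg}$, so the main task is to produce a lift of $\rmch^{G,\alg}$ to a transformation of equivariant coarse homology theories with transfers on $G\BC_{\tr}$.

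The key observation is already indicated right before the statement: the algebraic Chern character is obtained by post-composing $\bV^{G,\ctr}_{\bC}$ with the composition \eqref{hrthertgertgertgetrg}, namely $\Tr \circ \rmch^{GJ}_{\cL^{1}} : K^{\Cat_{\Z}}H_{\cL^{1}} \to \PCH$ on $\Add_{\C}$. Since these functors send equivalences of $\C$-linear additive categories to equivalences, they canonically factor through the Dwyer-Kan localization $\ell : \Add_{\C} \to \Add_{\C,2,1}$. Under the AV-sum hypothesis on $\bC$, diagram \eqref{foiwjeiorfwerfwefewerfw} provides an extension $\bV^{G,\ctr}_{\bC,\tr} : G\BC_{\tr} \to \Add_{\C,2,1}$. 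Post-composing with $(\Tr \circ \rmch^{GJ}_{\cL^{1}})_{2,1}$ and analogously post-composing with $K^{\Cat_{\Z}}H_{\cL^{1}}$ and $\PCH_{\cL^{1}}$ individually, we obtain extensions
\[
K\cX^{G,\ctr}_{\bC,\tr} \xrightarrow{\rmch^{\alg}_{\cL^{1},\tr}} \PCH\cX^{G}_{\bC,\cL^{1},\tr} \xrightarrow{\Tr_{\tr}} \PCH\cX^{G}_{\bC,\tr}
\]
of the two factors of \cref{iopgwregfrwefwerfrefw}, whose composition is the desired $\rmch^{G,\alg}_{\tr}$. The extension $\PCH\cX^{G}_{\bC,\tr}$ is already given by \eqref{iozgweuirfwerfwerfwref}, and similarly $K\cX^{G,\ctr}_{\bC,\tr}$ is as recalled in the paragraph before \eqref{sdacasdcrgqrgrewge}, so the construction is consistent.

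With the extension $\rmch^{G,\alg}_{\tr}$ in hand, the Borelification maps $\beta$ on both sides of \eqref{briojbiofdjgdo} are, by \cref{jgoijowerfewrfewfwfwef}, constructed from these same extensions via the universal span \eqref{gweoiroifjoewfwerfwerferwferfrffw}. Therefore \cref{ogjwioergjeworijuoiewferfwwrf} applies with $E^{G} = K\cX^{G,\ctr}_{\bC}$, $F^{G} = \PCH\cX^{G}_{\bC}$, $\phi = \rmch^{G,\alg}$ and $\phi_{\tr} = \rmch^{G,\alg}_{\tr}$, yielding the asserted commutative square.

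No genuine obstacle is expected: the entire argument is formal once the extension to transfers is produced, and that extension reduces to the already-established existence of $\bV^{G,\ctr}_{\bC,\tr}$ together with the equivalence-preservation properties of $K^{\Cat_{\Z}}H_{\cL^{1}}$, $\rmch^{GJ}_{\cL^{1}}$, and $\Tr$ on $\Add_{\C}$. The only point requiring care is to check that our chosen extension of $\rmch^{G,\alg}$ actually agrees, as a natural transformation on $G\BC$, with the composition defining $\rmch^{G,\alg}$ in \cref{iopgwregfrwefwerfrefw}; this is immediate since both are obtained by applying the same natural transformation of functors $\Add_{\C} \to \Sp$ to the wide-subcategory restriction of $\bV^{G,\ctr}_{\bC,\tr}$ along $G\BC \to G\BC_{\tr}$.
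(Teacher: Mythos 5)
Your proposal is correct and follows essentially the same route as the paper: extend $\rmch^{G,\alg}$ to $G\BC_{\tr}$ by factoring the transformation \eqref{hrthertgertgertgetrg} through $\Add_{\C,2,1}$ and post-composing with the transfer extension $\bV^{G,\ctr}_{\bC,\tr}$ of \eqref{foiwjeiorfwerfwefewerfw}, then invoke \cref{ogjwioergjeworijuoiewferfwwrf}. The paper's own argument is exactly this (stated more tersely), so there is nothing to add beyond the minor observation that the passage through the $u$-continuity forcing in the target is handled by the universal property \eqref{gwezghuowerfwerf}, which both you and the paper leave implicit.
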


The equivalence \eqref{gwioeuorgfewrgwergre} restricts to an equivalence
\begin{equation}\label{gwioeuorgfewrgwrrrrergre}\ell\bV^{\ctr}_{\bC}\stackrel{\simeq}{\to} \ell\bV^{\{e\},\ctr}_{\bC}:BG\times \BC\to  \Add_{2,1}
\end{equation} 
Applying \eqref{hrthertgertgertgetrg}, taking $\lim_{BG}$ and restoring $u$-continuity we conclude:
\begin{kor}\label{lkprthergretgtr} We assume that $\bC$ admits all AV-sums.
We have a commutative square
\begin{equation}\label{gwerfwerfgtrg425}\xymatrix{ K\cX^{\ctr,hG}_{\bC}\ar[rr]^{\rmch^{hG}}\ar[d]^{\simeq} &&\PCH^{hG}_{\bC} \ar[d]^{\simeq} \\ K\cX^{\{e\},\ctr,hG}_{\bC}\ar[rr]^{\rmch^{\{e\},hG}} &&\PCH_{\bC}^{\{e\},hG} } 
\end{equation}
\end{kor}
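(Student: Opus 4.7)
The plan is to transport the equivalence of functors \eqref{gwioeuorgfewrgwrrrrergre} through the natural transformation \eqref{hrthertgertgertgetrg}, then descend through the formation of Borel-equivariant coarse homology theories, in close analogy with the strategy used in \cref{kopbgbdgbdfgbgd} and \cref{kopbgbdgberferferdfgbgd}.

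First I would observe that each of the functors $K^{\Cat_{\Z}}H_{\cL^{1}}$, $\PCH_{\cL^{1}}$, and $\PCH$ defined on $\Add_{\C}$ sends equivalences to equivalences, hence factors through the $(2,1)$-localization $\Add_{\C,2,1}$. The two natural transformations in \eqref{hrthertgertgertgetrg}, namely $\rmch^{GJ}_{\cL^{1}}$ and $\Tr$, therefore induce a natural transformation of functors $\Add_{\C,2,1}\to \Sp$. Post-composing this transformation with the equivalence $\ell\bV^{\ctr}_{\bC}\simeq \ell\bV^{\{e\},\ctr}_{\bC}$ of \eqref{gwioeuorgfewrgwrrrrergre} yields a commutative square of natural transformations of functors $BG\times \BC\to \Sp$ whose horizontal arrows are the non-equivariant algebraic Chern characters $\rmch^{\alg}$ and $\rmch^{\{e\},\alg}$, and whose vertical equivalences are precisely the identifications of \eqref{gjiweorjofwewerfwerfw} and \eqref{gjiweorjofwewerfwerfw111} restricted to the relevant level.

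Next I would apply the functor $G\BC\xrightarrow{\hat\Res^{G}}\Fun(BG,\BC)$ followed by $\diag_{BG}^{*}$ and $\lim_{BG}$ to each corner. This produces a commutative square of natural transformations of functors $G\BC\to \Sp$, since $\lim_{BG}$ is a functor and preserves commutativity of squares. The four resulting functors are coarsely invariant, excisive, and vanish on flasques (these properties are inherited from the original coarse homology theories since they involve only colimits commuting with $\lim_{BG}$), but they may fail to be $u$-continuous. To rectify this I would apply the $(-)^{u}$ operation of \cref{kophprthgertrtgegrtg} to each corner; since this operation is defined as a left Kan extension and is natural in the input functor, it carries the commutative square in question to a commutative square of natural transformations between equivariant coarse homology theories. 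By construction, the corners are exactly $K\cX^{\ctr,hG}_{\bC}$, $K\cX^{\{e\},\ctr,hG}_{\bC}$, $\PCH\cX^{hG}_{\bC}$, $\PCH\cX^{\{e\},hG}_{\bC}$ (see \cref{oiog0wergerfwerferwfr}), and the horizontal arrows are the Borelified Chern characters $\rmch^{hG}$ and $\rmch^{\{e\},hG}$.

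There is no substantive obstacle beyond bookkeeping: the only possible subtlety is to check that the vertical equivalences obtained in this way agree with those appearing in \eqref{gjiweorjofwewerfwerfw}$^{hG}$ and \eqref{gjiweorjofwewerfwerfw111}$^{hG}$, but this is built into the definitions since both are obtained by post-composing the same underlying equivalence \eqref{gwioeuorgfewrgwrrrrergre} with the relevant homological functor and performing the same Borelification procedure. This completes the proof.
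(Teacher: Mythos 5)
Your proposal is correct and follows essentially the same route as the paper: the paper likewise obtains the square by applying the natural transformation \eqref{hrthertgertgertgetrg} (which factors through $\Add_{\C,2,1}$) to the restricted equivalence \eqref{gwioeuorgfewrgwrrrrergre}, then taking $\lim_{BG}$ and restoring $u$-continuity. Your extra remarks on the inherited homological properties and on matching the vertical equivalences are consistent with the paper's definitions and add only bookkeeping detail.
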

The vertical equivalences in \eqref{gwerfwerfgtrg425} are induced by \eqref{gwioeuorgfewrgwrrrrergre}, and the horizontal maps by  \eqref{hrthertgertgertgetrg}.

\subsection{Equivariant periodic coarse homology}

In the section we recall the construction of the equivariant coarse ordinary homology theory, introduce its periodization and its Borelification.

We first recall the construction of the equivariant coarse ordinary homology theory from \cite[Sec. 7]{equicoarse}.
Let $k$ be a commutative ring and $A$ be a $k$-module.
 We consider the functor
 \begin{equation}\label{iugioejorfcerfwerf}C\cX^{G}(-,A):G\BC\to \Ch(k)
\end{equation} 
 which associates to $X$ in
  $G\BC$ the chain-complex 
 $(C\cX^{G}(X,A),\partial)$  of  $G$-invariant locally finite  controlled $A$-valued chains on $X$.
 An element $\phi$ in $C\cX^{G}_{n}(X,A)$ is a function
 $\phi:X^{n+1}\to A$ which is
 \begin{enumerate}
 \item $G$-invariant: $\phi(gx_{0},\dots,gx_{n})=\phi(x_{0},\dots,x_{n})$ for all $g$ in $G$ and $(x_{0},\dots,x_{n})$ in $X^{n+1}$.
 \item locally finite: For every bounded subset $B$ of $X$  and $i$ in $\{0,\dots ,n\}$ the set
 $\pr_{i}(\supp(\phi)) \cap B$ is finite.
 \item controlled: There exists an entourage $U$ in $\cC_{X}^{G}$ such that $\phi$ is $U$-controlled, i.e., 
 for all $(x_{0},\dots,x_{n})$ in $\supp(\phi)$ and all $i,j$ in $\{0,\dots,n\}$ we have
 $(x_{i},x_{j})\in U$.
 \end{enumerate}
 Here $\supp(\phi):=\{(x_{0},\dots,x_{n})\in X^{n+1}\mid \phi(x_{0},\dots,x_{n})\not=0\}$ is the support of $\phi$,
 and $\pr_{i}:X^{n+1}\to X$ is the projection onto the $i$'th component.
 The differential $\partial:C\cX_{n}^{G}(X,A)\to C\cX_{n-1}^{G}(X,A)$ is given by
 $$\partial \phi:=\sum_{i=0}^{n}(-1)^{i}\partial_{i}\ ,$$
 where
 $$(\partial_{i}\phi)(x_{0},\dots,x_{n-1}):=\sum_{x\in X}\phi(x_{0},\dots,x_{i-1},x,x_{i},\dots,x_{n-1})$$
 takes the sum over the fibre of $\pr_{i}$.
 If $\phi$ is $U$-controlled, then
 the summand for $x$ is non-zero only if $x\in \bigcap_{i=0}^{n-1} U[x_{i}]$. Since
 this set if bounded and $\phi$ is locally finite the sum is finite.
For $X$ in $G\BC$ we have  an isomorphism  \begin{equation}\label{gwerfwervsfvfd}C\cX^{G}(X,A)\cong \colim_{U\in \cC_{X}^{G}} C\cX_{U}^{G}(X,A)\ ,
\end{equation}  where $C\cX_{U}^{G}(X,A)$ is the subcomplex of $C\cX^{G}(X,A)$ of $U$-controlled chains.

Let \begin{equation}\label{bsdfpojkvopsdfvsdfvsdfvfdv}\ell:\Ch_{k}\to   \Mod(Hk)
\end{equation}be the Dwyer-Kan localization at quasi-isomorphisms.
\begin{theorem}[{\cite[Thm. 7.3]{equicoarse}}]
The functor \begin{equation}\label{nfgbgfbddgfber}\rmH\cX^{G}(-,A):=\ell C\cX^{G}(-,A):G\BC\to \Mod(Hk)\end{equation} 
is an equivariant coarse homology theory.
\end{theorem}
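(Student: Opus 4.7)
The plan is to verify the four axioms of \cref{okprhertgrtge9} for the functor $\rmH\cX^{G}(-,A) = \ell C\cX^{G}(-,A)$: the target $\Mod(Hk)$ is a cocomplete stable $\infty$-category, so it remains to establish coarse invariance, excision, $u$-continuity, and vanishing on flasques. All four properties will be derived from explicit constructions at the level of the chain complex $C\cX^{G}(-,A)$ described in \eqref{iugioejorfcerfwerf}, and then transferred along the Dwyer--Kan localization $\ell$ of \eqref{bsdfpojkvopsdfvsdfvsdfvfdv}.

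First I would handle $u$-continuity, which is essentially free. The key point is the isomorphism \eqref{gwerfwervsfvfd}, $C\cX^{G}(X,A)\cong \colim_{U\in \cC_{X}^{G}} C\cX^{G}_{U}(X,A)$, which holds already at the level of chain complexes because every locally finite $G$-invariant chain has a well-defined controlling entourage. Using that $\ell$ preserves filtered colimits, we get $\rmH\cX^{G}(X,A)\simeq \colim_{U} \rmH\cX^{G}(X_{U},A)$, which is exactly $u$-continuity. Next I would prove vanishing on flasques. If $X$ in $G\BC$ is flasque, witnessed by an equivariant self-map $f:X\to X$ close to $\id_{X}$ with the usual properties (all iterates $f^{k}$ are controlled and the family $\{f^{k}\}_{k\in \IN}$ is uniformly locally finite in the appropriate sense), then the infinite sum $\sum_{k\ge 0} f^{k}_{*}$ makes sense on $C\cX^{G}(X,A)$ because of the local finiteness condition. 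The Eilenberg swindle $\id + f_{*}\circ(\sum_{k\ge 0}f^{k}_{*}) = \sum_{k\ge 0}f^{k}_{*}$ gives a chain homotopy from $\id$ to $0$ once combined with the chain homotopy realizing $f_{*}\simeq \id$ obtained below, so $\rmH\cX^{G}(X,A)\simeq 0$.

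For coarse invariance, consider two morphisms $f_{0},f_{1}:X\to X'$ in $G\BC$ that are close, i.e., $\{(f_{0}(x),f_{1}(x))\mid x\in X\}$ is contained in some entourage $U'\in \cC^{G}_{X'}$. I would construct an explicit $G$-equivariant prism chain homotopy $h_{n}:C\cX^{G}_{n}(X,A)\to C\cX^{G}_{n+1}(X',A)$ by the standard formula
\[
(h_{n}\phi)(y_{0},\dots,y_{n+1}) = \sum_{i=0}^{n}(-1)^{i}\sum_{\substack{x\in X\\ f_{\epsilon}(x)=y_{j}}} \phi(x_{0},\dots,x_{n}),
\]
where the inner sum runs over preimages determined by the combinatorics of the prism decomposition. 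Controlledness is preserved because all the $f_{\epsilon}(x)$ live within a bounded enlargement of the support of $\phi$, and local finiteness is preserved by properness of $f_{0},f_{1}$. One checks $\partial h+h\partial = f_{1,*}-f_{0,*}$, so $f_{0}$ and $f_{1}$ induce the same map after applying $\ell$. The main obstacle here is bookkeeping: one must verify carefully that the prism formula sends locally finite controlled chains to locally finite controlled chains, and that all sums are finite.

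The hardest axiom is excision. Let $(Z,\cU)$ be an equivariant complementary pair on $X$, with $\cU=(U_{i})_{i\in I}$ a big family. I would need to show that the square
\[
\xymatrix{
\colim_{i} C\cX^{G}(Z\cap U_{i},A)\ar[r]\ar[d] & \colim_{i} C\cX^{G}(U_{i},A)\ar[d]\\
C\cX^{G}(Z,A)\ar[r] & C\cX^{G}(X,A)
}
\]
becomes a pushout in $\Mod(Hk)$, equivalently that the cofibre of the left vertical is equivalent to the cofibre of the right vertical. This follows from a short exact sequence of chain complexes: the quotient $C\cX^{G}(X,A)/\colim_{i} C\cX^{G}(U_{i},A)$ consists of $G$-invariant locally finite controlled chains on $X$ modulo those eventually supported in $\cU$; by complementarity, any controlled chain can be decomposed so that its essential support lies in $Z$, and the same analysis applies on the $Z$-side. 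This gives a quasi-isomorphism of relative complexes. The routine but delicate work is to produce an honest equivariant splitting of supports at the chain level: one chooses, for each controlling entourage $V$, an index $i(V)$ such that $V[Z]\subseteq U_{i(V)}$, and then uses the indicator-function projection onto chains whose support meets $Z$ in a $V$-neighborhood. Taking the filtered colimit over $V$ and applying $\ell$ produces the required equivalence. This step is where one really uses the interplay between the bornology, the coarse structure, and the big family structure of $\cU$, and is the technical heart of the proof. With all four axioms verified, the functor $\rmH\cX^{G}(-,A)$ is an equivariant coarse homology theory.
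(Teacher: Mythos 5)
The paper does not actually prove this statement: it is quoted from \cite[Thm.~7.3]{equicoarse}, so there is no in-paper argument to compare with. Your proposal follows the same route as the cited proof: check the four axioms of \cref{okprhertgrtge9} at the chain level and transfer them along $\ell$. The $u$-continuity step is fine (modulo the harmless identification of $\colim_{U}C\cX^{G}(X_{U},A)$ with $\colim_{U}C\cX^{G}_{U}(X,A)$ via cofinality of the powers $U^{n}$), the Eilenberg swindle for flasqueness is the right argument provided you invoke the actual flasqueness conditions (that $\bigcup_{k}(f^{k}\times f^{k})(U)$ is again an entourage, which is what makes $\sum_{k\ge 0}f^{k}_{*}\phi$ controlled, and the shifting condition, which makes it locally finite), and the prism homotopy for coarse invariance is the standard construction even though your displayed formula is garbled (the inner sum should run over tuples $(x_{0},\dots,x_{n})$ with $f_{0}(x_{j})=y_{j}$ for $j\le i$ and $f_{1}(x_{j})=y_{j+1}$ for $j\ge i$).

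There is, however, a concrete misstep in the excision argument, which you yourself identify as the technical heart. You propose to choose, for each controlling entourage $V$, an index $i(V)$ with $V[Z]\subseteq U_{i(V)}$. This is backwards and in general impossible: $Z$ is the ``large'' part of the complementary pair (it may even be all of $X$), and there is no reason for any thickening of $Z$ to lie in a member of the big family $\mathcal{U}$. The correct use of the hypotheses is the other way around: complementarity provides a single $i_{0}$ with $Z\cup U_{i_{0}}=X$, and for a $V$-controlled chain every simplex that meets $X\setminus Z\subseteq U_{i_{0}}$ is entirely contained in $V[U_{i_{0}}]$, which by the big-family property lies in some $U_{j}$. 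With this decomposition of supports one shows that the induced map of quotient complexes $C\cX^{G}(Z,A)/\colim_{i}C\cX^{G}(Z\cap U_{i},A)\to C\cX^{G}(X,A)/\colim_{i}C\cX^{G}(U_{i},A)$ is in fact an isomorphism (not merely a quasi-isomorphism); since the vertical inclusions are degreewise injective, $\ell$ sends the two short exact sequences to cofibre sequences and the square is a pushout. So the overall architecture of your proof is correct and standard, but the support-splitting step must be repaired as above before the excision axiom is actually established.
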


The functor $\rmH\cX^{G}(-,A)$ is called the equivariant coarse ordinary homology theory  with coefficients in $A$. It is furthermore 
   continuous by \cite[Lem. 7.9]{equicoarse}, 
  strong by \cite[Lem. 7.10]{equicoarse}, and 
 strongly additive by \cite[Lem. 7.11]{equicoarse}.

%


Recall the   \cref{iugwierogjwe8r9uf98werfwerfwerwerf} of $2$-periodization.
\begin{ddd}\label{kopgherthertgertgertget}
We define the equivariant  coarse periodic  homology theory with coefficients in $A$ by 
$$ \PH\cX^{G}(-,A):= P(\rmH\cX^{G}(-,A))
:G\BC\to \Mod(Hk)\ .$$ 
\end{ddd}

By \eqref{bsdfvdfvsfvsfdvs} for $X$ in $G\BC$ we have
\begin{equation}\label{gwerferfwfer}\PH\cX^{G}(X,A)\simeq \colim_{U\in \cC^{G}_{X}} \prod_{k\in \nat} \Sigma^{2k}\rmH\cX^{G}(X_{U},A)\ .
\end{equation}  
\begin{lem}\label{kopethtregtet}
We have $$\PH\cX^{G}(X)\simeq \ell  \colim_{U\in \cC_{X}^{G}}  \prod_{k\in \nat}  \colim_{n\in \nat }C\cX^{G}_{U^{n}} (X,A)[2k]\ .$$
\end{lem}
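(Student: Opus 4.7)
The plan is to unwind the definitions on the left and move the localization $\ell$ of \eqref{bsdfpojkvopsdfvsdfvsdfvfdv} to the outside, using that it commutes with both the filtered colimits and the product appearing in the formula.

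First I would unfold $\PH\cX^G(X,A)$ using \cref{kopgherthertgertgertget}, \cref{iugwierogjwe8r9uf98werfwerfwerwerf}, and the pointwise formula \eqref{bsdfvdfvsfvsfdvs} for forcing $u$-continuity to obtain
\[
\PH\cX^G(X,A)\;\simeq\;\colim_{U\in \cC_X^G}\prod_{k\in\nat}\Sigma^{2k}\rmH\cX^G(X_U,A)
\]
(this is essentially \eqref{gwerferfwfer}). Then I apply the definition $\rmH\cX^G(-,A)=\ell C\cX^G(-,A)$ from \eqref{nfgbgfbddgfber}, together with the isomorphism \eqref{gwerfwervsfvfd} applied to $X_U$. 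Since the coarse structure of $X_U$ is generated by the single $G$-invariant entourage $U$, the family $\{U^n\}_{n\in\nat}$ is cofinal in $\cC_{X_U}^G$, and hence $C\cX^G(X_U,A)\cong \colim_{n\in\nat}C\cX^G_{U^n}(X,A)$ in $\Ch_k$. Combining these we get
\[
\PH\cX^G(X,A)\;\simeq\;\colim_{U\in \cC_X^G}\prod_{k\in\nat}\Sigma^{2k}\,\ell\,\colim_{n\in\nat}C\cX^G_{U^n}(X,A).
\]

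The remaining work is to pull $\ell$ to the outside. For this I need two facts about the Dwyer–Kan localization $\ell:\Ch_k\to \Mod(Hk)$: it preserves filtered colimits, and it preserves small products. The first holds because filtered colimits in the category of $k$-modules are exact, so filtered colimits of quasi-isomorphisms of chain complexes are again quasi-isomorphisms; equivalently, the homotopy groups of a filtered colimit of chain complexes are the colimit of the homotopy groups. The second holds because products of $k$-modules are exact, so taking products of chain complexes commutes with passage to homology, and comparison of homotopy groups shows that the canonical map $\ell\prod_i C^{(i)}\to \prod_i \ell C^{(i)}$ is an equivalence in $\Mod(Hk)$. Using that $\Sigma^{2k}$ is $[2k]$ and that suspension commutes with filtered colimits, I then move $\ell$ successively out of the inner $\colim_n$, the product $\prod_k$, and finally the outer $\colim_U$, obtaining the claimed formula.

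I do not anticipate a main obstacle: the argument is a routine manipulation of commuting operations. The only point requiring brief care is the commutation of $\ell$ with infinite products, which is specific to the fact that we are working with chain complexes of modules (where products are exact) and so is safe here; this would be the one place to double–check if one were worried.
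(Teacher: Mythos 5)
Your proposal is correct and follows essentially the same route as the paper's own (very brief) proof: unfold the periodization via \eqref{gwerferfwfer}, use \eqref{gwerfwervsfvfd} together with the cofinality of $(U^{n})_{n\in\nat}$ in $\cC^{G}_{X_{U}}$, and then commute $\ell$ past the filtered colimits and the product. The only difference is that you spell out the justification (exactness of filtered colimits and products of $k$-modules) that the paper leaves implicit, which is a harmless addition.
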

\begin{proof}
We use  \eqref{gwerfwervsfvfd}, 
\eqref{gwerferfwfer} and the fact that $(U^{n})_{n\in \nat}$ is cofinal in $\cC^{G}_{X_{U}}$ provided $U$ conains the diagonal.
We also use that $\ell$ commutes with filtered colimits and products.
\end{proof}

The order of the product and the colimits in this formula is relevant.
Using \cref{kopethtregtet} one can show:
\begin{prop}
$\HP\cX^{G}(-,A)$ is strong.
\end{prop}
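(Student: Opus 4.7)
The plan is to transport the Eilenberg swindle argument underlying the strongness of $\rmH\cX^G(-,A)$ (see \cite[Lem.~7.10]{equicoarse}) through the explicit chain-complex formula provided by \cref{kopethtregtet}. Let $X$ be a weakly flasque $G$-bornological coarse space with witnessing endomorphism $f\colon X\to X$. By the weakly flasque axioms the pushforward $f_*$ acts on locally finite controlled chains, the iterated pushforwards $f^n_*$ map $C\cX^G_V(X,A)$ into $C\cX^G_{(f^n\times f^n)(V)}(X,A)$ with $(f^n\times f^n)(V)\in \cC^G_X$, and the series $S:=\sum_{n\geq 0} f^n_*$ converges pointwise because of local finiteness and the boundedness condition on orbits of $f$. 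By construction $S = \id + f_* S$ at the chain level, and the weakly flasque structure supplies an explicit chain homotopy $h\colon f_*\simeq \id$ on $C\cX^G(X,A)$, which combined with $S = \id + f_* S$ yields the familiar null-homotopy $\id \simeq 0$ on $\rmH\cX^G(X,A)$.

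Starting from \cref{kopethtregtet},
\[
\PH\cX^{G}(X,A)\simeq \ell\,\colim_{U\in \cC^{G}_X}\, \prod_{k\in \nat}\, \colim_{n\in \nat} C\cX^{G}_{U^{n}}(X,A)[2k]\ ,
\]
I would work cofinally with those $U\in \cC^G_X$ containing the diagonal and invariant under $(f\times f)$ up to enlargement; such $U$ are cofinal because $V\mapsto \bigcup_m (f^m\times f^m)(V)$ sends $\cC^G_X$ into itself (this is precisely the stabilization clause of the weakly flasque axiom). For such $U$, the swindle $S$ maps $\colim_n C\cX^G_{U^n}(X,A)$ into itself, and the relation $S=\id+f_*S$ together with the chain homotopy $h$ gives a null-homotopy of $\id$ on the inner colimit $\colim_n C\cX^G_{U^n}(X,A)\simeq C\cX^G(X_U,A)$, all natural in the coarse structure refinement $U\subseteq U'$.

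Since $\prod_{k\in\nat}\Sigma^{2k}$ is an additive, exact operation on chain complexes applied factorwise, the swindle $S$ and the chain homotopy $h$ extend componentwise to the product $\prod_{k\in\nat}\colim_n C\cX^G_{U^n}(X,A)[2k]$, producing a null-homotopy of the identity on this chain complex. Passing through the Dwyer--Kan localization $\ell$ and the filtered colimit over $U$ (both of which preserve null-homotopies) yields $\PH\cX^G(X,A)\simeq 0$, as required.

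The main technical point, and the step one has to be most careful about, is the bookkeeping in \textbf{Step~2}: one needs to see that the entourage growth produced by $S$ together with the control bounds for the chain homotopy $h$ can be absorbed into a single cofinal subfamily of $\cC^G_X$, so that both $S$ and $h$ descend to well-defined endomorphisms of the inner colimit $\colim_n C\cX^G_{U^n}(X,A)$, and not merely to maps into a larger colimit. The interlocking ordering of the product $\prod_k$ \emph{between} the inner colimit $\colim_n$ and the outer colimit $\colim_U$ in \cref{kopethtregtet} is exactly what makes this absorption compatible with the periodization; this is why the same argument does not apply to the naive product $\prod_{k\in\Z}\Sigma^{2k}\rmH\cX^G$ before forcing $u$-continuity.
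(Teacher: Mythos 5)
The paper does not actually supply a proof here (it is explicitly left as an exercise), so your attempt has to be judged on its own. Your bookkeeping of the Eilenberg swindle through the formula of \cref{kopethtregtet} — that $S=\sum_{n\ge 0}f^{n}_{*}$ raises control only by the entourage $\tilde U:=\bigcup_{m}(f^{m}\times f^{m})(U)$ uniformly in the product index $k$, hence descends to an endomorphism of $\colim_{U}\prod_{k}\colim_{n}C\cX^{G}_{U^{n}}(X,A)[2k]$ — is the right idea and is the genuinely technical part. But there is a real gap at the decisive step: you claim that ``the weakly flasque structure supplies an explicit chain homotopy $h\colon f_{*}\simeq \id$ on $C\cX^{G}(X,A)$''. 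It does not. Weak flasqueness (in the sense of \cite{equicoarse}, which is what ``strong'' refers to) only requires $\Yo^{G}(f)\simeq \Yo^{G}(\id_{X})$, a motivic identification with no chain-level or propagation control whatsoever; the condition that $f$ be \emph{close} to the identity, which is what produces a controlled chain homotopy, is exactly the extra hypothesis of genuine flasqueness. As written, your argument therefore proves vanishing on flasque spaces — which is already part of $\PH\cX^{G}(-,A)$ being an equivariant coarse homology theory — and not strongness.

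The repair is to drop the chain homotopy entirely: no such homotopy is needed. From the chain-level identity $\id = S - f_{*}\circ S$ one gets, after applying $\ell$ and passing to the colimit over $U$, the relation $\id \simeq [S]-\PH\cX^{G}(f,A)\circ [S]$ as endomorphisms of $\PH\cX^{G}(X,A)$ in $\Mod(Hk)$ (here one uses that the map induced by the chain-level pushforward $f_{*}$ is exactly $\PH\cX^{G}(f,A)$). Since $\PH\cX^{G}(-,A)$ is already known to be an equivariant coarse homology theory, it factors through $\Yo^{G}$, so the hypothesis $\Yo^{G}(f)\simeq\Yo^{G}(\id_{X})$ gives $\PH\cX^{G}(f,A)\simeq \id$, and the displayed relation collapses to $\id\simeq 0$, i.e.\ $\PH\cX^{G}(X,A)\simeq 0$. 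Note also that your closing remark is slightly off target: the naive periodization $\prod_{k\in\Z}\Sigma^{2k}\rmH\cX^{G}(-,A)$ vanishes on weakly flasque $X$ trivially, factor by factor; the reason the chain-level swindle is needed for $\PH\cX^{G}$ is that forcing $u$-continuity evaluates the product on the spaces $X_{U}$, which need not inherit weak flasqueness from $X$ — that, rather than the ordering of product and colimits per se, is what makes the argument nontrivial.
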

\begin{proof}
As this fact is not needed in the present paper and
the proof requires to unfold   definitions not recalled here  we leave it as an exercise to the reeader.
%
%
%
%
\end{proof}

It has been shown in \cite[Sec. 3.3]{trans} that the equivariant coarse ordinary homology theory
has an extension
\begin{equation}\label{huihgiwehgerfwerfwerf}\xymatrix{G\BC\ar[dr]\ar[rr]^{\rmH\cX^{G}(-,A)}&&\Mod(Hk)\\&G\BC_{\tr}\ar[ur]_{\rmH\cX^{G}_{\tr}(-,A)}&}\ .
\end{equation}
This extension induces an an extension $\PH\cX^{G}_{\tr}(-,A)$ of the periodic version.
We can therefore define the Borel-equivariant version
$$\PH\cX^{hG}(-,A):G\BC\to \Mod(Hk)$$
and the Borelification map
\begin{equation}\label{bsldkjvopsdfvsfdvsfdv}\beta:\PH\cX^{G}(-,A)\to \PH\cX^{hG}(-,A):G\BC\to \Mod(Hk)
\end{equation}
according to \cref{jgoijowerfewrfewfwfwef}.

We finally make the points from \cref{biojrgogbfgbdb} explicit.

\begin{prop}
We have a canonical equivalence
\begin{equation}\label{bsdfvsdfvsfdvsfdvsfv}\PH\cX(-,A)\simeq \PH\cX^{\{e\}}(-,A):BG\times \BC\to \Mod(Hk)
\end{equation}
  under which the Borelification maps $\beta$ and $\beta'$ become equivalent, i.e., 
 \begin{equation}\label{bsdifub980u90bdfgb}\xymatrix{&\PH\cX^{G}(-,A)\ar[dr]^{\beta'}_{\eqref{vsdfokvpsosdfv}}\ar[dl]^{\eqref{bsldkjvopsdfvsfdvsfdv}}_{\beta}&\\ \PH\cX^{hG}(-,A)\ar[rr]_{\eqref{bsdfvsdfvsfdvsfdvsfv}^{hG}}^{\simeq}&&\PH\cX^{\{e\},hG}(-,A)}
\end{equation}  
commutes.
\end{prop}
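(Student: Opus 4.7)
The plan is to construct the equivalence \eqref{bsdfvsdfvsfdvsfdvsfv} at the level of the chain complex functor $C\cX^{G}(-,A)$ from \eqref{iugioejorfcerfwerf}, in close analogy with the topological $K$-theory case \cref{kohperthtregrtgertgbtertgrtgrertgertge}, its algebraic analogue \cref{kopbgbdgbdfgbgd}, and the cyclic case \cref{kopbgbdgberferferdfgbgd}. For $X$ in $\BC$, an element $\phi$ of $C\cX^{G}(\Res_{G}(X)\otimes G_{min,min},A)_{n}$ is a $G$-invariant, locally finite, controlled function $(X\times G)^{n+1}\to A$. Since every entourage of $G_{min,min}$ is contained in the diagonal, controlledness forces the support of $\phi$ to lie in the set where $g_{0}=\dots=g_{n}$, while left $G$-invariance then makes the common value of the $g_{i}$'s irrelevant. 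Evaluation at $g=e$ therefore yields a natural isomorphism of chain complexes
\[
\operatorname{res}_{X}\colon C\cX^{G}(\Res_{G}(X)\otimes G_{min,min},A)\stackrel{\cong}{\to} C\cX(X,A)=C\cX^{\{e\}}(X,A)\ ,
\]
with local finiteness preserved because bounded subsets of $G_{min,min}$ are finite. A direct computation shows that $\operatorname{res}_{X}$ intertwines the right $G$-action on the source (coming from right multiplication on the $G_{min,min}$-factor) with the trivial $G$-action on the target. Applying the Dwyer--Kan localization $\ell$ from \eqref{bsdfpojkvopsdfvsdfvsdfvfdv} followed by the $2$-periodization of \cref{iugwierogjwe8r9uf98werfwerfwerwerf} then produces the equivalence \eqref{bsdfvsdfvsfdvsfdvsfv}.

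For the commutativity of \eqref{bsdifub980u90bdfgb} I would proceed exactly as in \cref{iguweorigwergwergw9}. The Borelification $\beta$ is defined via the extension \eqref{huihgiwehgerfwerfwerf} of $\rmH\cX^{G}(-,A)$ to $G\BC_{\tr}$, evaluated on the transfer \eqref{bsfdwergwregwergwevodfsdbdf}. At the chain level, this transfer sends a $G$-invariant controlled locally finite chain on $Y$ to the corresponding chain on $Y\otimes G_{min,min}$ supported on the graph of $y\mapsto (y,e)$ and propagated by the left $G$-action. The map $\beta'\colon \PH\cX^{G}(-,A)\to \PH\cX^{\{e\},hG}(-,A)$ is constructed, by adjunction and lifting through the universal map of \cref{kophprthgertrtgegrtg}, from a natural transformation $C\cX^{G}(-,A)\to \lim_{BG}C\cX^{\{e\}}(-,A)$ whose component at $Y$ sends a chain $\psi$ to the family, indexed by $g\in G$, of its pullbacks along the inclusions $y\mapsto (y,g)$. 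Postcomposing the chain-level transfer with $\operatorname{res}$ recovers precisely this natural transformation, so that the triangle of chain complex functors commutes by a direct calculation; passing to $\Mod(Hk)$, periodizing, and forcing $u$-continuity then produces \eqref{bsdifub980u90bdfgb}.

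The main obstacle is careful bookkeeping: one must identify the right $G$-action on $C\cX^{G}(\Res_{G}(-)\otimes G_{min,min},A)$ as corresponding to the trivial action on $C\cX^{\{e\}}(-,A)$ after passing through $\operatorname{res}$, and verify that the transfer and restriction maps are compatible at the chain level. All remaining steps are formal, since $\ell$, periodization, $\lim_{BG}$ and $(-)^{u}$ enter only through their universal properties already established in the cited constructions.
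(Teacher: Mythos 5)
Your proposal is correct and follows essentially the same route as the paper: the paper constructs the same chain-level isomorphism (written in the opposite direction, extending a chain on $X$ by the diagonal formula on $X\otimes G_{min,min}$ rather than restricting at $g=e$), observes that the induced $BG$-action is trivial, and obtains \eqref{bsdfvsdfvsfdvsfdvsfv} by applying $\ell$ and periodizing. For \eqref{bsdifub980u90bdfgb} the paper likewise defines $\beta'$ from the inclusion of invariant chains (which is what your "family of pullbacks along $y\mapsto(y,g)$" collapses to) and checks on the chain level that the transfer followed by this isomorphism agrees with it, before applying $\ell$, adjoints and $P(-)$, so your bookkeeping matches the paper's proof.
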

\begin{proof}
The map $\beta'$ will be construced in the proof below.
For  $X$  in $\BC$ we define a natural isomorphism
\begin{equation}\label{bdfsoijoiwgsdbdfb}C\cX(X,A)\stackrel{\cong}{\to} C\cX^{G}(X\otimes G_{min,min},A)
\end{equation}
which sends a chain
$\phi$  in $C\cX_{n}(X,A)$  to the chain
$$((x_{0},g_{0}),\dots,(x_{n},g_{n}))\mapsto \left\{\begin{array}{cc} \phi(x_{0},\dots,x_{n})&g_{0}=\dots =g_{n}\\0 &else  \end{array} \right.$$
in $C\cX^{G}_{n}(X\otimes G_{min,min},A)$. Applying $\ell$ from \eqref{bsdfpojkvopsdfvsdfvsdfvfdv}
this isomorphism induces the equivalence
\begin{equation}\label{jbiosdfjoivdsfvsdfvsdvdfvsdfv}\rmH \cX(-,A)\simeq \rmH\cX^{\{e\}}(-,A):BG\times \BC\to \Mod(Hk)\ ,
\end{equation} where $G$ acts trivially on both sides.
Periodization gives the equivalence \eqref{bsdfvsdfvsfdvsfdvsfv}.

For $X$ in $G\BC$ we have  canonical inclusion
\begin{equation}\label{bsodjkfpvsdfvsfdvdf}\underline{C\cX^{G}(X,A)}\to C\cX(\hat \Res^{G}(X),A)
\end{equation} of chain complexes with $G$-action.
Applying $\ell$ and taking adjoints we get the Borelification map
$$\tilde \beta:\rmH\cX^{G}(-,A)\to \rmH\cX^{hG}(-,A):G\BC\to \Mod(Hk)\ .$$
Its periodization finally induces
 \begin{equation}\label{vsdfokvpsosdfv} \beta':\PH\cX^{G}(-,A)\to \PH\cX^{hG}(-,A):G\BC\to \Mod(Hk)\ .
\end{equation}
The following triangle commutes:
$$\xymatrix{&\underline{C\cX^{G}(-,A)}\ar[dl]_{\tr} \ar[dr]^{\eqref{bsodjkfpvsdfvsfdvdf}} &\\ C\cX^{G}(\hat \Res^{G}(-)\otimes G_{min,min},A) &&\ar[ll]^{\eqref{bdfsoijoiwgsdbdfb}}_{\cong}C\cX^{\{e\}}(\hat \Res^{G}(-),A)  }\ .$$
Here $\tr$ is the chain level transfer inducing the transfer \eqref{bsfdwergwregwergwevodfsdbdf}  in equivariant coarse ordinary homology. We apply $\ell$, take adjoints,   and apply $P(-)$ and  in order to get \eqref{bsdifub980u90bdfgb}.
\end{proof}

\subsection{The trace transformation}

In this section we introduce the trace transformation from equivariant coarse periodic cyclic homology  
to equivariant periodic ordinary coarse homology theory. We further discuss its compatibility with Borelification.

Recall our standing assumption that  $\bC$   in $G\nCcat$ is effectively additive and admits countable $AV$-sums. 
In the present subsection   we assume in addition  that the full subcategory $\bC^{u}$ of unital objects admits a $G$-invariant trace $\tau_{\bC}=(\tau_{C})_{C\in \Ob(\bC^{u})}$, see \cite[Def. 9.1 \& 9.8]{Bunke:2025aa}. The trace associates to
every
object $C$ in $\bC^{u}$ a $\C$-linear map
$$\tau_{C}:\End_{\bC}(C)\to \C$$ such that
for every two morphisms $f:C\to C'$ and $f':C'\to C$
between objects of $\bC^{u}$
we have
$\tau_{C'}(f\circ f')=\tau_{C}(f'\circ f)$ (trace property), and for every $g$ in $G$ and $h$ in $\End_{\bC}(C)$ we have
$\tau_{C}(h)=\tau_{gC}(gh)$ (invariance).

\begin{ex}
The category $\Hilb_{c}(\C)^{u}$ with the trivial action has a $G$-invariant trace given by the usual matrix trace.
More generally, if $A$ is a unital  $G$-$C^{*}$-algebra with a $G$-invariant trace, then $\Hilb_{c}(A)^{u}$ has an induced  $G$-invariant trace. \hB
\end{ex}

In this section we construct a   transformation of equivariant coarse homology theories 
\begin{equation}\label{wregwerfvdfs}\tau^{G}:\PCH\cX^{G}_{\bC}\to \HP\cX^{G}(-,\C):G\BC\to \Mod(H\C)\ . 
\end{equation}
It is a modification of the trace transformation from equivariant  coarse Hochschild or cyclic homology to coarse homology
constructed in \cite{caputi-diss}.   Variants also appeared, e.g.,  in \cite{Engel:2025aa}, \cite{Ludewig:2025aa}.

%
%
%
%


\begin{construction}\label{ojorherthgetrgtr}{\em
 The trace transformation will be determined by a chain-level construction. 
Recall the explicit description of the periodic cyclic homology complex of a mixed complex from 
\cref{koprethertgetgegtrg} and the description of the mixed complex of a $k$-linear category from \cref{kwgopergweferfwef}.
The trace transformation \eqref{wregwerfvdfs} will be determined by the natural  chain map 
$$\tilde \tau^{G}: \colim_{U\in \cC_{X}^{G}}\prod_{k\in \Z}  \rmMix_{\C}(\bV_{\bC}^{G,\ctr}(X_{U}))[2k]\to \colim_{U\in \cC^{G}_{X}} \prod_{k\in \Z} \colim_{n\in \nat} C\cX_{U^{n}}^{G}(X,\C)[2k]\ ,$$ see \cref{kopgewerfrefewf} for the domain and
 \cref{kopethtregtet} for the target, where the $\Z$-graded group in the domain has the differential of the periodic 
 cyclic homology complex.
By definition, the map  $\tilde \tau^{G}$ will  preserve the factors of the products.

Let $U$ in $\cC_{X}^{G}$ contain the diagonal.
We consider the map between the  factors with index $k$. Let $(f_{0}\otimes\dots \otimes f_{n+2k})$ be in $ \Mix(\bV_{\bC}^{G,\ctr}(X_{U}))[2k]_{n}$. For every pair of points $x,y$ in $X$ the matrix coefficient $f_{i,x,y}=\mu(\{x\})f_{i}\mu(\{y\})$ is a morphism in $\bC^{u}$.
Hence the whole composition below is a morphism in $\bC^{u}$ and its trace is defined.
We define
$$\tilde \tau^{G}(f_{0}\otimes\dots \otimes f_{n+2k})(x_{0},\dots,x_{n+2k}):=\tau_{A_{n+2k}}(f_{0,x_{n+2k},x_{0}} \circ \dots  \circ f_{n+2k,x_{n-1+2k},x_{n+2k}})\ .$$
Using that $\tau_{\bC}$ is $G$-invariant one checks that the function $\tilde \tau^{G}(f_{0}\otimes \dots \otimes f_{n+2k})$ on $X^{n+2k+1}$ is $G$-invariant.  

If $n_{i}$ in $\nat$ is such that $f_{i}$ is $U^{n_{i}}$-controlled for all $i$ in $\{0,\dots,n+2k\}$, then we have 
$\tilde \tau^{G}(f_{0}\otimes\dots \otimes f_{n+2k})\in C\cX^{G}_{U^{\sum_{i=0}^{n+2k}n_{i}}}(X,\C)$.   In order to see this assume that $j<l$ and that 
  $(x_{j},x_{l})\not\in U^{\sum_{i=0}^{n+2k}n_{i}}$. Then
  $f_{j+1,x_{j},x_{j+1}}\circ \dots \circ  f_{l,x_{l-1},x_{l}}=0$ since this composition propagates at  most as  $U^{\sum_{i=j}^{l}n_{i}}$ and $U^{\sum_{i=j}^{l}n_{i}}\subseteq U^{\sum_{i=0}^{n+2k}n_{i}}$ since
  $\sum_{i=j}^{k}n_{i}\le \sum_{i=0}^{n+2k}n_{i}$.

We check local finiteness.
If $B$ is a bounded subset of $X$, then for every $i$ in $I$ the set of $(x_{0},\dots,x_{i-1},\hat x_{i},x_{i+1},\dots,x_{n+2k}) $ in $X^{n+2k}$ such that $$\tilde \tau^{G}(f_{0}\otimes\dots \otimes f_{n+2k})(x_{0},\dots,x_{i-1},x,x_{i+1},x_{n+2k})\not=0$$ and $x\in B$  is finite since
only  tuples $(x_{0},\dots,x_{i-1},x_{i+1},x_{n+2k})$ with $x_{j}\in U^{\sum_{i=0}^{n+2k}n_{i}}[B]$  for all $j\in \{0,\dots,n+2k\}\setminus \{i\}$  can contribute non-trivially.

Naturality is an straightforward calculation.

One checks that $\tau\circ d_{i}=\partial_{i}\circ \tau$.
This implies that
$\tau\circ d=d\circ \tau$.
Using the cyclicity of the trace one checks that $\tau\circ B=0$.
This implies that
$\tau$ is a chain map.} \hB
\end{construction}

\begin{ddd}
We call $\tau^{G}$ from \eqref{wregwerfvdfs} the trace transformation.
\end{ddd}

We now state the compatibility with the Borelification:

\begin{prop}\label{okhopekrtgertgethehrtgertg} We assume that $\bC$ admits all AV-sums.
The following square commutes:
\begin{equation}\label{gwergwerg2345gtw}\xymatrix{ \PCH\cX^{G}_{\bC}\ar[r]^{\tau^{G}}\ar[d]_{\eqref{werfwerfwerf}}^{\beta} & \PH\cX^{G}(-,\C) \ar[d]_{\eqref{bsldkjvopsdfvsfdvsfdv}}^{\beta} \\   \PCH \cX_{\bC}^{hG}(-,\C)\ar[r]^{\tau^{hG}} &\PH^{hG}(-,\C) } 
\end{equation}
\end{prop}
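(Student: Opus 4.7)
The plan is to deduce \eqref{gwergwerg2345gtw} from \cref{ogjwioergjeworijuoiewferfwwrf} by lifting $\tau^{G}$ to a natural transformation
\[
\tau^{G}_{\tr}:\PCH\cX^{G}_{\bC,\tr}\to \PH\cX^{G}_{\tr}(-,\C):G\BC_{\tr}\to \Mod(H\C)
\]
of equivariant coarse homology theories with transfers. The extension of the target is obtained by applying the $2$-periodization operation and the forcing-$u$-continuity operation to the extension $\rmH\cX^{G}_{\tr}(-,\C)$ of \eqref{huihgiwehgerfwerfwerf}. The extension of the source is \eqref{iozgweuirfwerfwerfwref}, whose existence uses the AV-sum assumption through the transfer extension $\bV^{G,\ctr}_{\bC,\tr}$ in \eqref{foiwjeiorfwerfwefewerfw}. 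Once $\tau^{G}_{\tr}$ is produced, \cref{ogjwioergjeworijuoiewferfwwrf} applied to the square \eqref{jiogjoiwegrwerfwrefrwefwfrfreew4} immediately yields the commutative square \eqref{gwergwerg2345gtw}.

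The construction of $\tau^{G}_{\tr}$ proceeds at the chain level, refining \cref{ojorherthgetrgtr}. Both uncompleted functors
\[
X\mapsto \prod_{k\in \Z}\rmMix_{\C}(\bV^{G,\ctr}_{\bC}(X))[2k] \quad\text{and}\quad X\mapsto \prod_{k\in \Z} C\cX^{G}(X,\C)[2k]
\]
extend to $G\BC_{\tr}$ by composition with $\bV^{G,\ctr}_{\bC,\tr}$ and with the chain-level transfer lift of $C\cX^{G}(-,\C)$ used in \cite{trans} to produce \eqref{huihgiwehgerfwerfwerf}. First I would record that on a generating span
\[
\xymatrix{&W\ar[dr]^{f}\ar@{-->}[dl]_{g}&\\ X&&Y}
\]
with $f$ a bornological morphism and $g$ a bounded coarse covering, the induced chain-level maps are $f_{*}\circ g^{*}$ on both sides. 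The naturality of $\tilde\tau^{G}$ for transfers reduces to the identity
\[
\tilde\tau^{G}_{Y}\circ(f_{*}\circ g^{*})=(f_{*}\circ g^{*})\circ \tilde\tau^{G}_{X}
\]
on cycles. Locally any bounded coarse covering is isomorphic to a projection $Y\otimes S_{min,min}\to Y$ for a $G$-set $S$, so it suffices to verify the identity for such projections, where both sides spread the data across fibres indexed by $S$. The identity then follows from the trace property and the $G$-invariance of $\tau_{\bC}$ applied block-diagonally over $S$, together with local finiteness of the multiplier measures, exactly as in the verifications of \cref{ojorherthgetrgtr}.

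The main obstacle is the fibrewise verification along a bounded coarse covering, in particular checking that after applying $f_{*}g^{*}$ the chain-level trace formula remains locally finite, controlled, and $G$-invariant, and that the resulting chain-level natural transformation respects the $2$-categorical structure of $G\BC_{\tr}$. Once this objectwise compatibility is established, post-composition with the exact functor $\PCH\circ \rmMix_{\C}$, the periodization $\prod_{k\in\Z}\Sigma^{2k}(-)$, and the forcing-$u$-continuity operation from \cref{kophprthgertrtgegrtg} (which by construction preserves natural transformations) assembles the chain-level data into $\tau^{G}_{\tr}$ as a natural transformation of equivariant coarse homology theories with transfers whose underlying transformation is $\tau^{G}$, and \cref{ogjwioergjeworijuoiewferfwwrf} concludes the proof.
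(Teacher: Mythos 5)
Your proposal follows the same route as the paper: extend $\tau^{G}$ to a transformation $\tau^{G}_{\tr}:\PCH\cX^{G}_{\bC,\tr}\to \PH\cX^{G}_{\tr}(-,\C)$ on $G\BC_{\tr}$ by checking the chain-level trace construction is compatible with transfers along bounded coarse coverings (and that $2$-morphisms of spans act trivially), then conclude via \cref{ogjwioergjeworijuoiewferfwwrf}. Your fibrewise verification over $Y\otimes S_{min,min}\to Y$ is a reasonable fleshing-out of the paper's terse "one checks by unfolding definitions".
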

\begin{proof}
One checks by unfolding definitions that
the trace map $\tau^{G}$ extends to a transformation
$$\tau^{G}_{\tr}: \PCH\cX^{G}_{\bC,\tr}\to  \PH \cX^{G}_{\tr}(-,\C):G\BC_{\tr}\to \Mod(H\C)\ ,$$
see \eqref{huihgiwehgerfwerfwerf} for the target and \eqref{iozgweuirfwerfwerfwref} for the domain.
To this end one must check that the chain complex level construction 
is compatible with transfers along bounded coarse coverings and that $2$-morphisms of spans act as equalities.
The assertion then follows from \cref{ogjwioergjeworijuoiewferfwwrf}.
\end{proof}

\begin{prop}\label{oiuehrgweroferferwfweferfwrdb} We assume that $\bC$ admits all AV-sums.
The square
\begin{equation}\label{}\xymatrix{ \PCH\cX^{hG}_{\bC}\ar[r]^{\tau^{hG}}\ar[d]^{\eqref{gjiweorjofwewerfwerfw111}^{hG}} &  \PH\cX^{hG}(-,\C)\ar[d]^{\eqref{jbiosdfjoivdsfvsdfvsdvdfvsdfv}^{hG}} \\  \PCH\cX^{\{e\},hG}_{\bC} \ar[r]^{\tau^{\{e\},hG}} & \PH\cX^{\{e\},hG}(-,\C)} 
\end{equation}
commutes.
\end{prop}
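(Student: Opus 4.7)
The strategy mimics the pattern of \cref{kopbgbdgberferferdfgbgd} and \cref{iguweorigwergwergw9}: I first establish the reduced non-equivariant compatibility
\[
\xymatrix{ \PCH\cX_{\bC}\ar[r]^{\tau}\ar[d]^{\eqref{gjiweorjofwewerfwerfw111}} &  \PH\cX(-,\C)\ar[d]^{\eqref{bsdfvsdfvsfdvsfdvsfv}} \\  \PCH\cX^{\{e\}}_{\bC} \ar[r]^{\tau^{\{e\}}} & \PH\cX^{\{e\}}(-,\C)}
\]
as a diagram in $\Fun(BG\times \BC,\Mod(H\C))$, and then apply $\diag_{BG}^{*}\circ\lim_{BG}$ and lift against the counit \eqref{gwezghuowerfwerf} using that both sides of the proposition are $u$-continuous. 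The latter step is formal and identical to the passage from \eqref{gwioeuorgfewrgwrrrrergre} to \eqref{gweorjfiojiwerfwerfwerfw}.

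For the reduced square I work at the chain level. By construction the left vertical is induced by the functor $\phi_{X}:\bV^{G,\ctr}_{\bC}(X\otimes G_{min,min})\to \bV^{\{e\},\ctr}_{\bC}(X)$ from the proof of \cref{kohperthtregrtgertgbtertgrtgrertgertge} which restricts to the image $C_{e}\hookrightarrow C$ of $\nu(X\times\{e\})$, while the right vertical is induced by the strict chain-level isomorphism \eqref{bdfsoijoiwgsdbdfb}. Using the description of $\tilde\tau^{G}$ in \cref{ojorherthgetrgtr}, the desired identity reduces to checking
\[
\tilde\tau^{G}(f_{0}\otimes\dots\otimes f_{n+2k})\bigl((x_{0},e),\dots,(x_{n+2k},e)\bigr)=\tilde\tau^{\{e\}}\bigl(\phi_{X}(f_{0})\otimes\dots\otimes \phi_{X}(f_{n+2k})\bigr)(x_{0},\dots,x_{n+2k})
\]
together with the behavior on tuples with non-identity group coordinates. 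The displayed equality is immediate: each matrix coefficient $\nu(\{(x_{i-1},e)\})f_{i}\nu(\{(x_{i},e)\})$ factors through $C_{e}$, so the whole alternating product lies in $\End_{\bC}(C_{e})$ and the values of $\tau_{C}$ and $\tau_{C_{e}}$ on it agree; evaluation at tuples with common nonzero group coordinate reduces to the case $g=e$ by $G$-invariance of $\tau_{\bC}$, and tuples without a constant group coordinate lie outside the image of \eqref{bdfsoijoiwgsdbdfb} so both sides vanish there.

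The one subtlety I anticipate is that $\phi_{X}$ depends on a non-canonical choice of image $C_{e}$, so the upper square is at best $(2,1)$-categorically commutative; different choices are related by unitary isomorphisms in $\bM\bC$, and cyclicity of $\tau_{\bC}$ is exactly what ensures the chain-level trace values above are independent of such a choice. This is what allows the chain-level squares to assemble into a natural homotopy-commutative square after applying the Dwyer--Kan localizations \eqref{dfvdsfvwerv} and \eqref{bsdfpojkvopsdfvsdfvsdfvfdv} and periodizing via $\PCH$, yielding the reduced square and thus the proposition.
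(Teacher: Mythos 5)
Your proposal is correct and follows essentially the same route as the paper, whose entire proof is the one-line remark that one checks commutativity on the chain level before applying $\ell$ and $\lim_{BG}$: your reduced square in $\Fun(BG\times \BC,\Mod(H\C))$, verified via the explicit trace computation against $\phi_{X}$ and \eqref{bdfsoijoiwgsdbdfb}, followed by $\lim_{BG}$ and lifting against \eqref{gwezghuowerfwerf}, is exactly that check with the details (including the choice-independence via cyclicity, and the vanishing off constant group coordinates, which is forced by the minimal coarse structure on the $G_{min,min}$ factor) made explicit.
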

\begin{proof}
One checks commutativity on  the chain level before applying $\ell$ and $\lim_{BG}$.
\end{proof}

%
%
%
 
\subsection{Equivariant  Borel-Moore homology theories}\label{jiooggwerweferwf}

Let   
$$E:G\LCH^{+,\op}\to \cC$$ be  a functor to a cocomplete  stable target.
We will consider the following properties   such a functor could have
 \begin{ddd} \label{ehpzhrtgtrg} \mbox{}
 \begin{enumerate}
 \item homotopy invariance: The map $E (X)\to E([0,1]\times X)$ induced by the projection $[0,1]\times X\to X$ is an equivalence for every $X$ in $G\LCH^{+}$.
 \item\label{lkopgrthertgrtgerrrr} strong excision: $E(\emptyset)\simeq 0$ and
 for every $X$ in $G\LCH^{+}$ and  inclusion $i:Y\to X$ of a closed invariant subspace with complement $j:U\to X$
 \begin{equation}\label{giwjeriojfowerfwerfwref}E(U)\xrightarrow{X\stackrel{j}{\supseteq} U\stackrel{\id_{U}}{\to}U} E(X)\xrightarrow{i} E(Y)
\end{equation} is  a fibre sequence.
 \item\label{ojohpertgrtgtrget} prodescent:  For every cofiltered family $(X_{i})_{i\in I}$ in $G\CH$
  the canonical map
 $$ \colim_{i\in I^{\op}} E(X_{i})\stackrel{\simeq}{\to}   E(\lim_{i\in I} X_{i}) $$ is an equivalence.
   \end{enumerate}
    \end{ddd}
    
    Note that in the situation of  \cref{ehpzhrtgtrg}.\ref{lkopgrthertgrtgerrrr}. we have a factorization
   $$\xymatrix{E(U)\ar[r]\ar[dr]_{ \emptyset\to U} &E(X)\ar[r]&E(Y)\\&E(\emptyset)\ar[ur]_{  Y\supseteq \emptyset\to \emptyset}&}$$
   showing that the composition \eqref{giwjeriojfowerfwerfwref} is canonically zero.
   In \cref{ehpzhrtgtrg}.\ref{ojohpertgrtgtrget}. we consider the category $G\CH$ of compact Hausdorff $G$-spaces and equivariant continuous (everywhere defined) maps  as a non-full subcategory of $G\LCH^{+}$.
    
    Let $E:G\LCH^{+,\op}\to \cC$ be a functor to a cocomplete  stable target.
\begin{ddd}\label{kophertgrtgrtgerrtgertg}
The functor $E$ is an equivariant  Borel-Moore cohomology theory if it is homotopy invariant, strongly excisive and satisfies prodescent.
\end{ddd}

    Let $\phi:\cC\to \cD$ be a colimit-preserving functor between cocomplete stable $\infty$-categories.
    
    \begin{kor}\label{okiophkeprthrtgretgertg}
    If $E:G\LCH^{+,\op}\to \cC$ is an equivariant Borel-Moore cohomology theory, then so is
    $\phi\circ E:G\LCH^{+,\op}\to \cD$.
    \end{kor}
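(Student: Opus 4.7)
The plan is to verify each of the three defining properties of an equivariant Borel-Moore cohomology theory listed in \cref{ehpzhrtgtrg} separately, and in each case observe that it is preserved under post-composition with a colimit-preserving functor $\phi$ between cocomplete stable $\infty$-categories. The argument is formal and short.

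Homotopy invariance is the easiest: by assumption the canonical map $E(X) \to E([0,1] \times X)$ is an equivalence in $\cC$, and since any functor of $\infty$-categories preserves equivalences, the induced map $(\phi \circ E)(X) \to (\phi \circ E)([0,1] \times X)$ is an equivalence in $\cD$. For prodescent, given a cofiltered family $(X_i)_{i \in I}$ in $G\CH$, prodescent for $E$ provides an equivalence $\colim_{i \in I^{\op}} E(X_i) \stackrel{\simeq}{\to} E(\lim_{i \in I} X_i)$; applying $\phi$ and using that $\phi$ preserves colimits yields the equivalence $\colim_{i \in I^{\op}} \phi(E(X_i)) \stackrel{\simeq}{\to} \phi(E(\lim_{i \in I} X_i))$, which is exactly the prodescent condition for $\phi \circ E$.

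The key point for strong excision is that a colimit-preserving functor between stable $\infty$-categories is automatically exact, since fibre and cofibre sequences coincide in a stable $\infty$-category and cofibre sequences are particular pushouts, hence preserved by any colimit-preserving functor. In particular $\phi(0) \simeq 0$ (as the initial object is a colimit over the empty diagram), giving $(\phi \circ E)(\emptyset) \simeq \phi(E(\emptyset)) \simeq \phi(0) \simeq 0$; and for a closed inclusion $i : Y \to X$ with open complement $j : U \to X$ in $G\LCH^{+}$, applying the exact functor $\phi$ to the fibre sequence \eqref{giwjeriojfowerfwerfwref} for $E$ produces the analogous fibre sequence for $\phi \circ E$. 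There is no real obstacle here; the only substantive observation is the exactness of colimit-preserving functors between stable $\infty$-categories, which is standard.
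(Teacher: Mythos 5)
Your proof is correct and matches the paper's intent: the paper states this corollary without proof, treating it as immediate because (as with the analogous \cref{oigpwerferfwfrfrfr} for coarse homology theories) all three axioms are preserved under a colimit-preserving functor, with the only non-trivial observation being exactly the one you make — that such a functor between stable $\infty$-categories preserves zero objects and fibre sequences since these are finite colimits in the stable setting.
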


The target of  the equivariant $E$-theory functor \eqref{hertgrtgtergbgd}
 is cocomplete and stable. 
 \begin{ddd}\label{okopkgpowrtgertgetrg}
 We define the   functor \begin{equation}\label{gerwfreferfwefwer}\ee^{G}C_{0}:G\LCH^{+,\op}\stackrel{C_{0}}{\simeq}G \nCalg_{\comm}\to G\nCalg\xrightarrow{\ee^{G}}\EE^{G}\ .
\end{equation} 
 \end{ddd}

\begin{lem} \label{kopherthertgertgertr}The functor
$\ee^{G}C_{0}:G\LCH^{+,\op}\to  \EE^{G}$ 
is an equivariant Borel-Moore cohomology theory. \end{lem}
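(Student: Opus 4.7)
The plan is to verify the three defining properties of an equivariant Borel-Moore cohomology theory (\cref{kophertgrtgrtgerrtgertg}) for the functor $\ee^{G}C_{0}$ by appealing to the corresponding universal properties of the equivariant $E$-theory functor $\ee^{G}$ from \eqref{hertgrtgtergbgd} (homotopy invariance, exactness, and preservation of filtered colimits), together with standard facts from Gelfand duality translating topological operations on $G\LCH^{+}$ into algebraic operations on $G\nCalg_{\comm}$.

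First I would handle homotopy invariance. The projection $\pi:[0,1]\times X\to X$ induces an equivariant $*$-homomorphism $\pi^{*}:C_{0}(X)\to C_{0}([0,1]\times X)\cong C([0,1])\otimes C_{0}(X)$. The inclusion $\pi^{*}$ is precisely the canonical map sending $a$ to the constant function $t\mapsto a$; since $\ee^{G}$ is homotopy invariant by construction \cite{budu}, this map becomes an equivalence in $\EE^{G}$, which gives the desired homotopy invariance for $\ee^{G}C_{0}$.

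Next, for strong excision, let $i\colon Y\hookrightarrow X$ be the inclusion of a closed invariant subspace with open complement $j\colon U\hookrightarrow X$. Then Gelfand duality produces the standard short exact sequence of $G$-$C^{*}$-algebras
\begin{equation*}
0\to C_{0}(U)\xrightarrow{j_{!}} C_{0}(X)\xrightarrow{i^{*}} C_{0}(Y)\to 0\ ,
\end{equation*}
in which $j_{!}$ is extension by zero and $i^{*}$ is restriction. Since $\ee^{G}$ is exact, applying it produces a fibre sequence in $\EE^{G}$, which is exactly the strong excision condition. The vanishing $\ee^{G}C_{0}(\emptyset)\simeq 0$ follows because $C_{0}(\emptyset)=0$ and $\ee^{G}(0)\simeq 0$ by exactness.

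Finally, for prodescent, let $(X_{i})_{i\in I}$ be a cofiltered system in $G\CH$ with limit $X:=\lim_{i\in I} X_{i}$. Gelfand duality for compact Hausdorff $G$-spaces identifies this limit with the filtered colimit of the corresponding commutative unital $G$-$C^{*}$-algebras, i.e.\ $C(X)\cong \colim_{i\in I^{\op}} C(X_{i})$ in $G\nCalg_{\comm}$, and this colimit is also a colimit in $G\nCalg$. Since $\ee^{G}$ preserves filtered colimits by construction, we obtain
\begin{equation*}
\colim_{i\in I^{\op}} \ee^{G}C_{0}(X_{i})\simeq \ee^{G}\big(\colim_{i\in I^{\op}} C_{0}(X_{i})\big)\simeq \ee^{G}C_{0}(X)\ ,
\end{equation*}
which is precisely the prodescent condition. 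The main subtle point—and hence the only real obstacle—is the identification of the colimit of the $C(X_{i})$ in $G\nCalg$ with $C(X)$; this is classical Gelfand–Naimark theory, but one should note that the colimit in $G\nCalg$ agrees with the colimit in $G\nCalg_{\comm}$ because the forgetful functor from commutative to non-commutative $C^{*}$-algebras creates filtered colimits.
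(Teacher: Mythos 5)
Your proof is correct and follows essentially the same route as the paper's: homotopy invariance via homotopy invariance of $\ee^{G}$, strong excision via exactness applied to the sequence $0\to C_{0}(U)\to C_{0}(X)\to C_{0}(Y)\to 0$, and prodescent via Gelfand duality together with the fact that $\ee^{G}$ preserves filtered colimits. Your extra remark about the forgetful functor from commutative to all $C^{*}$-algebras creating filtered colimits is a reasonable elaboration of a point the paper leaves implicit.
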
\begin{proof}
The functor $\ee^{G}C_{0}$
  is homotopy invariant
   since $C_{0}$ translates homotopies in $G\LCH^{+}$ to homotopies in $G\nCalg$ and the functor   $\ee^{G}$  is homotopy invariant.
   
   We have $C_{0}(\emptyset)\cong 0$ and hence $e^{G}C_{0}(\emptyset)\simeq 0$.
    If $U$ is an invariant open subset of $X$ in $G\LCH^{+}$ with closed complement $Y$, then we have an exact sequence
$$0\to C_{0}(U)\to C_{0}(X)\to C_{0}(Y)\to 0$$
in $G\nCalg_{\comm}$,
where the first map is the extension by $0$ and the second is the restriction to $Y$.
By exactness of $\ee^{G}$ we get the desired fibre sequence
$$\ee^{G}C_{0}(U)\to \ee^{G}C_{0}(X)\to \ee^{G}C_{0}(Y)\ .$$
Finally, if $(X_{i})_{i\in I}$ is a cofiltered system in $G\CH$ with $X\cong \lim_{i\in I}X_{i} $, then we get by Gelfand duality an isomorphism
$$\colim_{i\in I^{\op}} C_{0}(X_{i})\stackrel{\cong} {\to} C_{0}(X)$$
in $G\nCalg_{\comm}$.
Since $\ee^{G}$ preserves filtered colimits we conclude that
$$\colim_{i\in I^{\op}} \ee^{G}C_{0}(X_{i}) \stackrel{\cong} {\to} \ee^{G}C_{0}(X)\ .$$
\end{proof}
By the same argument as above,  the functor $\ee^{G}C_{0}$ actually sends all cofiltered limits in $G\LCH^{+}$ to colimits.


Let $E:G\CH^{\op}\to \cC$ be a functor. The notion of homotopy invariance and prodescent for $E$  are defined  as above.
In order to define the notion of an   equivariant   Borel-Moore cohomology for functors defined on $G\CH^{\op}$ we must reformulate the excision axiom.  
 A pair $(X,Y)$ in $G\CH$ is an object $X$ in $G\CH$ with a closed invariant subspace $Y$. We set 
\begin{equation}\label{werfwerfwerfewe}E(X,Y):=\Fib(E(X)\to E(Y))\ .
\end{equation}
 A map of pairs $f:(Z,W)\to (X,Y)$ is a map $f:Z\to X$ in $G\CH$ with $f(W)\subseteq Y$.
 It  naturally  induces a map
 $E(X,Y)\to E(Z,W)$.

  \begin{ddd}
$E$ is  invariant under relative homeomorphisms if 
 for every map   $f:(Z,W)\to (X,Y)$  of pairs in $G\CH$ that  induces a homeomorphism $Z\setminus W\to X\setminus Y$ the induced map
 $ E(X,Y)  \to E(Z,W)$  is an equivalence.
\end{ddd}

Let $E:G\CH^{\op}\to \cC$ be a functor to a cocomplete stable target.
\begin{ddd}
The functor $E$ is   a compact   equivariant  Borel-Moore cohomology if it is homotopy invariant,  invariant under relative homeomorphisms and satisfies prodescent.
\end{ddd}

  We let $\Fun^{\BM}(G\LCH^{+,\op},\cC)$ (or  
$\Fun^{\BM}(G\CH^{\op},\cC)$) denote the full subcategories of 
$\Fun (G\LCH^{+,\op},\cC)$ (or  
$\Fun (G\CH^{\op},\cC)$) of equivariant (compact)
 Borel-Moore cohomology theories.

Let $\cC$ be a cocomplete stable $\infty$-category and consider the restriction
$r:\Fun (G\LCH^{+,\op},\cC)\to \Fun (G\CH^{\op},\cC)$ along the inclusion $G\CH\to G\LCH^{+}$.
\begin{lem}\label{hjrtigjiortgtrgertg}
The restriction functor 
 induces equivalence $$r: \Fun^{\BM}(G\LCH^{+,\op},\cC)\stackrel{\simeq}{\to} \Fun^{\BM}(G\CH^{\op},\cC)\ .$$   
\end{lem}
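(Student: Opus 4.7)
\emph{Proof plan.} The plan is to exhibit an explicit inverse $s$ to the restriction functor $r$ using one-point compactification. Recall that $G\LCH^+$ is equivalent, via $X \mapsto (X_+, +)$, to the category of pointed compact Hausdorff $G$-spaces and basepoint-preserving equivariant maps, as this is the Gelfand dual of the equivalence between $G\nCalg$ and unital augmented $G$-$C^{*}$-algebras given by unitalization. Given a compact Borel-Moore cohomology theory $F : G\CH^{\op} \to \cC$, I will define
$$s(F)(X) := \Fib\bigl(F(X_+) \to F(\{+\})\bigr),$$
with functoriality in partially defined proper equivariant maps $X \to Y$ obtained from the induced pointed continuous maps $X_+ \to Y_+$.

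First I would verify that $s(F)$ is an equivariant Borel-Moore cohomology theory on $G\LCH^{+}$ in the sense of \cref{kophertgrtgrtgerrtgertg}. The vanishing $s(F)(\emptyset) \simeq 0$ is immediate. For strong excision, given a closed equivariant $Y \hookrightarrow X$ with open complement $U$, the subspace $Y_+ := Y \cup \{+\} \subseteq X_+$ is closed, and collapsing $Y_+$ produces a pointed continuous map $q : (X_+, Y_+) \to (U_+, \{+\})$ restricting to the identity homeomorphism on complements $U \to U$. Invariance under relative homeomorphisms yields $F(U_+, \{+\}) \simeq F(X_+, Y_+)$ with the convention $F(Z, W) := \Fib(F(Z) \to F(W))$, and combining with the fibre sequence attached to the filtration $\{+\} \subseteq Y_+ \subseteq X_+$ gives the required $s(F)(U) \to s(F)(X) \to s(F)(Y)$. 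Homotopy invariance follows because $p : [0,1] \times X \to X$ admits the section $s_0 : x \mapsto (0, x)$, and while $p_+ \circ (s_0)_+ = \id$, the composite $(s_0)_+ \circ p_+$ is homotopic to the identity in $G\CH$ via $((t, x), s) \mapsto ((1-s)t, x)$ (continuity at $+$ uses that preimages of cocompact neighborhoods remain cocompact), so $F(p_+)$ is an equivalence. Prodescent is inherited from $F$ because $(-)_+$ commutes with cofiltered limits in $G\CH$ and $\Fib$ commutes with filtered colimits in the stable target $\cC$.

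It remains to check that $r$ and $s$ are mutually inverse. For $s \circ r \simeq \id$: if $E$ is a BM theory on $G\LCH^{+}$, then strong excision applied to the closed inclusion $\{+\} \hookrightarrow X_+$ with open complement $X$ yields the fibre sequence $E(X) \to E(X_+) \to E(\{+\})$, hence a natural equivalence $E(X) \simeq s(rE)(X)$. For $r \circ s \simeq \id$: if $X$ is already compact then $X_+ = X \sqcup \{+\}$, and the inclusion of pairs $(X, \emptyset) \hookrightarrow (X_+, \{+\})$ is the identity on complements, so relative homeomorphism invariance together with the normalization $F(\emptyset) \simeq 0$ (absorbed into the definition, or extracted from rel-homeo invariance applied to the pair $(\emptyset, \emptyset)$ together with the initial maps) yields $s(F)(X) = F(X_+, \{+\}) \simeq F(X, \emptyset) \simeq F(X)$.

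The main obstacle is the strong excision verification for $s(F)$: one must correctly identify $X_+/Y_+ \cong U_+$ as pointed compact Hausdorff $G$-spaces, check that the quotient map is a relative homeomorphism (which uses closedness of $Y$ and local compactness of $U$), and then assemble this with the three-term pair fibre sequence. A secondary subtlety is the normalization $F(\emptyset) \simeq 0$ needed for the counit equivalence $r \circ s \simeq \id$, which must either be built into the axioms or deduced from a mild strengthening of the relative homeomorphism invariance.
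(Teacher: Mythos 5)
Your construction is essentially the paper's own proof: the inverse is defined by $s(F)(X)=\Fib(F(X_{+})\to F(+))$ via the identification of $G\LCH^{+}$ with pointed compact Hausdorff $G$-spaces, strong excision for $s(F)$ is deduced from relative-homeomorphism invariance applied to the collapse map of pairs $(X_{+},Y_{+})\to (U_{+},+)$, and the unit and counit equivalences are checked exactly as you do (unit via strong excision for $\{+\}\subseteq X_{+}$, counit via relative-homeomorphism invariance for compact $X$). The normalization $F(\emptyset)\simeq 0$ that you flag is likewise implicit in the paper's argument (which uses $E(X_{+})\simeq E(X)\oplus E(+)$ for compact $X$), so this is not a defect of your proposal relative to the paper.
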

\begin{proof}
We let $G\CH_{*/}$ denote the category of pointed compact Hausdorff $G$-spaces. Then
we have an equivalence
of categories
\begin{equation}\label{fgqefdqewdqwedqewd}G\CH_{*/}\stackrel{\simeq}{\to}G \LCH^{+}\ .
\end{equation}
The functor in \eqref{fgqefdqewdqwedqewd} sends the object $(X,*_{X})$ in $G\CH_{*/}$ to the object  $X\setminus *_{X}$ in $G\LCH^{+}$, and  the morphism $f:(X,*_{X})\to (Y,*_{Y})$ in $G\CH_{*/}$  to the morphism
$$X\setminus *_{X}\supseteq X\setminus f^{-1}(*_{Y})\xrightarrow{f_{|X\setminus f^{-1}(*_{Y})}} Y\setminus *_{Y}$$
in $G\LCH^{+}$.

The inverse functor sends the object 
$X$ in $G\LCH^{+}$ to the one-point compactification $(X_{+},+)$ in $G\CH_{*/}$,   and the morphism
$X\supseteq U\xrightarrow {f}Y$ in $G\LCH^{+}$ to the map $(X_{+},+_{X})\to (Y_{+},+_{Y})$ in $G\CH_{*/}$
that sends all points of $X_{+}\setminus U$ to $+_{Y}$ and points $u$ in $U$ to $f(u)$.

It is clear that the restriction sends  equivariant 
Borel-Moore cohomology theories to
compact  equivariant  Borel-Moore cohomology theories.
We have a functor  
$$\phi:\Fun(G\CM^{\op},\cC)\to \Fun(G\CM^{\op}_{*/},\cC)\ , \quad E\mapsto ( (X,*)\mapsto E(X,*)) \ .$$
Using  \eqref{fgqefdqewdqwedqewd}  for the second equivalence we form the functor
$$s:\Fun(G\CM^{\op},\cC)\stackrel{\phi}{\to}   \Fun(G\CM^{\op}_{*/},\cC)\simeq  \Fun(G\LCH^{+,\op},\cC)\ .$$ 
In the following we exhibit $s$ as an inverse of $r$ at the level of equivariant Borel-Moore
cohomology theories.

We have a natural transformation of functors
$$(-)_{+}\to \id:G\LCH^{+}\to G\LCH^{+}\ , \qquad X\mapsto (X^{+}\supseteq X\stackrel{\id_{X}}{\to}X)\ .$$
It induces a natural transformation
of functors
\begin{equation}\label{gerwg524ffwerf}E\mapsto ( E\to E((-)_{+})):\Fun(\LCH^{+,\op},\cC)\to \Fun(\LCH^{+,\op},\cC)\ .
\end{equation} We have a natural commutative  diagram
$$\xymatrix{+\ar[rr]\ar[drr]^{+\supseteq \emptyset\to \emptyset}&&X_{+}\ar[rr]^{X_{+}\supseteq X\stackrel{\id_{X}}{\to}X} &&X\\&&\emptyset\ar[urr]&&}$$
which  after applying    a functor $E:G\LCH^{+}\to \cC$ with $E(\emptyset)\simeq 0$ (we say that $E$ is reduced)  gives a natural commutative diagram
$$\xymatrix{E(X)\ar[rr]\ar[drr]&&E(X_{+})\ar[rr]&&E(+) \\&&0\ar[urr]&&}\ .$$
Consequently,  the transformation \eqref{gerwg524ffwerf}
has a canonical factorization over a transformation
$$E\to (E\to   s(r(E)))$$ for reduced functors $E$.
 On functors  $E$ which in  addition satisfy  strong excision this transformation is an equivalence.
 
On functors  $E:G\CH^{\op}\to \cC$ which are  invariant under relative homeomorphisms we have an equivalence
$r(s(E))\to E$ given by
$$E(X_{+},+) \simeq \Fib( E(X)\oplus E(+)\to E(+))\stackrel{\simeq}{\to} E(X)\ .$$

%
%

If $E:G\CH^{\op}\to \cC$ satisfies prodescent and is homotopy invariant, then   also $s(E)$ has these properties, too.  We now assume that $E$ is invariant under relative homeomorphisms and  show that $s(E)$ satisfies strong excision.  Let
$i:Y\to X$ be an inclusion of a closed invariant subspace with complement $j:U\to X$.
Then we get a map of pairs
$(X_{+},Y_{+})\to (U_{+},+)$ 
  which induces a homeomorphism of complements. By invariance under relative homeomorphisms we get an equivalence
  $$  E(U_{+},+)   \stackrel{\simeq}{ \to}  E(X_{+},Y_{+}) \ .$$    Furthermore,
its  target   is the total fibre
  of $$ \xymatrix{E(X_{+})\ar[r]\ar[d]&E(Y_{+})\ar[d]\\E( +)\ar[r]&E(+)}$$
  and therefore also the fibre of $s(E)(X)\to s(E)(Y)$, while  its domain is by definition $s(E)(U)$.
\end{proof}

 We now turn attention to  covariant functors.
Let $E:G\LCH^{+}\to \cC$ be some functor to a complete  stable $\infty$-category. 
We consider the following list properties
$$\bP=\{homotopy \:invariant, strongly\: excisive, prodescent, Borel-Moore\: homology \}\ .$$
\begin{ddd}\label{opekhrthgrgertgertg}
A functor $E:G\LCH^{+}\to \cC$   has property $P$ in $\bP$ if 
$E^{\op}:G\LCH^{+,\op}\to \cC^{\op}$ has the property $P$.
\end{ddd}

In constructions it is sometimes difficult to check the property of prodescent. In connection with coarse homology theories
the weaker condition of local finiteness from \cref{jkopgwegergferfwr} is sufficient which 
motivates \cref{kohperthrgrtgertgeg}.

Let $X$ be in $G\LCH^{+}$ and $\cW_{X}$ be the family of open invariant relatively $G$-compact  subsets, i.e., open invariant subsets $W$ such that there exists a compact subset $K$ with $W\subseteq GK$.
If $W,W'$ are in $\cW$ and  $W\subseteq W'$, then we have an inclusion morphism
$X\setminus W' \to X\setminus W$ in $G\LCH^{+}$.
\begin{ddd}\label{jkopgwegergferfwr}
The functor $E:G\LCH^{+}\to \cC$ is called  locally finite if for  every $X$ in $G\LCH^{+}$ we have
  $$ \lim_{W\in \cW_{X}}  E(X\setminus W) \simeq 0\ .$$
 \end{ddd}

\begin{ddd}\label{kohperthrgrtgertgeg}
A functor
$E:G\LCH^{+}\to \cC$ is called a weak equivariant   Borel-Moore homology theory if it is  homotopy invariant, strongly excisive
and  locally finite.
\end{ddd}

%

 If $(X,Y)$ is a pair in $G\CH$ and $E:G\CH\to \cC$ is a functor to a stable $\infty$-category, then  dually to \eqref{werfwerfwerfewe} we set
 \begin{equation}\label{woeirjojfnwkjerfwerfwr}E(X,Y):=\Cofib(E(Y)\to E(X))\ .
\end{equation}

\begin{lem}\label{ophertgertgertgertg}
An equivariant  Borel-Moore homology theory is a weak equivariant   Borel-Moore homology theory.
  \end{lem}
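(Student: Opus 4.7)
The plan is to verify local finiteness, since homotopy invariance and strong excision are immediate from applying Definition \ref{ehpzhrtgtrg} to $E^{\op}$. The key device is to pass from the non-compact closed subspaces $X \setminus W$ of $X$ to a cofiltered inverse system of compact Hausdorff $G$-spaces via one-point compactification, which lets us feed prodescent into the excision exact sequence.

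Fix $X \in G\LCH^+$. For each $W \in \cW_X$, form the one-point compactification $(X \setminus W)_+ \in G\CH$. It contains $\{+\}$ as a closed invariant subspace with open invariant complement $X \setminus W$. Strong excision (dualized) gives a cofibre sequence in $\cC$
$$E(\{+\}) \to E((X\setminus W)_+) \to E(X\setminus W),$$
natural in $W$. For $W \subseteq W'$ in $\cW_X$ we have a closed inclusion $(X \setminus W')_+ \hookrightarrow (X\setminus W)_+$ in $G\CH$, so the family $\{(X\setminus W)_+\}_W$ is a cofiltered inverse system in $G\CH$.

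Since $X$ is locally compact Hausdorff, every point admits an invariant relatively $G$-compact open neighborhood (take the $G$-saturation of a relatively compact open neighborhood), so $X = \bigcup_{W \in \cW_X} W$ and consequently $\bigcap_W (X\setminus W) = \emptyset$. It follows that $\lim_W (X\setminus W)_+ = \{+\}$ in $G\CH$. Prodescent then implies that the canonical map $E(\{+\}) \to \lim_W E((X\setminus W)_+)$ is an equivalence, and the compatibility of the cone maps $\{+\} \to (X\setminus W)_+$ with the universal cone at $\{+\}$ forces this equivalence to be the identity of $E(\{+\})$.

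Finally, taking the cofiltered limit of the above cofibre sequence (in the stable $\cC$ this is also a fibre sequence, hence preserved by limits) produces a cofibre sequence $E(\{+\}) \xrightarrow{\id} E(\{+\}) \to \lim_W E(X\setminus W)$, forcing $\lim_W E(X\setminus W) \simeq 0$. The only subtle point is checking that the equivalence identifying the middle term of the limit with $E(\{+\})$ really collapses the left-hand map to the identity; everything else is a formal consequence of excision, prodescent, and stability of $\cC$.
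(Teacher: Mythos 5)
Your proof is correct and follows essentially the same route as the paper: the paper also reduces to local finiteness and identifies $E(X\setminus W)$ with the relative term $E((X\setminus W)_{+},+)$ via strong excision, then uses $\lim_{W\in\cW_{X}}(X\setminus W)_{+}\cong\{+\}$ together with prodescent to conclude that $\lim_{W}E(X\setminus W)\simeq 0$. Your unwinding with the fibre sequences $E(+)\to E((X\setminus W)_{+})\to E(X\setminus W)$ and the observation that the basepoint inclusions are exactly the limit cone projections is just a more explicit phrasing of the paper's one-line chain of equivalences.
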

\begin{proof}
We must show that an equivariant  Borel-Moore homology theory is  locally finite.
Let $X$ be in $G\LCH^{+}$.
Then
$$  (+,+)\cong \lim_{W\in \cW_{X}}  ((X\setminus W)_{+},  +)   $$
in $G\CH$.
Using strong excision for the first equivalence and prodescent for the second we get
$$
 \lim_{W\in \cW_{X}}  E(X\setminus W) \simeq \lim_{W\in \cW_{X}}   E((X\setminus W)_{+},+)\simeq
 E( \lim_{W\in \cW_{X}} (X\setminus W)_{+} ,+)\simeq
  E(+_,+ )\simeq 0\ .
$$
\end{proof}

\begin{ex}\label{iuiqofujoewfqwefeqwdfq}
We consider the symmetric monoidal $\infty$-category $\Pr^{L}_{\st}$ of presentable stable $\infty$-categories.
Let $\cR$ be a symmetric monoidal presentable $\infty$-category, i.e., an object in $\CAlg(\Pr^{L}_{\st})$, and assume that $\cC$ is in $\Mod_{\Pr^{L}_{\st}}(\cR)$. Then  $\cC$ is enriched in $\cR$ and
 we have a mapping object functor
$$\cC(-,-):\cC^{\op}\times \cC\to \cR\ .$$
For fixed $C$ in $\cC$ the functor $\cC(-,C)$ is exact and sends colimits to limits.  
If $$E:G\LCH^{+,\op}\to \cC$$ is an equivariant  Borel-Moore  cohomology theory, then for any object $C$ of $\cC$
$$\cC(E,C):G\LCH^{+}\to \cR
$$
is an equivariant  Borel-Moore  homology theory.
\hB
\end{ex}

The category $\EE^{G}$ belongs to $\Mod_{\Pr^{L}_{\st}}(\Mod(KU))$.
For $A$ in $\EE$ and $\bC$ in $G\nCcat$, by \cref{kophertgrgrgerg} and \cref{okopkgpowrtgertgetrg} we have 
\begin{equation}\label{pojgkpowergewrweferfr}K^{G,\an}_{\bC,A}\simeq \EE^{G}(\ee^{G}C_{0},\bC^{(G)}_{\std}\otimes A):G\LCH^{+}\to \Mod(KU)\ .
\end{equation}In view of \cref{kopherthertgertgertr} and \cref{iuiqofujoewfqwefeqwdfq} we conclude:
\begin{kor}\label{ogpwerferfewrfwef}
$K^{G,\an}_{\bC,A}$ is an equivariant  Borel-Moore homology theory.
\end{kor}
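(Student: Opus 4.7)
The proof is essentially a straightforward assembly of the pieces established immediately before the statement. The plan is to identify $K^{G,\an}_{\bC,A}$ with the composition described in Example~\ref{iuiqofujoewfqwefeqwdfq}, with $\cR = \Mod(KU)$, $\cC = \EE^G$ (noting the $\Mod(KU)$-enrichment of $\EE^G$ recorded just before the statement), and $C = \bC^{(G)}_{\std}\otimes A$ viewed as an object of $\EE^G$ via $\ee^{G\nCcat}$ and the symmetric monoidal structure on $\EE^G$. The input cohomology theory is $\ee^G C_0 \colon G\LCH^{+,\op} \to \EE^G$ from Definition~\ref{okopkgpowrtgertgetrg}.

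First I would invoke the identification \eqref{pojgkpowergewrweferfr}, which rewrites $K^{G,\an}_{\bC,A}$ as $\EE^G(\ee^G C_0(-),\bC^{(G)}_{\std}\otimes A)$. By Lemma~\ref{kopherthertgertgertr}, the functor $\ee^G C_0$ is an equivariant Borel-Moore cohomology theory in the sense of Definition~\ref{kophertgrtgrtgerrtgertg}, i.e.\ it is homotopy invariant, strongly excisive, and satisfies prodescent. Example~\ref{iuiqofujoewfqwefeqwdfq} then applies verbatim: for fixed $C$ in $\cC$ the enriched mapping-object functor $\cC(-,C)$ is exact and sends colimits to limits, so post-composing the Borel-Moore cohomology theory $\ee^G C_0$ with $\EE^G(-,\bC^{(G)}_{\std}\otimes A)$ converts each of the three defining properties into their opposite-category counterparts. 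Concretely, homotopy invariance is preserved since the inclusion $X\hookrightarrow [0,1]\times X$ is sent to an equivalence; strong excision in the cohomological sense (fibre sequence) becomes strong excision in the covariant sense (cofibre sequence, equivalently fibre sequence in the stable setting); and prodescent goes from turning cofiltered limits in $G\CH$ into filtered colimits in $\EE^G$, to turning them into cofiltered limits in $\Mod(KU)$.

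This gives exactly the conditions of Definition~\ref{opekhrthgrgertgertg} applied to the three properties that together, by Definition~\ref{kophertgrtgrtgerrtgertg} and \ref{opekhrthgrgertgertg}, constitute an equivariant Borel-Moore homology theory on $G\LCH^+$. There is no real obstacle here: every step is a direct application of an already-established fact, and the only mild bookkeeping is to ensure that $\bC^{(G)}_{\std}\otimes A$ is indeed a legitimate object of $\EE^G$, which follows from the extension \eqref{erfwrefrfwrfwrfreefrefwerferfw} of $\ee^G$ to $G\nCcat$ together with the symmetric monoidal refinement of $\ee^G$ recalled after \eqref{hertgrtgtergbgd}. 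Thus $K^{G,\an}_{\bC,A}$ is an equivariant Borel-Moore homology theory.
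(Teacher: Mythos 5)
Your proposal is correct and follows the paper's own argument essentially verbatim: the paper also combines the identification \eqref{pojgkpowergewrweferfr} with \cref{kopherthertgertgertr} and the enriched-mapping-object construction of \cref{iuiqofujoewfqwefeqwdfq} (using that $\EE^{G}$ lies in $\Mod_{\Pr^{L}_{\st}}(\Mod(KU))$) to conclude. No discrepancies to report.
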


  In the case of the trivial group we have a classification of Borel-Moore cohomology theories and a systematic construction of Borel-Moore homology theories.
 We let $\Fun^{\BM}(\LCH^{+,\op},\cC)$  denote the full subcategory of $\Fun(\LCH^{+,\op},\cC)$ of Borel-Moore cohomology theories.
 The following is a consequence of \cref{hjrtigjiortgtrgertg} and  a result of D. Clausen (see \cite[Thm. 3.6.13]{NKP}).
 
 Since $\Pr^{L}_{\st}$ is symmetric monoidal we can talk about dualizable presentable stable $\infty$-categories.
 \begin{theorem}\label{gojertpgwerfwerfwerf}
 If $\cC$ is a dualizable presentable stable $\infty$-category, then the 
 evaluation at $*$ induces an equivalence
 \begin{equation}\label{rweferjflewrkfer}\Fun^{\BM}(\LCH^{+,\op},\cC)\to \cC\ , \quad E\mapsto E(*)\ .
\end{equation}
 \end{theorem}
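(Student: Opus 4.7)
The plan is direct once \cref{hjrtigjiortgtrgertg} is available. That lemma identifies restriction along the inclusion $\CH\hookrightarrow \LCH^{+}$ as an equivalence
$$r:\Fun^{\BM}(\LCH^{+,\op},\cC)\stackrel{\simeq}{\to}\Fun^{\BM}(\CH^{\op},\cC),$$
and this equivalence is manifestly compatible with evaluation at $*$, since $*$ is an object of $\CH\subseteq\LCH^{+}$. So it suffices to show that the evaluation functor $\Fun^{\BM}(\CH^{\op},\cC)\to \cC$, $E\mapsto E(*)$, is an equivalence, which is precisely the content of \cite[Thm.~3.6.13]{NKP}. Thus the proof consists in invoking \cref{hjrtigjiortgtrgertg} to pass to the compact case and then citing Clausen's theorem.

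If I had to reprove Clausen's theorem I would proceed as follows. The decisive input is that $\CH$ is generated, in an appropriate pro-sense, by its full subcategory $\CH^{\mathrm{fin}}$ of compact polyhedra (finite CW-complexes): every compact Hausdorff space admits a presentation as a cofiltered limit of finite CW-complexes. By the prodescent axiom a compact Borel-Moore cohomology theory $E$ is therefore determined, up to canonical natural equivalence, by its restriction to $\CH^{\mathrm{fin}}$. On $\CH^{\mathrm{fin}}$ the invariance under relative homeomorphisms specializes to strong excision for CW-pairs, and together with homotopy invariance a cellular induction starting from the point determines $E_{|\CH^{\mathrm{fin}}}$ from the single object $E(*)\in\cC$: each cell attachment $D^{n}\to X'\to X$ produces a cofiber sequence in $\cC$ whose third term is a suspension of $E(*)$.

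For essential surjectivity one would, given $C\in\cC$, construct a functor $E_{C}$ first on $\CH^{\mathrm{fin}}$ by the cellular procedure alluded to above, and then extend it to all of $\CH$ via a pro-Kan extension along $\CH^{\mathrm{fin},\op}\hookrightarrow\CH^{\op}$. The main obstacle, and the reason the theorem is non-trivial, is to verify that this extension genuinely satisfies the three axioms (homotopy invariance, invariance under relative homeomorphisms, prodescent) on all of $\CH^{\op}$; in particular the prodescent axiom is not automatic, because the naive Kan extension would only yield a pro-object in $\cC$ and not an honest object. This is exactly where the dualizability hypothesis on $\cC$ is used: dualizability guarantees that cofiltered limits in $\cC$ are compactly generated in the appropriate sense and interact well with the filtered colimits appearing in the Kan extension formula, so that the constructed $E_{C}$ really defines an object-valued Borel-Moore cohomology theory and the assignment $C\mapsto E_{C}$ is inverse to evaluation at $*$.
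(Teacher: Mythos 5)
Your proposal follows essentially the same route as the paper: reduce to the compact case via \cref{hjrtigjiortgtrgertg} and then invoke \cite[Thm.~3.6.13]{NKP}, so it is correct in structure; the only nuance you gloss over is that the cited theorem concerns functors that are merely invariant under relative homeomorphisms and satisfy prodescent (no homotopy invariance imposed), and the paper uses the same theorem to see that homotopy invariance is automatic, which is what identifies this class with the compact Borel--Moore cohomology theories. Your additional sketch of Clausen's theorem is optional material the paper does not attempt, and its account of where dualizability enters is heuristic rather than a proof, but since the result is cited this does not affect the argument.
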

\begin{proof}
The evaluation map has a factorization
\begin{equation}\label{gwrejgpregrtetgwg}\Fun^{\BM}(\LCH^{+,\op},\cC)\stackrel{r}{\simeq}\Fun^{\BM}(\CH^{\op},\cC)\stackrel{!,\simeq}{\to} \Fun^{\BM'}(\CM^{\op},\cC)\stackrel{E\mapsto E(*)}{\simeq} \cC
\end{equation}where $ \Fun^{\BM}(\CH^{\op},\cC)$ is the full subcategory of $ \Fun(\CH^{\op},\cC)$ compact Borel-Moore cohomology theories, and $\BM'$ stands for the full subcategory of $ \Fun(\CH^{\op},\cC)$ 
of   functors which are invariant under relative homeomorphisms  and satisfy prodescent (in contrast to Borel-Moore cohomology we do not require homotopy invariance).
The first map is an equivalence by  \cref{hjrtigjiortgtrgertg}, and third equivalence
is precisely  \cite[Thm. 3.6.13]{NKP} which also implies the automatic homotopy invariance
explaining the equivalence marked by $!$.
\end{proof}

\begin{rem}
The analogue for graded-group valued $\delta$-functors of the fact that the composition of the last two morphisms in \eqref{gwrejgpregrtetgwg}
 detects eqivalences is classically well-known, see \cite{kkka}.
 In the present paper we use the constructive power of \cref{gojertpgwerfwerfwerf} providing \eqref{oigwueifoerwferfwerfwerfw}. \hB
\end{rem}

 \begin{ddd} \label{giuowerpgwrefefwerfwvdf}We call  the inverse  \begin{equation}\label{oigwueifoerwferfwerfwerfw}(-)^{\BM}:\cC\to \Fun^{\BM}(\LCH^{+,\op},\cC)
\end{equation}  of the functor
in \eqref{rweferjflewrkfer} the associated Borel-Moore cohomology functor. \end{ddd}
 The functor in \eqref{oigwueifoerwferfwerfwerfw} thus associates to every object $C$ of $\cC$  the Borel-Moore cohomology theory
$C^{\BM}$ uniquely characterized by $C^{\BM}(*)\simeq C$.
On compact spaces $X$  the value $C^{\BM}(X)$  is given by the evaluation of the sheafification of the constant $\cC$-valued presheaf 
on $\CM$ with value $C$.

Assume that $\phi:\cC\to \cD$ is a colimit-preserving functor between dualizable 
stable $\infty$-categories.
\begin{kor}\label{ohperttrgertgt}
We have an equivalence
$$\phi\circ (-)^{\BM}\simeq \phi(-)^{\BM}:\cC\to \Fun^{\BM}(\LCH^{+,\op},\cD)\ .$$
\end{kor}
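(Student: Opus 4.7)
The plan is to apply the classification \cref{gojertpgwerfwerfwerf} to both sides, reducing to the observation that they agree after evaluation at a point.

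First I would verify that post-composition with $\phi$ restricts to a functor
\[
\phi_{*}:\Fun^{\BM}(\LCH^{+,\op},\cC)\to \Fun^{\BM}(\LCH^{+,\op},\cD)\ .
\]
Given $E$ in $\Fun^{\BM}(\LCH^{+,\op},\cC)$, I must check the three properties from \cref{kophertgrtgrtgerrtgertg} for $\phi\circ E$. Homotopy invariance is immediate since $\phi$ preserves equivalences. For strong excision, I use that $\phi$ is a colimit-preserving functor between stable $\infty$-categories, hence exact, so it preserves the fibre sequences \eqref{giwjeriojfowerfwerfwref}. For prodescent, since $\phi$ preserves colimits we get
\[
\colim_{i\in I^{\op}}(\phi\circ E)(X_{i})\simeq \phi\Big(\colim_{i\in I^{\op}}E(X_{i})\Big)\simeq \phi(E(\lim_{i\in I}X_{i}))\ .
\]
So $\phi_{*}$ is well-defined.

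Next I would exhibit the two functors $\cC\to \Fun^{\BM}(\LCH^{+,\op},\cD)$ in question, namely $\phi_{*}\circ (-)^{\BM}$ and $\phi(-)^{\BM}$, and compose each with the equivalence $\ev_{*}:\Fun^{\BM}(\LCH^{+,\op},\cD)\stackrel{\simeq}{\to}\cD$ from \eqref{rweferjflewrkfer}. By the defining property of $(-)^{\BM}$ in \cref{giuowerpgwrefefwerfwvdf} we have for every $C$ in $\cC$
\[
\ev_{*}(\phi_{*}C^{\BM})\simeq \phi(C^{\BM}(*))\simeq \phi(C)\simeq \ev_{*}(\phi(C)^{\BM})\ ,
\]
and these equivalences are natural in $C$. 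Thus both composites agree with the functor $\phi:\cC\to \cD$.

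Since $\ev_{*}$ is an equivalence of $\infty$-categories, post-composition with it induces an equivalence
\[
\Fun(\cC,\Fun^{\BM}(\LCH^{+,\op},\cD))\stackrel{\simeq}{\to}\Fun(\cC,\cD)\ ,
\]
so the two functors $\phi_{*}\circ (-)^{\BM}$ and $\phi(-)^{\BM}$ are equivalent, as claimed. There is no substantive obstacle here: the real content is packaged in \cref{gojertpgwerfwerfwerf}, which guarantees that a Borel-Moore cohomology theory is determined up to canonical equivalence by its value at $*$.
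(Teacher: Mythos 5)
Your proposal is correct and is exactly the intended argument: the paper gives no separate proof and treats the statement as an immediate consequence of \cref{gojertpgwerfwerfwerf} and \cref{giuowerpgwrefefwerfwvdf}, i.e., one checks that post-composition with the exact, colimit-preserving $\phi$ preserves homotopy invariance, strong excision and prodescent, and then compares the two composites after evaluation at $*$, where both give $\phi$. Your verification of the three axioms and the use of the evaluation equivalence \eqref{rweferjflewrkfer} for $\cD$ (which is where dualizability of $\cD$ enters) is precisely what the paper leaves implicit.
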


For a complete stable $\infty$-category $\cC$
we let $\Fun_{\BM}(\LCH^{+},\cC)$ denote the full subcategory of $\Fun(\LCH^{+},\cC)$ of Borel-Moore homology theories.

\begin{rem}\label{kohpertgrtgergerg}
Let $G\Orb$ denote the category of transitive discrete $G$-spaces, also called the orbit category of $G$.
We consider the inclusion $e:G\Orb\to G\LCH^{+}$.
For finite groups and complete and cocomplete $\cC$
one can also show directly that
$$e^{*}:\Fun_{\BM}(G\LCH^{+},\cC)\to \Fun(G\Orb,\cC)$$
is an equivalence, and also an analoguous result for cohomology theories. The reason that this, in contrast to \cref{gojertpgwerfwerfwerf},  works without  
stronger assumptions on $\cC$ is that we require homotopy invariance on the domain.
Using these results the assumption of dualizability of $\cC$ and $\cD$ in \cref{ohperttrgertgt} could be dropped.
\footnote{The details are part of  an ongoing PhD project.}  \hB
 \end{rem}
Let $R$ be a   ring spectrum and $S$ denote the sphere spectrum.

\begin{ddd}\label{okgpgkerpwofwerferwfwrefwf}
We define the functor
\begin{equation}\label{gwerpokjfpoweferfwerfwer}(-)_{\BM}:\Mod(R)\to \Fun_{\BM}(\LCH^{+},\Mod(R))\ , \qquad M\mapsto \Sp(S^{\BM},M)\ .\end{equation}
\end{ddd}
For justification note that $\Sp(-,M):\Sp\to \Mod(R)$ is exact and sends colimits to limits.

Note that the $E$-theory category $\EE$ is dualizable by \cite{budu}. 
In view of \cref{kopherthertgertgertr}, $\ee C_{0}(*)\simeq \beins_{\EE}$  and \eqref{oigwueifoerwferfwerfwerfw}  we get from \cref{gojertpgwerfwerfwerf}:
\begin{kor}\label{ijogpwregreg9}
We have an equivalence
$$\ee C_{0}\simeq \beins_{\EE}^{\BM}:\LCH^{+,\op}\to \EE\ .$$
\end{kor}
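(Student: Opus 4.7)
The plan is to identify both sides as Borel-Moore cohomology theories valued in $\EE$ and then appeal to the classification in \cref{gojertpgwerfwerfwerf} by matching their values at the point.

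First, I will verify the hypotheses needed to invoke \cref{gojertpgwerfwerfwerf}. The category $\EE$ is presentable by the construction in \cite{budu}, and as noted in the paragraph preceding the corollary it is dualizable. Thus the evaluation equivalence
\[
\Fun^{\BM}(\LCH^{+,\op},\EE) \xrightarrow{\simeq} \EE, \qquad E \mapsto E(*),
\]
of \cref{gojertpgwerfwerfwerf} applies, with inverse $(-)^{\BM}$ from \cref{giuowerpgwrefefwerfwvdf}.

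Next I check that $\ee C_0$ lies in $\Fun^{\BM}(\LCH^{+,\op},\EE)$. This is precisely the specialization of \cref{kopherthertgertgertr} to the trivial group: the proof given there shows directly that $\ee C_0$ is homotopy invariant (since $\ee$ is homotopy invariant and $C_0$ turns topological homotopies into algebraic ones), strongly excisive (since $C_0$ takes a closed decomposition to a short exact sequence of commutative $C^*$-algebras and $\ee$ is exact), and satisfies prodescent (since $C_0$ turns cofiltered limits in $\CH$ into filtered colimits in $\nCalg_{\comm}$ and $\ee$ preserves filtered colimits).

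Finally I compute the value at the one-point space. By Gelfand duality $C_0(*) \cong \C$, so $\ee C_0(*) \simeq \ee(\C) \simeq \beins_{\EE}$, where the last identification comes from the symmetric monoidal refinement of $\ee$. On the other hand, by definition of $(-)^{\BM}$ we have $\beins_{\EE}^{\BM}(*) \simeq \beins_{\EE}$. Thus the equivalence classes of $\ee C_0$ and $\beins_{\EE}^{\BM}$ under the evaluation functor of \cref{gojertpgwerfwerfwerf} coincide, and applying the inverse $(-)^{\BM}$ yields the desired natural equivalence
\[
\ee C_0 \simeq \beins_{\EE}^{\BM} : \LCH^{+,\op} \to \EE.
\]
No genuine obstacle arises: the entire argument is an application of the classification theorem, so the only points to double-check are the three axioms for $\ee C_0$ (already established in \cref{kopherthertgertgertr}) and dualizability of $\EE$ (cited from \cite{budu}).
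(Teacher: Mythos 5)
Your proposal is correct and follows exactly the route the paper takes: the corollary is stated as an immediate consequence of \cref{kopherthertgertgertr} (specialized to the trivial group), the computation $\ee C_{0}(*)\simeq \beins_{\EE}$, the dualizability of $\EE$ from \cite{budu}, and the classification \cref{gojertpgwerfwerfwerf} with its inverse $(-)^{\BM}$. Nothing essential is missing or different.
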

The $K$-theory functor
$$K:\EE\to \Mod(KU)\ , \quad A\mapsto \EE(\beins_{\EE},A)$$
preserves limits and colimits. Using that $K(\beins_{\EE})\simeq KU$ we get by \cref{ohperttrgertgt} a homotopy theoretic description of the   Borel-Moore $K$-cohomology  functor:
\begin{kor}\label{ijogpwregreg9}
We have an equivalence
\begin{equation}\label{btrbrerertgetg}K(\ee C_{0})\simeq  KU^{\BM} 
\end{equation} in 
$\Fun^{\BM}(\LCH^{+,\op},\Mod(KU))$.
\end{kor}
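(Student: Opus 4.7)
The plan is to obtain the equivalence by transporting the description $\ee C_{0}\simeq \beins_{\EE}^{\BM}$ (the preceding corollary) across the $K$-theory functor $K:\EE\to \Mod(KU)$, using the naturality statement \cref{ohperttrgertgt}.

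First I verify the hypotheses of \cref{ohperttrgertgt} for $\phi:=K$. The source $\EE$ is dualizable in $\Pr^{L}_{\st}$ (this is proved in \cite{budu}), and $\Mod(KU)$ is dualizable since it is compactly generated by the object $KU$. The functor $K=\map_{\EE}(\beins_{\EE},-)$ preserves colimits (this is recalled immediately before the corollary), so it is a colimit-preserving functor between dualizable presentable stable $\infty$-categories.

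Next, by \cref{ohperttrgertgt} applied to $K$ there is a natural equivalence
\[
K\circ (-)^{\BM}\simeq K(-)^{\BM}:\EE\to \Fun^{\BM}(\LCH^{+,\op},\Mod(KU))\ .
\]
Evaluating at $\beins_{\EE}$ and using $K(\beins_{\EE})\simeq KU$, I obtain
\[
K(\beins_{\EE}^{\BM})\simeq K(\beins_{\EE})^{\BM}\simeq KU^{\BM}\ .
\]
Post-composing the equivalence $\ee C_{0}\simeq \beins_{\EE}^{\BM}$ from the preceding corollary with $K$ then yields
\[
K(\ee C_{0})\simeq K(\beins_{\EE}^{\BM})\simeq KU^{\BM}\ ,
\]
which is the asserted equivalence.

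Finally, to see that this equivalence lives in the full subcategory $\Fun^{\BM}(\LCH^{+,\op},\Mod(KU))$ I invoke \cref{okiophkeprthrtgretgertg}: since $\ee C_{0}$ is a Borel--Moore cohomology theory (\cref{kopherthertgertgertr}) and $K$ is colimit-preserving between cocomplete stable $\infty$-categories, the composition $K\circ \ee C_{0}$ is again a Borel--Moore cohomology theory, and $KU^{\BM}$ belongs to this subcategory by the very definition \eqref{oigwueifoerwferfwerfwerfw}. There is essentially no obstacle here; the only delicate point is confirming dualizability of $\Mod(KU)$ so that \cref{ohperttrgertgt} is genuinely applicable, but this follows from compact generation.
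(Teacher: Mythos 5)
Your proposal is correct and follows essentially the same route as the paper: the corollary is obtained by applying \cref{ohperttrgertgt} to the colimit-preserving functor $K$ (with $\EE$ dualizable by \cite{budu} and $\Mod(KU)$ dualizable since it is compactly generated), using $K(\beins_{\EE})\simeq KU$, and transporting the equivalence $\ee C_{0}\simeq \beins_{\EE}^{\BM}$ from the preceding corollary. Your extra check via \cref{okiophkeprthrtgretgertg} that the result lies in $\Fun^{\BM}(\LCH^{+,\op},\Mod(KU))$ is consistent with the paper's setup and adds nothing problematic.
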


Note that the left-adjoint in \begin{equation}\label{egewfgrefwref}-\wedge KU:\Sp\leftrightarrows \Mod(KU):forget
\end{equation}is a colimit preserving functor between compactly generated and hence dualizable stable $\infty$-categories.
 We conclude from \cref{ohperttrgertgt} that 
\begin{equation}\label{gwerpokfperfwefwer}KU^{\BM}\simeq S^{\BM}\wedge KU
\end{equation} 
in $\Fun^{\BM}(\LCH^{+,\op},\Mod(KU))$.

Combining \cref{ijogpwregreg9} with \eqref{pojgkpowergewrweferfr}  we get:\begin{kor}
We have an equivalence
$$K^{\an}_{A}\simeq \EE( \beins_{\EE}^{\BM},A):\LCH^{+}\to \Mod(KU)$$
which is natural in $A$.
\end{kor}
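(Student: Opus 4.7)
The plan is to unfold the definition of the non-equivariant analytic $K$-homology and then substitute in the identification $\ee C_{0}\simeq\beins_{\EE}^{\BM}$ provided by \cref{ijogpwregreg9}. Recall that by our conventions $K^{\an}_{A}$ denotes the specialization of $K^{G,\an}_{\bC,A}$ to $G$ trivial and $\bC=\Hilb_{c}(\C)$.

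First I would rewrite $K^{\an}_{A}$ using \eqref{pojgkpowergewrweferfr}, which in the non-equivariant case reads
$$K^{\an}_{A}\simeq \EE(\ee C_{0},\Hilb_{c}(\C)_{\std}\otimes A):\LCH^{+}\to \Mod(KU)\ .$$
For $G$ trivial the $C^{*}$-category $\Hilb_{c}(\C)_{\std}$ is Morita equivalent to $\C$, so $\ee^{\nCcat}(\Hilb_{c}(\C)_{\std})\simeq \beins_{\EE}$ by $K$-stability of $\ee$. Consequently the tensor factor drops out and the displayed mapping object is equivalent to $\EE(\ee C_{0},A)$.

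Next, \cref{ijogpwregreg9} gives a natural equivalence $\ee C_{0}\simeq \beins_{\EE}^{\BM}$ of functors $\LCH^{+,\op}\to\EE$. Substituting this into the expression above produces the natural equivalence
$$K^{\an}_{A}\simeq \EE(\beins_{\EE}^{\BM},A):\LCH^{+}\to \Mod(KU)\ ,$$
which is the statement of the corollary.

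I do not expect any substantial obstacle: the entire content is carried by \cref{ijogpwregreg9} (which in turn invokes Clausen's classification in \cref{gojertpgwerfwerfwerf}), and the only bookkeeping is the Morita reduction $\ee(\Hilb_{c}(\C)_{\std})\simeq\beins_{\EE}$ together with functoriality of $\EE(-,A)$ in the first variable. The only point worth being careful about is that the identification in \eqref{pojgkpowergewrweferfr} was stated for the equivariant $E$-theory $\EE^{G}$; specializing to $G=\{e\}$ one needs $\EE^{\{e\}}\simeq \EE$, which is part of the construction of $\ee^{G}$ in \eqref{hertgrtgtergbgd}.
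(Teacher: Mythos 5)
Your proposal is correct and follows exactly the paper's route: the corollary is obtained by combining the identification $K^{G,\an}_{\bC,A}\simeq \EE^{G}(\ee^{G}C_{0},\bC^{(G)}_{\std}\otimes A)$ from \eqref{pojgkpowergewrweferfr} with the equivalence $\ee C_{0}\simeq \beins_{\EE}^{\BM}$ of \cref{ijogpwregreg9}, specialized to the trivial group and $\bC=\Hilb_{c}(\C)$. The only difference is that you make explicit the Morita reduction $\ee(\Hilb_{c}(\C)_{\std})\simeq\beins_{\EE}$ absorbing the tensor factor, which the paper leaves implicit in its notation $K^{\an}_{A}$.
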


\begin{prop}\label{hertgertgergtrgrtg}
We have an equivalence
$$K^{\an}_{A}\simeq K(A)_{\BM}$$
in $\Fun^{\BM}(\LCH^{+},\Sp)$ which is natural in $A$.
\end{prop}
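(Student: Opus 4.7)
The plan is to prove the equivalence by chaining the identifications
\[
K^{\an}_A(X) \;\simeq\; \EE(\beins_\EE^\BM(X), A) \;\simeq\; \Mod(KU)(KU^\BM(X), K(A)) \;\simeq\; \Sp(S^\BM(X), K(A)) \;\simeq\; K(A)_\BM(X),
\]
where the first equivalence is the corollary stated just before \cref{hertgertgertgrtgrtg} (combining the definition of $K^{\an}_A$ with $\ee C_0 \simeq \beins_\EE^\BM$) and the last is the definition \eqref{gwerpokjfpoweferfwerfwer} of $(-)_\BM$. So the entire proof reduces to establishing the two middle equivalences and checking naturality in $X$.

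For the second equivalence I first identify $\beins_\EE^\BM$ with $b(KU^\BM)$ in $\Fun^\BM(\LCH^{+,\op}, \EE)$. Since the functor $b:\Mod(KU)\to \EE$ from \eqref{gwergwerferfrew} is a left adjoint between dualizable presentable stable $\infty$-categories and satisfies $b(KU) \simeq \beins_\EE$, this identification is an immediate application of \cref{ohperttrgertgt} (which in turn rests on Clausen's classification \cref{gojertpgwerfwerfwerf}). The adjunction $b \dashv K$ then gives, naturally in $X$,
\[
\EE(\beins_\EE^\BM(X), A) \simeq \EE(b(KU^\BM(X)), A) \simeq \Mod(KU)(KU^\BM(X), K(A)).
\]

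The third equivalence proceeds analogously. I invoke the identification $KU^\BM \simeq S^\BM \wedge KU$ recorded in \eqref{gwerpokfperfwefwer}, which is itself the instance of \cref{ohperttrgertgt} applied to the colimit-preserving functor $-\wedge KU:\Sp\to \Mod(KU)$ of \eqref{egewfgrefwref}. The adjunction $-\wedge KU \dashv \mathrm{forget}$ then yields
\[
\Mod(KU)(KU^\BM(X), K(A)) \simeq \Mod(KU)(S^\BM(X) \wedge KU, K(A)) \simeq \Sp(S^\BM(X), K(A)),
\]
and the right-hand side is $K(A)_\BM(X)$ by definition.

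I do not anticipate any serious obstacle: every step is a formal consequence of an adjunction together with Clausen's classification theorem. The only point of care will be that the chain of natural equivalences of functors $\LCH^{+}\to \Sp$ assembles into an equivalence inside $\Fun^{\BM}(\LCH^{+},\Sp)$, but this is automatic because $\Fun^{\BM}(\LCH^{+},\Sp)$ is a full subcategory of $\Fun(\LCH^{+},\Sp)$. Naturality in $X$ is inherited from the naturality of the units and counits of the two adjunctions involved.
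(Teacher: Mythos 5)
Your proposal is correct, but at the decisive middle step it takes a genuinely different route from the paper. The paper's proof establishes the equivalence $\EE(\ee C_{0},A)\simeq \Mod(KU)(K(\ee C_{0}),K(A))$ (the step \eqref{rgojweorferwfrefwerf}) by showing that commutative $C^{*}$-algebras lie in the UCT-class of $\EE$: it reduces to separable subalgebras using that $\ee$ preserves filtered colimits, uses nuclearity to compare $\KK(A,B)$ with $\EE(A,c(B))$, invokes the classical UCT for $\KK$, and proves essential surjectivity of $c:\KK\to\EE$ via the $\Ind_{\aleph_{1}}$-extension of the Verdier quotient; the remaining steps of the paper's chain coincide with yours. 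You instead apply \cref{ohperttrgertgt} to the fully faithful left adjoint $b$ of \eqref{gwergwerferfrew} to get $\beins_{\EE}^{\BM}\simeq b(KU^{\BM})$ and then use the adjunction $b\dashv K$; since $\ee C_{0}\simeq \beins_{\EE}^{\BM}$, this in effect deduces bootstrap membership of all the values $\ee C_{0}(X)$ purely formally from Clausen's classification (\cref{gojertpgwerfwerfwerf}), concentrating the analytic input in the already-proved fact that $\ee C_{0}$ is a Borel--Moore cohomology theory and bypassing nuclearity and the classical UCT altogether. What each approach buys: yours is shorter and more structural (and, as a by-product, gives a formal proof that every commutative $C^{*}$-algebra is in the bootstrap class of $\EE$), while the paper's argument identifies the equivalence as the specific comparison map induced by the $K$-theory functor, which is the form in which the UCT is usually quoted. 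Two small points you should make explicit: the identity $b(KU)\simeq\beins_{\EE}$ is not stated in the paper and should be justified (it follows by Yoneda from $K=\map_{\EE}(\beins_{\EE},-)$ and the adjunction), and the adjunction equivalences for $b\dashv K$ and for \eqref{egewfgrefwref} must be taken at the level of mapping spectra rather than mapping spaces (exactness of the left adjoints gives this), since the asserted equivalence lives in $\Fun^{\BM}(\LCH^{+},\Sp)$.
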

\begin{proof}
The UCT-class (UCT stands for universal coefficients) in $\EE$ is defined as the localizing subcategory of objects $A$ in $\EE$
such that the map  $$\EE(A,B) \to \Mod(KU)(K(A),K(B))$$ induced by the 
$K$-theory functor $K=\EE(\beins_{\EE},-):\EE\to \Mod(KU)$ is an equivalence in $\Mod(KU)$ for all $B$ in $\EE$.
We claim that all commutative $C^{*}$-algebras belong to the UCT-class of $\EE$. 
Since every commutative $C^{*}$-algebra is the filtered colimit of its separable subalgebras and the $E$-theory functor  preserves filtered colimits  it suffices to see that all separable commutative $C^{*}$-algebras belong to the UCT-class.
Since  a separable commutative $C^{*}$-algebra $A$ is nuclear the comparison map
$c:\KK\to \EE$ induces an equivalence  $$\KK(A,B)\stackrel{\simeq}{\to} \EE(A,c(B))$$ for every object $B$ in $\KK$. 
It is now a classical fact that  commutative separable $C^{*}$-algebras belong to the 
UCT-class for $\KK$.
This gives the upper horizontal equivalence in the commutative triangle
$$\xymatrix{\KK(A,B)\ar[rr]^-{\simeq}\ar[dr]_{\simeq}&& \Mod(KU)(K(A), K(B))\\ &\EE(A,c(B))\ar@{..>}[ur]^{\simeq}&}\ .$$
it remains to see that $c:\KK\to \EE$ is essentially surjective. 
Since the separable version $c_{\sepa}:\KK_{\sepa}\to \EE_{\sepa}$ of the comparison map is a Verdier quotient by \cite[Prop. 3.61]{budu}
and the map $c$ above is equivalent to 
$$\Ind_{\aleph_{1}} (\KK_{\sepa})\to \Ind_{\aleph_{1}} ( \EE_{\sepa})$$ it is also a localization and hence essentially surjective. This finishes the proof of the claim.

Since $C_{0}$ takes values in commutative $C^{*}$-algebras the functor $\ee C_{0}$ 
has values in the UCT-class.
Consequently we get
an equivalence
\begin{equation}\label{rgojweorferwfrefwerf} \EE(\ee C_{0},A)\simeq \Mod(KU)(K(\ee C_{0}),K(A)):\LCH^{+}\to \Mod(KU)\ .
\end{equation}
We now have the chain of equivalences
\begin{eqnarray*}
K^{\an}_{A}&\stackrel{\eqref{pojgkpowergewrweferfr}}{\simeq}& \EE(\ee C_{0},A)\\&\stackrel{\eqref{rgojweorferwfrefwerf}}{\simeq}& \Mod(KU)(K(\ee C_{0}),K(A))\\&\stackrel{\eqref{btrbrerertgetg}}{\simeq} &\Mod(KU)(KU^{\BM},K(A))\\
&\stackrel{\eqref{gwerpokfperfwefwer}}{\simeq}&
\Mod(KU)(S^{\BM}\wedge KU,K(A))\\&\stackrel{\eqref{egewfgrefwref}}{\simeq}&
\Sp(S^{\BM},K(A))\\&\stackrel{\eqref{gwerpokjfpoweferfwerfwer}}{\simeq}&
K(A)_{\BM}
\end{eqnarray*}
in  $\Fun^{\BM}(\LCH^{+},\Sp)$.
 \end{proof}

\begin{rem}  \cref{hertgertgergtrgrtg} is also a direct consequence
of the result previewed in \cref{kohpertgrtgergerg}. The latter would even imply an equivariant version for finite groups $G$.
\hB \end{rem}

We choose an equivalence  \begin{equation}\label{gwweroijkewrjgfowertw334w}
K^{\an}_{\C}(*)\simeq KU\wedge H\C\simeq \prod_{k\in \Z} \Sigma^{2k} H\C
\end{equation}
in $\Mod(H\C)$.

\begin{kor}
We have an equivalence  \begin{equation}\label{ggwerfrfrewfwwrefw34f}K_{\C}^{\an}\simeq \prod_{k\in \Z} \Sigma^{2k} H\C_{\BM}
\end{equation}
in $\Fun_{\BM}(\LCH^{+},\Mod(H\C))$.
\end{kor}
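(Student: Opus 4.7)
The plan is to deduce this from \cref{hertgertgergtrgrtg} applied to the coefficient $A=\beins_{\EE}\otimes \C=b(KU\wedge H\C)$, combined with the choice \eqref{gwweroijkewrjgfowertw334w} and the observation that the Borel-Moore homology functor $(-)_{\BM}$ of \cref{okgpgkerpwofwerferwfwrefwf} commutes with both products and suspensions.

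First I specialize \cref{hertgertgergtrgrtg} to obtain
$$K^{\an}_{\C}\simeq K(\beins_{\EE}\otimes \C)_{\BM}$$
in $\Fun_{\BM}(\LCH^{+},\Sp)$. Since $K(\beins_{\EE}\otimes \C)\simeq KU\wedge H\C$ carries a canonical $H\C$-module structure, and the chain of equivalences used in the proof of \cref{hertgertgergtrgrtg} consists entirely of mapping-spectrum manipulations whose targets inherit the $H\C$-action, this equivalence refines to an equivalence in $\Fun_{\BM}(\LCH^{+},\Mod(H\C))$.

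Next I invoke the chosen equivalence \eqref{gwweroijkewrjgfowertw334w} to identify $K(\beins_{\EE}\otimes \C)\simeq KU\wedge H\C\simeq \prod_{k\in \Z}\Sigma^{2k}H\C$ in $\Mod(H\C)$. Since the functor $(-)_{\BM}=\Sp(S^{\BM},-)$ is defined pointwise as a mapping spectrum and inherits its $\Mod(H\C)$-structure from the second argument, it preserves all limits (in particular infinite products) and commutes with suspensions. Applying $(-)_{\BM}$ therefore yields
$$K(\beins_{\EE}\otimes \C)_{\BM}\simeq \Bigl(\prod_{k\in \Z}\Sigma^{2k}H\C\Bigr)_{\BM}\simeq \prod_{k\in \Z}\Sigma^{2k}H\C_{\BM}$$
in $\Fun_{\BM}(\LCH^{+},\Mod(H\C))$. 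Concatenating the three equivalences gives the desired identification.

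The only point that requires some care is the lift of the equivalence in \cref{hertgertgergtrgrtg} from $\Sp$-valued to $\Mod(H\C)$-valued functors; this is the main (mild) obstacle. Since each intermediate equivalence in that proof has the form $\Mod(KU)(-,K(A))$ or $\Sp(-,K(A))$, and the $H\C$-module structure on $K(A)$ propagates through each mapping-spectrum naturally in the target variable, this lift is essentially formal. No additional computation is needed beyond the chosen splitting \eqref{gwweroijkewrjgfowertw334w}.
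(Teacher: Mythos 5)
Your proposal is correct and follows essentially the same route as the paper: the paper's (one-line) proof likewise combines \cref{hertgertgergtrgrtg} for the coefficient $\beins_{\EE}\otimes\C$ with the chosen splitting \eqref{gwweroijkewrjgfowertw334w} inserted into the definition \eqref{gwerpokjfpoweferfwerfwer} of $(-)_{\BM}$ for $R=H\C$, using that $\Sp(S^{\BM},-)$ commutes with products and shifts. Your extra discussion of lifting the $\Sp$-valued equivalence of \cref{hertgertgergtrgrtg} to $\Mod(H\C)$-valued functors is a point the paper leaves implicit, and your formal argument via the $H\C$-module structure on $K(A)\simeq KU\wedge H\C$ propagating through the mapping spectra is the right way to justify it.
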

\begin{proof}
In view of \cref{hertgertgergtrgrtg}, inserting \eqref{gwweroijkewrjgfowertw334w}  into \eqref{gwerpokjfpoweferfwerfwer} in the case of the ring spectrum $R:=H\C$ yields the desired equivalence.\end{proof}

\begin{ddd}\label{9gwerfreferwfwever}
We define the homotopical Chern character as the composition
$$\ch^{h}:K^{\an}\xrightarrow{\ch^{K},\eqref{gwerojopwerferfwerfwerfw}} K^{\an}_{\C}\stackrel{\eqref{ggwerfrfrewfwwrefw34f}}{\simeq} \prod_{k\in \Z} \Sigma^{2k} H\C_{\BM}\ .$$
\end{ddd}

\begin{rem}\label{9gwerfreferwfwever1}
There are actually two apriori different  versions of the homotopical Chern character.
 In addition to the one described in \cref{9gwerfreferwfwever} we could also
 define an even more homotopical  Chern character by 
 $$K^{\an}\stackrel{Prop. \ref{hertgertgergtrgrtg}}{\simeq} K^{\an}(*)_{\BM}\simeq 
 KU_{\BM} \to  (KU\wedge H\C)_{\BM} \simeq \prod_{k\in \Z} \Sigma^{2k} H\C_{\BM} \ ,
  $$ where the third map is given by the unit of $H\C$.  
  Using the  naturality-in-$A$ statement in of  \cref{hertgertgergtrgrtg} and the details of the definition
  $\beins_{\EE}\to \beins_{\EE}\otimes H\C$ from \cref{gkojpwregwerwef}
  one checks that both versions actually coincide
     \hB
\end{rem}

 
We now come back to the equivariant case.
Assume that $E:BG\times \LCH^{+}\to \cC$ 
is such that its adjoint
$BG\to \Fun(\LCH^{+},\cC)$ takes values in Borel-Moore homology theories.

\begin{ddd}\label{okheprthertgertgetg}
We define the Borel-equivariant Borel-Moore homology associated to $E$ by
$$E^{hG}:G\LCH^{+}\xrightarrow{E}\Fun(BG\times BG,\cC)\xrightarrow{\diag_{BG}^{*}}   \Fun(BG,\cC)\xrightarrow{\lim_{BG}}\cC\ .$$
\end{ddd}
The justification is straightforward.

For a group $G$ we  can now extend the definition of the homotopical Chern character to the Borel case.
\begin{ddd} \label{iogjiowergwerfwerfrefw} 
We define the Borel-equivariant homotopical Chern character
$$\ch^{hG}: K^{G,\an}_{A}  \xrightarrow{\beta,\eqref{gwerpoj0erjgeroigerwg}} K^{\an,hG}_{A}\xrightarrow{\lim_{BG}\ch^{h}} ( \prod_{k\in \Z} \Sigma^{2k} H\C_{\BM})^{hG}:G\LCH^{+}\to \Mod(KU)\ .$$
\end{ddd}
Note that $ \ch^{hG}$ reduces to $\ch^{h}$ in the case of the trivial group.

 \subsection{Equivariant Homology theories  and assembly maps}\label{kophrhrtehethtr}

We start with explaining the construction of   Weiss-Williams assembly  
\cite{zbMATH01452550} in the equivariant context and using the $\infty$-category language.
Let $\ell:\Top\to \Spc$ be the Dwyer-Kan localization of the category of topological spaces at the weak homotopy equivalences. This is one of the well-known presentations of the category of spaces.
We have a functor 
\begin{equation}\label{hrthetgretgerg}Y^{G}:G\Top\to \PSh(G\Orb)\ , \quad Y^{G}(S):=\ell \Map_{G\Top}(S,X)\ ,
\end{equation} where $ \Map_{G\Top}(S,X)$ is the topological mapping space from the orbit $S$ considered as a
discrete $G$-topological space to $X$. 
A map $X\to Y$  in $G\Top$ is called an equivariant weak equivalence   if it induces a weak equivalence $X^{H}\to Y^{H}$ on the fixed points sets for all subgroups $H$ of $G$.
By Elemendorf's theorem \cite{elmendorf} the functor \eqref{hrthetgretgerg} presents its target as 
  the Dwyer-Kan localization of  its domain at the equivariant weak equivalences $W_{weak}$.

If $\cC$ is a cocomplete  $\infty$-category, then   we get a functor 
\begin{equation}\label{hretojgoertgrtegegertg} (-)^{\%}:\Fun(G\Orb,\cC) \xrightarrow{\simeq,y_{!}}  \Fun^{\colim}( \PSh(G\Orb),\cC)\xrightarrow{Y^{G,*}}\Fun(G\Top,\cC)\ ,
\end{equation}where $y_{!}$ is left-Kan extension along the Yoneda embedding $y:G\Orb\to \PSh(G\Orb)$.
Since $Y^{G}$ is a localization, the functor $(-)^{\%}$ is an inclusion of a full subcategory. If $\cC$ is stable, then the
 essential image of $(-)^{\%}$ is, by definition,  the category of $\cC$-valued equivariant homology theories.
 
 Assume that $E:G\Orb\to \cC$ is a functor to a cocomplete stable $\infty$-category.
 The following are well-known properties of equivariant homology theories.
 \begin{kor}\label{kopwrgergwergfsdfg}
 The functor
 $E^{\%}:G\Top\to \cC$ is homotopy invariant, excisive for  invariant open decompositions, and it sends disjoint unions of arbitrary families of $G$-spaces to sums.
 \end{kor}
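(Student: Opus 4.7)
The plan is to exploit the definition
\[
(-)^{\%}:\Fun(G\Orb,\cC) \xrightarrow{y_{!}} \Fun^{\colim}(\PSh(G\Orb),\cC)\xrightarrow{Y^{G,*}}\Fun(G\Top,\cC)
\]
by writing $E^{\%}\simeq y_{!}(E)\circ Y^{G}$. Since $y_{!}(E)$ preserves all colimits, it suffices to verify that $Y^{G}$ itself takes the relevant diagrams in $G\Top$ to the appropriate colimit diagrams in $\PSh(G\Orb)$. To test this in $\PSh(G\Orb)$, I will work objectwise on orbits $S=G/H$, using
\[
Y^{G}(X)(G/H)\simeq \ell\,\Map_{G\Top}(G/H,X)\simeq \ell(X^{H}),
\]
and reduce each assertion to a statement about the space $X^{H}$ in $\Spc$.

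For homotopy invariance, the projection $[0,1]\times X\to X$ in $G\Top$ is an equivariant homotopy equivalence, hence belongs to the class $W_{\mathrm{weak}}$ of equivariant weak equivalences at which $Y^{G}$ is a localization by Elmendorf's theorem. Thus $Y^{G}$ sends this projection to an equivalence in $\PSh(G\Orb)$, and applying $y_{!}(E)$ yields an equivalence in $\cC$.

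For excision, consider an invariant open decomposition $X=U\cup V$ in $G\Top$. For every subgroup $H\subseteq G$ the subsets $U^{H},V^{H}$ form an open cover of $X^{H}$, and it is classical that such an open cover gives a homotopy pushout square in $\Spc$. Hence the square
\[
\xymatrix{Y^{G}(U\cap V)\ar[r]\ar[d]&Y^{G}(U)\ar[d]\\ Y^{G}(V)\ar[r]&Y^{G}(X)}
\]
is pointwise, and therefore globally, a pushout in $\PSh(G\Orb)$. Applying the colimit-preserving functor $y_{!}(E)$ produces the desired pushout in $\cC$. For disjoint unions, note that for any family $(X_{i})_{i\in I}$ in $G\Top$ and any orbit $G/H$, one has $(\bigsqcup_{i}X_{i})^{H}=\bigsqcup_{i}X_{i}^{H}$. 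Since $\ell:\Top\to \Spc$ preserves coproducts, $Y^{G}(\bigsqcup_{i}X_{i})\simeq \coprod_{i}Y^{G}(X_{i})$ in $\PSh(G\Orb)$, and applying $y_{!}(E)$ yields the asserted sum decomposition in $\cC$.

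The only mild subtlety is the excision statement: one must remember that open covers in $\Top$ present homotopy pushout squares after localization at weak equivalences (via a Mayer--Vietoris/partition of unity argument), and verify that this goes through objectwise on each fixed-point space $X^{H}$; there is no further obstruction since everything reduces to a pointwise check in $\Spc$.
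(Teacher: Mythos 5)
Your argument is correct and is essentially the verification the paper has in mind (it states the corollary as ``well-known'' without written proof): reduce along $E^{\%}\simeq (y_{!}E)\circ Y^{G}$ with $y_{!}E$ colimit-preserving, and check everything objectwise on orbits via $Y^{G}(X)(G/H)\simeq \ell(X^{H})$, using Elmendorf's theorem for homotopy invariance, the open-cover homotopy pushout for excision, and compatibility of fixed points with disjoint unions for additivity. One small remark: the fact that an open cover $\{U^{H},V^{H}\}$ of $X^{H}$ yields a pushout square in $\Spc$ should be justified by the small-simplices/subdivision theorem (valid for arbitrary topological spaces) rather than by partitions of unity, since no paracompactness of $X^{H}$ is available; this does not affect the correctness of your proof.
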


 Assume now that $\cF$ is a family of subgroups of $G$ and let $G_{\cF}\Orb\subseteq G\Orb$ denote the full subcategory of orbits with stabilizers in $\cF$. Using left Kan-extension along this inclusion for the first map we define the functor
  $$ (-)^{\%,\cF}:\Fun(G_{\cF}\Orb,\cC)\to \Fun(G\Orb,\cC)  \xrightarrow{(-)^{\%}}\Fun(G\Top,\cC)\ .$$
  
  Assume that $E:G_{\cF}\Orb\to \cC$ is a functor to a cocomplete stable $\infty$-category.
 \begin{kor}\label{kopwrgergwergfsdfg1}
 The functor
 $E^{\%,\cF}:G\Top\to \cC$ is homotopy invariant, excisive for open decompositions, and it sends disjoint unions of arbitrary families of $G$-spaces to sums.
 \end{kor}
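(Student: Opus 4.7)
The plan is to reduce the statement directly to the preceding \cref{kopwrgergwergfsdfg}. By construction, the functor $(-)^{\%,\cF}$ is defined as the composition of left Kan extension along the inclusion $G_{\cF}\Orb \hookrightarrow G\Orb$ followed by $(-)^{\%}$. Thus, writing $\Lan$ for this left Kan extension, we have by definition $E^{\%,\cF} \simeq (\Lan E)^{\%}$. Since $\cC$ is assumed cocomplete, $\Lan E: G\Orb \to \cC$ is a well-defined functor to a cocomplete stable $\infty$-category.

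First I would apply \cref{kopwrgergwergfsdfg} to the functor $\Lan E: G\Orb \to \cC$. This immediately yields that $(\Lan E)^{\%}$ is homotopy invariant, excisive for invariant open decompositions, and sends disjoint unions of arbitrary families of $G$-spaces to sums. Composing with the equivalence $E^{\%,\cF} \simeq (\Lan E)^{\%}$ transports all three properties to $E^{\%,\cF}$, which is exactly the claim.

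The only substantive point to check is that the composition $(\Lan E)^{\%}$ really agrees with the definition of $E^{\%,\cF}$ given right before the statement, but this is immediate from the definitions since both are constructed by the same recipe (left Kan extension to $G\Orb$, then applying $(-)^{\%}$). There is no obstacle here; the corollary is essentially a formal consequence of the previous one together with the definition of $(-)^{\%,\cF}$. One could, alternatively, verify the three properties directly from the pointwise formula for $(-)^{\%,\cF}$ using that colimits commute with colimits, but the reduction via $\Lan$ is cleaner and avoids reproving \cref{kopwrgergwergfsdfg}.
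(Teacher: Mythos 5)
Your reduction is exactly the intended argument: by the definition of $(-)^{\%,\cF}$ one has $E^{\%,\cF}\simeq(\Lan E)^{\%}$ for the left Kan extension $\Lan E:G\Orb\to\cC$ (which exists since $\cC$ is cocomplete), and \cref{kopwrgergwergfsdfg} applied to $\Lan E$ gives all three properties. The paper treats this as immediate for the same reason, so your proof matches its (implicit) route.
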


 Let $i:G\CW \to G\Top$ be the inclusion of  the category of $G\CW$-complexes. Part of the proof of Elmendorf's theorem
 is the construction of a cofibrant approximation functor $c:G\Top \to G\CW$ together with a transformation $i\circ c\to \id$
 implemented by equivariant weak equivalences  and an equivalence
 $$G\CW[W_{h}^{-1}]\xrightarrow{\simeq,i} G\Top[W_{weak}^{-1}]\simeq \PSh(G\Orb)\ ,$$
 where $W_{h}$ denotes the collection of equivariant homotopy equivalences.
 
 Let $o_{\cF}:G_{\cF}\Orb\to   G\Top$ denote the canonical embedding. 
 For a functor $F:G\Top\to \cC$ we use the abbreviation.
 $$F^{\%,\cF}:=(o_{\cF}^{*}F)^{\%,\cF}:G\Top\to \cC\ .$$
 As usual we will omit the symbol $\cF$ in the case $\cF=\All$.
 
 Assume now that the functor
 $F $ is a homotopy invariant. Its restriction to $G\CW$  factorizes over the Dwyer-Kan localization at the equivariant homotopy equivalences and  therefore gives rise to functors $F'$ and $F''$ as indicated in the following diagram:
 $$\xymatrix{G\Top\ar@{-->}[drr]^{Y^{G}}\ar[rr]^{F}&&\cC \\ 
 G\CW\ar[u]^{i}\ar[r]&G\CW[W_{h}^{-1}]\ar@{..>}[ur]^{F'}\ar[r]^{\simeq}& \PSh(G\Orb)\ar@{..>}[u]^{F''}\\\ar@/^1cm/[uu]^{o}G\Orb\ar[u]\ar[urr]^{y}&&}\ .
$$ 
Then $y^{*}F''\simeq o^{*}F$.
The counit
 $  y_{!}y^{*}F''\to F'':\PSh(G\Orb)\to \cC$ presents its domain as the final colimit preserving functor over $F''$.
Pulling this back along $Y^{G}$ yields a natural assembly map transformation
$$\asmbl^{F}:F^{\%}:=  Y^{G,*}y_{!} o^{*}F\simeq        Y^{G,*}y_{!}y^{*}F''\to Y^{G,*}F''\stackrel{!}{\to} F$$
which presents $ F^{\%}$ as the final equivariant homology theory over $F$.
In order to obtain the transformation marked by $!$ we argue as follows.
 The cofibrant approximation transformation induces a morphism
$c^{*}i^{*}F\to F$. Since $Y^{G}$ sends weak equivalences to equivalences, applied to the cofibrant approximation transformation  it induces 
an equivalence $Y^{G}\circ i\circ c\to Y^{G}$.
We now  start from the equivalence  $i^{*} Y^{G,*} F''\simeq i^{*}F$ which holds by definition of $F''$. 
We apply $c^{*}$ and get the desired map
$$Y^{G,*}F''\stackrel{\simeq}{\leftarrow} c^{*}i^{*} Y^{G} F''\simeq c^{*} i^{*}F\to F\ .$$

More generally, for a family of subgroups we have an assembly map
\begin{equation}\label{erthertertgdfhgf}\asmbl^{F}_{\cF}:F^{\%,\cF}\to F^{\%}\to F\ .
\end{equation}

   Let $F:G\Top\to \cC$ be some homotopy invariant functor functor.  
   The following can be shown by induction over the number of cells.
   
\begin{kor} If $F$ is  excisive for open (or closed) decompositions and $X$ is homotopy equivalent to a finite $G$-$CW$-complex with stabilizers in $\cF$, then
$$\asmbl^{F}_{\cF,X}: F^{\%,\cF}(X)\to F(X)$$ is an equivalence.
  \end{kor}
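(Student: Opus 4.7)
The plan is to reduce to the case of a finite $G$-$CW$-complex via homotopy invariance and then induct on the total number of equivariant cells, exploiting that the source and target of $\asmbl^F_\cF$ share the same homotopy-invariance and excisivity properties by \cref{kopwrgergwergfsdfg} and \cref{kopwrgergwergfsdfg1}. Homotopy invariance of both $F$ and $F^{\%,\cF}$ allows replacing $X$ by a finite $G$-$CW$-complex with stabilizers in $\cF$. The base case $X=\emptyset$ is immediate from excisivity applied to the decomposition $\emptyset=\emptyset\sqcup\emptyset$, which forces $F(\emptyset)\simeq 0\simeq F^{\%,\cF}(\emptyset)$. The essential special case is $X=G/H$ with $H\in\cF$: here the assembly map is an equivalence because $G/H$ lies in the essential image of $o_\cF:G_\cF\Orb\to G\Top$, so the counit of the left Kan extension $y_!\, o_\cF^* F\to F''$ is an equivalence when evaluated at the Yoneda image of $G/H$, by the standard fact that left Kan extensions along full embeddings recover the original functor on their image.

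For the inductive step, write $X_n=X_{n-1}\cup_{G/H\times S^{n-1}}(G/H\times D^n)$ with $H\in\cF$. Choose an open invariant cover $X_n=U\cup V$ in which $U$ deformation retracts $G$-equivariantly onto $X_{n-1}$, the set $V$ deformation retracts onto $G/H\times\{0\}\cong G/H$, and $U\cap V$ deformation retracts onto a copy of $G/H\times S^{n-1}$. Open excisivity, together with homotopy invariance, produces a pushout square in $\cC$ with corners $F(G/H\times S^{n-1})$, $F(X_{n-1})$, $F(G/H)$, $F(X_n)$, and analogously for $F^{\%,\cF}$; naturality of $\asmbl^F_\cF$ identifies a morphism between the two squares. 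By the inductive hypothesis the assembly map is an equivalence at $X_{n-1}$, at $G/H\times S^{n-1}$ (which is a $G$-$CW$-complex with strictly fewer cells, all of stabilizer $H\in\cF$), and at $G/H$ by the orbit case. Stability of $\cC$ then forces $\asmbl^F_{\cF,X_n}$ to be an equivalence, closing the induction.

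The main technical obstacle is converting the closed decomposition $X_n=X_{n-1}\cup(G/H\times D^n)$ into an open cover so as to apply ``excisive for open decompositions''. This is precisely the role of the ``(or closed)'' clause in the hypothesis: if $F$ is closed-excisive then the attaching pushout is directly an $F$-pushout, while if only open excisivity is available one thickens $X_{n-1}$ by an equivariant collar neighbourhood inside the attached cell and uses the deformation retractions above. The compatibility with the $G$-action is immediate because cells have the equivariant product form $G/H\times D^n$. Beyond this bookkeeping, the induction is a purely formal consequence of the shared excisivity and homotopy invariance of $F$ and $F^{\%,\cF}$.
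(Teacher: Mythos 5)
Your proof is exactly the route the paper intends (the paper records only ``induction over the number of cells''), and its two main ingredients are sound: the orbit case $G/H$ with $H\in\cF$, which works because $Y^{G}(G/H)$ is the representable presheaf and left Kan extension along the fully faithful inclusion $G_{\cF}\Orb\to G\Orb$ restricts back to the original functor, so the assembly map at $G/H$ is the canonical equivalence; and the inductive step, where the comparison of Mayer--Vietoris pushout squares for $F^{\%,\cF}$ (\cref{kopwrgergwergfsdfg1}) and for $F$, together with homotopy invariance to identify the corners and stability of $\cC$, upgrades three equivalences to the fourth.

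One micro-step is wrong as written: excision applied to $\emptyset=\emptyset\sqcup\emptyset$ does \emph{not} force $F(\emptyset)\simeq 0$, since the square all of whose corners are $F(\emptyset)$ and all of whose maps are identities is automatically a pushout, hence imposes no condition. The vanishing $F(\emptyset)\simeq 0$ is nevertheless genuinely needed in your induction: whenever a cell is attached along the empty space (i.e.\ for disjoint unions) the $\emptyset$-corner of your map of pushout squares must be an equivalence against $F^{\%,\cF}(\emptyset)\simeq 0$, and a constant nonzero homotopy invariant functor $F$ sends all such identity squares to pushouts while the conclusion fails already for $G/H\sqcup G/H$. So reducedness cannot be derived from the square axiom; it has to be read as part of the hypothesis ``excisive for open (or closed) decompositions'' (the empty decomposition of $\emptyset$, in the spirit of the strong excision axiom of \cref{ehpzhrtgtrg}), matching the fact that $F^{\%,\cF}$ itself satisfies $F^{\%,\cF}(\emptyset)\simeq 0$ because it sends arbitrary disjoint unions to sums (\cref{kopwrgergwergfsdfg1}). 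With that reading, your induction closes as stated.
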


 We now consider the category $G\TB$ described in \cref{kopwhwthrh}.   We have a right Bousfield localization
 \begin{equation}\label{wergewrfdgswre} i:G\Top\leftrightarrows G\BT:u\ ,
\end{equation} where $u$ forgets the bornology and 
 $i$ equips a  topological space with the maximal bornology.

 Assume that $F:G\TB\to \cC$ is a homotopy invariant functor to a cocomplete stable $\infty$-category.
We then define  
 the functor
 $$F^{\%,\cF}:=i_{!}(i^{*}F)^{\%,\cF}:G\TB\to \cC\ .$$
 If $F$ is homotopy invariant, then so is $i^{*}F$ and 
  the assembly map \eqref{erthertertgdfhgf} induces a natural transformation
 \begin{equation}\label{jrtzthrtzkpotrzhtzh}
\asmbl_{\cF}^{F}:F^{\%,\cF}\stackrel{def}{=}i_{!}(i^{*}F)^{\%,\cF}\stackrel{i_{!}\asmbl_{\cF}^{i^{*}F}}{\to} i_{!}i^{*}F\stackrel{counit}{\to} F\ .
\end{equation} 

\begin{lem}\label{kophertgergertgert} The functor 
$F^{\%,\cF}:G\BT\to \cC$ is  homotopy invariant and open   excisive.
\end{lem}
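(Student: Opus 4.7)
The plan is to identify the left Kan extension $i_{!}$ with precomposition by $u$, so that the lemma reduces directly to \cref{kopwrgergwergfsdfg1}. The first step is to verify that $i \dashv u$ with $i$ fully faithful. Indeed, any morphism $iY \to X$ in $G\BT$ is automatically proper in the bornological sense because every subset of $iY$ is bounded in the maximal bornology; therefore $G\BT(iY, X) \cong G\Top(Y, uX)$ naturally in both variables, and $u \circ i \simeq \id_{G\Top}$.

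The second step is to compute $i_! H$ for a general functor $H:G\Top \to \cC$. By the pointwise formula for left Kan extension together with the identification of the comma category $(i \downarrow X)$ with $G\Top_{/uX}$ afforded by the adjunction above, one has
\begin{equation*}
(i_{!} H)(X) \simeq \colim_{(Y,\, Y \to uX)\, \in\, G\Top_{/uX}} H(Y) \simeq H(uX),
\end{equation*}
since the comma category has the terminal object $\id_{uX}$. Hence $i_{!} H \simeq H \circ u$, and in particular
\begin{equation*}
F^{\%,\cF} \simeq (i^{*}F)^{\%,\cF} \circ u : G\BT \to \cC.
\end{equation*}

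The remainder is then immediate. By \cref{kopwrgergwergfsdfg1}, the functor $(i^{*}F)^{\%,\cF}$ is homotopy invariant and excisive for invariant open decompositions on $G\Top$. The forgetful functor $u$ leaves the underlying topology and $G$-action unchanged, so it carries cylinders to cylinders and invariant open covers of $X \in G\BT$ to invariant open covers of $uX \in G\Top$. Both properties therefore descend from $(i^{*}F)^{\%,\cF}$ to the composition $(i^{*}F)^{\%,\cF} \circ u \simeq F^{\%,\cF}$. I do not expect a substantive obstacle; the only conceptual point requiring care is that $i$ sits as the \emph{left} adjoint in the adjunction \eqref{wergewrfdgswre} despite being the fully faithful embedding, which is forced by the choice of the \emph{maximal} (rather than minimal) bornology.
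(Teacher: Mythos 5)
Your proposal is correct and follows essentially the same route as the paper: the paper's proof also identifies $i_{!}E\simeq E\circ u$ using the adjunction \eqref{wergewrfdgswre}, observes that precomposition with $u$ preserves homotopy invariance and open excisiveness, and then cites \cref{kopwrgergwergfsdfg1}. Your extra details (verifying $i\dashv u$ via the maximal bornology making properness automatic, and the comma-category computation of the Kan extension) are just an expanded version of the same argument.
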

\begin{proof}
In view of \eqref{wergewrfdgswre},   for $E:G\Top\to \cC $ we have $i_{!}E\simeq E\circ u$.
If $E$ is homotopy invariant or open excisive, then so is $i_{!}E$.   
The assertion now follows from \cref{kopwrgergwergfsdfg1}.\end{proof}

Let $E:G\TB\to \cC$ be a functor to a complete stable $\infty$-category.
Recall that a subset $W$ of $X$ is $G$-bounded if there exists a bounded subset $B$ of $X$ such that $W\subseteq GB$.

\begin{ddd}\label{kopgwegewrfwerf}
$E$ is locally finite if for every $X$ in $G\TB$  
\begin{equation}\label{sdfvsdfvsfdvsvsdfvwer}  \lim_{W}  E(X\setminus W) \simeq 0 \ ,
\end{equation}
where $W$ runs over the $G$-invariant $G$-bounded subsets of $X$.
\end{ddd}

\begin{construction}\label{ohiperzthtrgetrger}{\em 
We explain the construction of forcing local finiteness, see \cite[Def. 7.15]{buen} for the non-equivariant case.
Let $G\BT^{\cW}$ be the category of pairs $(X,W)$ of $X$ in $G\TB$ and a $G$-invariant  $G$-bounded  subset $W$.
 Morphisms $(X,W)\to (X',W')$ are morphisms $f:X\to X'$  in $G\BT$ such that $f(W)\subseteq W'$.
Let $q:G\TB^{\cW} \to G\TB$ be the forgetful functor sending $(X,W)$ to $X$. 
For every functor $E:G \TB\to \cC$ to a complete stable target we can associate the functor
\begin{equation}\label{gerwferfwerferfweffg}\tilde E:G\TB^{\cW}\to \cC\ , \quad \tilde E(X,W):=\Cofib(E(X\setminus W)\to E(X))\ .
\end{equation} 
It comes with a natural transformation $q^{*}E\to \tilde E$.
We define the locally finite version of $E$ by $$E^{\lf}:=q_{*}q^{*}E:G\TB\to \cC$$ and have a natural transformation 
\begin{equation}\label{ergwerfdsgwer}E\stackrel{unit}{\to}q_{*}q^{*}E\to q_{*}\tilde E=E^{\lf}\ .
\end{equation}
Note that \begin{equation}\label{goijwioejorigwerfwerf}E^{\lf}(X)\simeq \lim_{W} \Cofib(E(X\setminus W)\to E(X))\ ,
\end{equation}
where $W$ runs over the $G$-bounded invariant subsets of $X$.}
 \hB
\end{construction}

Let $E:G\TB\to \cC$ be a functor to a complete stable target.
\begin{lem}\label{lhperthertgerg}
The functor $E^{\lf}:G\BT\to \cC$ is locally finite and $E\to E^{\lf}$  is the initial functor to a locally finite functor.
\end{lem}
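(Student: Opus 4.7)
The plan is to deduce both claims from the cofibre sequences obtained by applying the octahedral axiom to composable pairs of inclusions, combined with a straightforward cofinality argument.

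For the local finiteness of $E^{\lf}$, I fix $X$ in $G\TB$ and let $W_{0}$ be a $G$-invariant $G$-bounded subset of $X$. Reindexing the $G$-bounded invariants $W'$ of $X\setminus W_{0}$ by $W:=W'\cup W_{0}\supseteq W_{0}$ in \eqref{goijwioejorigwerfwerf} gives
\[
E^{\lf}(X\setminus W_{0})\simeq \lim_{W\supseteq W_{0}}\Cofib(E(X\setminus W)\to E(X\setminus W_{0}))\ .
\]
For each pair $W_{0}\subseteq W$, the composable chain $E(X\setminus W)\to E(X\setminus W_{0})\to E(X)$ gives a fibre sequence of cofibres
\[
\Cofib(E(X\setminus W)\to E(X\setminus W_{0}))\to \tilde E(X,W)\to \tilde E(X,W_{0})\ .
\]
Since every $W'\in \cW_{X}$ is dominated by $W'\cup W_{0}$, the restricted limit $\lim_{W\supseteq W_{0}}\tilde E(X,W)$ computes $E^{\lf}(X)$; taking $\lim_{W\supseteq W_{0}}$ of the above fibre sequence therefore yields
\[
E^{\lf}(X\setminus W_{0})\to E^{\lf}(X)\to \tilde E(X,W_{0})\ .
\]
Now varying $W_{0}$ and taking $\lim_{W_{0}}$: the middle term is constant in $W_{0}$, the right-hand side limits by definition to $E^{\lf}(X)$, and the composition $E^{\lf}(X)\to E^{\lf}(X)$ is the identity by the universal property of the defining limit. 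Hence $\lim_{W_{0}}E^{\lf}(X\setminus W_{0})\simeq 0$, establishing local finiteness.

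For the universal property, I first show that when $F$ is locally finite, the unit $F\to F^{\lf}$ from \eqref{ergwerfdsgwer} is an equivalence: applying $\lim_{W}$ to the fibre sequence $F(X\setminus W)\to F(X)\to \tilde F(X,W)$ with constant middle term produces a fibre sequence whose left term vanishes by local finiteness, giving $F(X)\simeq F^{\lf}(X)$. Given a transformation $\phi:E\to F$ with $F$ locally finite, functoriality of $(-)^{\lf}$ then furnishes the factorisation $E\to E^{\lf}\xrightarrow{\phi^{\lf}} F^{\lf}\simeq F$. To upgrade this to the $\infty$-categorical initiality statement I would invoke the standard recognition principle for reflective localisations: part~(a) ensures that $F^{\lf}$ is locally finite for every $F$, and the equivalence just shown verifies that the unit $\id\to (-)^{\lf}$ is an equivalence on the full subcategory of locally finite functors. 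Together these exhibit $(-)^{\lf}$ as a left adjoint to the full inclusion of locally finite functors, which is precisely the stated universal property.

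The main technical obstacle is the verification in the final step of part~(a) that the composition $E^{\lf}(X)\to \lim_{W_{0}}\tilde E(X,W_{0})\simeq E^{\lf}(X)$ is the identity. This requires checking that the projection appearing in the fibre sequence for each $W_{0}$ coincides with the canonical projection from $E^{\lf}(X)=\lim_{W_{0}}\tilde E(X,W_{0})$ to its $W_{0}$-th factor, after which the uniqueness clause of the limit's universal property closes the argument. A minor secondary point is the cofinality claim, which follows from the closedness of $\cW_{X}$ under finite unions.
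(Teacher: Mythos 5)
The paper itself gives no proof of this lemma (it is stated after \cref{ohiperzthrtgetrger}\,---\,more precisely after the construction of $E^{\lf}$\,---\,and implicitly defers to the non-equivariant treatment in the cited book of Bunke--Engel), so your proposal has to be judged on its own. Your argument for the first claim is the expected one and is correct: the reindexing of the $G$-bounded invariant subsets of $X\setminus W_{0}$ by the $W\supseteq W_{0}$ does work (invariance of $W_{0}$ is what guarantees that $G$-boundedness relative to $X\setminus W_{0}$ and relative to $X$ agree), the octahedral cofibre sequence and the cofinality of $\{W\mid W\supseteq W_{0}\}$ give the fibre sequence $E^{\lf}(X\setminus W_{0})\to E^{\lf}(X)\to \tilde E(X,W_{0})$, and passing to $\lim_{W_{0}}$ kills the fibre since the second maps are the limit projections; the coherence points you flag (naturality of these fibre sequences in $W$ and $W_{0}$) are genuine but routine, at the level of detail the paper and its reference operate at.

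The universal property is where there is a real gap. Showing that the unit \eqref{ergwerfdsgwer} is an equivalence on locally finite functors is correct and yields the existence of a factorization $E\to E^{\lf}\to F$. But the principle you then invoke is not the standard recognition criterion and is false as stated: knowing (a) that $F^{\lf}$ is always locally finite and (b) that the unit is an equivalence on locally finite functors does not exhibit $(-)^{\lf}$ as left adjoint to the inclusion. The actual criterion (Lurie, HTT 5.2.7.4) requires that \emph{both} $\eta_{E^{\lf}}$ \emph{and} $(\eta_{E})^{\lf}$ be equivalences; (a)+(b) give the first but not the second, and without the second the factorization need not be unique, i.e.\ initiality can fail. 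A one-categorical counterexample to your principle: on abelian groups let $L(A)=(A\otimes\Q)\oplus(\mathrm{Hom}(\Q/\Z,A)\otimes\Q)$ with unit $a\mapsto (a\otimes 1,0)$ and ``local'' objects the $\Q$-vector spaces; (a) and (b) hold, but for $A=\Q/\Z$ the unit is the zero map into a non-zero $\Q$-vector space, so precomposition with the unit is not injective on maps into local objects. Hence you must additionally verify that $(\eta_{E})^{\lf}\colon E^{\lf}\to (E^{\lf})^{\lf}$ is an equivalence (or prove the mapping-space statement directly). This is repairable with material you already have: your fibre sequence from part (a) identifies $\Cofib\bigl(E^{\lf}(X\setminus W)\to E^{\lf}(X)\bigr)\simeq \tilde E(X,W)$ compatibly with the unit, since the composite $E(X)\to E^{\lf}(X)\to \tilde E(X,W)$ is the canonical cofibre map; taking $\lim_{W}$ then shows $(E^{\lf})^{\lf}\simeq E^{\lf}$ via $(\eta_{E})^{\lf}$, after which the localization criterion applies and initiality follows. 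As written, though, the final step does not close.
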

The following proposition is the straightforward generalization of  \cite[Lem. 7.35 \& 7.36]{buen} to the equivariant case.
\begin{prop}\label{kopgpwerrefweferfw}
If $E:G\BT\to \cC$ is homotopy  invariant or excisive for open (closed) decompositions, then
$E^{\lf}:G\BT\to \cC$ is also homotopy  invariant or excisive for open (closed) decompositions, respectively.
\end{prop}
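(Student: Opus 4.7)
The plan is to verify each of the three preservation statements separately, all following from the construction \eqref{goijwioejorigwerfwerf} together with the fact that in a stable $\infty$-category cofiber sequences coincide with fiber sequences, so the construction $E^{\lf}$, being a limit of cofibers, commutes with all finite limits in the target.

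For homotopy invariance, let $p:[0,1]\times X\to X$ denote the projection. The key observation is that the $G$-bounded invariant subsets of $[0,1]\times X$ of the form $[0,1]\times V$ for $V$ a $G$-bounded invariant subset of $X$ are cofinal: given any $G$-bounded invariant $W\subseteq [0,1]\times X$, its projection $\pr_{X}(W)$ is $G$-bounded invariant in $X$ and $W\subseteq [0,1]\times \pr_{X}(W)$. Using this cofinality together with the fact that homotopy invariance of $E$ implies $E([0,1]\times(X\setminus V))\xrightarrow{\simeq} E(X\setminus V)$ and $E([0,1]\times X)\xrightarrow{\simeq} E(X)$, one obtains
\begin{equation*}
E^{\lf}([0,1]\times X)\simeq \lim_{V}\Cofib(E([0,1]\times(X\setminus V))\to E([0,1]\times X))\simeq \lim_V \Cofib(E(X\setminus V)\to E(X))\simeq E^{\lf}(X),
\end{equation*}
naturally in $p$.

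For excision with respect to open decompositions, suppose $X=U\cup V$ with $U,V$ open invariant subsets. Since $U$ and $V$ are open and $G$-invariant in $X$, for any $G$-bounded invariant $W\subseteq X$ the sets $W\cap U$, $W\cap V$ and $W\cap U\cap V$ are $G$-bounded invariant in the respective subspaces, and $W\mapsto W$ exhibits a cofinal system inducing compatible cofinal systems in $U$, $V$ and $U\cap V$. Open excision applied to $E$ on both the decomposition $X=U\cup V$ and on $X\setminus W=(U\setminus W)\cup(V\setminus W)$ yields bi-Cartesian squares, and taking cofibers of the evident map between them produces a bi-Cartesian square whose vertices are $\tilde E(U\cap V,W\cap U\cap V)$, $\tilde E(U,W\cap U)$, $\tilde E(V,W\cap V)$ and $\tilde E(X,W)$. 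Since bi-Cartesian squares in a stable $\infty$-category are equivalently Cartesian squares, and Cartesian squares are closed under limits, passing to $\lim_W$ yields the desired Mayer--Vietoris square for $E^{\lf}$.

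The closed decomposition case is entirely parallel: for a closed invariant decomposition $X=A\cup B$ with $A,B$ closed invariant, the relevant restrictions of a $G$-bounded invariant $W$ are again $G$-bounded invariant in the subspaces, and the same cofiber-of-biCartesian-squares argument applies. The main potential obstacle is the cofinality/compatibility issue for $G$-bounded invariant subsets under restriction to subspaces; this requires the (standard) compatibility of bornologies under the inclusions of open and closed invariant subspaces, namely that bounded subsets of $U$ (resp.\ $A$) with its induced bornology are bounded in $X$. Once this is in hand, the rest is a formal manipulation of limits of cofiber sequences in a stable $\infty$-category, directly generalizing the non-equivariant \cite[Lem.\ 7.35 \& 7.36]{buen}.
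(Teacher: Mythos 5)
Your proof is correct and is exactly the ``straightforward generalization'' the paper has in mind: the paper gives no argument of its own here beyond citing the non-equivariant \cite[Lem.~7.35 \& 7.36]{buen}, and your argument (cofinality of the subsets $[0,1]\times V$, resp.\ compatibility of $G$-bounded invariant subsets under restriction to the pieces of the decomposition, followed by passing to limits of (bi-)Cartesian squares of cofibres in a stable $\infty$-category) is the standard proof of those lemmas transported verbatim to the equivariant setting. No gaps to report.
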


\cref{kophertgergertgert} and \cref{kopgpwerrefweferfw} together  imply:

\begin{kor}\label{plhertgertgertetrrgertgt}
The functor $E^{\%,\cF,\lf}:G\BT\to \cC$ is  homotopy  invariant and excisive for open  decompositions.
\end{kor}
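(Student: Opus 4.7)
The plan is to derive this as a two-step consequence of the two results cited immediately before the statement. First I would apply \cref{kophertgergertgert} to the functor $E:G\TB\to\cC$ in order to conclude that the intermediate functor $E^{\%,\cF}:G\TB\to \cC$ is already homotopy invariant and excisive for open decompositions. The construction $F\mapsto F^{\%,\cF}$ is defined via \eqref{jrtzthrtzkpotrzhtzh} as $i_{!}(i^{*}F)^{\%,\cF}$, and the cited lemma establishes exactly the two properties we need for this intermediate functor without any additional assumptions on $F$.

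Next I would apply \cref{kopgpwerrefweferfw} with the functor $E^{\%,\cF}$ in place of $E$. Since we have already verified homotopy invariance and open excision of $E^{\%,\cF}$ in the first step, that proposition yields that $(E^{\%,\cF})^{\lf}$ inherits both properties. Unfolding the definition $E^{\%,\cF,\lf}:=(E^{\%,\cF})^{\lf}$, which is the locally finite version produced by \cref{ohiperzthtrgetrger} applied to $E^{\%,\cF}$, gives the claim.

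The one matter to double-check is that the notation $E^{\%,\cF,\lf}$ indeed parses as $(E^{\%,\cF})^{\lf}$ rather than in some other order, but this is forced by the constructions: forming $(-)^{\%,\cF}$ requires only homotopy invariance (via \eqref{wergewrfdgswre} and the discussion around \eqref{jrtzthrtzkpotrzhtzh}), while $(-)^{\lf}$ is defined for any functor to a complete stable target and, by \cref{lhperthertgerg}, is the universal such functor with local finiteness. I do not foresee any obstacle here; the statement is essentially the composition of the two preceding results, and no further verification (such as checking compatibility with the family $\cF$) is required since both $(-)^{\%,\cF}$ and $(-)^{\lf}$ preserve the relevant homotopical properties individually.
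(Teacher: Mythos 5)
Your proposal is correct and is exactly the paper's argument: the corollary is stated as the immediate consequence of \cref{kophertgergertgert} (homotopy invariance and open excision of $E^{\%,\cF}$) followed by \cref{kopgpwerrefweferfw} (these properties pass to the locally finite version), with $E^{\%,\cF,\lf}=(E^{\%,\cF})^{\lf}$ as you parse it. No further verification is needed beyond the standing assumption that $\cC$ is complete and cocomplete so that both constructions are defined.
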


If $E:G\BT\to \cC$ is homotopy invariant and locally finite, then we call 
\begin{equation}\label{gerwfwerfwerfwerfw}\asmbl_{\cF}^{E,\lf}:E^{\%,\lf}\xrightarrow{(\asmbl^{E}_{\cF})^{\lf}, \eqref{jrtzthrtzkpotrzhtzh}} E^{\lf}\stackrel{\simeq, \eqref{ergwerfdsgwer}}{\leftarrow} E
\end{equation}
the locally finite assembly map for   $E$ and the family $\cF$.

\begin{kor} If $E:G\BT\to \cC $ is  homotopy invariant, excisive for open (or closed) decompositions and locally finite, and
  $X$ is a locally  finite $G$-$CW$-complex with stabilizers in $\cF$, then the locally finite assembly map 
 $$\asmbl^{E,\lf}_{\cF,X}:E^{\%,\lf}(X)\to E(X)$$ is an equivalence.
 \end{kor}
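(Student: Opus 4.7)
The plan is as follows. Since $E$ is locally finite by hypothesis, the natural transformation $E \to E^{\lf}$ of \eqref{ergwerfdsgwer} is an equivalence by the universal property of \cref{lhperthertgerg}; consequently, by the definition \eqref{gerwfwerfwerfwerfw}, the assembly map $\asmbl^{E,\lf}_{\cF,X}$ is an equivalence precisely when the locally finite form of the ordinary assembly, $(\asmbl^{E}_{\cF})^{\lf}_X \colon E^{\%,\cF,\lf}(X) \to E^{\lf}(X)$, is one. By the limit formula \eqref{goijwioejorigwerfwerf}, both sides are presented as limits over the poset $\cW_X$ of $G$-invariant $G$-bounded subsets $W \subseteq X$ of the relative cofibers $\Cofib(E(X \setminus W) \to E(X))$ and $\Cofib(E^{\%,\cF}(X \setminus W) \to E^{\%,\cF}(X))$, and the transformation $\asmbl^{E}_{\cF}$ induces the comparison map of these diagrams term by term.

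The key idea is to exploit local finiteness of $X$ to replace this indexing by a countable exhaustion and reduce, at each stage, to the preceding corollary. First fix an increasing exhaustion $Y_0 \subseteq Y_1 \subseteq \cdots$ of $X$ by $G$-finite invariant subcomplexes with $X = \bigcup_n Y_n$. Because $X$ is locally finite, every $G$-bounded invariant subset of $X$ lies in some $Y_n$, so the sequence $(Y_n)_n$ is a cofinal subsystem of $\cW_X$, and the limits above reduce to limits over $n$. For each $n$ I choose an open $G$-invariant $G$-bounded neighborhood $N_n$ of $Y_n$ in $X$ which equivariantly deformation retracts onto $Y_n$; local finiteness of $X$ ensures that $N_n$ meets only finitely many orbits of cells and is therefore homotopy equivalent to the $G$-finite CW-complex $Y_n$. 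Applying MV excision for the open cover $X = N_n \cup (X \setminus Y_n)$ (with intersection $N_n \setminus Y_n$) to both $E$ and $E^{\%,\cF}$ yields natural equivalences
\[
\Cofib(E(X \setminus Y_n) \to E(X)) \simeq \Cofib(E(N_n \setminus Y_n) \to E(N_n)),
\]
and analogously for $E^{\%,\cF}$, intertwined by $\asmbl^{E}_{\cF}$.

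Now $N_n$ and $N_n \setminus Y_n$ are both homotopy equivalent to finite $G$-CW-complexes with stabilizers in $\cF$ (their cells are cells of $X$), so the preceding corollary applies and shows that $\asmbl^{E}_{\cF}$ is an equivalence at both $N_n$ and $N_n \setminus Y_n$. It therefore induces an equivalence on the relative cofibers for every $n$, and passing to the limit over $n$ completes the proof. The hardest part I anticipate is arranging the neighborhoods $N_n$ coherently, so that the MV excision equivalences assemble into a genuine map of countably indexed diagrams that is naturally compatible with the assembly transformation; this should be achievable via a coherent cellular thickening of the exhaustion, but an alternative is to argue directly with cofiber sequences of subcomplex pairs $(X, X \setminus Y_n)$ and bypass the explicit neighborhood construction.
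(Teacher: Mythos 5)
The paper states this corollary without proof, so there is no written argument to compare against; your strategy---reduce via \eqref{gerwfwerfwerfwerfw} and the limit formula \eqref{goijwioejorigwerfwerf} to the relative cofibres indexed by bounded invariant subsets, pass to an exhaustion by $G$-finite subcomplexes, excise down to a small invariant neighbourhood, and quote the preceding corollary for finite complexes---is the natural one, and the formal reductions are correct: $E\to E^{\lf}$ from \eqref{ergwerfdsgwer} is an equivalence because $E$ is locally finite, and both sides of $(\asmbl^{E}_{\cF})^{\lf}$ are limits of relative cofibres compared termwise. Note also that your anticipated ``hardest part'' is a non-issue: the map of $\cW_{X}$-indexed diagrams is already given by applying the natural transformation $\asmbl^{E}_{\cF}$ objectwise, as in \cref{ohiperzthtrgetrger}; the Mayer--Vietoris identifications are only used to check that this fixed map is an equivalence at each index separately, so nothing has to be arranged coherently in $n$.

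Two points do need repair. First, a locally finite $G$-CW-complex need not admit a countable exhaustion by $G$-finite subcomplexes (an uncountable discrete $G$-set already fails this); index instead over the filtered poset of all $G$-finite invariant subcomplexes, and observe that its cofinality in $\cW_{X}$ uses that $X$ carries the bornology of relatively compact subsets as in \eqref{rfwerferf24}---a hypothesis you should make explicit, since $G$-boundedness is a bornological condition. Second, and more substantially, the assertion that $N_{n}$ and $N_{n}\setminus Y_{n}$ are homotopy equivalent to finite $G$-CW-complexes with stabilizers in $\cF$ ``since their cells are cells of $X$'' is not an argument: neither subset is a subcomplex, and $N_{n}\setminus Y_{n}$ contains no complete cell of $X$ at all. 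For locally finite $G$-simplicial complexes one can take $N_{n}$ to be an open star neighbourhood and retract $N_{n}\setminus Y_{n}$ onto a $G$-finite link, but in the $G$-CW setting the existence of an equivariant open neighbourhood with both properties (including the stabilizer condition) requires justification, e.g.\ by first replacing $X$ by a locally finite $G$-simplicial complex through a proper equivariant homotopy equivalence, or by noting that finite domination of $N_{n}\setminus Y_{n}$ already suffices because equivalences are closed under retracts. Finally, the paper only records open excision for $E^{\%,\cF,\lf}$ (\cref{plhertgertgertetrrgertgt}); if $E$ is merely closed excisive, your open Mayer--Vietoris step needs a corresponding adjustment.
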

 
 Recall the functor $\iota^{\topp}:G\BT\to G\LCH^{+}$ from   \eqref{gewrfwerfrwefwe}.
Let $E:G\LCH^{+}\to \cC$ be a functor and recall \cref{kohperthrgrtgertgeg}.
\begin{lem}\label{8kothpetrhtrheth9}
If $E:G\LCH^{+}\to \cC$ is an equivariant weak Borel-Moore   homology theory, then
$\iota^{\topp,*}E:G\BT\to \cC$ is homotopy invariant and locally finite.
\end{lem}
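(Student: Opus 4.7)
The plan is to transfer both properties of $E$ across the functor $\iota^{\topp}$ by exhibiting the appropriate compatibilities between the bornological data used on $G\BT$ and the topological data used on $G\LCH^{+}$.

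For homotopy invariance I would first note that $\iota^{\topp}$ is compatible with the cylinder construction: for $X$ in $G\BT$ one has $\iota^{\topp}([0,1]\otimes X) \cong [0,1]\times \iota^{\topp}(X)$ in $G\LCH^{+}$, as can be read off from the $C^{*}$-algebra description $C_{0}(\iota^{\topp}X) = C(X)\cap \ell^{\infty}_{\cB}(X)$ listed after \eqref{gewrfwerfrwefwe}. The projection $[0,1]\otimes X\to X$ in $G\BT$ is carried to the projection $[0,1]\times \iota^{\topp}(X)\to \iota^{\topp}(X)$ in $G\LCH^{+}$, and the homotopy invariance of $E$ granted by \cref{kohperthrgrtgertgeg} then yields the homotopy invariance of $\iota^{\topp,*}E$.

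For local finiteness, fix $X$ in $G\BT$. The condition in \cref{kopgwegewrfwerf} asks for the vanishing of $\lim_{W} E(\iota^{\topp}(X\setminus W))$ as $W$ runs over the $G$-invariant $G$-bounded subsets of $X$, while the hypothesis that $E$ is a weak equivariant Borel–Moore homology theory provides via \cref{jkopgwegergferfwr} the analogous vanishing of $\lim_{W'\in \cW_{\iota^{\topp}(X)}} E(\iota^{\topp}(X)\setminus W')$ over relatively $G$-compact open invariant subsets. I would reduce the first limit to the second by exhibiting mutually cofinal subfamilies of open invariant sets together with an identification of the form $\iota^{\topp}(X\setminus W) \cong \iota^{\topp}(X)\setminus W''$ for a suitable $W''\in \cW_{\iota^{\topp}(X)}$. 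In one direction, compatibility of bornology and topology on $X$ forces bounded subsets to have relatively compact image in $\iota^{\topp}(X)$, so each $G$-bounded invariant $W$ sits inside an element of $\cW_{\iota^{\topp}(X)}$; in the other, any $W'\in \cW_{\iota^{\topp}(X)}$ satisfies $W'\subseteq GK$ for some compact $K$, and $K$ is bounded in $X$, which places $W'$ inside a $G$-bounded invariant subset.

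The main obstacle will be making the passage between subsets of $X$ and subsets of $\iota^{\topp}(X)$ precise, since the underlying sets need not coincide when the bornology on $X$ is strictly finer than that of relatively compact subsets. I expect that restricting to \emph{open} bounded $W$ (which is cofinal in the $G$-bounded invariant subsets, by topology–bornology compatibility) together with the Gelfand-dual identification of $\iota^{\topp}$ suffices to produce the required $W''$ and check that the two cofiltered systems compute the same limit. Once this bookkeeping is done, the hypothesis on $E$ delivers the vanishing \eqref{sdfvsdfvsfdvsvsdfvwer} and hence local finiteness of $\iota^{\topp,*}E$.
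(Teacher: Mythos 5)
Your treatment of homotopy invariance is exactly the paper's: $\iota^{\topp}([0,1]\otimes X)\cong[0,1]\times\iota^{\topp}(X)$ (using compactness of $[0,1]$) and then homotopy invariance of $E$. The local finiteness half, however, has a genuine gap. Your reduction rests on the identification $\iota^{\topp}(X\setminus W)\cong\iota^{\topp}(X)\setminus W''$ with $W''\in\cW_{\iota^{\topp}(X)}$, together with a dictionary ``$G$-bounded in $X$ $\leftrightarrow$ relatively $G$-compact in $\iota^{\topp}(X)$''. Neither is available for a general object of $G\BT$. The space $\iota^{\topp}(X)$ is only defined through Gelfand duality from $C_{0}(X)=C(X)\cap\ell^{\infty}(\cB_{X})$; its points need not correspond to points of $X$, so ``$K$ is bounded in $X$'' for a compact $K\subseteq\iota^{\topp}(X)$ is not meaningful as stated. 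More seriously, $C_{0}$ of the closed subset $\iota^{\topp}(X)\setminus W''$ is a quotient of $C_{0}(X)$, whereas $C_{0}(X\setminus W)$ only receives the restriction map from $C_{0}(X)$; identifying the two requires a Tietze-type extension of bounded continuous functions vanishing at bornological infinity from $X\setminus W$ to $X$, and no normality or even complete regularity is assumed for objects of $G\BT$. Likewise, ``compatibility forces bounded subsets to have relatively compact image'' is not justified: compatibility gives compact $\Rightarrow$ bounded, and relative compactness of the image in the spectrum amounts to the existence of a function in $C_{0}(X)$ bounded below on the given bounded set, i.e.\ a bump function, which again needs separation properties that are not part of the axioms. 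So the ``bookkeeping'' you defer is precisely where the argument can break.

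The paper's proof is arranged so that none of this is needed: it never transports the system $\{X\setminus W\}$ into $\iota^{\topp}(X)$. Instead it works with the inductive system of algebras $W\mapsto C_{0}(X\setminus W)$ and observes that for each $f\in C_{0}(X\setminus W)$ one has $\lim_{W\subseteq W'}\|f_{|X\setminus W'}\|=0$ (given $\varepsilon$, choose a bounded $B$ outside of which $|f|<\varepsilon$ and enlarge $W$ to the invariant $G$-bounded set $W\cup GB$); hence $\colim_{W}C_{0}(X\setminus W)\cong 0$, equivalently $\lim_{W}\iota^{\topp}(X\setminus W)\cong\emptyset$, and only then is the local finiteness of $E$ in the sense of \cref{jkopgwegergferfwr} invoked to conclude $\lim_{W}E(\iota^{\topp}(X\setminus W))\simeq 0$. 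The comparisons this requires go only in the ``restriction'' direction (maps out of $C_{0}(X\setminus W_{0})$), which exist for free, whereas your route forces the extension direction. If you want to salvage your approach, you would have to add hypotheses (e.g.\ normality of $X$ and a bornology with a basis of open sets admitting Urysohn functions) that the lemma does not assume; as a proof of the stated lemma it does not go through.
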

\begin{proof}
Using the compactness of $[0,1]$ one checks for $X$ in $G\BT$ that
$$\iota^{\topp }([0,1]\otimes X)\cong [0,1]\times \iota^{\topp }(X)\ .$$  
Since $E$ is homotopy invariant, so is $ \iota^{\topp,*}E$.

 In order to show that 
$\iota^{\topp,*}E$ is locally finite we consider $X$ in $G\BT$.
We must show that
$$\lim_{W} E(X\setminus W)\simeq 0\ ,$$
where $W$ runs over the invariant $G$-bounded subsets.
We have the inductive system $W\mapsto C_{0}(X\setminus W)$ of commutative $G$-$C^{*}$-algebras
where for $W\subseteq W'$ the map $C_{0}(X\setminus W)\to C_{0}(X\setminus W')  $ is given by restriction along $X\setminus W'\subseteq X\setminus W$.
 For every
$f$ in $ C_{0}(X\setminus W)$ we have
$$\lim_{W\subseteq W'}\| f_{|X\setminus W'}\|=0\ ,$$
where the limits runs over $W'$.
This implies that $\colim_{W}  C_{0}(X\setminus W)\cong 0$.
By Gelfand duality this is equivalent to $\lim_{W} \iota^{\topp}(X\setminus W)\cong \emptyset$. Since $E$ is locally finite in the sense of \cref{jkopgwegergferfwr}
  we conclude that
$\lim_{W} (\iota^{\topp,*}E)(X\setminus W)\simeq 0$.
\end{proof}

Let $E:G\LCH^{+}\to \cC$ be an equivariant  weak Borel-Moore  homology theory.
By \cref{8kothpetrhtrheth9}  we know that
$\iota^{\topp,*}E:G\BT\to \cC$ is homotopy invariant and locally finite.
We get the locally finite assembly map \eqref{gerwfwerfwerfwerfw}
\begin{equation}\label{fervjoierjveoijmewoiwmo}\asmbl_{\cF}^{\iota^{\topp,*}E,\lf}:( \iota^{\topp,*}E)^{\%,\cF,\lf}\to \iota^{\topp,*}E:G\BT\to \cC \ .
\end{equation}

\subsection{Coarsification}

Coarsification of locally finite homology theories has been introduced by  Roe,  see  \cite[Sec. 5.5]{roe_lectures_coarse_geometry}. In this subsection we explain  variants of this construction 
in our contexts.
For a $G$-bornological coarse space $X$   and an invariant coarse entourage $U$   containing the diagonal we  consider the Rips complex
$P_{U}(X)$. It is defined as the realization of the  simplicial complex
with $G$-action whose $n$-simplices are the  $n+1$-element subsets of $X$ such that each pair of elements  belongs to
$U$. We often  interpret points in $P_{U}(X)$ as finitely supported probability measures on $X$.

We realize $n$-simplices geometrically as the intersection of the standard sphere $S^{n}$
with the positive quadrant of $\R^{n+1}$ in order to obtain the spherical path metric structure on them. We then 
equip $P_{U}(X)$ with the $G$-invariant path  metric which induces the spherical path metric on every  simplex.
This metric 
 induces the uniform and coarse structure on $P_{U}(X)$. We further equip $P_{U}(X)$ with the minimal compatible bornology such that the 
 inclusion of the zero skeleton    $X\to P_{U}(X)$ sending $x$ to the Dirac measure $\delta_{x}$ at $x$ is  bornologcial. In this way we will consider $P_{U}(X)$ as an object of $G\UBC$. The inclusion of $X$ as the zero-skeleton of $P_{U}(X)$ is a 
  weak coarse equivalence $X_{U}\to \iota (P_{U}(X))$, i.e., a coarse equivalence after forgetting the $G$-action,   where $\iota:G\UBC\to G\BC$ is the forgetful functor.
  
  Assume that $f:X\to X'$ is a morphism in $G\BC$ and $U'$ is an invariant coarse entourage of $X'$ containing the diagonal such that $f(U)\subseteq U'$. Then we get an induced map 
      $P(f):P_{U}(X)\to P_{U'}(X')$ induced by the push-forward of probablity measures. It is a morphism of $G$-simplicial complexes and hence contractive with respect to the spherical path metrics. Since $f$ is proper, $P(f)$ is a morphism in 
 $G\UBC$. 
  Assume  that
$F:G\UBC\to \cC$ is a functor to a complete stable target. \begin{ddd}\label{lpohertertgrtget} We  define the coarsification of $F:G\UBC\to \cC$ to be the functor
$$F\bP:G\BC\to \cC\ , \quad X\mapsto \colim_{U\in \cC^{G}_{X}} F(P_{U}(X)) \ .$$
\end{ddd}

\begin{rem}\label{kophertgrtgretgertgertg}
In order to make this definition more precise we proceed similarly as in \cref{kophprthgertrtgegrtg}. 
We have already defined the functors indicated by bold arrows in
$$\xymatrix{G\BC^{\cC}\ar[d]_{(X,U)\mapsto X}\ar[rrr]^{(X,U)\mapsto P_{U}(X)} &&&G\UBC  \ar[r]^-{F}&\cC \\G\BC\ar@{..>}[urrrr]_{F\bP}&& }\ .$$
We then define $F\bP$ by left Kan-extension. 
Coarsification is also functorial in $F$, i.e., we have actually a functor
$$(-)\bP:\Fun(G\UBC,\cC)\to \Fun(G\BC,\cC)\ .$$
 \hB
\end{rem}
Coarsification is a tool to produce coarse homology theories.
Recall that $X$ in $G\UBC$ is called flasque if   it admits a selfmorphism $f:X\to X$
such that:\begin{enumerate}
\item  \label{ojgwergjeorfijwref}$f$ is close and  homotopic to the identity, \item 
non-expanding in the sense that
there exist a cofinal set of entourages  $U$ in $\cC_{X}^{G}$ such that $f(U)\subseteq U$, and \item  \label{ojgwergjeorfijwref1} shifting in the sense that for every bounded subset $B$ of $X$ there exists $n$ in $\nat$ such that $f^{n}(X)\cap B=\emptyset$.\end{enumerate}

  \begin{prop}\label{pkehpetrherththgrtg}
If $F:G\UBC\to \cC$ is homotopy invariant, excisive for decompositions of simplicial complexes into subcomplexes and vanishes on flasques,   then $F\bP:G\BC\to \cC$ is an equivariant coarse homology theory.
\end{prop}
\begin{proof}
The argument is the same as for \cite[Prop. 5.2]{ass}. 
The functor $F\bP$ is $u$-continuous by design.
 Homotopy invariance of $F$ implies coarse invariance of $F\bP$. 
  Excisiveness of $F$ for decompositions of simplicial complexes into subcomplexes implies
  excision  for $F\bP$. Finally,   the property of vanishing on flasques is preserved by coarsification.
\end{proof}

\begin{rem}\label{lophertherthtergetg}
The  \cref{kopgwegewrfwerf}  of local finiteness can be interpreted for functors on $G\UBC$. 
For homotopy invariant functors local finiteness implies vanishing on flasques, see the proof of \cite[Lem. 7.21]{buen}. \hB
\end{rem}

 Recall  the notation $\iota^{\prime}$ from \eqref{gewrfwerfrwefwe}.
\begin{ddd}\label{lergrgrthertgertg}
 We define the coarsification of $E:G\BT\to \cC$ by
\begin{equation}\label{erfwerifjoiwefwerfwerf}E\bP:= (\iota^{\prime,*}E)\bP:G\BC\to \cC\ .
\end{equation}
\end{ddd}

Recall that $X$ in $G\BT$ is flasque if it admits a selfmap
$f:X\to X$ which is homotopic to the identity and shifting in the sense of \cref{ojgwergjeorfijwref1} above.
We  then have the analogue of \cref{pkehpetrherththgrtg}:
\begin{prop}\label{pkehpetrherththgrtg1}
If $E:G\BT\to \cC$ is homotopy invariant, excisive for decompositions of simplicial complexes into subcomplexes and vanishes on flasques,   then $E\bP:G\BC\to \cC$ is an equivariant coarse homology theory.
\end{prop}
 %
%
%
%
%
%

\begin{ex}
Assume that $\cC$ is a complete and cocomplete stable $\infty$-category and consider a functor $E:G_{\cF}\Orb\to \cC$. By \cref{plhertgertgertetrrgertgt}, \cref{lhperthertgerg} and \cref{lophertherthtergetg}  the functor  $E^{\%,\cF,\lf}$  
satisfies the assumptions of \cref{pkehpetrherththgrtg1} and gives rise to an equivariant coarse homology theory
$$E^{\%,\cF,\lf}\bP:G\BC\to \cC\ .$$
\hB
\end{ex}

We consider the composition of functors
$$\iota^{\topp,\prime}:G\UBC \xrightarrow{\iota'}G\BT\xrightarrow{\iota^{\topp}} G\LCH^{+} \ ,$$
see \eqref{gewrfwerfrwefwe} for notation.
 \begin{ddd}\label{lhertghetrgrtgertgt}
We define the coarsification of $E:G\LCH^{+}\to \cC$ by
$$E\bP:= (\iota^{\topp,\prime,*}E)\bP:G\BC\to \cC\ .$$
\end{ddd}
We have \begin{equation}\label{gwergoij0fpwerfwerf}
E\bP(X)\simeq \colim_{U\in \cC^{G}_{X}} E(\iota^{\topp,\prime} P_{U}(X))\ .
\end{equation}

\begin{prop}\label{lpkherferferfergertgertgetrg}
If $E:G\LCH^{+}\to \cC$ is a weak equivariant Borel-Moore homology theory, then $E\bP:G\BC\to \cC$ is a coarse homology theory.
\end{prop}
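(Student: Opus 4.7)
The plan is to reduce the claim to \cref{pkehpetrherththgrtg1}. Setting $E' := \iota^{\topp,*}E : G\BT \to \cC$, a direct unfolding of \cref{lergrgrthertgertg} and \cref{lhertghetrgrtgertgt} gives $E\bP \simeq (\iota'^{*}E')\bP$, which is precisely the coarsification of $E'$ in the sense of \cref{lergrgrthertgertg}. Thus it suffices to verify the three hypotheses of \cref{pkehpetrherththgrtg1} for $E'$: homotopy invariance, excision for decompositions of simplicial complexes into subcomplexes, and vanishing on flasques.

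Homotopy invariance of $E'$ is immediate from the canonical isomorphism $\iota^{\topp}([0,1]\otimes X)\cong [0,1]\times \iota^{\topp}(X)$, already exploited in the proof of \cref{8kothpetrhtrheth9}, combined with homotopy invariance of $E$ as a weak Borel--Moore homology theory.

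For excision, I would let $K=K_{1}\cup K_{2}$ be a decomposition of a $G$-simplicial complex into closed subcomplexes and set $K_{0}:=K_{1}\cap K_{2}$. Applying strong excision of $E$ (item (2) of \cref{ehpzhrtgtrg}) to the closed pairs $(K,K_{1})$ and $(K_{2},K_{0})$ produces two fibre sequences
\[
E(K\setminus K_{1})\to E(K)\to E(K_{1}),\qquad E(K_{2}\setminus K_{0})\to E(K_{2})\to E(K_{0}).
\]
Since $K\setminus K_{1}\cong K_{2}\setminus K_{0}$ canonically, the two fibres coincide, so the square on the four remaining terms is cartesian in the stable $\infty$-category $\cC$, which is exactly the Mayer--Vietoris excision statement required by \cref{pkehpetrherththgrtg1}.

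Vanishing on flasques will follow by combining local finiteness of $E'$ (already established in \cref{8kothpetrhtrheth9}) with homotopy invariance, following the standard argument recalled in \cref{lophertherthtergetg}. Namely, if $X$ in $G\BT$ is flasque via a shifting selfmap $f$ homotopic to the identity, then for each invariant $G$-bounded subset $W$ some iterate $f^{n}$ factors through $X\setminus W$; homotopy invariance identifies $E'(f^{n})$ with the identity, so the map $E'(X)\to \Cofib(E'(X\setminus W)\to E'(X))$ is null; taking the limit over $W$ and invoking the local finiteness formula \eqref{goijwioejorigwerfwerf} forces $E'(X)\simeq 0$. With these three properties in hand, \cref{pkehpetrherththgrtg1} applies to $E'$ and yields that $E\bP$ is an equivariant coarse homology theory. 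The main obstacle is the excision step, since strong excision in \cref{ehpzhrtgtrg} is phrased for closed/open pairs in $G\LCH^{+}$ and must be correctly translated into the Mayer--Vietoris statement for subcomplex decompositions; the other two properties transfer essentially by inspection.
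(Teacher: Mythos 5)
Your proposal is correct and follows essentially the same route as the paper: pull $E$ back along $\iota^{\topp}$ (the paper applies the $G\UBC$-level criterion \cref{pkehpetrherththgrtg} to $\iota^{\topp,\prime,*}E$, you the equivalent $G\BT$-level \cref{pkehpetrherththgrtg1} to $\iota^{\topp,*}E$), obtaining homotopy invariance and local finiteness from \cref{8kothpetrhtrheth9}, vanishing on flasques from \cref{lophertherthtergetg}, and excision for subcomplex decompositions from strong excision — the Mayer--Vietoris step you spell out is exactly what the paper compresses into the remark that $\iota^{\topp,\prime}$ turns subcomplex decompositions into closed decompositions and $E$ is closed excisive. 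One cosmetic correction: since $E$ is covariant, the fibre sequences run $E(K_{1})\to E(K)\to E(K\setminus K_{1})$ and $E(K_{0})\to E(K_{2})\to E(K_{2}\setminus K_{0})$ (the second maps coming from the partially defined morphisms in $G\LCH^{+}$), not in the cohomological order you wrote, though this does not affect the conclusion that the square is (co)cartesian.
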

\begin{proof}
We will apply \cref{pkehpetrherththgrtg} to $\iota^{\topp,\prime,*}E$. To this end we verify its assumptions.
It follows from \cref{8kothpetrhtrheth9} that $\iota^{\topp,\prime,*}E$ is homotopy invariant and
locally finite. By  \cref{lophertherthtergetg} it annihilates flasques. 
 
 The functor $\iota^{\topp,\prime}$ sends decompositions of $G$-simplicial complexes 
   into $G$-subcomplexes to closed decompositions.
 Since $E$ is closed excisive, $\iota^{\topp,\prime,*}E$ is excisive for such decompositions.
%
%
%
%
%
\end{proof}


\subsection{The coarse character}

In this section we introduce the coarse character from equivariant coarse homology theory to
the coarsification of Borel-Moore homology. In a first step we construct, on the level of chain complexes, a map from coarse homology to the locally finite singular homology of the Rips complex.  
We then observe based on the theory of assembly developed in  \cref{kophrhrtehethtr} that locally finite singular homology on $G\BT$  has a universal property for mapping out
to other versions of locally finite homology. This will be used in a second step in order to construct a map to the coarsification of the corresponding Borel-Moore homology. Note that Borel-Moore homology  as a homology theory on $G\LCH^{+}$ has a universal property for mapping in, but in the process of coarsification we pull-back the Borel-Moore homology  to
$G\BT$ using $\iota^{\topp,\prime}$, and in this process we loose the corresponding universal property.

Let $k$ be a commutative ring and $A$ be a $k$-module.
We consider the functor
$$C_{\sing}^{G,\LoF}(-,A):G\TB\to \Ch_{k}$$
which sends $X$ to the chain complex of $k$-modules of locally finite $G$-invariant singular chains.  The elements of $C_{\sing}^{G,\LoF}
(X)_{n}$ are $G$-invariant functions $\phi:\Sing(X)_{n}\to A$ such that
for every bounded subset $B$ of $X$ the number of simplices of the chain that meet $B$, i.e., the  cardinality  of the set $$\{\sigma\in \Sing(X)_{n}\mid \sigma(\Delta^{n})\cap B\not=\emptyset \wedge \phi(\sigma)\not=0\}\ ,$$
 is finite.  The differential  $d:C_{\sing}^{G,\LoF}
(X,A)_{n}\to C_{\sing}^{G,\LoF}
(X,A)_{n-1}$ is the usual homological differential. The local finiteness ensures that the sums involved in its definition are finite.  Here we use   the compatibility of the topology and the bornology which implies that  compact subsets are always bounded,
and further that the images of singular simplices are compact and hence bounded.
 
 Recall the notation from  \eqref{bsdfpojkvopsdfvsdfvsdfvfdv}.  We get the functor
\begin{equation}\label{hqwioregqwefdewfqwef}H^{G,\LoF}_{\sing}(-,A):=\ell C_{\sing}^{G,\LoF}(-,A):G\BT\to \Mod(Hk)\ .
\end{equation} 
The following is standard.
\begin{prop}
The functor
$H^{G,\LoF}_{\sing}(-,A)$ is homotopy invariant, open excisive, and vanishes on flasques.
\end{prop}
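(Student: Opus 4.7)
The plan is to verify the three properties via classical chain-level constructions in singular homology, in each case checking that local finiteness and $G$-invariance are preserved. I will repeatedly use that morphisms in $G\BT$ are bornologically proper, that bounded subsets of $[0,1]\otimes X$ project to bounded subsets of $X$, and that compact subsets of a topological bornological space are bounded.

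For homotopy invariance, given a homotopy $h:[0,1]\otimes X\to Y$ in $G\BT$, I would build the standard prism operator $P_h:C^{G,\LoF}_{\sing}(X,A)_{*}\to C^{G,\LoF}_{\sing}(Y,A)_{*+1}$ coming from the triangulation of $\Delta^n\times[0,1]$. Local finiteness of $P_h(\phi)$ follows because for bounded $B\subseteq Y$ the preimage $h^{-1}(B)$ is bounded, hence projects to a bounded $B'\subseteq X$; only simplices $\sigma$ of $\phi$ meeting $B'$ can contribute to simplices of $P_h(\phi)$ meeting $B$, and there are finitely many by local finiteness of $\phi$. Equivariance of $P_h$ and the chain homotopy identity $dP_h+P_hd=h_{1,*}-h_{0,*}$ are standard.

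For open excisiveness, given a $G$-invariant open cover $X=U\cup V$, I would introduce the subcomplex $C^{\{U,V\}}\subseteq C^{G,\LoF}_{\sing}(X,A)$ of chains supported on simplices contained entirely in $U$ or entirely in $V$. The inclusion $C^{\{U,V\}}\hookrightarrow C^{G,\LoF}_{\sing}(X,A)$ is a quasi-isomorphism by iterated barycentric subdivision, applied simplex-wise with depth depending on the simplex; this is well-defined in the locally finite setting because subdivided simplices have image in that of the original, so for any bounded $B$ only finitely many simplices can contribute after any number of subdivisions. The evident short exact sequence
\[
0\to C^{G,\LoF}_{\sing}(U\cap V,A)\to C^{G,\LoF}_{\sing}(U,A)\oplus C^{G,\LoF}_{\sing}(V,A)\to C^{\{U,V\}}\to 0
\]
then yields the Mayer--Vietoris pushout after applying $\ell$.

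For vanishing on flasques, I would run an Eilenberg swindle: given $f:X\to X$ shifting and homotopic to $\id_X$, set $\Psi(\phi):=\sum_{n\geq 0}f^n_{*}\phi$. The shifting property provides, for every bounded $B$, an $N$ with $f^n(X)\cap B=\emptyset$ for $n\geq N$, so simplices of $f^n_{*}\phi$ with $n\geq N$ cannot meet $B$; hence $\Psi(\phi)$ is locally finite. One has $\Psi(\phi)-f_{*}\Psi(\phi)=\phi$, and $f_{*}\simeq\id_{*}$ on $H^{G,\LoF}_{\sing}$ by the homotopy invariance just established, so $\phi$ is a boundary. I expect the main obstacle to be in open excisiveness: the simplex-dependent subdivision depth must be organised into a globally coherent, $G$-equivariant operator whose chain homotopy to the identity remains locally finite. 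An alternative strategy would be to model Mayer--Vietoris via a (co)sheaf-theoretic argument that avoids explicit subdivision entirely.
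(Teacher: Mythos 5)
The paper gives no argument here at all (it simply declares the statement standard), so there is nothing to compare step by step; your proposal supplies exactly the standard chain-level proofs that are being invoked, and they are correct. The prism-operator and Eilenberg-swindle parts are complete as you state them: for the swindle note only that $f^{m}(X)\subseteq f^{n}(X)$ for $m\ge n$, so the shifting property bounds the number of terms of $\sum_{n\ge 0}f^{n}_{*}\phi$ that can touch a given bounded set, and each $f^{n}_{*}\phi$ is locally finite because morphisms of $G\BT$ are bornologically proper. The one point you defer, the organisation of the simplex-dependent subdivision depth in the excision step, is not a genuine obstacle: the classical argument (Hatcher-type) defines $D\phi:=\sum_{\sigma}D_{m(\sigma)}\sigma$ with $m(\sigma)$ the minimal number of barycentric subdivisions making every simplex of $S^{m(\sigma)}\sigma$ land in $U$ or $V$, and $\rho:=\id-\partial D-D\partial$ is then a chain map into the small-chain complex with $\iota\rho\simeq\id$. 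This operator is $G$-equivariant because the cover is invariant and subdivision commutes with the $G$-action, so $m(g\sigma)=m(\sigma)$, and it preserves local finiteness for the reason you already identified: every simplex produced from $\sigma$ has image contained in $\sigma(\Delta^{n})$, so for a bounded $B$ only the finitely many $\sigma$ in the support of $\phi$ meeting $B$ can contribute. With that bookkeeping filled in, your short exact sequence argument (whose exactness and local-finiteness checks are fine, using the induced bornologies on $U$, $V$, $U\cap V$) yields the Mayer--Vietoris pushout after applying $\ell$, and the proof is complete.
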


Recall \cref{lergrgrthertgertg}. As a consequence of \cref{pkehpetrherththgrtg1} we get:
\begin{kor} The coarsification
$$H^{G,\LoF}_{\sing}(-,A)\bP:G\BC\to \Mod(Hk) $$
of $H^{G,\LoF}_{\sing}(-,A)$ is an equivariant  coarse homology theory.
\end{kor}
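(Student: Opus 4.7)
The plan is to invoke \cref{pkehpetrherththgrtg1} applied to the functor $E := H^{G,\LoF}_{\sing}(-,A) \circ \iota' : G\UBC \to \Mod(Hk)$, or equivalently, according to \cref{lergrgrthertgertg}, directly to $H^{G,\LoF}_{\sing}(-,A) : G\BT \to \Mod(Hk)$. Thus essentially everything is already done: we only need to verify the three hypotheses of \cref{pkehpetrherththgrtg1}, namely homotopy invariance, excisiveness for decompositions of simplicial complexes into subcomplexes, and vanishing on flasques.

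Two of the three hypotheses are given to us directly by the proposition immediately preceding the corollary: $H^{G,\LoF}_{\sing}(-,A)$ is homotopy invariant and vanishes on flasques. First I would cite these two assertions. The remaining point is that the required excision is for decompositions $P = Y \cup Z$ of a $G$-simplicial complex $P$ (of the form $P_{U}(X)$) into closed equivariant subcomplexes, whereas the preceding proposition only records open excisiveness.

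The key step, which is the only place where a genuine argument is needed, is to bridge between open excision and simplicial-complex (closed) excision. My approach would be to use that for a decomposition $P = Y \cup Z$ of a $G$-simplicial complex into closed $G$-subcomplexes, one has open equivariant neighborhoods $Y \subseteq U$, $Z \subseteq V$ (constructed via the star neighborhoods in the barycentric subdivision) such that the inclusions $Y \hookrightarrow U$, $Z \hookrightarrow V$ and $Y \cap Z \hookrightarrow U \cap V$ are equivariant deformation retracts; these retractions are proper in the bornological sense since they preserve the  simplex containing a given point. Combined with the homotopy invariance of $H^{G,\LoF}_{\sing}(-,A)$, the open Mayer--Vietoris square for $P = U \cup V$ translates into the desired Mayer--Vietoris square for $P = Y \cup Z$. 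This is the only non-formal ingredient; it is a straightforward exercise that I would either sketch briefly or leave to the reader.

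With all three hypotheses verified, \cref{pkehpetrherththgrtg1} directly yields that the coarsification $H^{G,\LoF}_{\sing}(-,A)\bP : G\BC \to \Mod(Hk)$ is an equivariant coarse homology theory, completing the proof. The main obstacle, to the extent there is one, is the open-to-simplicial excision comparison, but this is standard simplicial-topology material and should not require more than a paragraph to record.
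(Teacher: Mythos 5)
Your proposal is correct and follows essentially the same route as the paper: the corollary is obtained by feeding the preceding proposition (homotopy invariance, open excisiveness, vanishing on flasques) into \cref{pkehpetrherththgrtg1}. The only difference is that you make explicit the passage from open excision to excision for decompositions into closed $G$-subcomplexes via equivariant star-neighborhood retractions, a step the paper leaves implicit in citing \cref{pkehpetrherththgrtg1} directly.
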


Let $X$ be in $G\BT$ and $W$ be an invariant $G$-bounded subset. Then the map 
$
C_{\sing}^{G,\LoF}(X\setminus W,A)\to C_{\sing}^{G,\LoF}(X ,A)$ is injective. Consequently
we have an equivalence
\begin{equation}\label{heith0ergterthert}\ell \frac{C_{\sing}^{G,\LoF}(X ,A)}{C_{\sing}^{G,\LoF}(X\setminus W,A)}\simeq \Cofib( H^{G,\LoF}_{\sing}(X\setminus W,A)
\to H^{G,\LoF}_{\sing}(X,A))\ .
\end{equation}
We let  $$H^{G,\lf}_{\sing}(-,A):=(H^{G,\LoF}_{\sing}(-,A))^{\lf}$$
 be the result of making $H^{G,\LoF}_{\sing}(-,A)$ locally finite, see \cref{ohiperzthtrgetrger}.
In view
  of
\eqref{goijwioejorigwerfwerf} and \eqref{heith0ergterthert} we have
$$H^{G,\lf}_{\sing}(X,A)\simeq \lim_{W}\ell \frac{C_{\sing}^{G,\LoF}(X ,A)}{C_{\sing}^{G,\LoF}(X\setminus W,A)}\ ,$$
where $W$ runs over the  $G$-bounded invariant  subsets of $X$.
On the other hand,
the map
$$C_{\sing}^{G,\LoF}(X ,A)\to  \lim_{W} \frac{C_{\sing}^{G,\LoF}(X ,A)}{C_{\sing}^{G,\LoF}(X\setminus W,A)}$$
is an isomorphism of chain complexes.
Hence the natural  transformation \eqref{ergwerfdsgwer}  specializes to 
\begin{equation}\label{hfgfqzwegfuzfqewf}H^{G,\LoF}_{\sing}(-,A)\to H^{G,\lf}_{\sing}(-,A)
\end{equation} which at $X$ is the  canonical map
\begin{equation}\label{hertgrtgergtrg}\ell   \lim_{W} \frac{C_{\sing}^{G,\LoF}(X ,A)}{C_{\sing}^{G,\LoF}(X\setminus W,A)}\to
   \lim_{W}\ell \frac{C_{\sing}^{G,\LoF}(X ,A)}{C_{\sing}^{G,\LoF}(X\setminus W,A)}\ .\end{equation} 
  Note the different orders of $\ell$ and the limit in domain and target of this map.
   
   \begin{rem}
   Note that the structure maps 
   $$ \frac{C_{\sing}^{G,\LoF}(X ,A)}{C_{\sing}^{G,\LoF}(X\setminus W,A)}\to  \frac{C_{\sing}^{G,\LoF}(X ,A)}{C_{\sing}^{G,\LoF}(X\setminus W',A)}$$ for $W\subseteq W'$ are surjective.
   If $X$ admits a countable exhaustion by $G$-bounded  invariant subsets, then 
   we can apply Mittag-Leffler in order to see that \eqref{hertgrtgergtrg} is an equivalence.
 {The general case remains unclear so that we do not know whether $H^{G,\LoF}_{\sing}(-,A)$ is already locally finite.} \hB
   \end{rem}

Recall the  equivariant coarse homology with coefficients in $A$ from \eqref{nfgbgfbddgfber}.
We are going to construct a natural 
 transformation of equivariant coarse homology theories
 \begin{equation}\label{rgrwtbwergwerfwerf}
\kappa^{G}:\rmH\cX^{G}(-,A)\to H^{G,\LoF}_{\sing}(-,A)\bP:G\BC\to \Mod(Hk)\ .
\end{equation}


\begin{construction}\label{huigwerfgerfwrefwerfe}{\em 
The transformation in \eqref{rgrwtbwergwerfwerf}
  will be induced by a chain-level construction.
Let $X$ be    in $G\BC$. 
 For every $U$ in $\cC^{G}_{X}$ containing the diagonal we have
a map \begin{equation}\label{sgsdfgsfgfdgs}\tilde\kappa^{G}_{U}:C\cX^{G}_{U}(X,A)\to C_{\sing}^{G,\LoF}( P_{U}(X),A)\ ,
\end{equation}
where $C\cX^{G}_{U}(X,A)$ is the subcomplex of $C\cX^{G}(X,A)$ of $U$-controlled chains.
 The map $\tilde\kappa^{G}_{U}$ sends
 $\phi$ in $C\cX^{G}_{U}(X,A)_{n}$ to the chain
 $$\tilde \kappa^{G}_{U}(\phi): \Sing_{n}(P_{U}(X))\to A$$ which sends  the singular simplex $ \Delta(x_{0},\dots,x_{n}):\Delta^{n}\to P_{U}(X)$     to $\phi(x_{0},\dots,x_{n})$  and vanishes elsewhere.  Here $\Delta(x_{0},\dots,x_{n})$ is  given in barycentric coordinates by $t=(t_{0},\dots,t_{n})\mapsto \sum_{i=0}^{n} t_{i}\delta_{x_{i}}$,    where we interpret points on $P_{U}(X)$ as finitely supported probability measures.  
   Since $\phi$ is $U$-controlled the condition  $\phi(x_{0},\dots,x_{n})\not=0$ implies that $\Delta(x_{0},\dots,x_{n}) $ is  indeed a well-defined singular simplex in $P_{U}(X)$.
The $G$-invariance or local finiteness of $\phi$ implies $G$-invariance or local finiteness  of $\tilde\kappa^{G}_{U}(\phi)$, respectively.
One checks that $\tilde\kappa^{G}_{U}$ is a map of chain complexes. 

These maps are compatible with the inclusions of chain complexes associated to inclusions of invariant coarse entourages.
Taking the colimit of all $U$ in $\cC_{X}^{G}$ we get a map of chain complexes
$$\tilde \kappa^{G}_{X}:C\cX^{G}(X,A)\to C_{\sing}^{G,\LoF}(-,A)\bP(X)\ .$$ 
The family of maps $(\tilde \kappa^{G}_{X})_{X\in G\BC}$ is a natural transformation of  functors
$$\tilde \kappa^{G}:C\cX^{G}(-,A)\to C_{\sing}^{G,\LoF}(-,A)\bP:G\BC\to \Ch_{k}\ .$$ 
We finally apply $\ell$ from   \eqref{bsdfpojkvopsdfvsdfvsdfvfdv}  in order to get the  transformation
\eqref{rgrwtbwergwerfwerf}
 }\hB\end{construction}
%

We consider the restriction
  \begin{equation}\label{bsdfoivjsdoivjosdfvsdfvsdfvsdf}H_{A}^{G}:G\Orb\xrightarrow{S\mapsto S_{\disc}} G\Top\xrightarrow{i} G\BT \xrightarrow{H^{G,\LoF}_{\sing}(-,A)} \Mod(Hk)
\end{equation} 
  of $H_{\sing}^{G,\LoF}$ to the orbit category, where $i$ is as in \eqref{wergewrfdgswre}. Since $H^{G,\LoF}_{\sing}(-,A)$ is homotopy invariant we have a natural transformation
    $$\asmbl^{H^{G}_{A}}:H_{A}^{G,\%}\to H^{G,\LoF}_{\sing}(-,A):G\BT\to \Mod(Hk)\ .$$ Forcing local finiteness we get a map
    \begin{equation}\label{porjgoiergwerfwerfref}\asmbl^{H^{G}_{A},\lf}:H_{A}^{G,\%,\lf}\to H^{G,\lf}_{\sing}(-,A)\ .
\end{equation}
    
  \begin{ex}\label{ijobpwrgrhre}
  Assume that $T$ in $G\Orb$ is an infinite orbit.
  Since $i(T)$ is $T$ equipped with the maximal bornology it admits no  non-zero $G$-invariant locally finite chains.
  Consequently $H_{A}^{G}(T)=0$. Let $S$ be an infinite $G$-orbit  and $S_{min}$ in $G\TB$ be $S$ equipped with the minimal bornology.
   Using the point-wise formula for the left Kan-extension  in \eqref{hretojgoertgrtegegertg} we have
     $$H_{A}^{G,\%,\lf}(S_{min})\simeq \colim_{T\in G\Orb_{/S_{min,min}}}  H^{G}_{A}(T)  \simeq 0$$ 
     since every $T$ in $G\Orb$ admitting a non-trivial map to $S$ is also infinite.
     
     On the other hand we have 
 $$\pi_{0}H^{G,\LoF}_{\sing}(S_{min},A)\cong \pi_{0}H^{G,\lf}_{\sing}(S_{min},A)\cong A\ .$$
 Hence   the assembly map is not an equivalence for infinite groups. \hB
 \end{ex}

%
%
%
%
%
%
%

\begin{prop}\label{kohprwthegtrgetg} If $G$ is finite and $|G|$ is invertible in $k$, then the assembly map \eqref{porjgoiergwerfwerfref} is an equivalence.
\end{prop}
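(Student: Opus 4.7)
The plan is to check the assembly map on single orbits $i(G/H)$ and then to extend to all $X$ in $G\TB$ by reducing the equivariant statement to the non-equivariant one through an averaging argument that uses that $|G|$ is invertible in $k$.

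First I would verify that both $H_A^{G,\%,\lf}$ and $H^{G,\lf}_{\sing}(-,A)$ are homotopy invariant, open-excisive, and locally finite functors on $G\TB$: for the source this is \cref{plhertgertgertetrrgertgt} together with \cref{lhperthertgerg}; for the target these properties hold at the chain level for $C^{G,\LoF}_{\sing}(-,A)$ and are preserved by $\ell$ and by the $(-)^{\lf}$-construction. Equivalence on a single orbit is then clear: for $G$ finite, $G/H$ is a finite discrete space, hence $i(G/H)$ is bounded and the $(-)^{\lf}$-construction is the identity on both sides; the pointwise formula for left Kan extension in \eqref{hretojgoertgrtegegertg} at $G/H$ has a terminal object $\mathrm{id}_{G/H}$ in $G\Orb_{/(G/H)}$, yielding a tautological equivalence $H_A^{G,\%}(i(G/H)) \simeq H_A^G(G/H) = H^{G,\LoF}_{\sing}(i(G/H), A)$ realized by the assembly map.

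To propagate the equivalence from orbits to an arbitrary $X$ in $G\TB$, I would use that for $|G|$ invertible in $k$ the averaging operator $e_{G} := |G|^{-1}\sum_{g \in G} g$ exhibits the $G$-invariants functor on $k[G]$-modules as a retract of the forgetful functor, so that $(-)^{G}$ is exact, commutes with all colimits and limits, and agrees with the homotopy fixed-points functor $(-)^{hG}$. Applied at the chain level, this gives a natural equivalence $H^{G,\lf}_{\sing}(X, A) \simeq H^{\lf}_{\sing}(X, A)^{hG}$ on the target side, and analogously $H_A^{G,\%,\lf}(X) \simeq (H_A^{\%,\lf}(X))^{hG}$ on the source side, via the compatibility of the equivariant Kan extension from $G\Orb$ with the $G$-fixed points of the corresponding non-equivariant Kan extension from $\{e\}\Orb$. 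Under these identifications, the assembly map becomes the $G$-homotopy fixed points of the non-equivariant assembly map for the trivial group, which is an equivalence on all of $\TB$: a direct chain-level computation using that $i$ is left adjoint to the forgetful functor shows both sides of the non-equivariant assembly map compute $\lim_{W} C_{\sing}(X, X \setminus W; A)$, with $W$ ranging over bounded subsets.

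The main obstacle will be making the averaging identification $H_A^{G,\%,\lf}(X) \simeq (H_A^{\%,\lf}(X))^{hG}$ precise, since it requires comparing the equivariant Kan extension through Elmendorf's theorem with the $G$-fixed points of the corresponding non-equivariant construction. For finite $G$ with $|G|$ invertible in $k$, the exactness of $(-)^G$ on $k[G]$-modules and its commutation with colimits and limits should allow one to interchange $(-)^G$ with the pointwise formula for left Kan extension, yielding the required identification and completing the argument.
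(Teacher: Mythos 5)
Your reduction-to-the-trivial-group strategy is viable in outline, and it shares the two essential ingredients of the paper's own proof: (i) with $|G|$ invertible, strict $G$-invariants of chain-level data agree with homotopy fixed points and homotopy orbits, so that $\lim_{BG}\simeq\colim_{BG}$ in $\Mod(Hk)$ and this operation commutes with colimits; and (ii) the isomorphism of relative chain complexes (finite versus locally finite chains) for a bounded subset $W$, which — together with the classical identification of $H_{A}^{\%}$ with singular homology — is what makes your non-equivariant claim (that the lf-assembly for the trivial group is an equivalence on all of $\TB$) correct. The paper organizes these ingredients differently: it introduces the finite invariant chains functor $H^{G}_{\sing}(-,A)$, proves in \cref{kohperthrtgetrg} that it agrees with $\colim_{BG}H_{\sing}(-,A)$ and is therefore an equivariant homology theory, represented by $j_{G,!}\colim_{BG}H_{\sing}(-,A)$ on $G\Orb$, so that the un-lf'd assembly map to it is an equivalence, and then identifies $(H^{G}_{\sing}(-,A))^{\lf}\simeq H^{G,\lf}_{\sing}(-,A)$ using that $G$-bounded equals bounded for finite $G$.

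The genuine gap is the step you yourself flag: the identification $H_{A}^{G,\%,\lf}(X)\simeq(H_{A}^{\%,\lf}(X))^{hG}$, compatibly with the assembly maps. The mechanism you propose — interchanging $(-)^{G}$ with the pointwise formula for the left Kan extension — cannot work as stated, because the two Kan extensions are indexed over different categories: $H_{A}^{G,\%}(X)$ is a colimit over $G\Orb_{/Y^{G}(X)}$, while $H_{A}^{\%}(X)$ is a colimit over the trivial-group slice, i.e.\ over the underlying homotopy type of $X$; applying $(-)^{G}$ inside the latter colimit does not produce the former. What is actually needed is to show that $X\mapsto\lim_{BG}H_{\sing}(X,A)$ is itself an equivariant homology theory, i.e.\ is Kan-extended from $G\Orb$ (this is where the commutation of $\lim_{BG}\simeq\colim_{BG}$ with colimits enters, and it is precisely the content of the paper's \cref{kohperthrtgetrg}), then to identify its coefficient system on $G\Orb$ with $H^{G}_{A}$ via the natural map from invariant chains to homotopy-fixed chains (an equivalence since $|G|$ is invertible), and finally to check that these identifications intertwine the equivariant assembly with $\lim_{BG}$ of the non-equivariant one — none of which is argued in your proposal. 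Two smaller points also need care: commuting $(-)^{hG}$ with the lf-limit requires passing to the cofinal family of $G$-invariant bounded subsets (using again that $G$ is finite), and the orbitwise check in your first paragraph does not by itself propagate to arbitrary $X$ in $G\TB$; all of the propagation is carried by the Borel identification above.
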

\begin{proof}
Let $$C^{G}_{\sing}(-,A):G\BT\to \Ch_{k}$$ be the functor which sends $X$ in $G\BT$ to the chain complex of finite $G$-invariant $A$-valued singular chains. We further define
$$H^{G}_{\sing}(-,A):=\ell C^{G}_{\sing}(-,A):G\BT\to \Mod(Hk)\ .$$
\begin{lem}\label{kohperthrtgetrg} If $G$ is finite and $|G|$ is invertible in $k$, then
$H^{G}_{\sing}(-,A)$ is an equivariant homology theory on $G\Top$.
\end{lem}
\begin{proof}
We use the isomorphism $C^{G}_{\sing}(X,A)\cong \lim_{BG}C_{\sing}(X,A)$. Since $|G|$ is invertible in $k$ 
we have equivalences $\colim_{BG}\ell\simeq \lim_{BG}\ell\simeq \ell \lim_{BG}$ of functors from $BG$ to $\Mod(Hk)$, where $\ell:\Ch_{k}\to \Mod(Hk)$. Consequently,
$$H^{G}_{\sing}(X,A)\simeq \colim_{BG} H_{\sing}(X,A)\ .$$
We now use that $H_{\sing}(-,A)$ is a non-equivariant homology theory and corresponds to a colimit-preserving functor from $\Spc$ to $\Mod(Hk)$ denoted by the same symbol.
We can therefore write
$H^{G}_{\sing}(-,A)$ as the composition
$$G\Top\stackrel{Y^{G}}{\to} \PSh(G\Orb)\stackrel{j_{G}^{*}}{\to} \Fun(BG,\Spc)\stackrel{H_{\sing}(-,A)}{\to}\Fun(BG,\Mod(Hk))\stackrel{\colim_{BG}}{\to} \Mod(Hk)\ ,$$
where $j_{G}:BG\to G\Orb$ is the inclusion.
It follows that
$H^{G}_{\sing}(-,A)$ is the equivariant homology theory represented by  the functor
 $$ j_{G,!} \colim_{BG}H_{\sing}(-,A):G\Orb\to \Mod(Hk)\ .$$
 \end{proof}

It follows from \cref{kohperthrtgetrg} that the assembly map
$$H^{G,\%}_{A}\stackrel{\simeq}{\to}  H^{G}_{\sing}(-,A)$$ is an equivalence. 

Since  $G$ is finite the condition    of being $G$-bounded is equivalent to being  bounded.
Let $W$ be a $G$-bounded  invariant subset of $X$.
Then we have an  isomorphism of chain complexes
$$ \frac{C_{\sing}^{G}(X ,A)}{C_{\sing}^{G}(X\setminus W,A)} \to  \frac{C_{\sing}^{G,\LoF}(X ,A)}{C_{\sing}^{G,\LoF}(X\setminus W,A)}$$
induced by the canonical inclusions.
Hence for every $G$ bounded invariant subset $W$  we get an equivalence
$$\Cofib(H^{G}_{\sing}(X\setminus W,A)\to H_{\sing}^{G}(X,A))\stackrel{\simeq}{\to} \Cofib(H^{G,\LoF}_{\sing}(X\setminus W,A)\to H_{\sing}^{G,\LoF}(X,A))\ .$$ 
Applying $\ell$ and taking the limit we get an equivalence
$$H^{G}_{\sing}(X,A)^{\lf}\stackrel{\simeq}{\to} H^{G,\lf}_{\sing}(X,A):G\BT\to \Mod(Hk)\ .$$
Composing these equivalences we get the desired equivalence 
$$H_{A}^{G,\%,\lf}\stackrel{\simeq}{\to}  H^{G}_{\sing}(-,A)^{\lf}\stackrel{\simeq}{\to}  H^{G,\lf}_{\sing}(-,A)\ .$$
\end{proof}
We assume that $G$ is finite and that $|G|$ is invertible in $k$.
\begin{ddd}\label{oihop0rthrtgtrgertg}
The locally finite coarse character is the natural transformation
\begin{equation}\label{gwergwergwrfregw}\chi^{G,\lf}:\rmH\cX^{G}(-,A)\stackrel{\kappa^{G},\eqref{rgrwtbwergwerfwerf}}{\to} H^{G,\LoF}_{\sing}(-,A)\bP\stackrel{\eqref{hfgfqzwegfuzfqewf}}{\to} H^{G,\lf}_{\sing}(-,A)\bP\stackrel{\simeq, \eqref{porjgoiergwerfwerfref} }{\leftarrow}
H_{A}^{G,\%,\lf}\bP:G\BC\to \Mod(Hk)\ .
\end{equation} \end{ddd}
In view of \cref{ijobpwrgrhre} and the presence of the inverted map in \eqref{gwergwergwrfregw}
the  locally finite coarse character is only defined for finite groups $G$ with $|G|$ invertible in $k$.


In the case of the trivial group $G$, by \cref{okgpgkerpwofwerferwfwrefwf} we can associate to every spectrum $F$ a Borel-Moore homology theory $F_{\BM}$. We use this in order to construct the coarse Borel-Moore character.

\begin{ddd}\label{iogopwrefwerfewrferwf}
The  coarse Borel-Moore   character is defined as the natural transformation
\begin{equation}\label{gwergwergwrfregw1}\chi_{\BM}:\rmH\cX(-,A)\xrightarrow{\chi^{\lf},\eqref{gwergwergwrfregw}}HA^{\%,\lf} \bP\xrightarrow{\asmbl^{\iota^{\topp,*} HA_{\BM} ,\lf },\eqref{fervjoierjveoijmewoiwmo}}HA_{\BM}\bP
 :\BC\to \Mod(Hk)\ .
\end{equation} \end{ddd}

\begin{ddd} We define the periodic Borel-Moore character as the  composition
\begin{equation}\label{gwergwergwrfregw1}P(\chi_{\BM}):\PH\cX(-,A) \xrightarrow{P(\chi_{BM})} P(HA_{\BM}\bP)\to (\prod_{k\in \Z}\Sigma^{2k} HA_{\BM})\bP:\BC\to \Mod(Hk)\ ,
\end{equation}  \end{ddd}
The   second map in \eqref{gwergwergwrfregw1}  is the canonical one induced by the projections
$P(HA_{\BM}\bP)\to \Sigma^{2k}HA_{\BM}\bP$ for all $k$ in $\Z$.

We consider  \eqref{gwergwergwrfregw1} as a transformation of functors from
$BG\times \BC$ to $\Mod(Hk)$ such that  $G$-acts trivially. Specializing \eqref{boijiodfbfgbdfgbd}
it  naturally induces the Borel-equivariant version 
\begin{equation}\label{fqwedwqedwfrefrdq}P(\chi_{\BM})^{hG}:\PH\cX^{hG}(-,A)  \to ((\prod_{k\in \Z}\Sigma^{2k} HA_{\BM})\bP)^{hG}:G\BC\to \Mod(Hk) 
\end{equation} 
of the  periodic Borel-Moore character.

 \subsection{The topological transgression} \label{kogprwfregergerfewrf}

In this section we construct for  any weak equivariant  Borel-Moore homology theory $E^{G}:G\LCH^{+}\to \cC$   the topological transgression
as a natural transformation
 \begin{equation}\label{fqwedwedwedqwe}
 T^{G,\topp}:E^{G}\bP\to \Sigma E^{G}\circ \partial_{h}:G\BC\to \cC\ .
\end{equation}
It is a homotopy theoretic   generalization 
of the transgression defined in \cite{wulff_axioms}  to the context of all of $G\BC$.

 Assume that $X$ is in $G\BC$ and let $U$ be an invariant coarse entourage of $X$ containing the diagonal.
 We consider $(X,U)$ as an object of $G\BC^{\cC}$ from  \cref{kophprthgertrtgegrtg}.
  By \cite[Lem. 3.6]{Bunke:2024aa} (see \cref{pkhoprtorptgiporetgertgetrg})
  we can define the Higson corona $ \partial_{h} P_{U}(X)$ of the metric Rips complex using  continuous  functions with the condition on the variation instead of  bounded functions as in \cref{kiogwergewrfwrefw}. We have an exact sequence
   \begin{equation}\label{gwerfwrefwefree}0\to C_{0}(P_{U}(X))\to C_{\cB_{P_{U}(X)}}(P_{U}(X))\to C(\partial_{h} P_{U}(X))\to 0
\end{equation}
   of commutative $G$-$C^{*}$-algebras. Recall that $X_{U}$ in $G\BC$ denotes the underlying $G$-bornological space of $X$ equipped with the $G$-coarse structure generated by $U$.
   The inclusion  $X_{U}\to P_{U}(X)$ as the zero skeleton is equivariant  and becomes a   coarse equivalence after forgetting the $G$-action. By \cref{kopbrwgregrefwr} we therefore get an isomorphism
   $$\partial_{h}X_{U}\stackrel{\cong}{\to} \partial_{h} P_{U}(X)$$ of Higson coronas
   in $G\CH$.
   This isomorphism is the component at $(X,U)$ of a natural isomorphism
   of functors
   $$\partial_{h}\stackrel{\cong}{\to}  \partial_{h} P_{-}(-):G\BC^{\cC}\to G\CH\ .$$

    The exact  sequence \eqref{gwerfwrefwefree}  presents an equivariant  compactification
   $\overline{ \iota^{\topp,\prime} (P_{U}(X))}$ of $\iota^{\topp,\prime}(P_{U}(X))$ by $\partial_{h}X_{U}$.
    A morphism $(X,U)\to (X',U')$ in $G\BC^{\cC}$  induces a map of pairs 
   $$(\overline{ \iota^{\topp,\prime} (P_{U}(X))},\partial_{h}X_{U})\to ( \overline{ \iota^{\topp,\prime} (P_{U'}(X'))},\partial_{h}X'_{U'})$$ in $G\CH $. 
      We thus have  a functor  
$$G\BC^{\cC}\to G\CH^{(2)}\ , \qquad (X,U)\mapsto (\overline{ \iota^{\topp,\prime} (P_{U}(X))},\partial_{h}X_{U})\ ,$$
where $G\CH^{(2)}$ denotes the category of pairs in $G\CH$.

   Assume now that $E^{G}:G\LCH^{+}\to \cC$ is a weak equivariant   Borel-Moore homology theory.
   Using the notation \eqref{woeirjojfnwkjerfwerfwr} it induces the functor
   $$E^{G}(-,-):G\CH^{(2)}\to \cC\ , \qquad (X,Y)\mapsto E^{G}(X,Y)\ .$$
  We have the functor $$(-_{2}):G\CH^{(2)}\to G\CH\ , \quad (X,Y)\mapsto Y$$
  and a natural transformation
  $$\partial: E^{G}(-,-)\to \Sigma E^{G}(-_{2}):G\CH^{(2)}\to \cC$$
  which for $(X,Y)$ in $G\CH^{(2)}$ is the boundary map
  $E^{G}(X,Y)\to \Sigma E^{G}(Y)$.
   
     
   We define a natural   transformation
     of functors     \begin{equation}\label{fvwerpojvofvdsvdfvds} E^{G}(\iota^{\topp,\prime} P_{-}(-))\to \Sigma E\circ  \partial_{h}:G\BC^{\cC} \to \cC\end{equation}  whose component at $(X,U)$ in $G\BC^{\cC}$ is the composition  
   \begin{equation}\label{fvwerpojvofvdsvdfvds1}E^{G}(\iota^{\topp,\prime}P_{U}(X)) \simeq  E^{G}( \overline{ \iota^{\topp,\prime} (P_{U}(X))},\partial_{h}X_{U}) \stackrel{\partial}{\to} \Sigma E^{G}( \partial_{h}X_{U})\to   \Sigma E^{G}( \partial_{h}X)  \ ,
\end{equation} where we use the strong excision property of $E^{G}$ for the first equivalence and the last map is induced by the canonical map  $X_{U}\to X$.     
  \begin{ddd} \label{ojohpertgtrgretgtgg}We define the topological transgression
  $$T^{G,\topp} :E^{G}\bP \to  \Sigma E^{G}\circ  \partial_{h}:G\BC\to \cC$$
  as the left Kan-extension of \eqref{fvwerpojvofvdsvdfvds}
   indicated in
  \begin{equation}\label{}\xymatrix{G\BC^{\cC}\ar[dd]\ar@/^0.3cm/[rr]^{\Sigma E^{G}\partial_{h}} \ar@/^-0.3cm/[rr]^{\Uparrow\eqref{fvwerpojvofvdsvdfvds}}_{E(\iota^{\topp,\prime}P_{-}(-))}& &\cC\ar@{=}[dd]\\&&\\G\BC\ar@{..>}@/^0.3cm/[rr]^{\Sigma E^{G}\partial_{h}}\ar@{..>}@/^-0.3cm/[rr]_{E^{G}\bP}^{T^{G,\topp}\Uparrow}&&\cC}
\end{equation}
 \end{ddd}

We refer to \cref{kophertgrtgretgertgertg} for the justification of the domain.
In order to see that we get the correct target we use that
the target of  \eqref{fvwerpojvofvdsvdfvds}  is pulled back along the forgetful functor  $G\BC^{\cC}\to G\BC$.

In our main diagram \eqref{ferwferfwrefw} we need the Borel-equivariant version of the topological transgression
\begin{equation}\label{iogjoiergjowerfwerfw}
(T^{\topp})^{hG}:(E\bP)^{hG}\to \Sigma E^{hG}\circ \partial_{h}\ .
\end{equation}
for a functor
$E$ in $\Fun(BG,\Fun_{\BM}(\CH,\cC))$.  
It is defined as the composition
$$(E\bP)^{hG}\stackrel{def}{\simeq} (\lim_{BG}E\bP)^{u}\xrightarrow{(\lim_{BG}T^{\topp})^{u}} (\lim_{BG} \Sigma E\circ \partial_{h} )^{u}\to
  \Sigma E^{hG}\circ \partial_{h}\ .$$
 The last map is induced by the canonical maps
\begin{align*}  (\lim_{BG} \Sigma E\circ \partial_{h} )^{u}(X)\stackrel{def}{\simeq} &\colim_{\cC_{X}^{G}} \lim_{BG} \Sigma E(\partial_{h}X_{U})\\ &\stackrel{\can}{\to} 
 \lim_{BG}  \colim_{U\in \cC_{X}^{G}}\Sigma E(\partial_{h}X_{U})\stackrel{\can}{\to}  
  \lim_{BG}   \Sigma E(\colim_{U\in \cC_{X}^{G}}\partial_{h}X_{U})
\to    \Sigma E^{hG}( \partial_{h}X )\end{align*}
where the last map  is induced by the canonical map (in fact an isomorphism) $$\colim_{U\in \cC_{X}^{G}}\partial_{h}X_{U}\to\partial_{h}X$$ in $G\CH$.
 %

\begin{rem}\label{oijhopterhertgtrgetrg}
Let $E^{G}:G\LCH^{+}\to \cC$ be a weak equivariant Borel-Moore homology theory. 
Then by \cref{khoeprthretgertge} and similarly as in \cref{jhtrhoeriotgertgetrhehhfj} we can construct a factorization
$$\xymatrix{&&\Sigma R(E^{G} \partial_{h})\ar[d] \\E^{G}\bP\ar[rr]^{T^{G,\topp}}\ar@{..>}[urr]&&\Sigma E^{G}\partial_{h}}\ .$$ The dotted arrow is the best approximation of the topological transgression
by a natural transformation between equivariant coarse homology theories.
Non-vanishing of $T^{G,\topp}$ implies non-vanishing of  $R(E^{G} \partial_{h})$ and of the dotted arrow.

Consider, e.g., the example of a trivial group and $X=\R^{n}$ in $\BC$ with the metric structures. Then we have a canonical morphism $q:\partial_{h} \R^{n}\to S^{n-1}$. Since $\R^{n}$ is coarsifying (see \cite[Def. 7.8 and Prop. 7.13]{ass})
we have $\Sigma^{n}E(*)\simeq E(\R^{n})\simeq E \bP(\R^{n})$ and
the composition \begin{equation}\label{gerfwerfweg542zze} \Sigma^{n}E(*)\simeq E \bP(\R^{n})\stackrel{T^{\topp}}{\to} \Sigma E(\partial_{h}\R^{n}) \stackrel{q}{\to} \Sigma E(S^{n-1})\stackrel{\simeq}{\to} \Sigma^{n}E(*)\oplus \Sigma E(*)
\end{equation} 
is the inclusion of the first summand.  \hB
\end{rem}

\section{Construction of fillers}\label{jogopwefewrfewfd}

%
%
%
%
%
%
%
%
%
%
%
%
%
%
%
%
%
%
%
%
%
%
%
%

 \subsection{Cone functors}\label{oijgowpererfwerfwref}

 In this subsection we recall the definitions of the cone functors
 $$\cO, \cO^{\infty}:G\UBC\to \BC$$ and the cone sequences \eqref{gerijoki0o5g4g}, \eqref{ferfwerfwerffrewe}.
 We refer to \cite[Sec. 5.3.3]{buen}, \cite[Sec. 8]{ass} for the non-equivariant, and to \cite[Sec. 9.4]{equicoarse} for the equivariant case. As there are slightly varying conventions how to treat the time variable we give complete definitions here.
 
Let $\iota:G\UBC\to G\BC$ be the forgetful functor. We first describe the functor $\cO^{\infty}$.   For $X$ in $G\UBC$ 
the cone $\cO^{\infty}(X)$ in $G\BC$ is obtained from $\iota(\R\otimes X)$ in $G\BC$  by taking the new $G$-coarse structure $\cC_{\cO^{\infty}(X)}$ consisting of all coarse entourages $U$ of $\R\otimes X$ with the additional property that for every uniform entourage
$V$ of $X$ there exists $s$ in $\R$ such that $U_{t}\subseteq V$ for  every $t$ in $[s,\infty)$, where
$U_{t}\subseteq X\times X$ is the restriction of $U$ to the slice $ (\{t\}\times X\})\times  (\{t\}\times X\})$.
 If $f:X\to X'$ is a morphism in $G\UBC$, then the map of sets    $ \id_{\R}\times f: \R\times X\to  \R\times X'$
  functorially induces a morphism $\cO^{\infty}(f):\cO^{\infty}(X)\to \cO^{\infty}(X')$.

 The cone $\cO(X)$ in $G\BC$  is defined as the subspace \begin{equation}\label{regwerfwerf234}\cO(X):=\{(t,x)\in \cO^{\infty}(X)\mid t\ge 0\}
\end{equation} of $\cO^{\infty}(X)$ with the induced structures.     \begin{ddd} \label{koopehrrtgetrgeg}The functors $\cO^{\infty},\cO:G\UBC\to G\BC$ described above are called the (geometric) cone-at-$\infty$ functor and the cone functor.
 \end{ddd}

 For $X$ in $G\UBC$ we have an inclusion
 $\iota X\to \cO(X)$, $x\mapsto (0,x)$. The cone boundary
 $\partial^{\cone}:\cO^{\infty}(X)\to \R\otimes X$
 is given by the identity of underlying $G$-sets.
 The sequence
 \begin{equation}\label{gerijoki0o5g4g}\iota X\to \cO(X)\to \cO^{\infty}(X)\stackrel{\partial^{\cone}}{\to} \iota(\R\otimes  X)
\end{equation}
 is called the geometric cone sequence.
Applying the universal equivariant coarse homology functor $\Yo^{G}:G\BC\to G\Sp\cX$ from \eqref{gweroihgwuierjfoiewrfiojwerofwerfw} and using  $\Yo^{G}\iota(\R\otimes X)\simeq \Sigma \Yo^{G} \iota X$ we get a fibre sequence
 \begin{equation}\label{ferfwerfwerffrewe}
 \Yo^{G}\iota X\to \Yo^{G}\cO(X)\to \Yo^{G}\cO^{\infty}(X)\to \Sigma \Yo^{G} \iota X
 \end{equation}
 in $G\Sp\cX$
 which is also called the cone sequence \cite[Cor. 9.30]{equicoarse}.
 The fibre sequence \eqref{ferfwerfwerffrewe} shows that
 $\Yo^{G}\cO^{\infty}(X)\simeq \Cofib(\Yo^{G}\iota X\to \Yo^{G}\cO(X))$ which motivates the name 
 "cone-at-$\infty$" for $\cO^{\infty}$.
 
 If $S$ is a $G$-set, then 
  $\cO(S_{min,min,disc})\cong [0,\infty)\otimes S_{min,min}$ is flasque. Hence in view of the fibre sequence  \eqref{ferfwerfwerffrewe}
 the cone  boundary induces an equivalence
 \begin{equation}\label{twrwrt344545e}
  \Yo^{G}\cO^{\infty}(S_{min,min,disc})\stackrel{\simeq}{\to} \Sigma \Yo^{G}S_{min,min}\ ,
\end{equation}
see also \cite[Prop.9.35]{equicoarse}.

 \begin{rem}\label{hgwiueghergwergwe}
If $E:G\BC\to \cC$ is a strong coarse homology theory, then the compositions of $E$ with the cone functors 
 are equivariant  local homology theories $$E\cO^{\infty}, E\cO:G\UBC\to \cC\ ,$$ see  \cite[Lem. 9.2]{ass} for the non-equivariant case, and also \cref{okhprertgertgrtger}. They are in particular homotopy invariant, annihilate flasques and   excisive for decompositions of simplicial complexes. So by \cref{pkehpetrherththgrtg} they can serve as an input for the coarsification. 
 
 In  the non-equivariant case,  the functor $E\cO^{\infty} \bP$ is the domain of the coarse assembly map whose fibre is represented by $E\cO  \bP$, see  \cite[Sec. 9]{ass}.
 \hB\end{rem}

 \subsection{Normalization at a point}\label{ojgwpegerfewrfwref}

This section provides our starting point for the stepwise construction of fillers of our squares.  We show that the homotopy Chern character
can be normalized such that  the diagram  \eqref{gwerwefwerfrefffw} commutes   when we insert
$Z=*$.

\begin{prop}\label{kophertgertgterg}
There exists a unique choice of the equivalence  \eqref{vdsfpovkspdfvfdvsf} used in the construction of $\ch^{h}$ such that the big cell in the diagram \eqref{gwerwefwerfrefffw}  commutes at $Z=*$.
\end{prop}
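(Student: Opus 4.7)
The plan is to evaluate both legs of the big cell of \eqref{gwerwefwerfrefffw} at $Z=*$ explicitly, identify each with a concrete morphism of $H\C$-module spectra, and then observe that requiring the two to agree determines \eqref{vdsfpovkspdfvfdvsf} uniquely. First I would carry out the reductions forced by $Z=*$. By the cone equivalence \eqref{twrwrt344545e} applied to the trivial orbit, every equivariant coarse homology theory $E$ occurring in \eqref{gwerwefwerfrefffw} satisfies $E(\cO^{\infty}(*))\simeq \Sigma E(*)$, while on the topological side $\iota^{\topp}_{u}(*)=*$ yields $\Sigma^{2k}H\C_{\BM}(\iota^{\topp}_{u}(*))\simeq \Sigma^{2k}H\C$. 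Using \cref{hkrtopgkoprtogkrtgretgert} together with $c^{\cL^{1}}_{\Hilb^{\fin}(\C)}$ from \eqref{vfojoivdsfvfdvswre}, one gets $K\cX^{\ctr}(*)\simeq K\cX(*)\simeq K(\C)\simeq KU$, and similar reductions identify $\PCH\cX(*)\simeq \PCH_{\cL^{1}}(\C)$, $\PH\cX(*,\C)\simeq H\C$, and $K^{\an}(*)\simeq K(\C)$. Both legs are therefore maps of the form $\Sigma K(\C)\to \Sigma\prod_{k\in\Z}\Sigma^{2k}H\C$ of $H\C$-module spectra.

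Next I would identify the two legs. The counterclockwise (algebraic) leg reduces, by \cref{iopgwregfrwefwerfrefw}, \eqref{werfwerferwfwrefref}, \eqref{wregwerfvdfs} and the observation that the periodic Borel--Moore character at a point is the canonical identification $\PH(*,\C)\simeq \prod_{k}\Sigma^{2k}H\C$, to the suspension of the classical algebraic Chern character
\[ \kappa^{\alg}:K(\C)\xrightarrow{\rmch^{GJ}\circ\Tr}\PCH(\C)\simeq \prod_{k\in\Z}\Sigma^{2k}H\C, \]
and crucially is independent of the choice in \eqref{vdsfpovkspdfvfdvsf}. The clockwise (analytic) leg is the composition of $c$ (an equivalence by \cref{hkrtopgkoprtogkrtgretgert}), $T^{\an}$, the projection $\pi$, and $\ch^{h}$. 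Tracing the construction \cref{rguweruigowerferfrfwr} together with the cone sequence \eqref{ferfwerfwerffrewe} shows that after the above identifications the composition $\pi\circ T^{\an}$ on $\cO^{\infty}(*)$ coincides with the suspension of the cone-boundary equivalence $K(\C)\xrightarrow{\simeq}K^{\an}(*)$; hence by the defining formula \eqref{bdfspokpvdfvsdfvsdvsfdv} of $\ch^{h}$, the clockwise leg is the suspension of
\[ \kappa^{\an}:K(\C)\simeq KU \longrightarrow KU\wedge H\C \xrightarrow{\;\eqref{vdsfpovkspdfvfdvsf}\;} \prod_{k\in\Z}\Sigma^{2k}H\C, \]
whose second arrow is precisely the equivalence to be fixed.

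Finally I would deduce uniqueness. The space of $H\C$-module equivalences $KU\wedge H\C\simeq \prod_{k\in\Z}\Sigma^{2k}H\C$ is a torsor under $\mathrm{Aut}_{H\C}\bigl(\prod_{k\in\Z}\Sigma^{2k}H\C\bigr)$, whose $\pi_{0}$ is the group $\prod_{k\in\Z}\C^{\times}$ acting factorwise. Since each graded component of $\kappa^{\alg}$ is nonzero (the Goodwillie--Jones--Chern character of $\C$ detects every power of the Bott class up to a nonzero rational multiple), there is exactly one element of this torsor transforming $\kappa^{\an}$ into $\kappa^{\alg}$, and hence a unique choice of \eqref{vdsfpovkspdfvfdvsf} for which the big cell of \eqref{gwerwefwerfrefffw} commutes at $Z=*$. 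The main obstacle I anticipate is the precise identification of $\pi\circ T^{\an}$ on $\cO^{\infty}(*)$ with the cone boundary, together with the analogous identification of $(T^{\topp})\circ\pi$ on the homological side; each is formal from the constructions in \cref{rguweruigowerferfrfwr} and \cref{ojohpertgtrgretgtgg} combined with \eqref{twrwrt344545e}, but assembling them coherently through the full composition is where the substantive bookkeeping lies.
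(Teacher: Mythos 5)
Your proposal follows essentially the same route as the paper's proof: evaluate at $Z=*$, reduce both legs to families of maps $\Z\to\C$ indexed by $k\in\Z$ (using the cone equivalence and \cref{hkrtopgkoprtogkrtgretgert} to identify the upper left corner with $\Sigma KU$), observe that only the clockwise leg depends on the choice in \eqref{vdsfpovkspdfvfdvsf}, and then normalize that choice componentwise, which is possible and unique because the relevant components are nonzero. The one point you underestimate is the identification of $\pi\circ T^{\an}$ on $\cO^{\infty}(*)$ with the cone-boundary equivalence: this is not purely formal bookkeeping but is obtained in the paper from \cref{khopperttrgeg} (with $T^{\mot}=\id$) combined with the Paschke equivalence of \cite[Thm. 1.5]{bel-paschke}, which is exactly how \cref{ophertgertgetrge} establishes $\phi_{k}\neq 0$.
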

\begin{proof}
We evaluate the diagram \eqref{gwerwefwerfrefffw} at $Z=*$. In the lower right corner we have 
$$\Sigma (\prod_{k\in \Z} \Sigma^{2k} H\C_{\BM})(\iota^{\topp}_{u}(*))\simeq \prod_{k\in \Z} \Sigma^{2k+1} H\C \ .$$
If $F$ is any spectrum, then the datum of a map
$f:F\to  \prod_{k\in \Z} \Sigma^{2k+1} H\C$ is equivalent to a family $(\pr_{k}\circ f)_{k\in \Z}$ of maps $\pr_{k}\circ f:F\to \Sigma^{2k+1}H\C$. For each $k$
such a map is in turn  uniquely determined up to equivalence by a map
$f_{k}:\pi_{2k+1}F\to \pi_{2k+1}\Sigma^{2k+1}H\C\cong \C$.
 
 By \eqref{twrwrt344545e}
 the cone boundary   induces an equivalence \begin{equation}\label{gijoergrtg}\partial^{\cone}:\Yo(\cO^{\infty}(*))\stackrel{\simeq}{\to}\Sigma \Yo(*)\ .
\end{equation} Hence $\Yo(\cO^{\infty}(*)) $  belongs to the localizing subcategory \eqref{fqwewefedqedew} 
(which in the present non-equivariant case is the one generated by $\Yo(*)$),
and the comparison map $c $ from \cref{kopgwegwerfrfwref1}  is an equivalence at 
$\cO^{\infty}(*)$ by  
\cref{gkopwreferfwerfrwef}.
The upper left corner of  \eqref{gwerwefwerfrefffw} at $Z=\cO^{\infty}(Z)$ can therefore be calculated
 using the equivalence $$ K\cX^{\ctr}(\cO^{\infty}(*))\stackrel{c}{\simeq} K\cX(\cO^{\infty}(*))\stackrel{\partial^{\cone},\eqref{gijoergrtg}}{\simeq} \Sigma K\cX(*)\simeq \Sigma KU \ .$$ We have 
$\pi_{2k+1} \Sigma KU=\Z$ and  $\pi_{2k}\Sigma KU\cong 0$ for every $k$ in $\Z$.   
 
In view of these calculations 
the transformations $\phi$ and $\psi$ are   given by families $(\phi_{k})_{k\in \Z}, (\psi_{k})_{k\in \Z}$
of maps $\phi_{k},\psi_{k}:\Z\to \C$. 
Thereby $\phi$ depends on the choice of the equivalence \eqref{vdsfpovkspdfvfdvsf} and $\psi$ does not.

We must show that we can normalize \eqref{vdsfpovkspdfvfdvsf}   and therefore $\ch^{h}$ uniquely such that 
$\phi_{k}=\psi_{k}$ for all $k$ in $\Z$.  The map $\phi_{k}$ depends homogeneously on
the component $$\Z\cong \pi_{2k+1}\Sigma KU\to \pi_{2k+1} \Sigma KU\wedge H\C\stackrel{\eqref{vdsfpovkspdfvfdvsf} }{\simeq} \pi_{2k+1} \prod_{k\in \Z} \Sigma^{2k+1} H\C \stackrel{\pr_{k}}{\to}  \pi_{2k+1}\Sigma^{2k+1}H\C\cong \C$$
 of \eqref{vdsfpovkspdfvfdvsf}.
 If we fix one non-zero choice and show that the resulting $\phi_{k}$ is not zero, then there is a unique normalization of this component so that $\phi_{k}=\psi_{k}$.

 \begin{lem}\label{ophertgertgetrge}
 The component $\phi_{k}$ is non-zero.
 \end{lem}
 \begin{proof}
By   \cref{khopperttrgeg} below we have a commutative triangle
$$
 \xymatrix{K\cX(\cO^{\infty}(*))\ar[rr]^{p^{\an}}\ar[dr]^{T^{\an}}&&\Sigma K^{\an}(\iota^{\topp}_{u}(*))\\&\Sigma K^{\an}(\partial_{h}\cO^{\infty}(*))\ar[ur]^{\pi}&}\ .$$
 Note that the map $T^{\mot}$ in \eqref{gweihrguiwheirfhiwefwerfwefwerf} is the identity and $k$ in \eqref{gweihrguiwheirfhiwefwerfwefwerf} can be identified with $\pi$   from \eqref{ewrferfrfvsdfvfd} when $X=\cO^{\infty}(Y)$ as in the our present situation. By \cite[Thm. 1.5]{bel-paschke} the morphism $p^{\an}$ is an equivalence. Therefore $\pi\circ T^{\an}$ is an equivalence, too. The component of $\ch^{h}$ on $*$ is precisely
$$\Sigma K^{\an}(*)\simeq \Sigma KU \to  \Sigma KU\wedge H\C\stackrel{\eqref{vdsfpovkspdfvfdvsf} }{\simeq}   \prod_{k\in \Z} \Sigma^{2k+1} H\C  \ .$$
 It follows that
 $$\phi_{k}:\Z\cong \pi_{2k+1} K\cX^{\ctr}(\cO^{\infty}(*))\xrightarrow{ \pr_{k}\circ \ch^{h} \circ \pi\circ T^{\an}\circ c} \pi_{2k+1} \Sigma^{2k+1}H\C\cong \C$$ is injective, in particular non-zero. 
 \end{proof} The \cref{ophertgertgetrge} finishes the proof of   \cref{kophertgertgterg}
 \end{proof}

  \subsection{The  motivic transgression}\label{gwegregweg}

  In this section we introduce the concept of the motivic transgression. 
  It will be used to extend the commutativity of  the diagram \eqref{gwerwefwerfrefw} from the case where
  $X$ is a cone over some uniform bornological coarse space $Y$ to the situation where $X$ only contains
   such a cone coarsely. We furthermore describe the corresponding projection map on the level of Higson coronas.
    
     We start with recalling   some basics about coarsely excisive pairs.
   Let $X$ be in $G\BC$.
    According to  \cite[Def. 3.3]{equicoarse} a subset $C$ of $X$ is nice if the inclusion $C\to U[C]$ is a coarse equivalence  for every invariant coarse entourage $U$ of $X$ containing the diagonal.
  Let  $A,B$ be invariant subspaces of $X$ such that $A\cup B=X$.  
 The following is the corrected version of
    \cite[Def. 4.13]{equicoarse} as stated in \cite[Def. 2.41]{Bunke:2021ab}.
    \begin{ddd}\label{kohperthertge}
    The pair $(A,B)$ is coarsely excisive, if 
    $A$, $A\cap B$ are nice in $X$ and $V[A]\cap B$ is nice in $X$ for a cofinal family of invariant entourages $V$ of $X$, and for every  coarse entourage $U$ of $X$ there exists another coarse entourage $V$ such that
    $U[A]\cap U[B]\subseteq V[A\cap B]$.
    \end{ddd}

     \begin{rem}\label{korethkpertgertgertg}
   The point of  \cref{kohperthertge}  is that it implies for a coarsely excisive pair $(A,B)$  that the square
  $$ \xymatrix{\Yo^{G}(A\cap B)\ar[r]\ar[d] & \Yo^{G}(A)\ar[d] \\ \Yo^{G}(B)\ar[r] &\Yo^{G}(X) } $$
   is a push-out in $G\Sp\cX$. \footnote{The conditions stated in  \cite[Def. 4.13]{equicoarse} are not sufficient to get this conclusion as stated in \cite[Cor. 4.14]{equicoarse}.}\hB
\end{rem}
   
        \begin{lem} \label{kohperthertgertgertg}If   $$\xymatrix{A\cap B\ar[r]\ar[d] &A \ar[d] \\ B\ar[r] &X } $$
   is a push-out in $G\BC$, $A$ and $A\cap B$ are nice in $X$,  and
   $V[A]\cap B$ is nice in $X$ for a cofinal family of invariant entourages $V$ of $X$, then the pair
   $(A,B)$ is coarsely excisive. 
   \end{lem}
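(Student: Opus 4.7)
The plan is to verify the third clause of \cref{kohperthertge}, since the niceness clauses are given by hypothesis. That clause requires that for every invariant coarse entourage $U$ of $X$ there is an invariant coarse entourage $V$ with $U[A]\cap U[B]\subseteq V[A\cap B]$.

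First I would translate the pushout hypothesis into a concrete description of the coarse structure on $X$. Because both legs of the span $A\leftarrow A\cap B\to B$ are inclusions, the pushout in $G\BC$ is carried by the set $X=A\cup B$, and its universal property forces $\cC_X^G$ to be exactly the $G$-coarse structure generated by $\cC_A^G\cup\cC_B^G$ (viewed inside $X\times X$ via the inclusions). In particular, for any symmetric $U\in\cC_X^G$ containing the diagonal, I may fix a decomposition $U\circ U\subseteq V_1\circ\cdots\circ V_n$ with $V_1,\dots,V_n\in\cC_A^G\cup\cC_B^G$.

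Next I would run a zig-zag chase. Given $x\in U[A]\cap U[B]$, pick $a\in A$ and $b\in B$ with $(a,x),(x,b)\in U$; then $(a,b)\in U\circ U$ yields a chain $a=x_0,x_1,\dots,x_n=b$ with $(x_{i-1},x_i)\in V_i$. Each $V_i$ lies either in $A\times A$ or in $B\times B$, so the chain's ``colour'' begins as $A$ and ends as $B$; at the first transition index $i_0$ the intermediate vertex $x_{i_0}$ is forced to lie in $A\cap B$. Setting $V:=(V_1\circ\cdots\circ V_n)\circ U$, which is an invariant coarse entourage of $X$ because each factor is invariant, one obtains $(x_{i_0},x)\in V$ and hence $x\in V[A\cap B]$, as required.

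The principal technical point to pin down is the identification of the pushout coarse structure on $X$ with the one generated by $\cC_A^G\cup\cC_B^G$. Once this characterization is established the zig-zag step is formal, and no further input beyond the stated niceness hypotheses is needed to confirm the remaining clauses of \cref{kohperthertge}.
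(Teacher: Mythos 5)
Your proposal is correct in substance, but it reaches the conclusion by a noticeably different route than the paper. Both arguments only have to verify the last clause of \cref{kohperthertge}, and both use the pushout hypothesis in exactly one place, namely to know that $\cC_X$ is generated by $\cC_A\cup\cC_B$ (you make this explicit; the paper phrases it as "$\cC'$ contains the generators of $\cC$"). The paper then proves a two-letter normal form: it introduces the family $\cC'$ of entourages contained in some $V\circ W\cup W\circ V$ with $V\in\cC_A$, $W\in\cC_B$, checks that $\cC'$ is itself a coarse structure — the key step, closure under composition, rests on the same observation as your colour-change argument, that in a mixed composition the intermediate point is forced into $A\cap B$, which lets one re-bracket $V\circ W\circ V'\circ W'$ into the form $V''\circ W''$ — and concludes $\cC'=\cC_X$. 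With that normal form the estimate $U[A]\cap U[B]\subseteq(V\cup W)[A\cap B]$ is a one-line case distinction. Your zig-zag chase through an arbitrary finite word in the generators avoids having to verify that $\cC'$ is a coarse structure, at the cost of more bookkeeping; the paper's version buys a cleaner structural statement (every entourage of $X$ is dominated by a word of length two) from which excisiveness drops out immediately.

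Three points in your write-up need tightening, all routine: (i) an entourage of the coarse structure generated by $\cC_A\cup\cC_B$ is in general only contained in a finite \emph{union} of finite compositions of generators, not in a single composition $V_1\circ\cdots\circ V_n$; handle each term of the union separately and take the union of the resulting entourages at the end. (ii) The entourage you output must be independent of $x$, but the prefix $V_1\circ\cdots\circ V_{i_0-1}$ that actually relates the $A\cap B$-vertex to $a$ varies with $x$, and compositions are not monotone in length; fix this by replacing each $V_i$ with $V_i\cup\diag(X)$ (still an entourage of $X$), or by taking $V:=\bigcup_{k}(V_1\circ\cdots\circ V_k)^{-1}\circ U$. (iii) It is the vertex $x_{i_0-1}$ (the last one of the initial colour) that lands in $A\cap B$, not $x_{i_0}$, and you should record the degenerate case in which the chain never leaves $A$, so that $b\in A\cap B$ and $x\in U^{-1}[A\cap B]$; symmetrizing $U$ and the $V_i$ disposes of all orientation issues. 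None of these affects the viability of the argument.
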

   \begin{proof}
   We must verify the last condition stated in \cref{kohperthertge}.
   We define a coarse structure $\cC'$ on $X$ as follows:
   An entourage $U$ of $X$ belongs to $\cC'$ if
there exist coarse entourages $V$ of $A$ and $W$ of $B$ such that
$$U\subseteq V\cup W\cup V\circ W\cup W\circ V\ .$$
We have $\diag(X)\in \cC'$ and $\cC'$ is closed under taking subsets.
We have
$U^{-1}\subseteq V^{-1}\cup W^{-1}\cup W^{-1}\circ V^{-1}\cup V^{-1}\circ W^{-1}$ so that $\cC'$ is closed under the flip.
If $U'\subseteq  V'\cup W'\cup V'\circ W'\cup W'\circ V'$, then  
\begin{equation}\label{gwerfewrfvdfsv}U\cup U'\subseteq V\cup V' \cup W\cup W'\cup (V\cup V')\circ (W\cup W')\cup (W\cup W')\circ (V\cup V')\ .
\end{equation}
We see that $\cC'$ is closed under finite unions.
We now show that $\cC'$ is closed under compositions.   We note that
$U\circ U'$ is the union of sixteen terms. In view of \eqref{gwerfewrfvdfsv} it suffices to see that all of them belong to $\cC'$.
The two terms of the form $V\circ V'$ or $W\circ W'$ clearly belong to $\cC'$.
There are further six terms of the form $$V\circ W' \ , W\circ V' \ ,V\circ V'\circ W'\ , W\circ W'\circ V'\ , V\circ W\circ W'\ , W\circ V\circ V'$$ which belong to  $\cC'$. 
There are six terms
$$ V\circ W'\circ V'\ , W\circ V'\circ W'\ , V\circ W\circ V'\ , W\circ V\circ W'\ ,   V\circ W\circ W'\circ V'\ , W\circ V\circ V'\circ W'\ ,$$ 
and finally the  two terms 
$$    V\circ W\circ V'\circ W' \ ,V\circ W\circ W'\circ V'  \ .$$
All these terms are shown to be in $\cC'$ by the following kind of argument.
 The restriction of $W\circ V'$ to $A\cap B$ is an entourage of $A\cap B$.
Hence we  have $V\circ W\circ V'  \subseteq V''$ and $ V\circ W\circ V'\circ W' \subseteq V''\circ W'$ for suitable $V''$. A similar reasoning applies to the other terms. 
We conclude that $U\circ U'\in \cC'$.  
 
 The arguments above show that  $\cC'$ is a coarse structure.
We have $\cC'\subseteq \cC$ and $\cC'$ contains the generators of $\cC$. It follows that $\cC'=\cC$.
We now observe that
$$U[A]\cap U[B] \subseteq (V\cup W\cup V\circ W\cup W\circ V)[A\cap B]\ .$$   \end{proof}


 
%

We consider $X'$ in $G\BC$ and $Y$ in $G\UBC$. We form  the cone
$\cO(Y)$ in $G\BC$ according to    \cref{koopehrrtgetrgeg}, and further assume a coarse subspace inclusion
$\cO(Y)\hookrightarrow X'$. We let $Z:=X'\setminus (0,\infty)\times Y$ denote the complement of this cone.   
We then define $\hat X'$ in $G\BC$ by the push-out
 \begin{equation}\label{gwerfwerfw3fweferfwer}\xymatrix{ (-\infty,0]\otimes  \iota Y \ar[r]\ar[d] &\cO^{\infty}(Y)  \ar[d]^{j} \\ (-\infty,0]\otimes Z \ar[r] &\hat X' } \ .
\end{equation}  We have a canonical inclusion $i:X'\to \hat X'$ which identifies $\cO(Y)$ as a subspace of $X'$ with the positive part of the cone $\cO^{\infty}(Y)$ and the subspace $Z$ of $X'$ with $\{0\}\times Z$.
   \begin{lem} The decomposition $( (-\infty,0]\otimes Z,\cO^{\infty}(Y))$ of $\hat X'$ is coarsely excisive.
   \end{lem}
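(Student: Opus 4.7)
The plan is to apply \cref{kohperthertgertgertg} to the pair $(A,B) = ((-\infty, 0]\otimes Z,\, \cO^{\infty}(Y))$ sitting in $\hat X'$. The pushout hypothesis holds by the definition \eqref{gwerfwerfw3fweferfwer} of $\hat X'$, so the task reduces to verifying three niceness conditions: that $A$ and $A\cap B = (-\infty,0]\otimes \iota Y$ are nice in $\hat X'$, and that $V[A]\cap B$ is nice in $\hat X'$ for a cofinal family of invariant coarse entourages $V$ of $\hat X'$.

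The first step is to unpack the coarse structure of $\hat X'$. Since $\hat X'$ is built as a pushout along $(-\infty,0]\otimes \iota Y$, a subset of $\hat X'\times \hat X'$ is an invariant coarse entourage precisely when its restrictions to $A\times A$ and to $B\times B$ are invariant coarse entourages of $A$ and of $B$ respectively. The coarse structure of $B = \cO^{\infty}(Y)$ from \cref{koopehrrtgetrgeg} enforces that every entourage is bounded in the $\R$-direction and satisfies the cone condition: for every uniform entourage $W$ of $Y$ and every coarse entourage $U$ of $B$ there is $s$ with $U\cap(\{t\}\times Y)^{\times 2}\subseteq W$ for all $t\geq s$. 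The consequence I need is that for any invariant entourage $U$ of $\hat X'$ containing the diagonal there is a constant $c=c(U)\geq 0$ with $U[A]\subseteq A\cup \bigl([0,c]\times Y\bigr)$, and the $Y$-spread of $U[A]\cap B$ at time $t$ is controlled by a fixed uniform entourage of $Y$.

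The second step is to verify niceness of $A$ and of $A\cap B$. Given an invariant entourage $U$ of $\hat X'$ containing the diagonal, I define an equivariant retraction $r\colon U[A]\to A$ by the identity on $A$ and by $r(t,y) := (0, y)$ for $(t,y)\in (0,c]\times Y$, where the target point is interpreted via the identification $\iota Y \hookrightarrow Z$ induced by $\cO(Y)\hookrightarrow X'$ at time $0$. The map $r$ is a morphism in $G\BC$ because the $\R$-extent of $U$ is bounded and the inclusion $\iota Y\hookrightarrow Z$ is controlled; moreover $r$ is close to the inclusion $A\hookrightarrow U[A]$, the displacement being controlled by an entourage depending only on $U$. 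Hence $A\hookrightarrow U[A]$ is a coarse equivalence. The same argument restricted to $(-\infty,0]\otimes \iota Y\subseteq A$ gives niceness of $A\cap B$.

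The third step is the condition on $V[A]\cap B$. I take the cofinal family $V_{n}$ obtained by enlarging the diagonal of $\hat X'$ by the entourage $\{((s,y),(t,y'))\in B\times B : |s-t|\leq n\}$ together with a matching invariant entourage on $A$; then $V_{n}[A]\cap B = [0,n]\times \iota Y$, and the projection $(t,y)\mapsto (0,y)\in A\cap B$ provides a coarse equivalence to $A\cap B$, which is nice by the previous step, so $V_{n}[A]\cap B$ is nice in $B$ and, since it lies entirely in $B$, also in $\hat X'$. The main obstacle throughout is the bookkeeping of the hybrid coarse structure on the pushout: one has to check that each retraction produced really is controlled both for the $A$-component and for the $B$-component of an entourage, and this is exactly where the cone condition from \cref{koopehrrtgetrgeg} and the assumption that $\cO(Y)\hookrightarrow X'$ is a coarse subspace inclusion are used; once that bookkeeping is done, the niceness verifications are routine.
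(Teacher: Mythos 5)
Your overall strategy is the paper's: reduce to \cref{kohperthertgertgertg} via the pushout \eqref{gwerfwerfw3fweferfwer} and verify the niceness hypotheses by collapsing a collar $(0,c]\times Y$ onto $\{0\}\times Y$; your treatment of the niceness of $A=(-\infty,0]\otimes Z$ and of $A\cap B$ is essentially the paper's retraction. The gap is in the third hypothesis. \cref{kohperthertge} and \cref{kohperthertgertgertg} require $V[A]\cap B$ to be nice for a \emph{cofinal} family of invariant entourages $V$ of $\hat X'$, and the family $V_{n}$ you propose is not cofinal: a general invariant entourage of $\hat X'$ has, on the $\cO^{\infty}(Y)$-part, nontrivial $Y$-spread at finite times (any coarse entourage of $\cO^{\infty}(Y)$ compatible with the cone condition is allowed), and such an entourage is not contained in one that only shifts the $\R$-coordinate while keeping $y$ fixed. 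Worse, as literally written your $V_{n}$ leaves $y'$ unconstrained, and $\{((s,y),(t,y'))\in B\times B : |s-t|\le n\}$ is not a coarse entourage of $\cO^{\infty}(Y)$ unless $Y$ is coarsely bounded (it fails both the requirement of being an entourage of $\R\otimes \iota Y$ and the cone condition at large times). So the hypothesis of \cref{kohperthertgertgertg} has not actually been verified.

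The repair is what the paper does: prove niceness of $V[A]\cap B$ for \emph{every} invariant entourage $V$ of $\hat X'$ (the case where $V$ is the diagonal then also yields niceness of $A\cap B$). For arbitrary $V$ one still has $A\cap B\subseteq V[A]\cap B\subseteq (-\infty,c]\times Y$ with $c$ depending only on $V$, because entourages of $\cO^{\infty}(Y)$ have bounded $\R$-displacement and the mixed part of an entourage of $\hat X'$ factors through $A\cap B$; then, for any further entourage $U$, the complement of $V[A]\cap B$ in $U[V[A]\cap B]$ consists of points $(t,y)$ with $0<t<c'$ for $c'$ depending only on $U$ and $V$, and the map fixing $V[A]\cap B$ and sending such a point to $(0,y)$ is a controlled and proper coarse inverse of the inclusion --- the same retraction you already used in your second step (your sandwich comparison with $A\cap B$ would also go through for arbitrary $V$ in this form). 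One further caution: your description of the coarse structure of the pushout --- that a subset is an entourage iff its restrictions to $A\times A$ and $B\times B$ are entourages --- is not correct; by the proof of \cref{kohperthertgertgertg} the entourages of $\hat X'$ are exactly those contained in some $V_{A}\circ V_{B}\cup V_{B}\circ V_{A}$ and in particular contain cross pairs. The consequence you actually use, namely $U[A]\subseteq A\cup ((0,c]\times Y)$ for a constant $c$ depending only on $U$, is nevertheless true and follows from that composition description.
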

   \begin{proof}
We want to   apply \cref{kohperthertgertgertg} for $A= (-\infty,0]\otimes Z$ and $B= \cO^{\infty}(Y)$.
To this end  it suffices to check that $V[(-\infty,0]\otimes Z)]\cap \cO^{\infty}(Y)$ is nice
   in $\hat X$ for every invariant coarse entourage $V$ of $\hat X'$, and that 
$(-\infty,0]\otimes Z$  is nice in $X$.      The case of $V=\diag(X)$ then settles niceness of $(-\infty,0]\otimes Z)\cap \cO^{\infty}(Y)$.
   
   
   For the first condition  must show that 
   for any invariant  coarse entourage
   $U$ of $\hat X'$   the map 
   $$V[(-\infty,0]\otimes Z]\cap \cO^{\infty}(Y) \to U[V[(-\infty,0]\otimes Z]\cap \cO^{\infty}(Y)]$$
   is an equivariant  coarse equivalence. 
  The complement of $ V[(-\infty,0]\otimes Z] \cap \cO^{\infty}(Y)$ in $U[V[(-\infty,0]\otimes Z]\cap \cO^{\infty}(Y)]$ consists of points $(t,y)$ in $\cO^{\infty}(Y)$ with $t\in [0,c]$ for some $c$ only depending on $U$ and $V$.
 An equivariant  coarse  inverse of the inclusion is given by
   the map
   which  is the identity on $V[(-\infty,0]\otimes Z]\cap \cO^{\infty}(Y)$ and sends the point $(t,y)$   in the complement of this set to $(0,y)$ 
   
 We now show that  $(-\infty,0]\otimes Z$  is nice in $X$.  Let $U$ be an invariant  coarse entourage    of $X$.
 We must show that
 $(-\infty,0]\otimes Z\to U[(-\infty,0]\otimes Z]$ is an equivariant coarse equivalence.
  We define a coarse inverse $ U[(-\infty,0]\otimes Z]\to (-\infty,0]\otimes Z$ as follows.
  This map is the identity on $ (-\infty,0]\otimes Z$. Any point in $U[(-\infty,0]\otimes Z]\setminus (-\infty,0]\otimes Z$
  is of the form $(t,y)$ for $t\in [0,c]$ with $c$ only depending on $U$. Our map sends this point to $(0,y)$ in $(-\infty,0]\otimes Z$.
   \end{proof}

   The
  flasqueness of the spaces in the left column of \eqref{gwerfwerfw3fweferfwer} shows that the inclusion $j$ induces an equivalence \begin{equation}\label{ufwiuew9fuwq9e8ud9qwdwed}\Yo^{G}(j):\Yo^{G}(\cO^{\infty}(Y))\stackrel{\simeq}{\to} \Yo^{G}(\hat X')\ .
\end{equation}
 The map 
 \begin{equation}\label{gwerfwerfrwerfref234} \Yo^{G}(X')\stackrel{i}{\to} \Yo^{G}(\hat X') \stackrel{j,\simeq}{\leftarrow} \Yo^{G}(\cO^{\infty}(Y))
\end{equation}
 in $G\Sp\cX$ is a special case of the motivic transgresion.
 
   We now generalize the geometric situation.
 Let $\cO(Y)\to X'$  be as above,   
  $X$ be in $G\BC$,  and $\psi:X\to X'$ be a morphism in 
 $G\BC$. 
 \begin{ddd}\label{kophrgertgertrgertgergert}
 In this situation we say that the motivic transgression 
 \begin{equation}\label{bsoidjfiovwefsdvsfdvsdfvsdfsv}T^{\mot}: \Yo^{G}(X)\xrightarrow{\Yo(\psi)} \Yo^{G}(X')\xrightarrow{\eqref{gwerfwerfrwerfref234}}  \Yo^{G}(\cO^{\infty}(Y))\ .
\end{equation} is defined.
 \end{ddd}

 The construction of the motivic transgression is complemented by   projection maps on the level of Higson coronas which we describe now.
 In $X'$ we consider
the  invariant big family 
 $\cY$    generated by the  bounded subsets of $\{0\}\times Y$. Note that $\cY\subseteq \cB$, where $\cB$ denotes  the bornology of $X'$.
 We define a homomorphism of commutative $G$-$C^{*}$-algebras \begin{equation}\label{ewrfwerfwrfwefer}C_{u,0}(Y)\to \ell^{\infty}_{\cY}( X')\ ,  \quad \phi\mapsto \tilde \phi\ ,
\end{equation}
where $\tilde \phi(t,y):=\phi(y)$ for $(t,y)$ in $\cO(Y)$ and $\phi(x)=0$ for $x\in Z$
(the justification is similar is \cite[Lem. 5.7]{bel-paschke}).
This gives a homomorphism of commutative $G$-$C^{*}$-algebras
$$C_{u,0}(Y)\to \ell^{\infty}_{\cY}(X')/\ell^{\infty}(\cY)\to  \ell^{\infty}_{\cB}(X')/\ell^{\infty}(\cB)=C(\partial_{h}X')\ .$$
Its Gelfand dual is a map
\begin{equation}\label{gwerfwerfwerf252}
\partial_{h}X'\to \iota^{\topp}_{u}(Y)\ .
\end{equation}
\begin{ddd} If the motivic transgression  \eqref{bsoidjfiovwefsdvsfdvsdfvsdfsv} is defined, then we define the map 
 \begin{equation}\label{ferfjwerfmerfireojoijo}k:  \partial_{h} X\xrightarrow{\partial_{h}\psi}\partial_{h}  X'\xrightarrow{\eqref{gwerfwerfwerf252}} \iota_{u}^{\topp}(Y)\ .
\end{equation}
\end{ddd}

 \begin{ex} Let $Y$ be in $G\UBC$.
 For $X=X'=\cO(Y)$ the motivic transgression 
 $T^{\mot}:\Yo^{G}(\cO(Y))\to \Yo^{G}( \cO^{\infty}(Y))$
 is defined and induced by the canonical inclusion $\cO(Y)\to \cO^{\infty}(Y)$.
 In this case the map $k$ is given by the composition
\begin{equation}\label{kghguhwiuerhgiuergfwerfwerfwerf}k:\partial_{h} \cO(Y)\to\partial_{h} \cO^{\infty}(Y)\stackrel{\eqref{ewrferfrfvsdfvfd}}{\to} \iota^{\topp}_{u}(Y)\ .
\end{equation} \hB
 \end{ex}

 \subsection{The analytic Paschke morphism}

The following theorem compares the $K$-theoretic transgression introduced in   \cref{rguweruigowerferfrfwr}  with the 
 analytic Paschke transformation
\begin{equation}\label{verfwerffvsfdv}p^{G,\an}:K\cX^{G}_{\bC,A,G_{can,max}}\circ \cO^{\infty}(-)  \to   K^{G,\an}_{\bC,A}\circ \iota_{u}^{\topp}:G\UBC\to \Mod(KU)
\end{equation} from  \cite[Thm. 1.5 \& Def. 6.3]{bel-paschke}.
 We assume that the motivic transgression is defined according to \cref{kophrgertgertrgertgergert}.

\begin{theorem}\label{khopperttrgeg}
We have a commutative square \begin{equation}\label{gweihrguiwheirfhiwefwerfwefwerf}\xymatrix{K\cX^{G}_{\bC,A,G_{can,max}}( X)\ar[d]_{T^{\mot}}\ar[r]^{T^{G,\an}} &\ar[d]^{k, \eqref{ferfjwerfmerfireojoijo}}
\Sigma K^{G,\an}_{\bC,A}( \partial_{h}X)  \\ 
 K\cX^{G}_{\bC,A,G_{can,max}}( \cO^{\infty}(Y))\ar[r]^-{p^{G,\an}}
& \Sigma K^{G,\an}_{\bC,A }( \iota_{u}^{\topp}(Y)) 
}\ . 
\end{equation}
\end{theorem}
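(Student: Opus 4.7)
The plan is to reduce the commutativity of the square to a direct comparison of the multiplication morphisms used in the constructions of $T^{G,\an}$ and $p^{G,\an}$ for the cone $\cO^{\infty}(Y)$ itself, and then appeal to naturality of $T^{G,\an}$ along the geometric zig-zag defining $T^{\mot}$.

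First I would unfold $T^{\mot}$. By \cref{kophrgertgertrgertgergert} it is the composition of $\Yo^{G}(\psi)$ with the zig-zag $\Yo^{G}(X')\xrightarrow{i}\Yo^{G}(\hat X')\xleftarrow{j,\simeq}\Yo^{G}(\cO^{\infty}(Y))$, where the right-pointing equivalence comes from \eqref{ufwiuew9fuwq9e8ud9qwdwed}. Extending the coarse homology theory $K\cX^{G}_{\bC,A,G_{can,max}}$ to motives, we obtain the corresponding zig-zag of spectra; similarly $k$ factors through $\partial_{h}\psi$ and $\partial_{h}i$ via the natural map $\partial_{h}\hat X'\to \iota^{\topp}_{u}(Y)$ produced by \eqref{ewrfwerfwrfwefer} applied to $\hat X'$. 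Exploiting naturality of $T^{G,\an}$ with respect to the morphisms $\psi$, $i$, $j$ in $G\BC$, together with functoriality of $\partial_{h}$, the whole square is assembled from such naturality squares, so its commutativity reduces to that of the triangle
\[\xymatrix{K\cX^{G}_{\bC,A,G_{can,max}}(\cO^{\infty}(Y))\ar[r]^{T^{G,\an}}\ar[dr]_{p^{G,\an}} & \Sigma K^{G,\an}_{\bC,A}(\partial_{h}\cO^{\infty}(Y))\ar[d]^{k} \\ & \Sigma K^{G,\an}_{\bC,A}(\iota^{\topp}_{u}(Y))}\]
where the vertical $k$ is the special case \eqref{kghguhwiuerhgiuergfwerfwerfwerf} of \eqref{ferfjwerfmerfireojoijo}.

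Second I would establish this remaining triangle by comparing the explicit formulas. Both $T^{G,\an}$ at $\cO^{\infty}(Y)$ and the Paschke morphism $p^{G,\an}$ of \cite{bel-paschke} are built by the same three-step recipe of \cref{rguweruigowerferfrfwr}: a diagonal twist, a multiplication morphism into $\bQ^{(G)}_{\std}$, and the boundary map of the extension \eqref{hrtge3g43gtrgegr}. The transgression uses the entire algebra $C(\partial_{h}\cO^{\infty}(Y))$ together with the multiplication $\mu_{\cO^{\infty}(Y)}$ of \cref{ugwerigwerfrefw}, while the Paschke morphism uses only the subalgebra $C_{u,0}(Y)\simeq C_{0}(\iota^{\topp}_{u}(Y))$. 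Since $k$ is Gelfand dual to the composition of \eqref{ewrfwerfwrfwefer} with the quotient map $\ell^{\infty}_{\cB}\to C(\partial_{h}\cO^{\infty}(Y))$, the commutativity of the triangle reduces to showing that the restriction of $\mu_{\cO^{\infty}(Y)}$ along $C_{u,0}(Y)\to C(\partial_{h}\cO^{\infty}(Y))$ agrees with the multiplication used in the definition of $p^{G,\an}$. By direct inspection both send $[f]\otimes A$ to $[\nu(\tilde f)A]$ in $\bQ^{(G)}_{\std}$; the diagonals and the boundary in \eqref{hrtge3g43gtrgegr} appear identically in both constructions, so the triangle commutes.

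The main obstacle is the bookkeeping in this final identification. It is conceptually routine, but it does require carefully matching the set-up of \cite{bel-paschke} (where $p^{G,\an}$ is constructed with respect to the cone and the asymptotic structure at infinity) with that of \cref{ugwerigwerfrefw} (which uses the Higson corona of $\cO^{\infty}(Y)$), and verifying that for $f\in C_{u,0}(Y)$ the extension $\tilde f$ of \eqref{ewrfwerfwrfwefer} lies in $\ell^{\infty}_{\cB}(\cO^{\infty}(Y))$ and has vanishing $U$-variation outside bounded subsets. The latter follows from the definition of the coarse structure on $\cO^{\infty}$, which forces the propagation of $U$-neighborhoods in the $Y$-direction to decay at $t\to\infty$, combined with the uniform continuity of $f$ on $Y$; this is essentially the content of \cite[Lem.~5.7]{bel-paschke} cited in the construction of \eqref{ewrfwerfwrfwefer}.
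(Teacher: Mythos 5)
Your proposal is essentially correct, but it takes a different route from the paper. You first use naturality of $T^{G,\an}$ along $\psi$, $i$, $j$ together with the compatibility of the corona maps (that $k$ factors through a map $\partial_{h}\hat X'\to \iota^{\topp}_{u}(Y)$ built as in \eqref{ewrfwerfwrfwefer}, and that this map restricts along $\partial_{h}j$ to $\pi_{Y}$) to reduce the square to the triangle $\pi_{Y}\circ T^{G,\an}_{\cO^{\infty}(Y)}\simeq p^{G,\an}_{Y}$, and only then compare multiplication morphisms. The paper instead keeps the general $X$ throughout: it unfolds $T^{\mot}$ explicitly at the level of Roe categories (push-forward along $\psi$, quotient by $\bV^{G}_{\bC}$ supported near $(-\infty,0]\times Z$, and an explicit inverse of the unitary equivalence induced by $j$, which involves choosing images of the projections $\nu'(\cO^{\infty}(Y))$), and then observes that both composites are induced by multiplications $\mu,\mu':C_{u,0}(Y)\otimes \bV^{G}_{\bC}(X)\to \bQ^{(G)}_{\std}$ composed with the diagonal; the final identification is the relation $\tilde\phi A=\tilde\phi\,\nu'(\cO(Y))A$ together with the MvN equivalence implemented by the isometries $\nu'(\cO(Y))C\to C$, which becomes an equivalence after applying $\ee$. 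What your route buys is that you never have to invert $j$ on the Roe-category level nor invoke the MvN argument, since after the reduction $T^{\mot}$ has disappeared; what it costs is the extra bookkeeping of the corona-map compatibilities in the reduction and, in the remaining triangle, a dependence on the precise normalization of the Paschke multiplication in the cited construction of $p^{G,\an}$. On that last point your phrase that the two multiplications ``literally agree'' should be softened: depending on how the Paschke multiplication is cut down to the positive part of the cone, one may only get agreement in $\bQ^{(G)}_{\std}$ up to an MvN equivalence of exactly the kind the paper uses, which is harmless for the $E$-theoretic conclusion but is where the real matching with the reference has to be done.
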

\begin{proof}
Both maps  $k\circ T^{G,\an}_{X}$ and $p_{Y}^{G,\an}\circ T^{\mot}$ are induced by the composition of multiplication maps of $C^{*}$-categories  $$\mu,\mu':C_{u,0}(Y)\otimes \bV_{\bC}^{G}(X)\to \bQ^{(G)}_{\std}$$  with the diagonal $\delta_{Y}$ similar as in \eqref{vwiejviowevdsfvsdfv}. 
 The result will follow from a comparison of these multiplication  maps.
 
  We first describe $\mu$ by unfolding the definitions.  It sends the object $(C,\rho,\nu)$ of $C_{u,0}(Y)\otimes \bV_{\bC}^{G}(X) $  to the object $(C,\rho)$  in $ \bQ^{(G)}_{\std}$ and the morphism $\phi \otimes A: (C,\rho,\nu)\to (C',\rho',\nu')$ in $C_{u,0}(Y)\otimes \bV_{\bC}^{G}(X) $ to  the morphism $[\tilde \phi A]$ in $ \bQ^{(G)}_{\std}$,
  where  $[\tilde \phi A]$ is an abbreviation for
  $[\nu'(\psi^{*}\tilde \phi)A]$, see  \eqref{ewrfwerfwrfwefer} and
 \eqref{giouweiorjgoiwejfioeferwfw43} for notation.
  
 Unfolding \cref{kophrgertgertrgertgergert} the map $T^{\mot}$ is induced by the functor
 \begin{equation}\label{vdsfoijvosdfvsvfsfv} \bV_{\bC}^{G}(X)\stackrel{\psi_{*}}{\to}  \bV_{\bC}^{G}(\hat X')\to  \bV_{\bC}^{G}(\hat X')/ \bV_{\bC}^{G}(\{(-\infty,0]\times Z\}\subseteq \hat X')
\end{equation}  followed by an inverse of the unitary equivalence induced by $j$ in \eqref{gwerfwerfw3fweferfwer}
  $$   \bV_{\bC}^{G}(\cO^{\infty}(Y))/ \bV_{\bC}^{G}(\{ (-\infty,0]\times Y\}\subseteq \cO^{\infty}(Y))\to \bV_{\bC}^{G}(\hat X')/ \bV_{\bC}^{G}(\{(-\infty,0]\times Z\}\subseteq \hat X')$$ (see \eqref{ufwiuew9fuwq9e8ud9qwdwed} for the motivic version).
  For the inverse we choose a functor
  which sends the object $(C,\rho,\nu')$  in $\bV_{\bC}^{G}(\hat X')$ to the object $(\nu'(\cO^{\infty}(Y))C, \nu'(\cO^{\infty}(Y))\rho,\nu'_{|\cO^{\infty}(Y)})$
  and the morphism    $[A]$  in $\bV_{\bC}^{G}(\hat X')/ \bV_{\bC}^{G}(\{(-\infty,0]\times Z\}\subseteq \hat X')$ to $[\nu'(\cO^{\infty}(Y))A]$ in $ \bV_{\bC}^{G}(\cO^{\infty}(Y))/ \bV_{\bC}^{G}(\{ (-\infty,0]\times Y\}\subseteq \cO^{\infty}(Y))$. Implicitly we have chosen images of the projections $\nu'(\cO^{\infty}(Y))$.

  The composition  \eqref{vdsfoijvosdfvsvfsfv} sends $(C,\rho,\nu)$ in $ \bV_{\bC}^{G}(X)$ to
  $(\nu'(\cO(Y))C, \nu'(\cO(Y))\rho, \nu_{|\cO(Y)})$ and
  $A$ to $[\nu'(\cO(Y))A]$, where $\nu':=\psi_{*}\nu$. 
  
   The   morphism $ \mu'$ is now given by the functor which sends
  $(C,\rho,\nu)$ to $(\nu'(\cO (Y))C,\nu'(\cO (Y))\rho)$ and
  $\phi\otimes A$ to $[\tilde \phi  \nu'(\cO (Y))A]$. 
  
  Since $\tilde \phi A=\tilde \phi  \nu'(\cO (Y))A$ we see that
  $\mu$ and $\mu'$  are MvN equivalent. This MvN-equivalence is implemented by the family of isometries
  $v_{(C,\rho,\nu)}:\nu'(\cO(Y))C\to C$. Since $$\ee(\mu)\simeq \ee(\mu'):\EE^{G}(C_{u,0}(Y),C_{u,0}(Y)\otimes \bV_{\bC}^{G}(X)\otimes A)\to \EE^{G}(C_{u,0}(Y),\bQ^{G}_{\std}\otimes A)$$ we conclude that 
  $$k\circ T^{\an}_{X}\simeq \mu\circ \delta_{Y}\simeq  \mu'\circ \delta_{Y}\simeq p^{\an}_{Y}\circ T^{\mot}\ .$$
  
%
%
%
%
%
%
%
%
%
%
%
\end{proof}
%
%
%

The commutative square in \eqref{gweihrguiwheirfhiwefwerfwefwerf} has the draw-back that the right lower corner
is a locally finite functor in the variable $Y$ while the left lower corner is not known to  be locally finite. So we do not expect that the lower horizontal map $p^{G}_{Y,\an}$ is an equivalence for  unbounded $Y$ in general.  In the following we improve  this point.

To any functor $E:G\UBC\to \cC$ to a   stable and  complete target we functorially associate a locally finite version
$E^{\lf}:G\UBC\to \cC$ and a natural transformation \begin{equation}\label{fiwqehiudhqwidqwedqwedqd} E\to E^{\lf}
:G\UBC\to \cC\end{equation} (in analogy to \eqref{ergwerfdsgwer}) such that
$$E^{\lf}(Z):=\lim_{W} \Cofib(E(Z\setminus W)\to E(Z))\ ,$$
where $W$ runs over all invariant $G$-bounded subsets of $Z$ (see \eqref{goijwioejorigwerfwerf}).
This construction preserves homotopy invariance and excisiveness by the analogue of \cref{kopgpwerrefweferfw}.
Following the conventions from \cite{bel-paschke} we abbreviate
$$\Sigma K^{G,\cX}_{\bC ,A}(-):= K\cX^{G}_{\bC,A,G_{can,max}}\circ \cO^{\infty}(-):G\UBC\to \Mod(KU)\ .$$
Since   $K^{G,\an,}_{\bC,A}$ is a Borel-Moore homology theory by \cref{ogpwerferfewrfwef} and therefore a weak  Borel-Moore homology theory by \cref{ophertgertgertgertg} it follows from the $\iota^{\topp}_{u}$ version
of \cref{8kothpetrhtrheth9} (with the same proof) that $K^{G,\an,}_{\bC,A}\circ \iota^{\topp}_{u}$ is already locally finite.
We can extend the diagram in \cref{khopperttrgeg} as follows:
$$\xymatrix{K\cX^{G}_{\bC,A,G_{can,max}}( X)\ar[d]_{T^{\mot} }\ar[r]^{T^{G,\an}_{X}} &\ar[d]^{k}
\Sigma K^{G,\an}_{\bC,A}( \partial_{h}X)  \\  \Sigma K^{G,\cX}_{\bC,A }( Y)\ar[r]^{p^{G,\an}_{Y}}\ar[d]^{\eqref{fiwqehiudhqwidqwedqwedqd}}&
\Sigma K^{G,\an}_{\bC,A}(\iota_{u}^{\topp}(Y)) 
   \\  \Sigma K^{G,\cX,\lf}_{\bC ,A}( Y) \ar[ur]^{p_{Y}^{G,\an,\lf}}&  } \ .$$

The Paschke transformation is a natural transformation between 
functors which are homotopy invariant and excisive for cell attachments.
By \cite[Thm. 1.5]{bel-paschke} it is an equivalence on objects in $G\UBC$ of the form $S_{min,min}$ for
$S$ in $G_{\Fin}\Orb$.  
\begin{ddd}
We call $W$ in $G\UBC$ locally finite if it is a retract of a locally finite $G$-$CW$-complex with finite stabilizers.  \end{ddd}

The following is an immediate consequence of  \cite[Thm. 1.5]{bel-paschke} and the definitions.

\begin{kor}If $W$ is locally finite, then
$$p^{G,\an,\lf}:K^{G,\cX,\lf}_{\bC,A}(W)\to K^{G,\an,\lf}_{\bC,A}(\iota^{\topp}_{u}(W))$$ is an equivalence.
\end{kor}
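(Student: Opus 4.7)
The statement should follow by reducing to the known equivalence of the (non locally finite) Paschke transformation on orbits $S_{min,min}$ with $S$ in $G_{\Fin}\Orb$, combined with the formal properties of the construction $(-)^{\lf}$ and of $\iota^{\topp}_{u}$. The plan has three stages: pass from $W$ to a locally finite $G$-$CW$-complex via a retract argument; use local finiteness to replace $W$ by its $G$-finite subcomplexes; and induct on cells starting from the base case supplied by \cite[Thm.~1.5]{bel-paschke}.

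First, since $\Mod(KU)$ is stable and equivalences are closed under retracts, I may assume $W$ itself is a locally finite $G$-$CW$-complex with finite stabilizers. Both functors $K^{G,\cX,\lf}_{\bC,A}$ and $K^{G,\an,\lf}_{\bC,A}\circ\iota^{\topp}_{u}$ are homotopy invariant and excisive for $G$-$CW$-cell attachments: the original Paschke domain and target $\Sigma K^{G,\cX}_{\bC,A}$ and $\Sigma K^{G,\an}_{\bC,A}\circ\iota^{\topp}_{u}$ have these properties by design (see the preamble to \cite[Thm.~1.5]{bel-paschke}), and these properties are preserved by passage to the locally finite version by the $G\UBC$-analogue of \cref{kopgpwerrefweferfw}. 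The natural transformation $p^{G,\an,\lf}$ between them is then a morphism of homotopy invariant, cell-excisive functors on locally finite $G$-$CW$-complexes with finite stabilizers.

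Next I reduce to the $G$-finite case. Writing $W$ as the union of its $G$-finite subcomplexes $(W_\alpha)$, local finiteness gives
\[
E^{\lf}(W)\simeq\lim_{\alpha}\,\Cofib\bigl(E(W\setminus W_\alpha)\to E(W)\bigr)
\]
for each of the two functors $E$ in question. By excision for the closed pair $(W,W\setminus W_\alpha)$, each relative term is computed from the values on $W_\alpha$ and on $\partial W_\alpha$, both of which are $G$-finite $G$-$CW$-complexes with finite stabilizers; on such spaces the canonical map $E\to E^{\lf}$ from \eqref{fiwqehiudhqwidqwedqwedqd} is an equivalence (the defining cofibres stabilize once the bounded subset $W$ exhausts the space). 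Thus it suffices to prove that $p^{G,\an}$ is an equivalence on $G$-finite $G$-$CW$-complexes with finite stabilizers, and this is exactly the situation handled by induction over cells: the inductive step uses cell-excision together with the five lemma for the resulting fibre sequences, while the base case $W=S_{min,min}$ with $S\in G_{\Fin}\Orb$ is supplied by \cite[Thm.~1.5]{bel-paschke}.

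The main subtlety I expect is bookkeeping for the second step, namely verifying that $E\to E^{\lf}$ is an equivalence on $G$-finite objects for the two specific functors at hand and that the limit description of $E^{\lf}(W)$ really does reduce checking equivalences on $W$ to equivalences on the $G$-finite relative pieces $(W_\alpha,\partial W_\alpha)$; once this is in place the remainder of the argument is the standard cellular induction built on the $G_{\Fin}\Orb$ base case from \cite{bel-paschke}.
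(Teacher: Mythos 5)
Your proposal is correct and follows exactly the route the paper intends: the paper treats the corollary as an immediate consequence of \cite[Thm.~1.5]{bel-paschke} together with homotopy invariance, cell-excision and the definition of $(-)^{\lf}$, which is precisely your combination of the retract reduction, cellular induction from the orbit case, and the limit formula \eqref{goijwioejorigwerfwerf} over $G$-bounded (equivalently $G$-finite) subcomplexes. The only remark is that stability of $\Mod(KU)$ is not needed for the retract step, and that the target is already locally finite by the $\iota^{\topp}_{u}$-version of \cref{8kothpetrhtrheth9}, so comparing the two $\lf$-versions as you do is equivalent to the statement as formulated.
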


\begin{kor}
If $Y$ is locally finite, then we have a commutative square
$$\xymatrix{K\cX^{G}_{\bC,A,G_{can,max}}( X)\ar[d]_{\eqref{fiwqehiudhqwidqwedqwedqd}\circ T^{\mot} }\ar[r]^{T^{G,\an}_{X}} &\ar[d]^{ k}
\Sigma K^{G,\an}_{\bC,A}( \partial_{h}X)  \\  \Sigma K^{G,\cX,\lf}_{\bC,A }( Y)\ar[r]^{p^{G,\an,\lf}}_{\simeq}& \Sigma K^{G,\an}_{\bC,A }( \iota^{\topp}_{u}(Y))  } \ .$$
 \end{kor}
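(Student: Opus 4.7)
The plan is to deduce the commutativity of the square by combining three inputs already established in the excerpt: the commutative square of \cref{khopperttrgeg}, the natural transformation \eqref{fiwqehiudhqwidqwedqwedqd} from a functor to its locally finite version, and the fact that $K^{G,\an}_{\bC,A}\circ \iota^{\topp}_{u}$ is itself already locally finite. The last point is a consequence of \cref{ogpwerferfewrfwef} together with the $\iota^{\topp}_{u}$-analogue of \cref{8kothpetrhtrheth9}, as indicated in the text preceding the corollary.

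First I would observe that, by the universal property of $(-)^{\lf}$ together with local finiteness of $K^{G,\an}_{\bC,A}\circ \iota^{\topp}_{u}$, the Paschke transformation $p^{G,\an}_{Y}$ admits a canonical factorization
\begin{equation*}
\xymatrix{
\Sigma K^{G,\cX}_{\bC,A}(Y) \ar[r]^-{\eqref{fiwqehiudhqwidqwedqwedqd}} \ar[dr]_{p^{G,\an}_{Y}} & \Sigma K^{G,\cX,\lf}_{\bC,A}(Y) \ar[d]^{p^{G,\an,\lf}_{Y}} \\
& \Sigma K^{G,\an}_{\bC,A}(\iota^{\topp}_{u}(Y))
}
\end{equation*}
as a natural transformation of functors on $G\UBC$. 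This factorization is nothing more than the adjoint of $p^{G,\an}_{Y}$ under the adjunction characterizing \eqref{fiwqehiudhqwidqwedqwedqd}, and it is well-defined because the target is already locally finite.

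Next I would paste this triangle onto the commutative square of \cref{khopperttrgeg}. The top row $T^{G,\an}_{X}$ and the right vertical $k$ are unchanged, the upper left corner is $K\cX^{G}_{\bC,A,G_{can,max}}(X)$, and the left vertical becomes the composition of $T^{\mot}$ with \eqref{fiwqehiudhqwidqwedqwedqd}. The commutativity of the resulting pentagon (the square of \cref{khopperttrgeg} on top, the triangle above on the bottom right) yields exactly the square in the statement.

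Finally, the previous corollary asserts that $p^{G,\an,\lf}_{Y}$ is an equivalence whenever $Y$ is locally finite, which provides the equivalence decoration on the bottom arrow in the statement. No further obstacle arises here: the construction is entirely formal once the three ingredients are in place, and in particular does not require revisiting the $C^{*}$-categorical comparison of the multiplication morphisms $\mu$ and $\mu'$ performed in the proof of \cref{khopperttrgeg}.
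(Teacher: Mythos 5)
Your argument is correct and is essentially the paper's own: the paper obtains this corollary by extending the square of \cref{khopperttrgeg} with the factorization $p^{G,\an}_{Y}\simeq p^{G,\an,\lf}_{Y}\circ\eqref{fiwqehiudhqwidqwedqwedqd}$, which exists precisely because $K^{G,\an}_{\bC,A}\circ\iota^{\topp}_{u}$ is already locally finite, and then invokes the preceding corollary for the equivalence on the bottom arrow. Your pasting of the triangle onto that square and your appeal to the initiality of $E\to E^{\lf}$ (the $G\UBC$-analogue of \cref{lhperthertgerg}) reproduce exactly this reasoning.
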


%
%

 \subsection{The topological Paschke map}\label{htekoperhrtgrt}

   Let $E:G\LCH^{+}\to \cC$ be a weak equivariant  Borel-Moore homology theory (\cref{kohperthrgrtgertgeg}). We will construct the topologial Paschke morphism 
as a natural transformation
    \begin{equation}\label{gweiojrfiowerfwerfwerfwe1}p^{G,\topp}:E \bP\circ \cO^{\infty}\to \Sigma E\circ \iota_{u}^{\topp}:G\UBC\to \cC\ .\end{equation}
  It is an analogue of the analytic Paschke morphism \eqref{verfwerffvsfdv}.
  
  \begin{rem}
  The domain of the analytic Paschke morphism  \eqref{verfwerffvsfdv} is an equivariant local homology theory 
  on $G\UBC$, i.e., homotopy invariant, excisive, $u$-continuous and trivial on flasques, see \cref{hgwiueghergwergwe}.
 This follows from  (the equivariant version of) \cite[Lem. 9.6]{ass} since $K\cX^{G}_{\bC,A,G_{can,max}}$ is a strong coarse homology theory. In contrast, the equivariant coarse homology theory $E\bP$ in the domain of the topological Paschke morphism \eqref{gweiojrfiowerfwerfwerfwe1} is not known to be strong.
 Therefore the domain of the topological Paschke morphism $E \bP\circ \cO^{\infty}$ is not known to be  a local homology theory since it may not  annihilate flasques. \hB
  \end{rem}

  We start with describing    the transformation \eqref{ewrferfrfvsdfvfd}
 \begin{equation}\label{gwreggjweiog}
 \pi:\partial_{h} \cO^{\infty}\to \iota_{u}^{\topp}:G\UBC\to G\LCH^{+}\ .
\end{equation}
in detail.
 Its component at $Y$ in $G\UBC$  is  the morphism \begin{equation}\label{}
\pi_{Y}:\partial_{h}\cO^{\infty}(Y)\to   \iota^{\topp}_{u}(Y) 
\end{equation}
defined as the Gelfand dual of the homomorphism of unital commutative $G$-$C^{*}$-algebras
 \begin{equation}\label{fqwoihdioewdewdqedq}C_{u,0}(Y)
 \to  \frac{\ell_{\cB_{\cO^{\infty}(Y)}}^{\infty}(\cO^{\infty}(Y))}{\ell^{\infty}(\cB_{\cO^{\infty}(Y)})}
   = C(\partial_{h}\cO^{\infty}(Y)) \ ,
\end{equation}     which
 sends a function $\phi$ in $C_{u,0}(Y)
 $ to the class of the function
   \begin{equation}\label{gerfewrfwerfwrfwre} \tilde \phi:\cO^{\infty}(Y)\to \C\ , \qquad  (t,y) \mapsto   \left\{\begin{array}{cc}
\phi(y)&t\ge 0 \\0 &t<0 \end{array} \right.
   \end{equation}  
   in $\ell_{\cB_{\cO^{\infty}(Y)}}^{\infty}(\cO^{\infty}(Y))$.
The decay of the variation of $\tilde \phi$ 
away from bounded subsets of the cone is implied by  the decay of $\phi$ outside of bounded subsets of $Y$
together with the uniform continuity of $\phi$ and the  characterization of the coarse entourages of the cone for large times, see \cref{oijgowpererfwerfwref} and  \cite[Lem. 5.7]{bel-paschke}. One thus checks that 
  \eqref{fqwoihdioewdewdqedq} is well-defined and 
  that $\pi:=(\pi_{Y})_{Y\in G\UBC}$ is a natural transformation.
  
 
%
%
  Precomposing   the topological transgression    from \eqref{fqwedwedwedqwe} with the functor  $\cO^{\infty}$
  we get the natural transformation
 \begin{equation}\label{frfwrwerfwerfwer}T^{G,\topp} \cO^{\infty} :E \bP\circ \cO^{\infty}\to \Sigma E\circ \partial_{h}\circ  \cO^{\infty}:G\UBC\to \cC\ .
\end{equation}   

   \begin{ddd} 
   We define the topological Paschke morphism as the composition
   \begin{equation}\label{hkeorptkgpertgertg}p^{G,\topp}:E \bP\circ \cO^{\infty}\xrightarrow{T^{G,\topp}  \cO^{\infty},\eqref{frfwrwerfwerfwer}}\Sigma E\circ \partial_{h}\circ  \cO^{\infty}
 \xrightarrow{\Sigma E \pi, \eqref{gwreggjweiog}}  \Sigma E\circ\iota^{\topp}_{u}:G\UBC\to \cC\ .\end{equation}
 \end{ddd}
   
%
%
%
%

   We now assume that    the motivic transgression  $T^{\mot}:\Yo(X) \to \Yo(\cO^{\infty}(Y))$ from \cref{kophrgertgertrgertgergert}  is defined. Recall $k$ from \eqref{ferfjwerfmerfireojoijo}.
  Since $E  \bP:G\BC\to \cC$ is an equivariant  coarse homology  theory by \cref{lpkherferferfergertgertgetrg}, it can be applied to the motivic transgression.
    \begin{theorem}\label{kophrthertgtrgetrge}
 The following square commutes:
 $$\xymatrix{E\bP(X)\ar[r]^{T^{\mot} }\ar[d]_{T^{G,\topp}}&E\bP(\cO^{\infty}(Y)\ar[d]^{p^{G,\topp}})\\\Sigma E(\partial_{h}X)\ar[r]^{k}&\Sigma E(\iota_{u}^{\topp}(Y))&}\ .$$
 \end{theorem}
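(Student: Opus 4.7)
The plan is to decompose the motivic transgression along the zigzag
$$\Yo^{G}(X)\xrightarrow{\Yo^{G}(\psi)}\Yo^{G}(X')\xrightarrow{\Yo^{G}(i)}\Yo^{G}(\hat X')\xleftarrow{\Yo^{G}(j),\,\simeq}\Yo^{G}(\cO^{\infty}(Y))$$
from \eqref{gwerfwerfrwerfref234} and \eqref{ufwiuew9fuwq9e8ud9qwdwed}, and to exploit naturality of the topological transgression $T^{G,\topp}$ from \cref{ojohpertgtrgretgtgg} with respect to each of the honest $G\BC$-morphisms $\psi$, $i$ and $j$. Writing $\rho:\partial_{h}X'\to \iota^{\topp}_{u}(Y)$ for the map of Higson coronas dual to \eqref{ewrfwerfwrfwefer}, so that $k=\rho\circ\partial_{h}\psi$ by \eqref{ferfjwerfmerfireojoijo}, the key ingredient I will construct is a map
$$\sigma:\partial_{h}\hat X'\to \iota^{\topp}_{u}(Y)$$
in $G\LCH^{+}$ satisfying $\sigma\circ\partial_{h}j=\pi$ and $\sigma\circ\partial_{h}i=\rho$.

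Granted such a $\sigma$, the argument is purely formal. Since $\Yo^{G}(j)$ is an equivalence and $E\bP$ is an equivariant coarse homology theory by \cref{lpkherferferfergertgertgetrg}, $E\bP(j)$ is an equivalence in $\cC$. Post-composing the naturality square of $T^{G,\topp}$ at $j$ with $\Sigma E(\sigma)$ and invoking $\sigma\circ\partial_{h}j=\pi$ yields
$$\Sigma E(\pi)\circ T^{G,\topp}_{\cO^{\infty}(Y)}=\Sigma E(\sigma)\circ T^{G,\topp}_{\hat X'}\circ E\bP(j).$$
Composing on the right with $T^{\mot}$ cancels the factor $E\bP(j)\circ E\bP(j)^{-1}$ to leave $E\bP(f)$, where $f:=i\circ\psi:X\to\hat X'$. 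Naturality of $T^{G,\topp}$ at $f$ then rewrites the result as $\Sigma E(\sigma\circ\partial_{h}f)\circ T^{G,\topp}_{X}$, and $\sigma\circ\partial_{h}f=\sigma\circ\partial_{h}i\circ\partial_{h}\psi=\rho\circ\partial_{h}\psi=k$. This gives the asserted identity $p^{G,\topp}\circ T^{\mot}=\Sigma E(k)\circ T^{G,\topp}_{X}$.

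The construction of $\sigma$ is the main content. Dually, I will define $\sigma^{*}:C_{u,0}(Y)\to C(\partial_{h}\hat X')$ by sending $\phi$ to the class of the bounded function $\hat\phi$ on $\hat X'$ given by $\hat\phi(t,y):=\chi(t)\phi(y)$ on the cone component $\cO^{\infty}(Y)\subseteq \hat X'$ and $\hat\phi:=0$ on $(-\infty,0]\otimes Z$; here $\chi:\R\to[0,1]$ is a fixed Lipschitz cut-off that vanishes on $(-\infty,0]$ and equals $1$ on $[1,\infty)$. The two descriptions agree on the push-out intersection $(-\infty,0]\otimes\iota Y$ from \eqref{gwerfwerfw3fweferfwer} since $\chi$ vanishes there, so $\hat\phi$ descends to $\hat X'$. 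The restriction $j^{*}\hat\phi$ differs from the function $\tilde\phi$ of \eqref{gerfewrfwerfwrfwre} by $(1-\chi(t))\phi(y)$ supported on $[0,1]\times Y$, which belongs to $\ell^{\infty}(\cB_{\cO^{\infty}(Y)})$ by the decay of $\phi$ outside bounded subsets of $Y$; an analogous argument applies to $i^{*}\hat\phi$ on $X'\subseteq \hat X'$. Hence $\sigma$ obeys the required relations modulo the respective Higson ideals.

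The main obstacle I expect is verifying that $\hat\phi\in\ell^{\infty}_{\cB}(\hat X')$, since the coarse structure of the push-out $\hat X'$ mixes the cone coarse structure with that of $(-\infty,0]\otimes Z$ across the interface $\{0\}\times Y$. The check splits into two regimes. Far from $[0,1]\times Y$, $\hat\phi$ is either zero (on the $Z$-side or for $t\leq 0$) or equal to $\phi\circ\pr_{Y}$ (for $t\geq 1$), so the variation-decay follows from the asymptotic uniform continuity built into the cone coarse structure combined with $\phi\in C_{u,0}(Y)$. In the transition region $[0,1]\times Y$, the Lipschitz estimate for $\chi$ multiplied by the decay of $\phi$ outside large bounded subsets of $Y$ again forces the variation on pairs controlled by any fixed coarse entourage to vanish asymptotically. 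This places $\hat\phi$ in $\ell^{\infty}_{\cB}(\hat X')$ and completes the construction of $\sigma$, closing the proof.
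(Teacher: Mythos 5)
Your proposal is correct and takes essentially the same route as the paper: the paper likewise expands the square through $\hat X'$, invokes naturality of $T^{G,\topp}$ at the morphisms $X\to \hat X'$ and $j:\cO^{\infty}(Y)\to \hat X'$, and settles the two remaining triangles via the Gelfand-dual map $C_{u,0}(Y)\to C(\partial_{h}\hat X')$ obtained by extending $\tilde\phi$ from \eqref{gerfewrfwerfwrfwre} by zero. The only cosmetic difference is that the paper uses the sharp cutoff at $t=0$ instead of your Lipschitz $\chi$, the discrepancy between the two lying in $\ell^{\infty}(\cB)$ exactly as in your own estimate.
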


\begin{proof}
We expand the square as follows:
$$\xymatrix{E\bP(X)\ar[r]\ar[d]^{T^{G,\topp}}&E\bP(\hat X')\ar[d]^{T^{G,\topp} }&\ar[d]^{T^{G,\topp} }\ar[l]^{\eqref{gwerfwerfw3fweferfwer},j}_{\simeq }E\bP(\cO^{\infty}(Y))\ar@/^2cm/[dd]^{p^{G,\topp}}\\ \ar[rrd]^{k}\Sigma  E(\partial_{h}X)\ar[r]^{\partial_{h}\psi}&\Sigma E(\partial_{h}\hat X')\ar[rd]^{\eqref{gwerfwerfwerf252}}&\Sigma E(\partial_{h}\cO^{\infty}(Y))\ar[l] \ar[d]^{\pi }\\&&\Sigma E(\iota_{u}^{\topp}(Y))}\ .$$
The   arrow labeled by \eqref{gwerfwerfwerf252}  is the Gelfand dual of
$$C_{u,0}(Y)\stackrel{}{\to}\ell^{\infty}_{\cB_{\cO^{\infty}(Y)}}(\cO^{\infty}(Y))\to \ell^{\infty}_{\cB_{\hat X'}} (\hat X')\to \frac{\ell^{\infty}_{\cB_{\hat X'}} (\hat X')}{\ell^{\infty}(\cB_{\hat X'})}\ ,$$
where the first map in the lower line is given by \eqref{gerfewrfwerfwrfwre} 
 and
the second map is extension by zero.
 The left lower triangle commutes by the definition \eqref{ferfjwerfmerfireojoijo} of $k$, and the commutativity of the right  lower triangle  can easily be checked on the level of  Gelfand dual algebras.
\end{proof}

 \subsection{Higson-dominated compactifications}

We consider  a bornological coarse space  $X$.
 Let $A$ be any closed unital subalgebra of $\ell^{\infty}_{\cB}(X)$
 containing $\ell^{\infty}(\cB)$  (see \eqref{gerwfwerfwerfwf} and \eqref{gerwfwerfwerfwf1}).
    By Gelfand duality it determines
 a compact Hausdorff space $\bar X$ such that $C(\bar X)=A$.
 We can consider $\bar X$ as a compactification of the discrete space $X$. 
 The Gelfand dual of the  right vertical map in the following map of exact sequences of commutative $C^{*}$-algebras $$
\xymatrix{
0\ar[r]&\ell^{\infty}(\cB)\ar@{=}[d]\ar[r]&A\ar[r]\ar[d]&  A/\ell^{\infty}(\cB)\ar[r]\ar[d]&0\\
0\ar[r]&\ell^{\infty}(\cB)\ar[r]&\ell^{\infty}_{\cB}(X)\ar[r]&C(\partial_{h}X)\ar[r]&0}
$$
 is a map of compact Hausdorff spaces   $\pi:\partial_{h}X\to Y$  where $Y$ is determined by $C(Y)=A/\ell^{\infty}(\cB)$.

 \begin{ddd}
 We call $\bar X$ a Higson-dominated compactification of $X$ with boundary $Y$.
  \end{ddd}
  
We can identify the set $X$ considered as a  discrete topological spaces  with the Gelfand dual of $\ell^{\infty}(\cB_{min})$, where $\cB_{min}$ is the minimal bornology on $X$ consisting of finite subsets.
We have a natural map $$\bar X \supseteq X\stackrel{\id_{X}}{\to} X$$   in $\LCH^{+}$ corresponding via Gelfand duality to the 
inclusion $\ell^{\infty}(\cB_{min})\to\ell^{\infty}(\cB)\to A$.
We let $p:X\to \bar X$  be the corresponding open  inclusion map.

   \begin{ass} \label{irhtgirtgrgerhertg}We assume the following data to be given:
 \begin{enumerate}
 \item
 a neighbourhood $U$ of $Y$ in $\bar X$ and a retraction $r:U\to Y$.
 \item 
  a proper, bornological  and controlled function $s:X\to [0,\infty)$.
  \item  a point $y_{0}$ in $Y$.
 \end{enumerate}
 \end{ass}



  We define a map of sets
  \begin{equation}\label{gwerpokpfwerfw}\psi :X\to [0,\infty)\times Y\ , \quad \psi(x):=\left\{\begin{array}{cc}(s(x),y_{0}) &x\not\in p^{-1}(U)\\ (s(x),r(p(x)))& x\in p^{-1}(U)  \end{array} \right.
\end{equation}   Since $Y$ is a compact Hausdorff space it has a canonical  uniform bornological coarse structure.
  By definition its bornological and  coarse structures  are the maximal ones.   The uniform structure consists of all neighbourhoods of the diagonal.
   Using this uniform bornological coarse structure  we equip $[0,\infty)\times Y$ with the bornological coarse structure of the  cone $\cO(Y)$, see \eqref{regwerfwerf234} and the text before that.
 
 \begin{lem}   The map  $\psi:X\to \cO(Y)$ in \eqref{gwerpokpfwerfw} is a morphism of bornological coarse spaces.\end{lem}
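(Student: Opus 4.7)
The plan is to verify the two defining conditions for $\psi$ to be a morphism in $\BC$: properness and controlledness. Properness is straightforward: since $Y$ has the maximal bornology, bounded subsets of $\cO(Y)\subseteq\R\otimes Y$ are those whose projection to $[0,\infty)$ is bounded, so any bounded $B\subseteq\cO(Y)$ is contained in $[0,T]\times Y$ for some $T$, and then $\psi^{-1}(B)\subseteq s^{-1}([0,T])$ is bounded by properness of $s$.

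The substantive content is controlledness. Let $E$ be a coarse entourage of $X$. That $(\psi\times\psi)(E)$ is a coarse entourage of $\R\otimes Y$ follows from $s$ being controlled together with the fact that $Y$ carries the maximal coarse structure. The remaining, cone-at-infinity, condition requires that for every uniform entourage $W$ of $Y$ there exists $T\geq 0$ such that any pair in $(\psi\times\psi)(E)$ whose two first coordinates are equal to some $t\geq T$ has its $Y$-components related by $W$. Such pairs arise from $(x,x')\in E$ with $s(x)=s(x')=t$, so writing $\psi(x)=(s(x),y(x))$ I must show that for every neighborhood $W$ of $\diag(Y)$ there is $T$ with $(y(x),y(x'))\in W$ whenever $(x,x')\in E$ and $s(x),s(x')\geq T$.

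Since $Y$ is compact Hausdorff I pick $f_{1},\dots,f_{n}\in C(Y)$ and $\epsilon>0$ with $\{(y,y')\mid \max_{i}|f_{i}(y)-f_{i}(y')|<\epsilon\}\subseteq W$, and lift each $f_{i}$ to $\tilde f_{i}\in A=C(\bar X)\subseteq \ell^{\infty}_{\cB}(X)$. The Higson decay condition \eqref{gerwfwerfwerfwf} yields a bounded $B_{1}\subseteq X$ on whose complement $\Var_{E}(\tilde f_{i},X\setminus B_{1})<\epsilon/3$ for all $i$. Continuity on $\bar X$ together with $\tilde f_{i|Y}=f_{i}$, $r_{|Y}=\id_{Y}$ and compactness of $Y$ produces a neighborhood $U'\subseteq U$ of $Y$ on which $|\tilde f_{i}-f_{i}\circ r|<\epsilon/3$ for every $i$. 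Since $\ell^{\infty}(\cB)\subseteq A$ contains the characteristic function of every singleton of $X$, these functions separate $X$ in $\bar X$, which forces $p\colon X\hookrightarrow \bar X$ to be injective with discrete image; hence the compact subset $\bar X\setminus U'$ lies in $p(X)$ and is finite, making $B_{2}:=X\setminus p^{-1}(U')$ bounded. A triangle inequality applied on $X\setminus(B_{1}\cup B_{2})$ then gives $\max_{i}|f_{i}(y(x))-f_{i}(y(x'))|<\epsilon$ for $(x,x')\in E$ with $x,x'\notin B_{1}\cup B_{2}$, and hence $(y(x),y(x'))\in W$. Finally, because $s$ is bornological the image $s(B_{1}\cup B_{2})$ is contained in some $[0,T_{0}]$, so taking $T:=T_{0}$ closes the slice condition.

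The main obstacle is the construction of $B_{2}$: I need the complement of a neighborhood of $Y$ in $\bar X$ to pull back to a bounded subset of $X$. This rests on the observation that the inclusion $\ell^{\infty}(\cB)\subseteq A$ already forces $X$ to sit discretely in its Higson-dominated compactification, so any compact subset of $\bar X\setminus Y$ is a compact subset of a discrete space, hence finite. Once this step is in place the remainder of the argument is a routine combination of Higson decay and a three-term triangle inequality.
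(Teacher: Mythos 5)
Your reduction of the cone condition to the statement that for every neighbourhood $W$ of $\diag(Y)$ there is $T$ with $(r(p(x)),r(p(x')))\in W$ whenever $(x,x')\in E$ and $s(x),s(x')\ge T$, and your direct three-epsilon proof of it (basic entourages of the compact space $Y$ cut out by finitely many $f_{i}\in C(Y)$, Tietze lifts $\tilde f_{i}\in A$, Higson decay of the $\tilde f_{i}$ over $E$ outside $B_{1}$, and the estimate $|\tilde f_{i}-f_{i}\circ r|<\epsilon/3$ on a neighbourhood $U'$ of $Y$) is a genuinely different route from the paper's, which instead argues by contradiction using a sequence, compactness, and a single separating Urysohn function. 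The gap is in the step you yourself flag as the main obstacle: the boundedness of $B_{2}=X\setminus p^{-1}(U')$. Discreteness of $p(X)$ in $\bar X$ is correct (the characteristic functions $\delta_{x}\in\ell^{\infty}(\cB)$ are projections in $A$ isolating the evaluation characters), but it does not follow that $\bar X\setminus U'\subseteq p(X)$. The open subset $\bar X\setminus Y$ is the Gelfand spectrum of the ideal $\ell^{\infty}(\cB)$, and this coincides with $p(X)$ only when every bounded subset of $X$ is finite: if $B$ is an infinite bounded subset, then for any non-principal ultrafilter $\omega$ on $B$ the character $f\mapsto \lim_{\omega}f|_{B}$ of $A$ does not annihilate the characteristic function of $B$ (which lies in $\ell^{\infty}(\cB)$), hence lies outside $Y$, and it is not an evaluation at a point of $X$. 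In the intended applications (e.g.\ $X=\R^{n}$ with the metric bornology) the set $X\setminus p^{-1}(U')$ is an infinite bounded set, so $\bar X\setminus U'$ is an infinite compact set whose accumulation points lie outside the discrete subset $p(X)$; it is neither finite nor contained in $p(X)$, so your justification of the boundedness of $B_{2}$ fails precisely when bounded subsets of $X$ may be infinite.

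The conclusion of that step is nevertheless true, and the paper's own device (used there for $X\setminus p^{-1}(U)$) repairs it verbatim: by Tietze choose $h\in C(\bar X)=A$ with $h\equiv 1$ on $\bar X\setminus U'$ and $h\equiv 0$ on $Y$; then $h\in\ker(A\to C(Y))=\ell^{\infty}(\cB)$, so $\lim_{B\in\cB}\sup_{X\setminus B}|h|=0$, and any bounded $B$ with $\sup_{X\setminus B}|h|<1$ contains $X\setminus p^{-1}(U')$. With $B_{2}$ produced this way, and $T$ chosen strictly larger than $\sup s(B_{1}\cup B_{2})$ (finite because $s$ is bornological), your triangle-inequality estimate applies to all $(x,x')\in E$ with $s(x),s(x')\ge T$, both points then lying in $p^{-1}(U')$ so that the $y_{0}$-branch of $\psi$ does not occur, and the slice condition follows. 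Properness and the bound on the time displacement are handled exactly as in the paper, so after this one correction your argument is a valid alternative proof that avoids the sequential-compactness step of the paper's contradiction argument.
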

 \begin{proof}
 First of all, any bounded subset of $\cO(Y)$ is contained in  a set of the form $[0,R]\times Y$ for some $R$ in $[0,\infty)$. Then $\psi^{-1}([0,R]\times Y)\subseteq   s^{-1}([0,R])$ and is therefore bounded since $s$ is proper.
 This shows that $\psi$ is proper.
 
 Let  $W$  be any entourage of $X$.
 Since $s$ is controlled
 we have   \begin{equation}\label{gweroifjowierfwerfwerf}\sup_{(x,x')\in W} |s(x)-s(x')|=:C<\infty\ .
\end{equation}
This shows that the first component of $\psi$ is controlled.

The  subset   $X\setminus p^{-1} (U)$ is a bounded subset of $X$. Indeed, since $\bar X$ is compact Hausdorff and hence normal, 
 by Tietze's extension theorem  there exist a function $f$ in $C(\bar X)$ such that
$f_{| \bar X\setminus U}=1$ and $f_{|Y}=0$. We then have $f\in \ker( C(\bar X)\to C(Y))=   \ell^{\infty}(\cB)$.
This implies $\lim_{B\in \cB}\| f_{|X\setminus B}\|=0$.   In particular there exists $B$ in $\cB$ such that
$ X\setminus p^{-1}(U)\subseteq B$. 

Since $s$ is bornological the set
$s(X\setminus p^{-1}(U))$ is bounded in $[0,\infty)$.

Since $Y$ has the maximal coarse structure, 
it remains to show that for every  neighbourhood $V$ of the diagonal of $Y$ there exists
$R$ in $[0,\infty)$ such that $(x,x')\in W$ and 
$s(x)\ge R$ implies $(r(p(x)),r(p(x')))\in V$. Here we take $R$ so large that $
s(X\setminus p^{-1}(U))\subseteq [0,R-C)$.
Then $s(x')\ge R-C$ by \eqref{gweroifjowierfwerfwerf} and therefore 
 $x,x'\in   p^{-1}(U)$ and $r(p(x)), r(p(x'))$ in $Y$ are defined.

Assume the contrary. Then there exists a neighbourhood $V$ of the diagonal of $Y$ and a  sequence
$((x_{i},x'_{i}))_{i\in \nat}$ in $W$ with $s(x_{i})\to \infty$ and
$(r(p(x_{i})),r(p(x_{i}')))\not\in V$.
By compactness of $Y$ we can assume that $r'(p(x_{i}))\to z$ and $r'(p(x_{i}'))\to z'$.
Then $z'\not=z$.

Since $\bar X$ is normal as observed above,   Tietze's extension theorem gives a continuous function $f:\bar X\to [0,1]$ such that $f(z)=0$ and $f(z')=1$. Since $\bar X$ is a Higson-dominated compactification
 we have $f_{|X} \in \ell^{\infty}_{\cB}(X)$. Since $s$ is bornological   there exists
$S$ in $[0,\infty)$ such that $s(x)\ge S$ and $(x,x')\in W$ implies that  $|f(x)-f(x')|\le 1/2$.

In particular, $|f(x_{i})-f(x_{i}')|\le 1/2$ for all $i$ in $\nat $ such that $s(x_{i})\ge S$.  
Taking the limit  over $i$ in $\nat$ we conclude $|f(z)-f(z')|\le 1/2$. This a contradiction.
\end{proof}
 
\begin{kor}\label{hojpertrtgertgerth9}
If $X$ is a bornological coarse space and  $\bar X$ is  a Higson-dominated compactification of $X$ with boundary $Y$
with the data described in  \cref{irhtgirtgrgerhertg}, then the motivic transgression $T^{\mot}$ is defined (with $X'=\cO(Y)$ and the canonical map $\pi:\partial_{h}X\to Y$).

\end{kor}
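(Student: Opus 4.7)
The plan is to apply Definition \ref{kophrgertgertrgertgergert} directly, taking $X' = \cO(Y)$ with the identity as the coarse subspace inclusion $\cO(Y) \hookrightarrow X'$, and taking $\psi: X \to \cO(Y)$ to be the map defined in \eqref{gwerpokpfwerfw}. The preceding lemma has already established that $\psi$ is a morphism in $\BC$, so all hypotheses of Definition \ref{kophrgertgertrgertgergert} are satisfied and the motivic transgression $T^{\mot}: \Yo(X) \to \Yo(\cO^{\infty}(Y))$ is defined via \eqref{bsoidjfiovwefsdvsfdvsdfvsdfsv}.

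It remains to identify the map $k: \partial_h X \to \iota^{\topp}_u(Y)$ from \eqref{ferfjwerfmerfireojoijo} with the canonical map $\pi: \partial_h X \to Y$ arising from the Higson-dominated compactification, i.e.\ the Gelfand dual of the inclusion $A/\ell^{\infty}(\cB) \hookrightarrow \ell^{\infty}_{\cB}(X)/\ell^{\infty}(\cB) = C(\partial_h X)$. To do this I would unfold \eqref{ferfjwerfmerfireojoijo}: the map $k$ is dual to the composition
\[
C_{u,0}(Y) \xrightarrow{\eqref{ewrfwerfwrfwefer}} \ell^{\infty}_{\cB}(\cO(Y))/\ell^{\infty}(\cY) \xrightarrow{\psi^*} \ell^{\infty}_{\cB}(X)/\ell^{\infty}(\cB),
\]
which sends $\phi \in C_{u,0}(Y)$ to the class of $\psi^* \tilde\phi$, where $\tilde\phi(t,y) = \phi(y)$ for $t \geq 0$.

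By the definition of $\psi$ in \eqref{gwerpokpfwerfw}, the pulled-back function $\psi^*\tilde\phi$ equals $\phi \circ r \circ p$ on $p^{-1}(U)$ and equals the constant $\phi(y_0)$ on $X \setminus p^{-1}(U)$. The key observation, already used inside the previous lemma, is that $X \setminus p^{-1}(U)$ is a bounded subset of $X$ (via Tietze applied to a continuous separator of $Y$ from $\bar X \setminus U$), so modifications on this set are absorbed in $\ell^{\infty}(\cB)$. Hence modulo $\ell^{\infty}(\cB)$ the function $\psi^*\tilde\phi$ agrees with any extension of $\phi \circ r \in C(U)$ to $\bar X$ followed by restriction to $X$. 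On the other hand, the canonical map $\pi$ is dual to the composition $C(Y) \xrightarrow{r^*} C(U) \to C(\bar X)/I \hookrightarrow \ell^{\infty}_{\cB}(X)/\ell^{\infty}(\cB)$, where the last step uses that any continuous extension of $\phi \circ r$ to $\bar X$ represents the same class in the Higson corona (the difference of two extensions lies in $\ell^{\infty}(\cB)$ since they agree on a neighborhood of $Y$). Comparing the two descriptions shows $k = \pi$.

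The main technical point, once one has the morphism $\psi$ in hand, is this last chain-level identification: one must verify that the two different-looking algebraic constructions (one via the tilde-extension through the cone, the other via the inclusion $A \hookrightarrow \ell^{\infty}_{\cB}(X)$) land in the same class in $C(\partial_h X)$. This reduces to the boundedness of $X \setminus p^{-1}(U)$ and the well-definedness of the extension of $\phi \circ r$ modulo $\ell^{\infty}(\cB)$, both of which follow from the data supplied by Assumption~\ref{irhtgirtgrgerhertg} together with normality of $\bar X$.
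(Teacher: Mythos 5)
Your first step is exactly what the paper does: the corollary is stated with no separate proof precisely because, once the preceding lemma shows that $\psi$ from \eqref{gwerpokpfwerfw} is a morphism in $\BC$, one applies \cref{kophrgertgertrgertgergert} with $X'=\cO(Y)$ and the identity coarse subspace inclusion, and $T^{\mot}$ is defined. So on the main claim you and the paper agree.

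Your second part — identifying the corona map $k$ of \eqref{ferfjwerfmerfireojoijo} with the canonical $\pi:\partial_{h}X\to Y$ — is an unwinding the paper leaves implicit, and it is worth recording, but two steps in your justification are not quite right as written. First, you propose to extend $\phi\circ r\in C(U)$ continuously to $\bar X$; Tietze applies to closed subsets, and $U$ is open, so this extension is not available. The fix is to extend $\phi$ itself from the closed set $Y$ to some $a\in C(\bar X)=A$, which is what the description of $\pi$ actually requires. Second, the functions you must compare, namely $\psi^{*}\tilde\phi$ (equal to $\phi\circ r\circ p$ on $p^{-1}(U)$) and $a|_{X}$, agree only over $Y$, not on a neighbourhood of $Y$, so the parenthetical ``the difference of two extensions lies in $\ell^{\infty}(\cB)$ since they agree on a neighbourhood of $Y$'' does not apply to the relevant difference. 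What one needs is that for every $\epsilon>0$ the set where $|\phi\circ r-a|\ge \epsilon$ on $U$ pulls back to a bounded subset of $X$; this follows by rerunning the Urysohn/Tietze argument from the lemma with the open neighbourhood $V_{\epsilon}:=\{u\in U:\ |\phi(r(u))-a(u)|<\epsilon\}$ of $Y$ in place of $U$, using $\ker(C(\bar X)\to C(Y))=\ell^{\infty}(\cB)$, together with the boundedness of $X\setminus p^{-1}(U)$. With that repair, $\psi^{*}\tilde\phi-a|_{X}\in\ell^{\infty}(\cB)$, hence $k=\pi$ under $\iota^{\topp}_{u}(Y)\cong Y$, and your argument is complete.
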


Let $\bar X$ be a Higson-dominated compactification of $X$ with non-empty boundary $Y$.
  If the bornological coarse space $X$ is presented by a metric $d$.  then
 we can choose a base point $x_{0}$ and  set
 $$s(-):=d(x_{0},-):X\to [0,\infty)\ .$$
 This function is proper, bornological and controlled.

Assume that $\bar X$ is metrizable and that $Y$ is an absolute neighbourhood retract.
Then there exists a neighborhood $U$ of $Y$ in $\bar X$ and a retraction $r:U\to Y$.

\begin{kor}\label{hojpertrtgertgerth91}
If $X$ is a metrizable bornological coarse space and  $\bar X$ is  a  metrizable Higson-dominated compactification of $X$ with non-empty boundary $Y$ such that $Y$ is  an  absolute neighbourhood retract, then the motivic transgression $T^{\mot}$  is defined  (with $X'=\cO(Y)$ and the canonical map $\pi:\partial_{h}X\to Y$). 

\end{kor}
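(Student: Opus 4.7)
The plan is to verify the three items of \cref{irhtgirtgrgerhertg} and then apply \cref{hojpertrtgertgerth9}. The setup is cleanly split according to the three data: a retraction near $Y$, a proper controlled bornological function $s$, and a base point $y_{0}$.

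First I would construct the retraction. Since by assumption $\bar X$ is metrizable and $Y$ is a closed subspace of $\bar X$ which is an absolute neighbourhood retract, the standard characterization of ANRs (in the category of metrizable spaces) produces a neighbourhood $U$ of $Y$ in $\bar X$ and a continuous retraction $r\colon U\to Y$. This is precisely datum (1) of \cref{irhtgirtgrgerhertg}. The case where $Y$ is empty is trivial because then $X$ is itself bounded and the motivic transgression with target $\cO^{\infty}(\emptyset)\simeq \emptyset$ is the zero map; so we may assume $Y\neq\emptyset$ and pick any $y_{0}\in Y$, giving datum (3).

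Next I would construct $s$. By hypothesis the bornological coarse structure on $X$ is presented by some metric $d$, in the sense of \cite{buen}. Fixing any base point $x_{0}\in X$, the function $s:=d(x_{0},-):X\to[0,\infty)$ is proper (since the metric bornology is generated by metric balls), bornological (because $s$ maps metric balls to bounded subsets of $[0,\infty)$), and controlled (by the triangle inequality, $|s(x)-s(x')|\le d(x,x')$, so the image of a metric entourage is bounded in $[0,\infty)$). This is the immediately before the statement and furnishes datum (2).

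With the three items in place, \cref{hojpertrtgertgerth9} applies and the motivic transgression $T^{\mot}$ is defined, where $X'=\cO(Y)$ and the projection on Higson coronas is the canonical map $\pi\colon \partial_{h}X\to Y$ induced by Gelfand duality from the inclusion $C(Y)\hookrightarrow C(\bar X)/\ell^{\infty}(\cB)\hookrightarrow \ell^{\infty}_{\cB}(X)/\ell^{\infty}(\cB)=C(\partial_{h}X)$. There is no real obstacle here; the only point requiring attention is that the ANR property in the metrizable setting really does provide a genuine neighbourhood retraction, which is standard, and that the completeness/metrizability of $X$ (rather than merely its coarse equivalence to a metric space) is not needed because only the metric coarse and bornological structures enter the construction of $s$.
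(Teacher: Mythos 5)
Your proposal is correct and follows essentially the same route as the paper: obtain the neighbourhood retraction from the ANR property of $Y$ in the metrizable compactification $\bar X$, take $s=d(x_{0},-)$ from a metric presenting the structures of $X$, pick a base point $y_{0}$, and invoke the preceding corollary built on \cref{irhtgirtgrgerhertg}. The extra remarks on the degenerate cases ($Y=\emptyset$, $X=\emptyset$) are harmless additions not made explicit in the paper.
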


Note that there is no relation between the metrics of $X$ and $\bar X$ are required.

Note that finite CW-complexes are absolute neighbourhood retracts.
\begin{kor}\label{opkherptokgpertgrtge}
If $X$ is a metrizable bornological coarse space and  $\bar X$ is  a  metrizable Higson-dominated compactification of $X$  by a non-empty  finite CW-complex $Y$, then the motivic transgression $T^{\mot}$  is defined  (with $X'=\cO(Y)$ and the canonical map $\pi:\partial_{h}X\to Y$).

\end{kor}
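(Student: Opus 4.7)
The plan is to deduce this corollary directly from the immediately preceding \cref{hojpertrtgertgerth91}. That corollary asserts the existence of the motivic transgression (with $X' = \cO(Y)$ and the canonical map $\pi \colon \partial_h X \to Y$) for any metrizable bornological coarse space $X$ admitting a metrizable Higson-dominated compactification $\bar X$ whose boundary $Y$ is an absolute neighbourhood retract. So the only additional input needed is the classical topological fact that every finite CW-complex is an absolute neighbourhood retract.

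Concretely, first I would invoke the standard result (due to Hanner/Borsuk) that a finite CW-complex, being a compact metrizable space admitting a finite cover by contractible open subsets with well-behaved intersections, is an ANR. This produces an open neighbourhood $U$ of $Y$ in $\bar X$ together with a retraction $r \colon U \to Y$, which is the first item of \cref{irhtgirtgrgerhertg}.

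For the remaining items of \cref{irhtgirtgrgerhertg}, I would use metrizability of $X$ to pick a base point $x_0 \in X$ (we may assume $X \neq \emptyset$, else the statement is trivial) and set $s := d(x_0, -) \colon X \to [0,\infty)$, which is proper, bornological and controlled with respect to the metric bornological coarse structure on $X$ (as already noted in the discussion preceding \cref{hojpertrtgertgerth91}). Finally, since $Y$ is non-empty (a finite CW-complex appearing as the boundary of a non-trivial compactification), pick any base point $y_0 \in Y$.

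With all three items of \cref{irhtgirtgrgerhertg} in place, \cref{hojpertrtgertgerth9} (or equivalently \cref{hojpertrtgertgerth91}) gives a morphism $\psi \colon X \to \cO(Y)$ in $\BC$, and by \cref{kophrgertgertrgertgergert} the motivic transgression $T^{\mot} \colon \Yo(X) \to \Yo(\cO^\infty(Y))$ together with the projection map \eqref{ferfjwerfmerfireojoijo}, which here agrees with the canonical map $\pi \colon \partial_h X \to Y$ induced by Gelfand duality from the inclusion $C(Y) = A/\ell^\infty(\cB) \hookrightarrow C(\partial_h X)$, are defined. There is no real obstacle: the corollary is a direct specialization, and the only subtlety is ensuring that the cone $\cO(Y)$ used in the construction of $T^{\mot}$ (built from $Y$ viewed as a compact Hausdorff space with its canonical uniform bornological coarse structure) agrees with the one produced by the ANR argument, which is immediate since $Y$ inherits a unique such structure from being compact Hausdorff.
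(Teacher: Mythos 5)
Your proposal is correct and matches the paper's own route exactly: the paper deduces this corollary from \cref{hojpertrtgertgerth91} by the single observation that finite CW-complexes are absolute neighbourhood retracts, with the function $s=d(x_{0},-)$ and a base point supplying the remaining data of \cref{irhtgirtgrgerhertg}. Your extra remarks (non-emptiness, the uniqueness of the uniform bornological coarse structure on the compact Hausdorff space $Y$) are harmless elaborations of the same argument.
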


\subsection{Extending commutativity using homotopy theory}\label{gwerjofewrfwer}

In this   section we explain how to extend morphisms between  natural transformations.
This is used in \cref{koipgwergwerfwerfrw}.

  Let $\cD,\cC$ be cocomplete $\infty$-categories,
 $E,F:\cD\to \cC$ be  colimit-preserving functors, and $\phi,\psi:E\to F$ be natural transformations.
 Let $f:\cG \to \cD$ be some functor and assume that \begin{equation}\label{friq9fqwedewdqewd}f^{*}\phi\simeq f^{*}\psi\ .
\end{equation} 
 We form the left-Kan extension   $$\xymatrix{\cG\ar[rr]^{f}\ar[dr]^{y}&&\cC\\&\PSh(\cG)\ar@{..>}[ur]^{\hat f}&}$$
 where $\hat f$ preserves colimits.
 By the universal property of presheaves we have an equivalence
 $$y^{*}:\Fun^{\colim}(\PSh(\cG),\cC)\stackrel{\simeq}{\to} \Fun(\cG,\cC)\ .$$
 The equivalence $$y^{*}\hat f^{*}\phi\simeq f^{*}\phi\simeq f^{*}\psi \simeq y^{*}\hat f^{*}\psi$$ therefore induces an equivalence
 $\hat f^{*}\phi\simeq \hat f^{*}\psi$.
 In particular, for every presheaf $Y$ in $\PSh(\cG)$ we have an equivalence
 $$\phi_{\hat f(Y)}\simeq \psi_{\hat f(Y)}:E(\hat f(Y))\to F(\hat f(Y))\ .$$
We conclude:
\begin{kor}\label{okgprherhreth}
For every object $D$ in the essential image of $\hat f$ we have an equivalence
$$\phi_{D}\simeq  \psi_{D}:E(D)\to F(D)$$ of morphisms in $\cC$.
\end{kor}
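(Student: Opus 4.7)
The plan is to exploit the universal property of the presheaf category $\PSh(\cG)$: since $E$ and $F$ preserve colimits, the compositions $E\circ\hat f$ and $F\circ\hat f$ are colimit-preserving functors $\PSh(\cG)\to\cC$, and the restriction along the Yoneda embedding
\[
y^{*}\colon\Fun^{\colim}(\PSh(\cG),\cC)\stackrel{\simeq}{\to}\Fun(\cG,\cC)
\]
is an equivalence of $\infty$-categories. In particular, $y^{*}$ is fully faithful on mapping spaces, so a natural transformation between two such colimit-preserving functors is determined up to equivalence by its restriction along $y$.

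First I would form the two natural transformations
\[
\hat f^{*}\phi,\ \hat f^{*}\psi\colon E\circ\hat f\longrightarrow F\circ\hat f
\]
in $\Fun^{\colim}(\PSh(\cG),\cC)$. Using the canonical equivalence $\hat f\circ y\simeq f$ (the defining property of the left Kan extension along the Yoneda embedding), I would identify
\[
y^{*}\hat f^{*}\phi\simeq f^{*}\phi\qquad\text{and}\qquad y^{*}\hat f^{*}\psi\simeq f^{*}\psi,
\]
so that the hypothesis $f^{*}\phi\simeq f^{*}\psi$ yields $y^{*}\hat f^{*}\phi\simeq y^{*}\hat f^{*}\psi$ in $\Fun(\cG,\cC)$. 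Applying the fully faithful functor $y^{*}$ in reverse then produces an equivalence $\hat f^{*}\phi\simeq\hat f^{*}\psi$ of natural transformations of colimit-preserving functors on $\PSh(\cG)$.

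To conclude, for $D$ in the essential image of $\hat f$ I would choose $Y$ in $\PSh(\cG)$ with an equivalence $\hat f(Y)\simeq D$ and evaluate the equivalence $\hat f^{*}\phi\simeq\hat f^{*}\psi$ at $Y$; naturality and the chosen identification give the desired equivalence $\phi_{D}\simeq\psi_{D}$ as morphisms $E(D)\to F(D)$. The only subtle point is the $\infty$-categorical one: the equality $y^{*}\hat f^{*}\phi\simeq f^{*}\phi$ must be taken as an equivalence of natural transformations (i.e.\ as an equivalence in the appropriate mapping space), not merely a coincidence of functors. This is guaranteed because $y^{*}$ is an equivalence of $\infty$-categories, and so induces equivalences on mapping spaces; once this is recorded, the argument above is a direct application of the universal property of presheaves and requires no further computation.
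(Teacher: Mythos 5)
Your argument is correct and is essentially the same as the paper's: both pass to the colimit-preserving functors $E\circ\hat f$, $F\circ\hat f$ on $\PSh(\cG)$, use that $y^{*}:\Fun^{\colim}(\PSh(\cG),\cC)\to\Fun(\cG,\cC)$ is an equivalence (hence fully faithful on natural transformations) together with $\hat f\circ y\simeq f$ to upgrade $f^{*}\phi\simeq f^{*}\psi$ to $\hat f^{*}\phi\simeq\hat f^{*}\psi$, and then evaluate at a preimage $Y$ of $D$. Your closing remark that this yields only objectwise equivalences on the essential image, not a natural equivalence there, matches the caveat the paper itself records after the corollary.
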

The equivalence \eqref{friq9fqwedewdqewd} of natural transformations  therefore extends to an objectwise 
 equivalence on the essential image of $\hat f$. Note that 
 this argument does not give an equivalence of natural transformations between the restrictions of 
 $\phi $ and $\psi$ to the essential image of $\hat f$. 

 \begin{ex}\label{kohpertgrtgertgertg}
 Assume that $\cC=\Spc$ or $\cC=\Sp$. The existence of the equivalence 
 $ \phi_{D}\simeq   \psi_{D}$ implies that
 $\phi_{D,*}=\psi_{D,*}:\pi_{*} E(D)\to \pi_{*}F(D)$.
 We therefore get an  equality of  natural transformations between the graded-group valued functors $\pi_{*}E$ and $\pi_{*}F$
 on the essential image of $\hat f$. \hB
 \end{ex}
 
 If one needs  some spectrum-valued naturality one can use the following.
Let $\cI$ be some $\infty$-category and consider the bold part of 
the diagram
$$\xymatrix{\cI\ar[rr]^{i}\ar@{..>}[ddrr]^{\hat i}&& \cD\ar@/^0.3cm/[rr]^{E} \ar@/^-0.3cm/[rr]_{F}&&\cC\\&&&\simeq&\\&&\PSh(\cG)\ar[uu]^{\hat f}\ar@/^0.3cm/[ruur]^{\hat f^{*}E}\ar@/^-0.3cm/[ruur]_{\hat f^{*}F}&&}\ .$$

\begin{kor}\label{guwergergefwwefwr}
When $i$ admits a lift $\hat i$, then we have an equivalence $i^{*}E\simeq i^{*}F$.
\end{kor}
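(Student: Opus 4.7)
The plan is to pull back the equivalence of natural transformations through $\hat f$ first, and then transport it along the lift $\hat i$. Write $\hat\phi := \hat f^{*}\phi$ and $\hat\psi := \hat f^{*}\psi$ for the induced natural transformations $\hat f^{*}E \to \hat f^{*}F$ between colimit-preserving functors $\PSh(\cG) \to \cC$.

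First I would invoke the universal property of $\PSh(\cG)$: since $\cC$ is cocomplete, restriction along $y$ induces an equivalence of $\infty$-categories
\[
y^{*}:\Fun^{\colim}(\PSh(\cG),\cC)\stackrel{\simeq}{\to} \Fun(\cG,\cC)\ ,
\]
and in particular is fully faithful on mapping spaces of natural transformations between colimit-preserving functors. Because $E$ and $F$ are colimit-preserving and $\hat f$ is a left Kan extension, $\hat\phi$ and $\hat\psi$ are natural transformations between colimit-preserving functors. The chain of equivalences
\[
y^{*}\hat\phi\simeq f^{*}\phi\stackrel{\eqref{friq9fqwedewdqewd}}{\simeq} f^{*}\psi\simeq y^{*}\hat\psi
\]
in $\Fun(\cG,\cC)$ therefore lifts to an equivalence $\hat\phi\simeq \hat\psi$ in $\Fun(\PSh(\cG),\cC)$.

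Next, the lift $\hat i$ by assumption comes with an equivalence $i\simeq \hat f\circ \hat i$ in $\Fun(\cI,\cD)$. Pulling the previous equivalence back along $\hat i$ gives
\[
i^{*}\phi\simeq \hat i^{*}\hat f^{*}\phi=\hat i^{*}\hat\phi\simeq \hat i^{*}\hat\psi=\hat i^{*}\hat f^{*}\psi\simeq i^{*}\psi
\]
as natural transformations between the pulled-back colimit-preserving functors $i^{*}E,i^{*}F:\cI\to \cC$. This realizes the asserted equivalence between $i^{*}E$ and $i^{*}F$ (via either of the pulled-back natural transformations, which now agree naturally in $\cI$).

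The main technical point, and the reason the result is non-formal, lies in the first step: it is crucial that $\hat f^{*}E$ and $\hat f^{*}F$ are \emph{colimit-preserving}, since only on this subcategory is $y^{*}$ fully faithful. This is precisely what fails in the remark following \cref{okgprherhreth}, where one only obtains a pointwise equivalence on the essential image of $\hat f$ without naturality. The presence of a lift $\hat i$ circumvents this obstruction by transporting the genuinely natural equivalence available on all of $\PSh(\cG)$ through a diagram already factoring via $\hat f$, so no further cohesion between pointwise equivalences in $\cD$ has to be produced by hand.
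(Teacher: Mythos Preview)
Your argument is correct and is exactly the approach the paper takes: the paper establishes $\hat f^{*}\phi\simeq\hat f^{*}\psi$ in the text immediately preceding the corollary (using the universal property of presheaves just as you do), and the corollary is stated without further proof as the direct consequence obtained by pulling back along the lift $\hat i$. One small remark: the statement as printed reads ``$i^{*}E\simeq i^{*}F$'', which is a typo for ``$i^{*}\phi\simeq i^{*}\psi$'' (as is clear from the application in \cref{koprthrtwerferfwerfwhergrtge}); you correctly prove the intended assertion, and your parenthetical at the end trying to reconcile this with the literal wording is unnecessary.
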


  \subsection{Normal equivariant homology theories}\label{okhprertgertgrtger}

  In \cite{ass} we introduced the notion of a local homology theory.
 In the reference the  axioms  were chosen so that the coarsification of a local homology theory is
 a coarse homology theory and that the composition 
of the cone functor $\cO^{\infty}:G\UBC\to G\BC$ with a strong equivariant coarse homology theory is an equivariant  local homology theory, see also \cref{hgwiueghergwergwe}. These sidebars leave some freedom for the choice of the excision axiom.
In the present paper we fix the excision axiom such that pull-backs of weak equivariant Borel-Moore
homology theories via $\iota^{\topp}_{u}:G\UBC\to G\LCH^{+}$ are excisive.

   Let $X$ be in $G\UBC$ and $A$ be an invariant closed subset. 
 \begin{ddd}
 We call $A$ normal if the restriction $C_{0,u}(X)\to C_{0,u}(A)$ is surjective.
 \end{ddd}
We refer to \cref{kopwhwthrh} for the notation.

 We consider a functor  $E:G\UBC\to \cC$  to a cocomplete stable $\infty$-category.
 In analogy to  \cite[Def. 3.12]{ass} we adopt the following definition.
 
 \begin{ddd}\label{koprthrtgertg}  $E$ is called a  normal equivariant
 local homology theory if it
 \begin{enumerate}
\item is homotopy invariant, 
\item\label{grwergwefwerfwerfrrewfwr} is excisive for  equivariant coarsely and uniformly excisive decompositions into normal closed subspaces,
\item is $u$-continuous,
\item vanishes on flasques.
 \end{enumerate}
   \end{ddd}
   Here normal refers to the additional normality condition in \cref{grwergwefwerfwerfrrewfwr}.
 
   \begin{ex}
 If $X$ is a $G$-simplicial complex with the spherical path metric and $A$ is an invariant  subcomplex, 
 then $A$ is a normal subset.  This is all what is needed to conclude that
 the coarsification \cref{lpohertertgrtget} of a normal equivariant local homology theory is a coarse homology theory.
 \hB
 \end{ex}

  As in \cite[Sec. 4]{ass} we can construct
  the universal  normal equivariant local homology theory
  $$\Yo \cN:G\UBC\stackrel{\Sigma^{\infty}_{+}y}{\to} \Fun(G\UBC,\Sp)\stackrel{L}{\to} G\Sp\cN$$
  where $y$ is the Yoneda embedding and $L$ is a left Bousfield localization
  which precisely forces that $\Yo\cN$ satisfies the axioms listed in \cref{koprthrtgertg}.
  Note that $G\Sp\cN$ is a presentable stable $\infty$-category and for every cocomplete stable $\infty$-category
  $\cC$ we have an equivalence 
  $$\Yo\cN^{*}:\Fun^{\colim}(G\Sp\cN,\cC)\stackrel{\simeq}{\to} \Fun^{\mathrm{\nloc}}(G\UBC,\cC)$$
  where the superscript $\nloc$ stands for the full subcategory of $ \Fun(G\UBC,\cC)$ of    normal  equivariant  local homology  theories.
 
 \begin{ex}\label{ogpwergerfwerfwref}
 Here are our main examples of  normal equivariant local homology theories.
\begin{enumerate}
\item If $E:G\BC\to \cC$ is a strong  equivariant coarse homology theory, then the composition
$$E\cO^{\infty}:G\UBC\xrightarrow{\cO^{\infty}} G\BC\xrightarrow{E}\cC$$ is a normal  equivariant local homology theory (the proof is analogous to the non-equivariant case \cite[Lem. 9.6]{ass}).
\item  If $E:G\LCH^{+}\to \cC$ is an equivariant  weak Borel-Moore   homology theory, then the composition
$$E\iota^{\topp}_{u} :G\UBC\xrightarrow{ \iota^{\topp}_{u}} G\LCH^{+}\xrightarrow{E}\cC$$
is a normal equivariant  local homology theory. The functor is obviously $u$-continuous.
The functor
$\iota^{\topp}_{u}$ preserves homotopies, flasques and sends equivariant coarsely and uniformly excisive decompositions into normal closed subspaces to invariant closed decompositions. We then
use that $E$ is homotopy invariant, vanishes on flasques   in $G\LCH^{+}$, and is strongly excisive  in order to verify the conditions from \cref{koprthrtgertg}.
\end{enumerate}
\hB\end{ex}

 Evaluating the diagram \eqref{gwerwefwerfrefw}
at $X=\cO^{\infty}(Z)$ and  the canonical map $$\pi:\partial_{h}\cO^{\infty}(Z)\to \iota^{\topp}_{u}(Z)$$ from \eqref{ewrferfrfvsdfvfd}   for $Z$ in $G\UBC$
 we get the diagram 
\eqref{gwerwefwerfrefw11}.

{\tiny \begin{equation}\label{gwerwefwerfrefw11}\hspace{-2.3cm}
\xymatrix{ K\cX_{G_{can,max}}^{G,\ctr}(\cO^{\infty}(Z))\ar[rr]^{c^{G}}  \ar[dd]^{\rmch^{G,\alg}}&& K\cX_{G_{can,max}}^{G}(\cO^{\infty}(Z)) \ar[rr]^{T^{G,\an}}&&\ar[dl]\Sigma K^{G,\an}(\partial_{h}\cO^{\infty}(Z))\ar@{..>}[dddd]^{\beta}  \\   &  & & 
\Sigma K^{G,\an}(\iota^{\topp}_{u}(Z))\ar[ddd]^{\beta}& \\
  \PCH\cX^{G}_{G_{can,max}}(\cO^{\infty}(Z))\ar[d]^{\tau^{G}}&  & & &   \\ \ar[d]^{\beta} \PH\cX^{G}_{G_{can,max}}(\cO^{\infty}(Z),\C) &  & &  &  \\ 
\PCH^{hG}(\cO^{\infty}(Z),\C)\ar[ddd]^{P(\chi_{\BM} )^{hG}\circ \pr_{X}} & &&  \Sigma K^{\an,hG}(\iota^{\topp}_{u}(Z) )\ar[dd]^{ \ch^{hG}  } & \Sigma K^{\an,hG}(\partial_{h} \cO^{\infty}(Z))\ar@{..>}[ddd]^{ \ch^{hG}   }    \\
&  &&  &   \\ 
& && \Sigma  (\prod_{k\in \Z} \Sigma^{2k}H\C_{\BM})^{hG}(\iota^{\topp}_{u}(Z)) &\\ 
  (\prod_{k\in \Z} \Sigma^{2k}H\C_{\BM} \bP)^{hG} (\cO^{\infty}(Z))\ar[rrrr]^{(T^{\topp})^{hG}}\ &&&&\ar[ul]\Sigma  (\prod_{k\in \Z} \Sigma^{2k}H\C_{\BM})^{hG}(\partial_{h}\cO^{\infty}(Z))  }\ .
\end{equation}
}
It describes two natural transformations
\begin{equation}\label{hrtgrtgtrg36zzez}\phi,\psi:K\cX_{G_{can,max}}^{G,\ctr}\cO^{\infty} \to \Sigma  (\prod_{k\in \Z} \Sigma^{2k}H\C_{\BM})^{hG}(  \iota^{\topp}_{u}(-)):G\UBC\to \Sp\ .
\end{equation}
where $ \phi$ is the clockwise, and $\psi$ is the counterclockwise path.  Note that $$K\cX^{G,\ctr}_{G_{can,max}}:G\BC\to \Sp$$ is an equivariant strong coarse homology theory, and that 
$$\Sigma  (\prod_{k\in \Z} \Sigma^{2k}H\C_{\BM})^{hG}:G\LCH^{+}\to \Sp$$
is an equivariant  Borel-Moore homology theory.  By \cref{ogpwergerfwerfwref}  the
 domain and target of \eqref{hrtgrtgtrg36zzez} are normal equivariant local homology theories and  can be interpreted as colimit preserving functors
from $G\Sp\cN$ to $\Sp$.

We now consider the case of a trivial group $G$. Then the diagram \eqref{gwerwefwerfrefw11} specializes to \eqref{gwerwefwerfrefffw}.
In \cref{kophertgertgterg} we have chosen the normalization of $\ch^{h}$ such that $\phi_{*}\simeq \psi_{*}$.
We apply the general theory  from \cref{gwerjofewrfwer}   to $\cG=\{*\}$. There is a unique colimit-preserving functor
\begin{equation}\label{bwepo0ivergervdfvsfdvfdv}\hat *:\Spc\to \Sp\cN
\end{equation} sending $*$ to $\Yo\cN(*)$.

\begin{kor}
We have $\phi_{D}\simeq \psi_{D}$ for every $D$ in the essential image of $\hat *$.
\end{kor}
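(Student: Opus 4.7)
The plan is to apply \cref{okgprherhreth} directly, specialized to $\cG=\{*\}$ and the functor $f:\{*\}\to \Sp\cN$ picking out $\Yo\cN(*)$. Under the identification $\PSh(\{*\})\simeq \Spc$, the left Kan extension of $f$ along the Yoneda embedding is the colimit-preserving functor $\Spc\to\Sp\cN$ sending $*$ to $\Yo\cN(*)$, which by the universal property $\Fun^{\colim}(\Spc,\Sp\cN)\simeq \Sp\cN$ agrees with the functor $\hat*$ of \eqref{bwepo0ivergervdfvsfdvfdv}. So the essential image of $\hat f$ in the general lemma coincides with the essential image of $\hat*$ that appears in the statement.

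Before applying the lemma one must check that $\phi$ and $\psi$ are admissible input for it, i.e.\ that they are natural transformations between \emph{colimit-preserving} functors out of $\Sp\cN$. This is immediate from \cref{ogpwergerfwerfwref}: the domain $K\cX^{\ctr}_{G_{can,max}}\cO^{\infty}$ is normal local (since $K\cX^{\ctr}_{G_{can,max}}$ is strong), and the target $\Sigma (\prod_{k\in \Z}\Sigma^{2k}H\C_{\BM})(\iota^{\topp}_u(-))$ is normal local because it is the pullback along $\iota^{\topp}_u$ of a Borel-Moore homology theory (hence weak Borel-Moore by \cref{ophertgertgertgertg}). Both therefore factor uniquely through $\Yo\cN$ as colimit-preserving functors $\Sp\cN\to\Sp$, by the universal property of $\Yo\cN$.

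The hypothesis $f^*\phi\simeq f^*\psi$ required by \cref{okgprherhreth} is exactly the statement that $\phi$ and $\psi$ agree after evaluation at $\Yo\cN(*)$, i.e.\ the commutativity of the outer cell of \eqref{gwerwefwerfrefffw} at $Z=*$. This is precisely what \cref{kophertgertgterg} provides, once the normalization of the equivalence in \eqref{vdsfpovkspdfvfdvsf} has been fixed (the existence and uniqueness of this normalization being the actual technical content, already established via \cref{ophertgertgetrge} using the Paschke equivalence for $\cO^{\infty}(*)$). Granted this, \cref{okgprherhreth} yields $\phi_D\simeq \psi_D$ for every $D$ in the essential image of $\hat*$, which is the desired conclusion.

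There is no real obstacle remaining; the work was done in \cref{kophertgertgterg} to arrange $\phi_*\simeq\psi_*$ and in \cref{gwerjofewrfwer} to formalize the extension principle. The only minor subtlety is to be transparent about the fact that the conclusion is an objectwise equivalence on the essential image of $\hat*$ and not a natural equivalence of the restricted transformations; this is inherent to the method, as noted after \cref{okgprherhreth}.
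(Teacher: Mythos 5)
Your proposal is correct and follows the paper's own route: factor $\phi,\psi$ through $\Yo\cN$ as transformations between colimit-preserving functors on $\Sp\cN$ (using \cref{ogpwergerfwerfwref} for normal locality of domain and target), use \cref{kophertgertgterg} to get agreement at $\Yo\cN(*)$, and invoke the extension principle \cref{okgprherhreth} with $\cG=\{*\}$, identifying $\hat f$ with $\hat *$ via $\PSh(\{*\})\simeq\Spc$. Your closing caveat about objectwise (rather than natural) equivalence on the essential image is exactly the point the paper also flags after \cref{okgprherhreth}.
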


Note that a finite CW-complex is a compact Hausdorff space and therefore canonically an object of $\UBC$.
We have an inclusion $\CW^{\fin}\to \UBC$.
\begin{lem}\label{koprthrthergrtge}
We have a lift
$$\xymatrix{&\Spc\ar[dr]^{\hat *}&\\\CW^{\fin}\ar@{..>}[ur]\ar[r]&\UBC\ar[r]^{\Yo\cN}&\Sp\cN}$$
\end{lem}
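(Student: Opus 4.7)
The plan is as follows. The candidate lift $\CW^{\fin}\to \Spc$ is simply the underlying–homotopy–type functor $\ell$. Since $\hat{*}$ preserves colimits and lands in the stable presentable $\infty$-category $\Sp\cN$, it factors through the stabilization $\Sigma^{\infty}_{+}:\Spc\to \Sp$, taking the explicit form $\hat{*}(K)\simeq \Sigma^{\infty}_{+}K\otimes \Yo\cN(*)$, where $\otimes$ denotes the canonical tensoring of $\Sp\cN$ over $\Sp$. It therefore suffices to exhibit a natural equivalence $\Yo\cN(Z)\simeq \Sigma^{\infty}_{+}\ell Z \otimes \Yo\cN(*)$ for $Z$ in $\CW^{\fin}$.

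Equip every finite CW-complex with its canonical UBC structure: the unique compatible uniformity on the compact Hausdorff underlying topological space, the maximal bornology, and the maximal coarse structure. I would argue by induction on the number of cells of $Z$. The base case $Z=*$ is exactly the defining property \eqref{bwepo0ivergervdfvsfdvfdv} of $\hat{*}$. For the inductive step, present $Z$ as a pushout $Z=Z'\cup_{f}D^{n}$ along the attaching map $f:S^{n-1}\to Z'$ of a top-dimensional cell. Since $\hat{*}\circ\ell$ preserves colimits, $D^{n}$ is contractible, and $\Yo\cN$ is homotopy invariant (so $\Yo\cN(D^{n})\simeq \Yo\cN(*)$), the inductive hypothesis applied to $Z'$ and, via a nested induction on dimension using the decomposition $S^{n-1}=D_{+}^{n-1}\cup_{S^{n-2}}D_{-}^{n-1}$, to $S^{n-1}$, reduces the step to showing that $\Yo\cN$ carries the CW-attachment pushout square to a pushout square in $\Sp\cN$.

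The main technical obstacle is to verify that the decomposition $(Z',D^{n})$ of $Z$ is a \emph{coarsely and uniformly excisive decomposition into normal closed subspaces} in the sense of \cref{koprthrtgertg}(\ref{grwergwefwerfwerfrrewfwr}), so that the excision axiom of a normal local homology theory delivers the required pushout. Normality of the closed subsets $Z'$, $D^{n}$, and $S^{n-1}=Z'\cap D^{n}$ follows from Tietze's extension theorem, since compact Hausdorff spaces are topologically normal and the $C^{*}$-algebras $C_{0,u}$ coincide with the full continuous function algebras on compact Hausdorff spaces, so every continuous function on a closed subset lifts. Coarse excision is automatic because all ambient coarse structures are maximal. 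Uniform excision reduces, via the standard collar of $S^{n-1}$ inside the cell $D^{n}$, to producing for every uniform entourage $V$ of $Z$ an enlarged entourage $W$ with $W[Z']\cap W[D^{n}]\subseteq V[S^{n-1}]$; this is a routine estimate from the compact Hausdorff uniform structure and the uniform continuity of the attaching map. Granting these verifications the induction closes and produces the desired lift $\CW^{\fin}\to\Spc$ fitting into the triangle.
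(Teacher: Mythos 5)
Your overall idea---take the lift to be the underlying homotopy type functor $\ell$ and identify $\Yo\cN|_{\CW^{\fin}}$ with $\hat *\circ\ell$ using $\hat *(K)\simeq \Sigma^{\infty}_{+}K\otimes \Yo\cN(*)$---is the right one, but the way you propose to establish the identification has a genuine gap. A cell-by-cell induction produces, for each finite CW-complex $Z$, an equivalence $\Yo\cN(Z)\simeq \hat *(\ell Z)$, whereas the lemma asserts a lift, i.e.\ a commutative triangle of functors, and therefore requires these equivalences to be natural (indeed coherently so) in $Z$. Your induction constructs them one object at a time, depending on a choice of CW structure and of the order of cell attachments, and says nothing about compatibility with maps in $\CW^{\fin}$; this is precisely the distinction the paper is careful about elsewhere (see \cref{okgprherhreth} and the remark following it: objectwise equivalences obtained in this way do not upgrade to equivalences of natural transformations). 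The paper's proof avoids the induction entirely: since $\Yo\cN|_{\CW^{\fin}}$ is homotopy invariant it factors through the Dwyer--Kan localization $\CW^{\fin}[W^{-1}]\simeq \Spc^{\fin}$, and since it is excisive for cell attachments the induced functor $\Spc^{\fin}\to\Sp\cN$ preserves finite colimits; because $\Spc^{\fin}$ is freely generated by $*$ under finite colimits and the functor sends $*$ to $\Yo\cN(*)$, it must agree with the restriction of $\hat *$ from \eqref{bwepo0ivergervdfvsfdvfdv}, which yields the commuting triangle, with all coherence, by a purely formal universal-property argument. To make your route work you would have to reorganize it into exactly such an argument rather than an objectwise induction.

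A secondary point: your excision step is not set up correctly as stated. In the pushout $Z=Z'\cup_{f}D^{n}$ the attaching map $f$ need not be injective, so the image of $D^{n}$ in $Z$ is in general neither homeomorphic to $D^{n}$ nor does it meet $Z'$ in $S^{n-1}$; the standard fix is to excise along a collar inside the open cell (take $A$ to be the complement of a small open ball in the interior of the cell, which deformation retracts onto $Z'$, and $B$ a slightly larger closed ball, with $A\cap B$ an annulus homotopy equivalent to $S^{n-1}$) and then invoke homotopy invariance. Your checks of normality and of coarse excisiveness for the maximal coarse structure are fine in spirit, but these details are exactly what the paper subsumes in the one-line claim that $\Yo\cN|_{\CW^{\fin}}$ is excisive for cell attachments; they do not repair the missing naturality.
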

\begin{proof}
 We have 
 a factorization
$$\xymatrix{ \UBC\ar[rr]^{\Yo\cN}&&\Sp\cN\\\CW^{\fin}\ar[u]\ar[d]^{!}&&\Spc\ar[u]^{\hat *}_{\eqref{bwepo0ivergervdfvsfdvfdv}}\\\ar@{..>}[uurr]\CW^{\fin}[W^{-1}]\ar[rr]^{\simeq}&&\Spc^{\fin}\ar[u]}\ ,$$
where the dotted arrow exists by the universal property of the marked arrow being the Dwyer-Kan localization at the homotopy equivalences since the composition 
 $$\CW^{\fin}\to \UBC\stackrel{\Yo\cN}{\to }\Sp\cN $$
is homotopy invariant. Since the latter is also 
excisive for cell-attachements the dotted arrow preserves finite colimits.
This implies that the lower triangle also commutes.
The assertion of the lemma follows.
\end{proof}

By combining  \cref{guwergergefwwefwr} and \cref{koprthrthergrtge}  we can conclude:

\begin{kor} \label{koprthrtwerferfwerfwhergrtge}The restriction of the diagram \eqref{gwerwefwerfrefffw} to $\CW^{\fin}$ commutes, i.e., we
 have $\phi_{|\CW^{\fin}}\simeq \psi_{|\CW^{\fin}}$.
\end{kor}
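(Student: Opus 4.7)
The plan is to view both natural transformations $\phi$ and $\psi$ as transformations between normal local homology theories on $\UBC$ and then to propagate the point-wise equivalence established in \cref{kophertgertgterg} along the formal apparatus of \cref{gwerjofewrfwer}, using the lift provided by \cref{koprthrthergrtge}.

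First I would verify that the source and target of $\phi,\psi$, namely $K\cX^{\ctr}\circ \cO^{\infty}$ and $\Sigma\bigl(\prod_{k\in \Z}\Sigma^{2k}H\C_{\BM}\bigr)\circ \iota^{\topp}_{u}$, are indeed normal equivariant local homology theories on $\UBC$ in the sense of \cref{koprthrtgertg}. For the source this is the first item of \cref{ogpwergerfwerfwref} together with strongness of $K\cX^{\ctr}$; for the target it is the second item of \cref{ogpwergerfwerfwref} applied to the Borel-Moore homology theory $\prod_{k\in \Z}\Sigma^{2k}H\C_{\BM}$. By the universal property of $\Yo\cN:\UBC\to \Sp\cN$, this means that $\phi$ and $\psi$ arise as (restrictions of) natural transformations between colimit-preserving functors $\Sp\cN\to \Sp$.

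Next I would take $\cG:=\{*\}$ and $f:\{*\}\to \UBC$ sending $*$ to the one-point space in the framework of \cref{gwerjofewrfwer}. By \cref{kophertgertgterg} — more precisely by the normalization of $\ch^{h}$ carried out there — we have $f^{*}\phi\simeq f^{*}\psi$. The associated left Kan extension $\hat f:\PSh(\{*\})\simeq\Spc\to \Sp\cN$ is precisely the functor $\hat *$ of \eqref{bwepo0ivergervdfvsfdvfdv}, so the general machinery gives $\hat f^{*}\phi\simeq \hat f^{*}\psi$.

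Finally, \cref{koprthrthergrtge} supplies the crucial lift of the composition $\CW^{\fin}\to \UBC\xrightarrow{\Yo\cN}\Sp\cN$ through $\hat *:\Spc\to \Sp\cN$. Applying \cref{guwergergefwwefwr} with this lift then yields $\phi_{|\CW^{\fin}}\simeq \psi_{|\CW^{\fin}}$, as desired. The main conceptual point — and the only step that required genuine work — is the normalization at a point carried out in \cref{kophertgertgterg}; once that is in hand, the extension to finite CW-complexes is purely a formal consequence of the universal properties of $\Sp\cN$ and $\PSh(\{*\})$, so I do not anticipate any further obstacle here.
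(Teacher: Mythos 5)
Your proposal is correct and follows essentially the same route as the paper: regard $\phi,\psi$ as transformations between normal local homology theories (hence between colimit-preserving functors out of $\Sp\cN$), feed the point-wise normalization of \cref{kophertgertgterg} into the Kan-extension formalism of \cref{gwerjofewrfwer}, and conclude via the lift of \cref{koprthrthergrtge} and \cref{guwergergefwwefwr}. The only cosmetic difference is that the functor $f$ should be taken with values in $\Sp\cN$ (sending $*$ to $\Yo\cN(*)$) rather than in $\UBC$, which is what you implicitly do when you identify $\hat f$ with $\hat *$.
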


\subsection{Pairing with cohomology classes}\label{ergerwfwerfwrf}

In this subsection we evaluate the right corners of \eqref{ferwferfwrefw} on appropriate cohomology classes and discuss the consequences of the  commutativity of this square.

Let $R$ be a commutative ring spectrum and $M$ be in $\Mod(R)$. Then according to \cref{giuowerpgwrefefwerfwvdf} we can form the Borel-Moore cohomology theory 
$$R^{\BM}:L\LCH^{+,\op}\to \Mod(R)\ .$$ The dual Borel-Moore homology theory   \cref{okgpgkerpwofwerferwfwrefwf} can be written, using $R^{\BM}\simeq S^{\BM}\wedge R$, as $$M_{\BM}:=\Sp(S^{\BM},M)\simeq \Mod(R)(R^{\BM},M):\LCH^{+}\to \Mod(R)\ .$$ 
Hence for $Y$ in $\LCH^{+}$ we get a binatural evaluation pairing
\begin{equation}\label{vboijowrferfvewrf} \ev:M_{\BM}(Y)\otimes_{R}R^{\BM}(Y)\to M\ .
\end{equation}

We now consider $K$-theory with values in $\Mod(KU)$. By \cref{ijogpwregreg9} and \cref{hertgertgergtrgrtg} we have the equivalences
$$K(C_{0}(-))\simeq KU^{\BM}\ , \quad  \mbox{and} \quad K^{\an}\simeq KU_{\BM}\ ,$$
where $K:\nCalg\to \Mod(KU)$ is the topological $K$-theory functor for $C^{*}$-algebras, written as $K\circ e$ in  \cref{ijogpwregreg9}.    Hence by specializing \eqref{vboijowrferfvewrf} we have a bi-natural pairing
\begin{equation}\label{feroifjowerfwerfew}K^{\an}(Y)\otimes_{KU} K(C_{0}(Y))\to KU\ .
\end{equation}
For $X$ in $\BC$  we have the coarse corona pairing
$$-\cap^{\cX}-:K\cX(X)\otimes_{KU} K(C_{0}(\partial_{h}X))\to K\cX(\cB) $$
from \cite[Def. 3.35]{Bunke:2024aa}.
The restrictions of the  canonical map  $X\to *$ of sets  to bounded subsets of $X$ are 
 morphisms in $\BC$ and hence induce a morphism $K\cX(\cB)\to K\cX(*)\simeq KU$.
\begin{lem}\label{koprherhgrtgertg}
The following square commutes:
$$\xymatrix{K\cX(X)\otimes_{KU} K(C_{0}(\partial_{h}X))\ar[r]^{T^{\an}\otimes \id}\ar[d]^{-\cap^{\cX}-} & \Sigma K^{\an}(\partial_{h}X)\otimes K(C_{0}(\partial_{h}X))  \ar[d]^{\eqref{feroifjowerfwerfew}} \\ \Sigma  K\cX(\cB)\ar[r] &\Sigma KU } $$\end{lem}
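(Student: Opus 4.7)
The plan is to show both compositions coincide by unfolding them through the multiplication morphism $\mu_X : C(\partial_h X) \otimes \bV(X) \to \bQ_{\std}$ from \cref{ugwerigwerfrefw} and the boundary $\partial : \bQ_{\std} \to \Sigma \bC_{\std}$ associated to the exact sequence \eqref{hrtge3g43gtrgegr}, and then invoking the compatibility of composition with the external product in the symmetric monoidal $\infty$-category $\EE$.

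First I would unfold the top-right path. A class $\alpha$ in $K\cX(X) = \EE(\C, \bV(X))$ is a morphism $\alpha : \C \to \bV(X)$ in $\EE$, and a class $\beta$ in $K(C(\partial_h X)) = \EE(\C, C(\partial_h X))$ is a morphism $\beta : \C \to C(\partial_h X)$. By \cref{rguweruigowerferfrfwr}, $T^{\an}(\alpha)$ equals $\partial \circ \mu_X \circ \delta_{\partial_h X}(\alpha) \in \Sigma \EE(C(\partial_h X), \bC_{\std})$, where $\delta_{\partial_h X}(\alpha) = C(\partial_h X) \otimes \alpha$. Under the equivalence $K^{\an}(\partial_h X) \simeq KU_{\BM}(\partial_h X)$ of \cref{hertgertgergtrgrtg}, the evaluation pairing \eqref{feroifjowerfwerfew} is, via \eqref{vboijowrferfvewrf}, identified with composition in $\EE$. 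Hence the top-right composition evaluated at $(\alpha, \beta)$ gives
$$\partial \circ \mu_X \circ (C(\partial_h X) \otimes \alpha) \circ \beta \; \in \; \EE(\C, \Sigma \bC_{\std}).$$

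Next I would unfold the left-bottom path. Following \cite[Def. 3.35]{Bunke:2024aa} generalized to the present framework (and using the pairing $-\cap^{\cX}-$ described after \cref{rguweruigowerferfrfwr}), $\alpha \cap^{\cX} \beta$ is defined as $\partial \circ \mu_X \circ (\alpha \otimes \beta)$, where $\alpha \otimes \beta : \C \simeq \C \otimes \C \to \bV(X) \otimes C(\partial_h X) \simeq C(\partial_h X) \otimes \bV(X)$ is the external product (with the symmetry isomorphism of $\EE$ built in so as to match the domain of $\mu_X$). The bottom arrow $\Sigma K\cX(\cB) \to \Sigma KU$ is induced by the maps $B \to *$ in $\BC$ for $B \in \cB$, which under the Morita identification $\bV(*) \simeq \bC_{\std}$ on $\EE$-theory sends $\Sigma K\cX(\cB)$ to $\Sigma \EE(\C, \bC_{\std}) \simeq \Sigma KU$; composing this with the corona pairing recovers $\partial \circ \mu_X \circ (\alpha \otimes \beta)$ as a class in $\Sigma \EE(\C, \bC_{\std})$.

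The equality of the two expressions then reduces to a basic identity in the symmetric monoidal $\infty$-category $\EE$: for morphisms $\alpha : \C \to A$ and $\beta : \C \to B$ out of the unit one has
$$(A \otimes \alpha) \circ \beta \; \simeq \; \alpha \otimes \beta,$$
which for $A = \bV(X)$, $B = C(\partial_h X)$ (and absorbing the symmetry isomorphism into the convention for $\mu_X$ as in \cref{ugwerigwerfrefw}) gives $(C(\partial_h X) \otimes \alpha) \circ \beta \simeq \alpha \otimes \beta$ in $\EE(\C, C(\partial_h X) \otimes \bV(X))$. Applying $\mu_X$ and then $\partial$ to both sides yields the desired equality of the two compositions.

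The main obstacle I anticipate is the bookkeeping needed to identify the target $\Sigma K\cX(\cB)$ of the corona pairing with $\Sigma \EE(\C, \bC_{\std}) \simeq \Sigma KU$ in a manner compatible with how $\partial \circ \mu_X \circ (\alpha \otimes \beta)$ appears naturally in the latter; in particular one must verify that the canonical comparison $K\cX(\cB) \to KU$ arising from $B \to *$ agrees with the identification induced by the boundary $\partial$ and the Morita equivalence $\bV(\text{point}) \simeq \bC_{\std}$, which will require tracing through the chain of equivalences underlying the construction of $-\cap^{\cX}-$ in \cite[Def. 3.35]{Bunke:2024aa}. Once this identification is in place, the rest is a formal chase in $\EE$.
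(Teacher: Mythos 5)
Your overall strategy is the paper's: unfold both composites as morphism-space manipulations in $\EE$ through $\delta$, the multiplication $\mu$, and a boundary map, and reduce the comparison to the monoidal identity $(A\otimes\alpha)\circ\beta\simeq \alpha\otimes\beta$ together with the identification of the evaluation pairing \eqref{feroifjowerfwerfew} with composition. The right/top column of your argument is essentially what the paper does.

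The gap is in the left/bottom column, and it is exactly the step you flag as an ``obstacle'' and then do not carry out. The corona pairing $-\cap^{\cX}-$ is \emph{not} defined as $\partial\circ\mu_{X}\circ(\alpha\otimes\beta)$ with $\partial$ the boundary of the Calkin-type sequence $\bC_{\std}\to\bM\bC_{\std}\to\bQ_{\std}$; by \cite[Def. 3.34 \& 3.35]{Bunke:2024aa} it is built from the pairing $\nu:\EE(\C,\bV(X))\otimes C(\partial_{h}X)\to \EE(\C,\bV(X)/\bV(\cB\subseteq X))$ followed by the boundary of the Roe-category ideal sequence $\bV(\cB\subseteq X)\to\bV(X)\to\bV(X)/\bV(\cB\subseteq X)$, so that it genuinely lands in $K\cX(\cB)=\EE(\C,\bV(\cB\subseteq X))$, and only afterwards is it pushed to $\Sigma KU$ by $B\to *$. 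The actual content of the lemma is the comparison of these two different quotients and their boundary maps: one must check that $\mu$ factors as $\nu$ followed by the functor $\bV(X)/\bV(\cB\subseteq X)\to\bQ$ (this is the paper's key observation), and that under the induced map of exact sequences the resulting map on fibres is compatible with the comparison $K\cX(\cB)\to KU$ coming from $B\to *$ (together with the Morita identification of $\bV$ of a point with $\Hilb_{c}(\C)$). Your writeup asserts that ``composing this with the corona pairing recovers $\partial\circ\mu_{X}\circ(\alpha\otimes\beta)$'' and then defers precisely this verification; without it the formal monoidal chase proves only a tautology about your substitute definition of the cap product, not the commutativity of the stated square. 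To close the gap, unfold $\cap^{\cX}$ as in \eqref{iguwoegwrefeferfw}, insert the factorization $\mu=(\bV(X)/\bV(\cB\subseteq X)\to\bQ)\circ\nu$, and use naturality of the boundary maps for the induced morphism of exact sequences; the remaining identification of \eqref{feroifjowerfwerfew} with composition (your appeal to \cref{hertgertgergtrgrtg}) then plays the role of the paper's reduction of the evaluation-plus-diagonal step to the module structure in step $(1)$.
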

\begin{proof}By \cite[Def. 3.35]{Bunke:2024aa} and using the pairing $\nu$
from \cite[Def. 3.34]{Bunke:2024aa}
the down-right composition    is given by  \begin{eqnarray}
K\cX(X)\otimes_{KU} K(C_{0}(\partial_{h}X))&\simeq& \EE(\C,\bV(X))\otimes_{KU} \EE(\C,C(\partial_{h}X))\nonumber\\&\stackrel{(1)}{\to}&
 \EE(\C,\bV(X))\otimes C(\partial_{h}X))\nonumber\\&\stackrel{\nu}{\to}&
 \EE(\C, \bV(X)/\bV(\cB\subseteq X))\nonumber\\&\stackrel{\partial}{\to}&
 \Sigma \EE(\C,\bV(\cB\subseteq X))\nonumber\\&\to& \Sigma KU\label{iguwoegwrefeferfw}\ ,
\end{eqnarray}
where $\nu$ is described in  \cite[Lem. 3.34]{Bunke:2024aa}.
The right-down composition is, according to \cref{rguweruigowerferfrfwr}, given by
 \begin{eqnarray*}
K\cX(X)\otimes_{KU} K(C_{0}(\partial_{h}X))&\simeq& \EE(\C,\bV(X))\otimes_{KU} \EE(\C,C(\partial_{h}X))\\&\stackrel{\delta_{X}}{\to}&
 \EE(C(\partial_{h}X),\bV(X)\otimes C(\partial_{h}X))\otimes \EE(\C,C(\partial_{h}X))\\&\stackrel{\mu}{\to}& \EE(C(\partial_{h}X), \bQ)\otimes_{KU} \EE(\C,C(\partial_{h}X))\\&\stackrel{\partial}{\to}&\Sigma \EE(C(\partial_{h}X), \C)\otimes_{KU} \EE(\C,C(\partial_{h}X))\\&\stackrel{\eqref{feroifjowerfwerfew}}{\to}&
\Sigma \EE(\C,\C)\\&\simeq&\Sigma KU
\end{eqnarray*}
Here $\bQ$ is the Calkin category $\Hilb(\C)/\Hilb_{c}(\C)$.
These maps are equivalent. In order to see this
we observe that if we compose $\nu$ with $\bV(X)/\bV(\cB\subseteq  X)\to \bQ$, then we get the pairing $\mu$.
Furthermore, the composition of the  pairing \eqref{feroifjowerfwerfew} with the diagonal morphism
can be reduced to the application of the symmetric monoidal structure as in step marked by $(1)$ in \eqref{iguwoegwrefeferfw}.
\end{proof}

By \eqref{gwweroijkewrjgfowertw334w} the  target of the  homotopical  Chern character \eqref{9gwerfreferwfwever} can be rewritten as  follows:
$$ \prod_{k\in \Z} \Sigma^{2k}H\C_{\BM}\simeq (KU\wedge H\C)_{\BM}\ .$$
We therefore have a bi-natural pairing
\begin{equation}\label{gwpoejropfwoer0f9wefref} \hspace{-0.5cm}( \prod_{k\in \Z} \Sigma^{2k}H\C_{\BM})(Y) \otimes  (KU\wedge H\C)^{\BM}(Y)      \simeq    (KU\wedge H\C)_{\BM}(Y)\otimes_{KU\wedge H\C} (KU\wedge H\C)^{\BM}(Y) \to KU\wedge H\C\ .
\end{equation} 
The morphism of ring spectra $\epsilon:KU\to KU\wedge H\C$ induced by the unit of $H\C$
induces a cohomological topological Chern character
$$\rmch_{h}:K(C_{0}(-))\simeq KU^{\BM}\to (KU\wedge H\C)^{\BM}\ .$$
The following is a formal consequence of the definitions:
\begin{lem}
The following  square commutes
$$\xymatrix{K^{\an}(Y)\otimes_{KU} K(C_{0}(Y))\ar[rr]^-{\rmch^{h}\otimes \rmch_{h}}\ar[d]^{\eqref{feroifjowerfwerfew}} && (\prod_{k\in \Z} \Sigma^{2k}H\C_{\BM})(Y)\otimes (KU\wedge H\C)^{\BM}(Y) \ar[d]^{\eqref{vboijowrferfvewrf}} \\  KU\ar[rr]^{\epsilon}&& KU\wedge H\C} \ .$$
\end{lem}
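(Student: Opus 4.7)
The idea is to exhibit the square as the naturality square of the evaluation pairing along the ring morphism $\epsilon\colon KU\to KU\wedge H\C$, using that both Chern characters in the vertical direction are obtained by applying the Borel--Moore (co)homology construction to $\epsilon$.

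First I would unpack the two horizontal pairings. By construction, the pairing \eqref{feroifjowerfwerfew} is the instance of the evaluation pairing \eqref{vboijowrferfvewrf} with $R=M=KU$, transported along the equivalences $K^{\an}\simeq KU_{\BM}$ of \cref{hertgertgergtrgrtg} and $K(C_{0}(-))\simeq KU^{\BM}$ of \cref{ijogpwregreg9}. Similarly, the pairing \eqref{gwpoejropfwoer0f9wefref} is, by definition, the instance of \eqref{vboijowrferfvewrf} with $R=M=KU\wedge H\C$, transported along the analogous identifications combined with \eqref{ggwerfrfrewfwwrefw34f}.

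Next I would identify both vertical arrows as the images of $\epsilon$ under the Borel--Moore (co)homology constructions. Applying $(-)_{\BM}=\Sp(S^{\BM},-)$ to $\epsilon$ produces a natural transformation $KU_{\BM}\to (KU\wedge H\C)_{\BM}$ which, via the identifications of \cref{hertgertgergtrgrtg} and \eqref{ggwerfrfrewfwwrefw34f}, recovers $\rmch^{h}$ by \cref{9gwerfreferwfwever}. Dually, applying $(-)^{\BM}$ to $\epsilon$ and using \cref{ohperttrgertgt} produces a natural transformation $KU^{\BM}\to (KU\wedge H\C)^{\BM}$ which, under the equivalence $K(C_{0}(-))\simeq KU^{\BM}$, is the cohomological Chern character $\rmch_{h}$ by construction.

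It then suffices to verify that for any morphism $\epsilon\colon R\to R'$ of commutative ring spectra the diagram
\[
\xymatrix{R_{\BM}(Y)\otimes_{R}R^{\BM}(Y)\ar[r]^-{\ev_{R}}\ar[d]^{\epsilon_{*}\otimes\epsilon^{*}} & R\ar[d]^{\epsilon}\\ R'_{\BM}(Y)\otimes_{R'}R'^{\BM}(Y)\ar[r]^-{\ev_{R'}} & R'}
\]
commutes, where the vertical map on the left is the tensor product of $\epsilon_{*}$ and $\epsilon^{*}$ composed with the base change from $\otimes_{R}$ to $\otimes_{R'}$. This is a formal consequence of the fact that $R^{\BM}\simeq S^{\BM}\wedge R$ together with $M_{\BM}\simeq \Sp(S^{\BM},M)$: tracing through the identification, $\ev_{R}$ is the counit of the tensor--hom adjunction at $S^{\BM}$, which is natural in the coefficient ring. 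Specialising to $R=KU$, $R'=KU\wedge H\C$ and $\epsilon$ the unit of the $H\C$-action produces exactly the square of the lemma. The only subtle point is to check that the tensor products appearing on the two sides of the square match after base change along $\epsilon$; this is automatic since both Chern character factors are obtained from the same $\epsilon$, so the $\otimes_{KU}$-product factors through $\otimes_{KU\wedge H\C}$ without ambiguity. There is no genuine obstacle here --- as the paper says, the assertion is a formal consequence of the definitions, and the substance of the argument is just this naturality of the Borel--Moore evaluation pairing in the coefficient ring.
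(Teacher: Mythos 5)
Your proposal is correct and follows exactly the route the paper intends: the paper offers no written argument beyond the remark that the square is "a formal consequence of the definitions", and your writeup supplies precisely that formal content, namely that both pairings are instances of the evaluation pairing \eqref{vboijowrferfvewrf}, that both Chern characters are obtained by applying $(-)_{\BM}$ resp.\ $(-)^{\BM}$ to the unit map $\epsilon\colon KU\to KU\wedge H\C$, and that the evaluation pairing is natural in the coefficient ring via base change. Nothing is missing, so this matches the paper's (implicit) proof in substance while making it explicit.
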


Assume that $X$ in $\BC$, $Y$ in $\UBC$ and $\pi:\partial_{h}X\to \iota^{\topp}_{u}(Y)$
satisfy the assumptions of  \cref{iowerjgowegjweriogrwegwe}.
 The following is a formal consequence of \cref{iowerjgowegjweriogrwegwe} and \cref{koprherhgrtgertg} for the lower right triangle.
\begin{kor}The following diagram commutes:
$${\tiny \xymatrix{K\cX^{\ctr}(X)\otimes K(C_{0}( \iota^{\topp}_{u}(Y))\ar[d]^{P(\chi_{\BM})\tau\rmch^{\alg}\otimes  \rmch_{h}}\ar[r]^{c\otimes \id }&K\cX(X)\otimes K(C_{0}( \iota^{\topp}_{u}(Y)))\ar[d]^{\id\otimes \pi^{*}} \ar[r]^{T^{\an}\otimes \id}& \ar[d]^{\id\otimes \pi^{*}}\Sigma K^{\an}(\partial_{h}X)\otimes K(C_{0}( \iota^{\topp}_{u}(Y))) \\ (\prod_{k\in \Z} \Sigma^{2k}H\C_{\BM})\bP(X)\otimes (KU\wedge H\C)^{\BM}( \iota^{\topp}_{u}(Y)) \ar[d]^{T^{\topp}\otimes \id}&\ar[dr]^{-\cap^{\cX}-}K\cX(X)\otimes K(C_{0}(\partial_{h}X )) \ar[r]^{T^{\an}\otimes \id}& \Sigma K^{\an}(\partial_{h}X)\otimes K(C_{0}( \partial_{h}X) )\ar[d]^{\eqref{feroifjowerfwerfew}}\\ \Sigma (\prod_{k\in \Z} \Sigma^{2k}H\C_{\BM})(\partial_{h}X)\otimes (KU\wedge H\C)^{\BM}( \iota^{\topp}_{u}(Y))\ar[r]^-{\eqref{gwpoejropfwoer0f9wefref}} &\Sigma KU\wedge H\C&\ar[l]_{\epsilon} \Sigma KU}}\ .$$
\end{kor}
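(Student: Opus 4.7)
The plan is to split the big diagram into four cells, each of which commutes for an immediately identifiable reason, and then paste them together. The two small squares in the upper portion of the diagram --- the one involving $c\otimes\id$ together with $\id\otimes\pi^{*}$, and the one involving $T^{\an}\otimes\id$ together with $\id\otimes\pi^{*}$ --- commute by bare bifunctoriality of the tensor product, since in each square both composites are literally equal to a common map of the form $f\otimes g$. The small triangle on the right edge, bounded by $\id\otimes\pi^{*}$ and the two instances of the $K$-theoretic pairing \eqref{feroifjowerfwerfew} converging on $\Sigma KU$, commutes by naturality of that pairing in its space variable, applied to the morphism $\pi:\partial_{h}X\to\iota^{\topp}_{u}(Y)$ in $\LCH^{+}$.

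Next, I identify the lower-right triangle --- with vertices $K\cX(X)\otimes K(C_{0}(\partial_{h}X))$, $\Sigma K^{\an}(\partial_{h}X)\otimes K(C_{0}(\partial_{h}X))$ and $\Sigma KU$, connected by $T^{\an}\otimes\id$, the pairing \eqref{feroifjowerfwerfew}, and the coarse corona pairing $-\cap^{\cX}-$ --- as the content of \cref{koprherhgrtgertg}, tensored on the right with $\id_{K(C_{0}(\partial_{h}X))}$.

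The heart of the argument is the remaining large cell occupying the left part of the diagram. I plan to derive its commutativity from \cref{iowerjgowegjweriogrwegwe} by the following device. Take the outer square of diagram \eqref{gwerwefwerfrefw} (whose commutativity is guaranteed by \cref{iowerjgowegjweriogrwegwe} under the standing hypotheses), tensor it on the right with the identity on the cohomology factor $(KU\wedge H\C)^{\BM}(\iota^{\topp}_{u}(Y))$, and post-compose with the Borel-Moore evaluation pairing \eqref{gwpoejropfwoer0f9wefref}. This will produce exactly the cell in question, once I identify the right edge of our diagram with the right edge of \eqref{gwerwefwerfrefw} paired against the cohomology factor --- using $K^{\an}\simeq KU_{\BM}$ from \cref{hertgertgergtrgrtg} together with $\prod_{k\in\Z}\Sigma^{2k}H\C_{\BM}\simeq(KU\wedge H\C)_{\BM}$ --- and similarly match the left edge via the definition of the periodic Borel-Moore character. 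The main obstacle will be this last bookkeeping step: correctly matching the two boundary composites of our cell with the composites $\phi$ and $\psi$ of \cref{iowerjgowegjweriogrwegwe} under the identifications just recalled. Once this matching is in place the conclusion is purely formal, and the four commutative cells paste together to yield the commutativity of the whole diagram.
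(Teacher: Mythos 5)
Your plan is essentially the paper's own proof: the lower-right triangle is exactly \cref{koprherhgrtgertg}, and the remaining large cell is obtained, just as you describe, by tensoring the outer square of \eqref{gwerwefwerfrefw} (commutative by \cref{iowerjgowegjweriogrwegwe}) with the cohomology factor and post-composing with the evaluation pairing. The only substantive content hidden in your deferred "bookkeeping" is the compatibility square for $\rmch^{h}\otimes \rmch_{h}$ with the pairings \eqref{feroifjowerfwerfew}, \eqref{vboijowrferfvewrf} and $\epsilon$ (the lemma stated immediately before the corollary) together with binaturality of the evaluation pairing along $\pi$; these are formal consequences of the identifications $K^{\an}\simeq KU_{\BM}$ and $\prod_{k\in\Z}\Sigma^{2k}H\C_{\BM}\simeq (KU\wedge H\C)_{\BM}$ that you already invoke, so the argument closes, although your cell enumeration is slightly off (the diagram contains only one bifunctoriality square, and the arrow \eqref{gwpoejropfwoer0f9wefref} out of the lower-left corner already incorporates $\pi$).
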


 \begin{kor}\label{koghperthetrgertge}
 If $x$ is in $\pi_{i}K\cX^{\ctr}(X)$ and $\xi$ is in $\pi_{-j} K(C_{0}(\iota^{\topp}_{u}(Y)))$, then
 we have
\begin{equation}\label{hrtepogkeportgertgetrge}\epsilon(c(x)\cap^{\cX} \pi^{*}\xi)=\langle T^{\topp}P(\chi_{\BM})\tau\rmch^{\alg},\rmch_{h}(\xi)\rangle
\end{equation} 
 in $\pi_{i-j+1}(KU)\otimes \C$.
 \end{kor}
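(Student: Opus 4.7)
The plan is to deduce \eqref{hrtepogkeportgertgetrge} by applying $\pi_{i-j}$ to the commutative diagram of the preceding corollary, evaluated on the class $x\otimes \xi$, and interpreting the two compositions.

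First I would chase $x\otimes \xi$ along the top and right of that diagram. Along the top row it is sent to $c(x)\otimes \xi \mapsto T^{\an}(c(x))\otimes \xi$. After applying $\id\otimes \pi^{*}$ (which identifies the two copies of $K(C_{0}(-))$ on $\iota^{\topp}_{u}(Y)$ and on $\partial_{h}X$), one reaches the analytic pairing \eqref{feroifjowerfwerfew} applied to $T^{\an}(c(x))\otimes \pi^{*}\xi$. By \cref{koprherhgrtgertg} this equals the image of the coarse corona pairing $c(x)\cap^{\cX}\pi^{*}\xi$ in $\pi_{*}\Sigma KU$, and after composition with $\epsilon$ we obtain the left hand side of \eqref{hrtepogkeportgertgetrge}.

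Next I would chase the counterclockwise route. Descending the left column, $x$ is mapped successively by $\rmch^{\alg}$, $\tau$ and $P(\chi_{\BM})$ to a class in $(\prod_{k\in\Z}\Sigma^{2k}H\C_{\BM})\bP(X)$, while $\xi$ is sent by $\rmch_{h}$ and then $\pi^{*}$ to a class in $(KU\wedge H\C)^{\BM}(\partial_{h}X)$. Applying the horizontal arrow $T^{\topp}\otimes \id$ and then the Borel-Moore evaluation pairing \eqref{gwpoejropfwoer0f9wefref} yields, by definition of the angle bracket, the right hand side of \eqref{hrtepogkeportgertgetrge}. The two compositions agree because the diagram of the preceding corollary commutes, and under the canonical identification $\pi_{*}(KU\wedge H\C)\simeq \pi_{*}(KU)\otimes\C$ the equality takes place in the group stated.

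The only mild bookkeeping is to keep the two pairings \eqref{feroifjowerfwerfew} and \eqref{gwpoejropfwoer0f9wefref} aligned under the Chern character $\epsilon$ on coefficients; this is exactly the content of the lower right cell of the preceding diagram and requires no additional argument beyond naturality. Consequently there is no essential obstacle: all of the real work, namely the commutativity of the large outer square (which relies on the analytic Paschke identification of \cref{khopperttrgeg}, the topological Paschke identification of \cref{kophrthertgtrgetrge}, and ultimately on \cref{iowerjgowegjweriogrwegwe}), has already been carried out in establishing the preceding corollary, and the remaining step is purely a diagram chase in graded abelian groups.
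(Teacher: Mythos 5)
Your proposal is correct and coincides with the paper's argument: the corollary is stated there as a formal consequence of the preceding commutative diagram (itself obtained from \cref{iowerjgowegjweriogrwegwe} and \cref{koprherhgrtgertg}), and your chase of $x\otimes\xi$ along the clockwise and counterclockwise composites, together with the identification $\pi_*(KU\wedge H\C)\cong\pi_*(KU)\otimes\C$, is exactly the intended diagram chase. The only point worth noting is the implicit $\pi^*$ needed to place $\rmch_h(\xi)$ over $\partial_h X$ before applying the evaluation pairing \eqref{gwpoejropfwoer0f9wefref}, which you handle correctly via binaturality.
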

   
\begin{rem}  The equality \eqref{hrtepogkeportgertgetrge}
is in particular an integrality statement for its right-hand side. 
The equality \eqref{hrtepogkeportgertgetrge} is our version of  \cite[(1.1)]{Engel:2025aa}.
In  \cite{Engel:2025aa} the cycle $x$ is given  in terms of operators on an $X$-controlled
Hilbert space and $\rmch_{h}(\xi)$ is described in terms of a \v{C}ech-type decomposition of $Y$.
In this case the  right-hand side of \eqref{hrtepogkeportgertgetrge} is   made explicit
 and expressed in terms of an integral over $X$ of a term involving the integral kernels  of the operators determining    $x$ and functions describing  the decomposition of $Y$. 
It was one of the main goals of   \cite{Engel:2025aa} to verify 
the integrality of this integral (which in the situation of the reference  is apriori given)  by interpreting  it as the  right-hand side of \eqref{hrtepogkeportgertgetrge}
and comparing it with the left-hand side. \hB
  \end{rem}

%
%


\subsection{Two Chern characters}

 The construction of Chern character maps out of analytic $K$-homology
 has a long history and motivated the introduction of cyclic homology  \cite{Connes_1985}.
 Using the universal property of the equivariant $KK$-theory functor $\kk^{G}:G\nCalg\to \KK^{G}$, constructions of Chern characters to versions of bivariant cyclic
 homology have been considered in \cite{Cuntz_1997}, \cite{Nistor_1993}, \cite{Puschnigg_2003}, \cite{Voigt_2007}.
 In the present paper we added a version   $\rmch^{h}$  described  in \cref{9gwerfreferwfwever}  which does not involve cyclic homology at all. In this section we compare it with a more algebraic version involving cyclic homology.

 The inner square in \eqref{gwerwefwerfrefw1} is a natural (we do not say natural commutative) square
\begin{equation}\label{vewfcwecvfvsfdv}\xymatrix{K\cX^{G,\ctr}_{G_{can,max}}\cO^{\infty}\ar[r]^{p^{G,\an}c^{G}}\ar[d]_{ P(\chi_{BM})^{hG}\pr \beta \tau^{G}\rmch^{G,\alg}}  &\Sigma K^{G,\an} \iota^{\topp}_{u}\ar[d]^{\ch^{hG}\beta} \\ (\prod_{k\in \Z} \Sigma^{2k}H\bC_{BM}\bP)^{hG}\cO^{\infty}\ar[r]^{p^{G,\topp}} &\Sigma (\prod_{k\in \Z}\Sigma^{2k}H\C_{BM})^{hG}\iota^{\topp}_{u} } 
\end{equation}
of $\Sp$-valued functors defined on $G\UBC$.
Since the functors on the right-hand side are locally finite we can  factorize the horizontal maps as follows: 
\begin{equation}\label{}\xymatrix{ \ar[d]_{ P(\chi_{BM})^{hG}\pr \beta \tau^{G}\rmch^{G,\alg}}   K\cX^{G,\ctr}_{G_{can,max}}\cO^{\infty}\ar[r] &\ar[d]_{b:= (P(\chi_{BM})^{hG}\pr \beta \tau^{G}\rmch^{G,\alg})^{\lf}}   (K\cX_{G_{can,max}}^{G,\ctr}\cO^{\infty})^{\lf}\ar[r]^{a:=p^{G,\an,\lf}c^{G,\lf}}  &\Sigma K^{G,\an} \iota^{\topp}_{u}\ar[d]^{\rmch^{hG}\beta} \\  (\prod_{k\in \Z} \Sigma^{2k}H\bC_{BM}\bP)^{hG}\cO^{\infty}\ar[r]& ((\prod_{k\in \Z} \Sigma^{2k}H\bC_{BM}\bP)^{hG}\cO^{\infty})^{\lf}\ar[r]^{c}  &\Sigma (\prod_{k\in \Z}\Sigma^{2k}H\C_{BM})^{hG}\iota^{\topp}_{u} } 
\end{equation}
If $Y$ in $G\UBC$ is such that the component $a_{Y}$ is an equivalence, then the composition $$\rmch^{G,\prime}_{Y}:=c_{Y}b_{Y}a_{Y}^{-1}:\Sigma K^{G,\an}(\iota^{\topp}_{u}(Y))\to \Sigma (\prod_{k\in \Z}\Sigma^{2k}H\C_{BM})^{hG}(\iota^{\topp}_{u} (Y))$$ 
is an alternative to the Borel-equivariant Chern character $ \rmch^{hG}\beta$.

By \cite[Thm. 1.5]{bel-paschke} we know that 
  $p^{G,\an}_{Y}$ and $c^{G}$ are equivalences if 
$Y$ in $G\UBC$ is homotopy equivalent to a $G$-finite $G$-simplicial complex with finite stabilizers.
This implies that $p^{G,\an,\lf}_{Y}$ and $c^{G,\lf}$ are equivalences if $Y$ is homotopy equivalent to a locally finite 
$G$-simplicial complex with proper $G$-action.

\begin{kor}
$$\rmch^{G,\prime}_{Y}:\Sigma K^{G,\an}(\iota^{\topp}_{u}(Y))\to \Sigma (\prod_{k\in \Z}\Sigma^{2k}H\C_{BM})^{hG}(\iota^{\topp}_{u} (Y))$$ is defined provided $Y$ in $G\UBC$ is  homotopy equivalent to a locally finite 
$G$-simplicial complex with proper $G$-action.
\end{kor}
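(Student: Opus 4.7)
The plan is to reduce the assertion to the cocompact case already handled by \cite[Thm.~1.5]{bel-paschke} via an exhaustion argument, relying on the good behavior of the locally finite construction $(-)^{\lf}$ from \cref{ohiperzthtrgetrger}. By definition of $\rmch^{G,\prime}_{Y}$, we must show that $a_{Y}=p^{G,\an,\lf}_{Y}\circ c^{G,\lf}_{Y}$ is an equivalence, so it suffices to verify that $c^{G,\lf}$ and $p^{G,\an,\lf}$ are pointwise equivalences on objects $Y$ of the stated form.

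First, using the homotopy invariance of $K\cX^{G,\ctr}_{G_{can,max}}\cO^{\infty}$, $K\cX^{G}_{G_{can,max}}\cO^{\infty}$, and $K^{G,\an}\iota^{\topp}_{u}$ (they are normal equivariant local homology theories in the sense of \cref{koprthrtgertg}, by \cref{ogpwergerfwerfwref}), and the naturality of $c^{G,\lf}$ and $p^{G,\an,\lf}$, I may replace $Y$ by a locally finite $G$-simplicial complex with proper $G$-action equipped with its canonical spherical-path-metric $G$-uniform bornological coarse structure. Next, I write $Y=\bigcup_{n\in\nat} Y_{n}$ as an ascending union of $G$-invariant $G$-cocompact $G$-subcomplexes; since the stabilizers of $Y$ are finite, each $Y_{n}$ is $G$-homotopy equivalent to a finite $G$-CW-complex with finite stabilizers, and hence lies in the class to which \cite[Thm.~1.5]{bel-paschke} applies. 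This yields that $c^{G}_{Y_{n}}$ and $p^{G,\an}_{Y_{n}}$ are equivalences for every $n$.

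Now I pass to the locally finite versions. By \eqref{goijwioejorigwerfwerf}, for any functor $E$ from $G\UBC$ to a stable complete target one has
\[
E^{\lf}(Y)\simeq \lim_{W}\Cofib\bigl(E(Y\setminus W)\to E(Y)\bigr),
\]
with $W$ running over $G$-invariant $G$-bounded (i.e.\ relatively $G$-compact) subsets. For $Y$ a locally finite proper $G$-simplicial complex the relatively $G$-compact subsets are cofinally the $G$-subcomplexes $Y_{n}$, and the relative terms $\Cofib\bigl(E(Y\setminus Y_{n})\to E(Y)\bigr)$ are, by strong excision for the target $K^{G,\an}\iota^{\topp}_{u}$ (cf.~\cref{ogpwergerfwerfwref}) and the excision properties of $K\cX^{G,\ctr}_{G_{can,max}}\cO^{\infty}$, computed by data supported on the cocompact piece $Y_{n}$. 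Applying \cref{okgprherhreth}, or rather the fact that a natural transformation of functors that is a pointwise equivalence on the cocompact approximations remains an equivalence after $(-)^{\lf}$ (because both sides become limits of equivalent relative cofibers), I conclude that $c^{G,\lf}_{Y}$ and $p^{G,\an,\lf}_{Y}$ are equivalences.

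The main obstacle is to make the last step rigorous: one must check that the relative cofibers in the $\lf$-formula are genuinely controlled by the cocompact approximation, i.e.\ that for each pair $Y_{n}\subseteq Y$ the inclusion $Y_{n}\hookrightarrow Y$ gives via strong excision on the analytic side, and via coarse excision plus the cone sequence \eqref{ferfwerfwerffrewe} on the coarse side, natural equivalences of the relative terms that are matched by $c^{G}$ and $p^{G,\an}$. Once this compatibility is verified (which follows from the naturality of the two transformations together with the equivariant compact Hausdorff model for $\iota^{\topp}_{u}(Y_{n})$ inside $\iota^{\topp}_{u}(Y)$), the composition $a_{Y}$ is an equivalence and the definition of $\rmch^{G,\prime}_{Y}=c_{Y}b_{Y}a_{Y}^{-1}$ makes sense, completing the proof.
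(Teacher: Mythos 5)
Your proposal follows essentially the same route as the paper: the paper's argument for this corollary consists precisely of the two preceding observations, namely that $p^{G,\an}_{Y}$ and $c^{G}$ are equivalences on spaces homotopy equivalent to $G$-finite $G$-simplicial complexes with finite stabilizers by \cite[Thm.~1.5]{bel-paschke}, and that this implies $p^{G,\an,\lf}$ and $c^{G,\lf}$ are equivalences on locally finite complexes with proper action, so that $a_{Y}$ is invertible and $\rmch^{G,\prime}_{Y}=c_{Y}b_{Y}a_{Y}^{-1}$ is defined. Your exhaustion-by-cocompact-subcomplexes argument via the formula \eqref{goijwioejorigwerfwerf} simply spells out the implication the paper leaves implicit (the appeal to \cref{okgprherhreth} is not needed and you correctly replace it by the excision/naturality argument), so the proof is correct and in the same spirit.
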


Recall that $\rmch^{hG}\beta $ is defined by homotopy theory, using the identification of  analytic $K$-homology with the Borel-Moore homology generated by $KU$ and the identification $KU\wedge H\C\simeq \prod_{k\in \Z}\Sigma^{2k}H\C$. In contrast, at least philosopically, $\rmch^{G,\prime}$ applies the algebraic Chern character $\rmch^{G,\alg}$ to a 
suitable algebraic  cycle like a summable Fredholm module. There is no obvious reason why these
constructions should give the same results.

\begin{prob}
Do we have an equivalence $\rmch^{G,\prime}_{Y}\simeq \rmch^{hG}_{Y}\beta_{Y}$ for $Y$ in $G\UBC$ which is homotopy equivalent to locally finite $G$-simplicial complex with finite stabilizers.
\end{prob}

The case of a finite group follows from \cref{iowerjgowegjweriogrwegwe} applied to $X=\cO^{\infty}(Y)$ and $\pi$ the component of \eqref{ewrferfrfvsdfvfd} at $Y$.
\begin{kor}
If $G$ is finite, then $\rmch^{G,\prime}_{Y}\simeq \rmch^{hG}_{Y}\beta_{Y}$ for every $Y$ in $G\UBC$ which is homotopy equivalent to a locally finite $G$-simplicial complex.
\end{kor}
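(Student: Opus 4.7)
The plan is to apply \cref{iowerjgowegjweriogrwegwe} with the choice $X := \cO^{\infty}(Y)$ and $\pi$ the component of the natural transformation \eqref{ewrferfrfvsdfvfd} at $Y$. With this choice the motivic transgression $T^{\mot}\colon \Yo^{G}(X)\to \Yo^{G}(\cO^{\infty}(Y))$ reduces to the identity (as remarked before the statement of \cref{khopperttrgeg}), so the data required for \cref{iowerjgowegjweriogrwegwe} is automatically present. The inner cell of \eqref{gwerwefwerfrefw}, evaluated at this $X$ and $\pi$, is precisely the square \eqref{vewfcwecvfvsfdv} evaluated at $Y$, and the definition $\rmch^{G,\prime}_{Y} := c_{Y}\, b_{Y}\, a_{Y}^{-1}$ will let us extract the desired equivalence by composing the relation $c_{Y}\cdot b_{Y}\simeq \rmch^{h,hG}_{Y}\beta_{Y}\cdot a_{Y}$ with $a_{Y}^{-1}$ on the right.

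First I would treat the case where $Y$ is homotopy equivalent to a \emph{finite} $G$-simplicial complex with finite stabilizers. Here \cref{iowerjgowegjweriogrwegwe} applies directly and yields commutativity of \eqref{gwerwefwerfrefw}; moreover $a_{Y}=p^{G,\an,\lf}_{Y}\,c^{G,\lf}_{Y}$ is an equivalence by \cite[Thm.~1.5]{bel-paschke} together with \cref{gkopwreferfwerfrwef} and \cref{plkthertgtggetrg}, so $\rmch^{G,\prime}_{Y}$ is defined and the computation above produces $\rmch^{G,\prime}_{Y}\simeq \rmch^{h,hG}_{Y}\beta_{Y}$.

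To pass from finite to locally finite $G$-simplicial complexes with proper $G$-action, I would invoke the formalism of \cref{gwerjofewrfwer} applied to the inclusion of the finite $G$-simplicial complexes into $G\UBC$. Both compositions $c\cdot b$ and $\rmch^{h,hG}\beta\cdot a$ are natural transformations between spectrum-valued functors on $G\UBC$ whose targets (and, after forcing local finiteness in the left column, domains) are homotopy-invariant, open-excisive, and locally finite functors in the sense of \cref{kopgwegewrfwerf}. Such functors on locally finite $G$-CW-complexes with finite stabilizers are determined by their restrictions to finite $G$-subcomplexes via the locally-finite assembly construction of \cref{kophrhrtehethtr}; the equivalence established in the finite case therefore propagates, in $\Sp$, to every locally finite $Y$.

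The main obstacle is ensuring that the propagation in the last step lifts from a mere equality of maps (which is immediate on homotopy groups by \cref{kohpertgrtgertgertg}) to a genuine equivalence of natural transformations in $\Sp$. The safe route is to pre-compose with the universal locally-finite assembly equivalence on $G$-finite CW-retracts of the locally-finite $G$-simplicial complex in question, then use that the map $a$ is itself a natural equivalence between locally finite functors on the full subcategory where $p^{G,\an,\lf}$ and $c^{G,\lf}$ are equivalences, so that composing with $a^{-1}$ preserves the pointwise equivalence obtained from \cref{iowerjgowegjweriogrwegwe}.
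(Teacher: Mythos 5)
Your first paragraph is precisely the paper's own proof: the paper disposes of this corollary with the single remark that \cref{iowerjgowegjweriogrwegwe}, applied to $X=\cO^{\infty}(Y)$ with $\pi$ the component of \eqref{ewrferfrfvsdfvfd} at $Y$, makes the square \eqref{vewfcwecvfvsfdv} commute at $Y$, after which composing with $a_{Y}^{-1}$ gives $\rmch^{G,\prime}_{Y}\simeq \rmch^{h,hG}_{Y}\beta_{Y}$. For $Y$ homotopy equivalent to a finite $G$-CW-complex this is complete; note that such a $Y$ is $G$-bounded (morphisms in $G\UBC$ are proper), so the passage through the locally finite replacement is harmless there, $E^{\lf}(Y)\simeq E(Y)$ because the functors involved vanish on $\emptyset$.

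The extra step you add to reach arbitrary locally finite complexes is where your argument breaks. The device of \cref{gwerjofewrfwer} is colimit-based: \cref{okgprherhreth} only yields objectwise equivalences on the essential image of a colimit-preserving extension, and that section explicitly warns that it does not upgrade objectwise fillers to equivalences of natural transformations, which is exactly what your propagation would need. Locally finite functors are of the opposite, limit-type nature: by \eqref{goijwioejorigwerfwerf} and \eqref{fiwqehiudhqwidqwedqwedqd} the value on a non-compact locally finite complex is a limit over cobounded complements and is not the colimit of the values on finite subcomplexes (already $H\C_{\BM}(\R)\simeq \Sigma H\C$, whereas $\colim_{n}H\C_{\BM}([-n,n])\simeq H\C$). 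Thus "determined by restriction to finite $G$-subcomplexes" holds only in the indirect sense of the assembly statement of \cref{kophrhrtehethtr}, and to transport the comparison through it you would need to know that both composites are induced from the orbit level compatibly with the assembly and $(-)^{\lf}$ constructions; the purely objectwise commutativity supplied by \cref{iowerjgowegjweriogrwegwe} does not give this. Concretely, commutativity of the $\lf$-factorized square at a non-compact $Y$ requires compatibility over the whole exhaustion entering the limit, not just commutativity of \eqref{vewfcwecvfvsfdv} at $Y$ itself. In fairness, the paper's one-line proof cites a statement whose hypothesis is a finite $G$-CW-complex and is silent about this passage, so your instinct that something extra must be said is sound; the particular mechanism you propose, however, would not deliver it.
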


   \bibliographystyle{alpha}
\bibliography{forschung2021}

\end{document}